\documentclass{amsart}
\usepackage[utf8]{inputenc}

\usepackage{geometry}
\geometry{verbose,letterpaper,tmargin=3cm,bmargin=3cm,lmargin=2.5cm,rmargin=2.5cm}

\usepackage{epsdice}

\usepackage[inline,shortlabels]{enumitem}

\usepackage{color}

\usepackage[OT2,T1]{fontenc}

\usepackage[final]{microtype}

\usepackage{tikz}
\usepackage{tikz-cd}
\usepackage{xypic}


\usepackage[draft=false,linktocpage=true]{hyperref}
\usepackage[capitalise]{cleveref}

\usepackage[normalem]{ulem}

\usepackage{relsize}
\usepackage[bbgreekl]{mathbbol}
\usepackage{amsfonts}
\DeclareSymbolFontAlphabet{\mathbb}{AMSb} 
\DeclareSymbolFontAlphabet{\mathbbl}{bbold}
\newcommand{\Prism}{{\mathlarger{\mathbbl{\Delta}}}} 

\usepackage{mathtools}
\usepackage{amsmath}

\usepackage{amsthm,amssymb}
\numberwithin{equation}{section}

\usepackage{extarrows}

\theoremstyle{plain}
\newtheorem{theorem}[equation]{Theorem}

\newtheorem{proposition}[equation]{Proposition}
\newtheorem{lemma}[equation]{Lemma}
\newtheorem{porism}[equation]{Porism}
\newtheorem{claim}[equation]{Claim}
\newtheorem{corollary}[equation]{Corollary}
\newtheorem{fact}[equation]{Fact}

\theoremstyle{definition}
\newtheorem{definition}[equation]{Definition}
\newtheorem{construction}[equation]{Construction}

\newtheorem{example}[equation]{Example}
\newtheorem{situation}[equation]{Situation}
\newtheorem{notation}[equation]{Notation}
\newtheorem{remark}[equation]{Remark}
\newtheorem{digression}[equation]{Digression}

\usepackage{colonequals}
\newcommand{\DF}{{\mathcal{DF}}}
\newcommand{\Spf}{{\mathrm{Spf}}}
\newcommand{\qrsp}{{\mathrm{qrsp}}}
\newcommand{\qsyn}{{\mathrm{qSyn}}}
\newcommand{\FG}{{\mathrm{F\text{-}Gauge}}}
\newcommand{\fG}{{\mathrm{(F\text{-})Gauge}}}
\newcommand{\fC}{{\mathrm{(F\text{-})Crys}}}
\newcommand{\FC}{{\mathrm{F\text{-}Crys}}}
\newcommand{\vect}{{\mathrm{vect}}}

\newcommand{\perf}{{\mathrm{perf}}}
\newcommand{\Perf}{{\mathrm{Perf}}}

\newcommand{\coh}{{\mathrm{coh}}}
\newcommand{\Fil}{{\mathrm{Fil}}}
\newcommand{\FH}{{\mathrm{FilHiggs}}}
\newcommand{\FConn}{{\mathrm{FilConn}}}
\newcommand{\conj}{{\mathrm{conj}}}
\newcommand{\Rees}{{\mathrm{Rees}}}
\newcommand{\D}{{\mathcal{D}}}
\newcommand{\DG}{{\mathcal{DG}}}
\newcommand{\GH}{{\mathrm{GrHiggs}}}
\newcommand{\Gr}{{\mathrm{Gr}}}

\newcommand{\wt}{{\mathrm{wt}}}
\newcommand{\colim}{{\mathrm{colim}}}
\newcommand{\Red}{{\mathrm{Red}}}

\definecolor{mb}{rgb}{0.36, 0.54, 0.66}

%

\title{Frobenius height of prismatic cohomology with coefficients}

\author{Haoyang Guo}

\address{H.~Guo: The University of Minnesota Twin cities, Vincent Hall, 206 Church St. SE, Minneapolis, MN 55455, USA}

\email{ghy@umn.edu}

\author{Shizhang Li}
\address{S.~Li: Morningside Center of Mathematics and State Key Laboratory of Mathematics Sciences,
Academy of Mathematics and Systems Science, Chinese Academy of Sciences, Beijing 100190, China}
\email{lishizhang@amss.ac.cn}

\begin{document}

\maketitle

\begin{abstract}
We study the behavior of Frobenius operators on smooth proper pushforwards
of prismatic $F$-crystals. In particular we show that the $i$-th pushforward
has its Frobenius height increased by at most $i$.
Our proof crucially uses the notion of prismatic $F$-gauges introduced by Drinfeld and Bhatt--Lurie
and its relative version, and we give a self-contained treatment
without using the stacky formulation.
\end{abstract}

\tableofcontents

\section{Introduction}

\addtocontents{toc}{\protect\setcounter{tocdepth}{1}}

\subsection*{Background and main result}
Fix a prime number $p$, throughout this paper
we let $K$ be a $p$-adic discretely valued field with perfect residue field $k$,
and let $\mathcal{O}_K$ be the ring of integers.
For smooth proper $p$-adic formal schemes over $\mathcal{O}_K$,
the prismatic cohomology introduced by
Bhatt--Scholze in \cite{BS19} (see also \cite{BMS18} and \cite{BMS19})
can be regarded as the universal cohomology theory.
Recently there has been many works concerning ``general coefficients''
for this cohomology theory, known as \emph{prismatic $F$-crystals},
to name a few: \cite{MT20}, \cite{Wu21}, \cite{BS21}, \cite{DuLiu21}, \cite{MW21},
\cite{DLMS} and \cite{GR22}.

In this paper, we study the behavior of these general coefficients
under smooth proper pushforwards. More precisely, we are interested
in estimating the height of the induced Frobenius operator.
Our main result is the following (combining \Cref{bound of Frob height of cohomology}
and \Cref{Verschiebung}):
\begin{theorem}[Main Theorem]
\label{main}
Let $f \colon X\to Y$ be a smooth proper morphism of smooth $p$-adic formal schemes over $\mathcal{O}_K$
of relative equidimension $n$,
and let $(\mathcal{E},\varphi_{\mathcal{E}})$ be an $\mathcal{I}$-torsionfree
coherent prismatic $F$-crystal over $X$ with Frobenius height in $[a,b]$.

Then for each integer $i$, the $i$-th relative prismatic cohomology $R^i f_{\Prism, *} \mathcal{E}$
induces a coherent prismatic $F$-crystal over $Y_\Prism$, 
whose $\mathcal{I}$-torsionfree quotient has Frobenius height in
$[a+\max\{0 ,i-n\}, b+\min\{i,n\}]$.

Moreover there exists a Verschiebung operator
\[
\psi_i \colon \mathcal{I}^{\otimes (i+b)}\otimes_{\mathcal{O}_{\Prism}} R^i f_{\Prism, *} \mathcal{E} 
\longrightarrow \varphi_{\Prism_Y}^* R^i f_{\Prism, *} \mathcal{E},
\]
which is inverse to the Frobenius operator of $R^i f_{\Prism, *} \mathcal{E}$ up to multiplying $\mathcal{I}^{\otimes (i+b)}$.
\end{theorem}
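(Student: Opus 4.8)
The plan is to lift $(\mathcal{E},\varphi_{\mathcal{E}})$ to a relative prismatic $F$-gauge on $X/Y$, push it forward as an $F$-gauge, and read off the Frobenius height from the resulting Nygaard filtration. Twisting the coefficient by a power of the (invertible) Breuil--Kisin twist --- which commutes with pushforward by the projection formula and shifts all heights uniformly --- reduces us to the effective case: $\mathcal{E}$ of height in $[0,h]$ with $h=b-a$. The first step is to recall, in the concrete filtered/graded (Rees) incarnation rather than the stacky syntomification, what a relative prismatic $F$-gauge is: the crystal $\mathcal{E}$ equipped with a Nygaard filtration $\Fil^\bullet_{\mathrm{N}}$ on $\varphi^*\mathcal{E}$ and divided Frobenii $\varphi_j\colon\Fil^j_{\mathrm{N}}\to\mathcal{E}$ subject to the standard axioms. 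Two compatibilities are needed: an $\mathcal{I}$-torsionfree coherent $F$-crystal of height in $[0,h]$ acquires such a structure, with gauge weights in $[0,h]$, via $\Fil^j_{\mathrm{N}}=\varphi_{\mathcal{E}}^{-1}(\mathcal{I}^j\mathcal{E})$; and conversely, the $F$-crystal underlying an $F$-gauge of weights in $[a',b']$ has Frobenius height in $[a',b']$ --- the lower bound because $\varphi=\mathcal{I}^{a'}\varphi_{a'}$ on $\Fil^{a'}_{\mathrm{N}}=\varphi^*\mathcal{E}$, the upper bound because the top divided Frobenius $\varphi_{b'}$ is surjective.

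The second step, and the technical heart, is to prove that smooth proper pushforward preserves relative $F$-gauges. Concretely one forms the Nygaard-filtered relative prismatic complex of $\mathcal{E}$ and takes its filtered derived pushforward along $f$; one must establish (i) coherence of $R^if_{\Prism,*}\mathcal{E}$ over $\mathcal{O}_{Y_{\Prism}}$ together with base change --- which, through the Hodge--Tate specialization, reduces to properness of $f$ and coherence of $Rf_*$ of a perfect complex on $X$ --- and (ii) that the filtered pushforward still satisfies the $F$-gauge axioms, its underlying $F$-crystal being the $R^if_{\Prism,*}\mathcal{E}$ of the statement. The essential input is a relative prismatic--de Rham / Hodge--Tate comparison: the Nygaard-graded pieces of the pushforward are computed by $Rf_*$ of the de Rham complex of $\mathcal{E}$ with its Hodge filtration, a complex of amplitude $[0,n]$ in the relative direction. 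I expect developing this relative $F$-gauge cohomology and its pushforward from scratch, without syntomification, to be the main obstacle.

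Granting it, the height bound is a weight count. The pushforward $F$-gauge $R^if_{\Prism,*}\mathcal{E}$ carries a conjugate (Hodge--Tate) filtration whose graded pieces are, up to a Breuil--Kisin twist by $t$, of the form $R^{i-t}f_*\bigl(\wedge^t L_{X/Y}\otimes\overline{\mathcal{E}}\bigr)$; since $\wedge^t L_{X/Y}=0$ for $t>n$ and $R^{i-t}f_*=0$ unless $0\le i-t\le n$, only the indices $t\in[\max\{0,i-n\},\min\{i,n\}]$ occur, and the twist by $t$ together with $\overline{\mathcal{E}}$ having weights in $[0,h]$ confines the gauge weights of $R^if_{\Prism,*}\mathcal{E}$ to $[\max\{0,i-n\},\,h+\min\{i,n\}]$. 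By the first step the underlying $F$-crystal, hence its $\mathcal{I}$-torsionfree quotient, then has Frobenius height in that interval; untwisting yields $[a+\max\{0,i-n\},\,b+\min\{i,n\}]$. Alternatively the lower bound can be deduced from the upper bound for the dual $F$-crystal $\mathcal{E}^\vee$ in cohomological degree $2n-i$, using relative prismatic Poincaré duality and tracking the resulting $\{-n\}$-twist.

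Finally, the Verschiebung $\psi_i$ comes with the gauge structure once the upper bound is known: since $R^if_{\Prism,*}\mathcal{E}$ has weights $\le b+\min\{i,n\}\le i+b$, its Frobenius is an isomorphism after inverting $\mathcal{I}$ with cokernel annihilated by $\mathcal{I}^{i+b}$, and the divided Frobenii provide a canonical $\psi_i\colon\mathcal{I}^{\otimes(i+b)}\otimes R^if_{\Prism,*}\mathcal{E}\to\varphi_{\Prism_Y}^*R^if_{\Prism,*}\mathcal{E}$ with $\varphi\circ\psi_i=\psi_i\circ\varphi$ equal to multiplication by $\mathcal{I}^{\otimes(i+b)}$; on the $\mathcal{I}$-torsionfree quotient this is simply $\varphi^{-1}$ rescaled by a generator of $\mathcal{I}^{i+b}$. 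Combining the height estimate with this construction gives the theorem.
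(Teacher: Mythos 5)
Your outline captures the right circle of ideas (Nygaard/gauge structure, divided Frobenii, reading heights off a conjugate/Hodge--Tate filtration on the graded pieces, the Verschiebung as an untwisted inverse of the linearized Frobenius), and the reduction to the effective case by a Breuil--Kisin twist and the alternative lower bound via Poincar\'e duality are sensible shortcuts not used in the paper. However there are several real gaps, not merely matters of exposition.

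First, and most seriously, the weight-counting step as you state it does not quite parse. You write that the conjugate filtration on $R^if_{\Prism,*}\mathcal{E}$ has graded pieces ``$R^{i-t}f_*(\wedge^t L_{X/Y}\otimes\overline{\mathcal{E}})$'' and then combine ``the twist by $t$'' with ``$\overline{\mathcal{E}}$ having weights in $[0,h]$''. But $\overline{\mathcal{E}}$ is a Hodge--Tate crystal, not a graded coherent sheaf; the corresponding Higgs bundle $\mathcal{M}$ on $X_{\bar A}$ carries no intrinsic grading. What \emph{does} make this precise is the weight filtration on $\Gr^\bullet E^{(1)}$, resp.\ on the associated filtered Higgs field, developed in \Cref{absolute weight filtration subsection} and \Cref{sec filtered Higgs}: the uniqueness/existence argument via the graded Nakayama lemma, the reduction functor $\Red$, and the identification of the weight-graded pieces with $\Red_u(E)\otimes\Gr^\bullet_N\Prism$. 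Only after this does one obtain (\Cref{conjugated graded of HT complex}) a \emph{finite increasing filtration} on each conjugate-graded piece with subquotients $\Red_u(M)\otimes\Omega^v$, which gives the two-index bookkeeping your sketch collapses into a single index $t$. Without this, ``weights of $\overline{\mathcal{E}}$'' is undefined and the bound does not follow.

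Second, nothing in your proposal addresses the completeness of the Nygaard filtration on the pushforward. The conjugate/Hodge--Tate estimates only control \emph{graded} pieces; to transfer them to assertions about $\Fil^j$ and hence about the Frobenius of $R^if_{\Prism,*}\mathcal{E}$ one needs the filtered object to be complete, and establishing this is nontrivial. The paper does it in \Cref{Completeness of Nygaard filtration via filtered de Rham realization} via filtered perfectness and a descendability argument (\Cref{descendable proposition}, \Cref{thm: filtered completeness}), and this is used explicitly in the proof of \Cref{thm height of coh local}. This is a genuine missing ingredient, not just a detail.

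Third, your ``relative $F$-gauge on $X/Y$'' is not quite the object in play. The relative Nygaard filtration, and hence a relative prismatic $F$-gauge, is defined over a base \emph{prism} $(A,I)$, not over an arbitrary base formal scheme $Y$. The paper's route is: construct an \emph{absolute} $F$-gauge on $X$ from the $F$-crystal (this is \Cref{thm crystal to gauge}, which is itself substantial work — it is not automatic that $\Fil^\bullet = \widetilde\varphi_\mathcal{E}^{-1}(I^\bullet)$ is a filtered perfect complex satisfying quasi-syntomic descent; the proof uses the twisted filtration and the base-change criterion \Cref{saturated Nyg fil satisfies filtered base change proposition}), then for each prism $(A,I)\in Y_\Prism$ base change to a relative $F$-gauge over $X_{\bar A}/A$ (\Cref{restriction functor}), and finally use the $t$-structure on $\FC^{\perf}(Y)$ from \Cref{The coherent prismatic ($F$-)crystals} to glue the per-prism statements into a statement about $R^if_{\Prism,*}\mathcal{E}$. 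Your plan tacitly assumes all three of these constructions, and the first one, the existence and sheafiness of the saturated Nygaardian filtration, is precisely the hardest step.

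In short: the strategy is the same as the paper's, the bookkeeping at the end matches, but the two enabling theorems — existence of the saturated $F$-gauge functor $\Pi_X$ and completeness of the induced Nygaard filtration after pushforward — are assumed rather than proved, and the weight filtration on the graded pieces, which makes the central estimate rigorous, is absent.
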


According to \cite[Theorem 1.3]{DLMS} or \cite[Theorem A]{GR22},
any $\mathbb{Z}_p$-crystalline local system comes from an $I$-torsionfree coherent prismatic $F$-crystal.
Moreover, in \cite[\S 8]{GR22} the first named author and Reinecke have described alternative approaches
to showing that the induced Frobenius operator on $Rf_{\Prism, *} \mathcal{E}$ is an isogeny.
The approach in this paper has the advantage that we can allow torsions in coefficients and cohomology,
and we give precise bounds on Frobenius height.

Notice that prismatic $F$-crystals can give rise to usual crystalline
$F$-crystals in characteristic $p$ as well as local systems in characteristic $0$.
Our estimate of Frobenius height is compatible with
the work of Kedlaya \cite[Thm.~6.7.1]{Ked06} 
bounding Frobenius slopes of rigid cohomology with coefficients
as well as the work of Shimizu \cite[Theorem 5.10]{Shimizu18}
controlling generalized Hodge--Tate weights of cohomology of local systems.

In the case of constant coefficient $\mathcal{E} = \mathcal{O}_{\Prism, X}$,
this result has been established by Bhatt--Scholze in \cite[\S 15]{BS19}.
In fact, our proof is in spirit a generalization of theirs: We found a canonical
way to promote the $F$-crystal to a ``Nygaard-filtered $F$-crystal'', a.k.a.~\emph{prismatic $F$-gauge},
whose graded pieces are controlled and can be estimated by coherent cohomologies.
The framework of prismatic $F$-gauges
had been laid down by Drinfeld \cite{Drin20} and Bhatt--Lurie \cite{BL22a}, \cite{BL22b}.
Many detailed discussions can be found in lecture notes by Bhatt \cite{Bha23},
and our paper is inspired by results therein.
However we choose to not follow their stacky approach,
and instead just discuss prismatic ($F$-)gauges and relative ($F$-)gauges
using slightly more concrete terms.

In the remainder of this introduction, let us briefly indicate the proof idea,
which will also allow us to summarize various sections of this article.
In \Cref{filtered homological algebra}, we lay down some foundational
discussions on homological algebras in graded or filtered setting.
We in particular give several useful criteria of being ``finite projecive'' or ``perfect''
for graded/filtered complexes over graded/filtered rings, which might be of independent interest.

\subsection*{Constructions}
As some preliminary discussions,
in \Cref{The coherent prismatic ($F$-)crystals} we explain the existence
of a standard $t$-structure on the category of prismatic ($F$-)crystals
on $X$ (see \Cref{def coherent crys}), so that we have well-defined functors $R^i f_{\Prism, *}$
sending coherent crystals on $X$ to those on $Y$, and
expressions like ``coherent'' and ``$I$-torsionfree'' appearing in the main theorem above make sense.
 
Here comes the main construction.
In \Cref{Prismatic $F$-gauges}, we give a definition of (\emph{absolute})~($F$-)\emph{gauges} on the formal spectra of qrsp algebras
$S$ and show that the assignment $S \mapsto \fG(\Spf(S))$ is a quasi-syntomic sheaf
(see \Cref{F-gauge sheaf}). 
By the unfolding process (c.f.~\cite[Proposition.~4.31]{BMS19}),
this gives rise to the notion of ($F$-)gauges on our $X$.
Then we show the following:
\begin{theorem}[= \Cref{thm crystal to gauge}]
\label{Theorem 1.2}
There is a fully faithful functor 
\[
\Pi_X \colon \{I\text{-torsionfree coherent } F\text{-crystals on } X\} \longrightarrow 
\{\text{coherent } F\text{-gauges on } X\},
\]
whose composition with the forgetful functor onto the underlying $F$-crystal is the identity.
\end{theorem}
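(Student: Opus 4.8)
The plan is to build $\Pi_X$ by working on quasiregular semiperfectoid (qrsp) algebras and then unfolding. By \Cref{F-gauge sheaf} the assignment $S\mapsto \fG(\Spf S)$ is a quasi-syntomic sheaf, and the same unfolding procedure that defines coherent $F$-crystals on $X$ (\Cref{def coherent crys} and the $t$-structure of \Cref{The coherent prismatic ($F$-)crystals}) defines coherent $F$-gauges on $X$; hence it is enough to produce, functorially in the qrsp $X$-algebra $S$ and compatibly with the quasi-syntomic topology, a coherent $F$-gauge over $\Spf S$ out of the value $M:=\mathcal E(S)$ of an $\mathcal I$-torsionfree coherent $F$-crystal together with its Frobenius $\varphi_M\colon \varphi^*M\to M$, an isomorphism after inverting $\mathcal I$.

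Fix the prism $(A,I)=(\Prism_S,I)$ with its Nygaard filtration $\mathcal N^{\ge\bullet}A$, which satisfies $\varphi(\mathcal N^{\ge i}A)\subseteq I^iA$. The \emph{saturated Nygaardian filtration} on $\varphi^*M$ is
\[
\mathcal N^{\ge i}(\varphi^*M) \;:=\; \varphi_M^{-1}\bigl(\mathcal I^{\otimes i}\otimes_A M\bigr)\;\subseteq\;\varphi^*M,\qquad i\in\mathbb Z,
\]
the preimage of the $\mathcal I$-adic filtration of $M$ under $\varphi_M$, formed inside $M[1/\mathcal I]=(\varphi^*M)[1/\mathcal I]$. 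I would first record its formal properties: it is decreasing, exhaustive, and separated (the latter by $\mathcal I$-torsionfreeness of $M$); the associated Rees module is a module over the Rees algebra of $\mathcal N^{\ge\bullet}A$, precisely because $\varphi(\mathcal N^{\ge i}A)\subseteq I^iA$; and since $(\mathcal E,\varphi_{\mathcal E})$ has Frobenius height in $[a,b]$ the filtration equals $\varphi^*M$ for $i\le a$ and satisfies $\mathcal N^{\ge i+1}(\varphi^*M)=\mathcal I\otimes_A\mathcal N^{\ge i}(\varphi^*M)$ for $i\ge b$, so only finitely many of its graded pieces are new. The data consisting of the crystal $M$, the filtered object $\bigl(\varphi^*M,\ \mathcal N^{\ge\bullet}(\varphi^*M)\bigr)$, and $\varphi_M$ is then an $F$-gauge over $\Spf S$ in the sense of \Cref{Prismatic $F$-gauges}: the inclusions $\mathcal N^{\ge i+1}\hookrightarrow\mathcal N^{\ge i}$ and the Frobenius $\varphi_M$ (combined with multiplication by a generator $d$ of $I$) furnish the two structure maps of an $F$-gauge, whose composite in either order is multiplication by $d$. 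I would verify these axioms directly, together with the compatibility with the Hodge--Tate/conjugate filtration, which amounts to checking that $\varphi_M$ induces isomorphisms $\gr^i_{\mathcal N}(\varphi^*M)\xrightarrow{\ \sim\ }\mathcal I^{\otimes i}\otimes_A(M/IM)$ onto the conjugate graded pieces.

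For coherence: the underlying crystal $M$ is coherent by hypothesis, and each $\gr^i_{\mathcal N}(\varphi^*M)$ is, via $\varphi_M$, a submodule of $\mathcal I^{\otimes i}\otimes_A(M/IM)$ and a subquotient of $\varphi^*M$, hence coherent over $\gr_{\mathcal N}A$; together with the stabilization above this shows $\Pi_X(\mathcal E)$ is a coherent $F$-gauge. Functoriality in $\mathcal E$ is immediate, as the construction uses only $\varphi_M$. The remaining and main point is to descend the construction to $X$: one must check that for a map $S\to S'$ of qrsp $X$-algebras the natural map $\mathcal N^{\ge i}(\varphi^*M)\otimes_A A'\to\mathcal N^{\ge i}(\varphi^*M')$ is an isomorphism, so that the locally defined $F$-gauges glue and unfold to the desired coherent $F$-gauge $\Pi_X(\mathcal E)$ on $X$.

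The hard part will be exactly this base-change compatibility, because the preimage $\varphi_M^{-1}(\mathcal I^{\otimes i}\otimes_A M)$ is a kind of $\mathcal I$-saturation, and saturations need not commute with base change when $M$ is merely coherent rather than a vector bundle. I expect to handle this by using the $\mathcal I$-torsionfree and coherence hypotheses to show that the graded pieces $\gr^i_{\mathcal N}(\varphi^*M)$ are flat (equivalently, $\mathcal I$-torsionfree) over the relevant Hodge--Tate base, so that $\mathcal N^{\ge i}(\varphi^*M)$ is, up to a controlled twist, the kernel of a map of coherent modules whose formation commutes with the flat base changes occurring in the quasi-syntomic topology; where necessary one reduces further to the finite projective case via a short resolution coming from the $t$-structure. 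A secondary technicality, resolved once the graded pieces are understood, is checking that the output genuinely lies in the category of coherent $F$-gauges (rather than only in a derived enlargement).
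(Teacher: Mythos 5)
Your outline correctly identifies the overall skeleton of the argument (define the saturated filtration, verify formal properties, prove base-change compatibility, unfold), and you rightly flag the base-change step as the crux and the reason the result is non-trivial for coherent rather than locally free coefficients. But the specific strategy you sketch for that step does not match what actually works, and I think it has a genuine gap.

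You propose to show that the graded pieces $\gr^i_{\mathcal N}(\varphi^*M)$ are \emph{flat} over the Hodge--Tate base and then invoke commutation of kernels with flat base change. Two problems. First, flatness is false in general for coherent $\mathcal I$-torsionfree $F$-crystals: in the paper's local normalization (\Cref{boundedness results on twist graded}) the graded of the twisted filtration on $\varphi_A^*M$ over a Breuil--Kisin prism $(A,I)$ is merely a finitely generated $u$-torsionfree $R[u]$-module with \emph{bounded $p$-power torsion}; it need not be flat. Bounded $p$-torsion of $N$ and of $N[1/u]/N$, not flatness, is what makes the base-change argument go through. Second, the base changes you actually need to control in the quasi-syntomic topology, for instance $\Prism_{A^{(0)}/I} \to \Prism_S$ for qrsp $S$, are not flat at the level of underlying rings (only their Nygaard-graded Rees algebras are $p$-completely flat), so ``preimages commute with flat base change'' is not directly applicable even if the graded pieces were flat.

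The actual mechanism the paper uses, which your proposal does not identify, is the characterization recorded as \Cref{Porism in Bha23}: a filtered map $\widetilde\varphi\colon (M_1,\Fil^\bullet M_1)\to(M_2,\Fil^\bullet M_2)$ with exhaustive connective filtration on the source, coconnective graded pieces, honest injective filtration on the target, and coconnective cone on graded pieces, automatically has $\Fil^\bullet M_1 = \widetilde\varphi^{-1}(\Fil^\bullet M_2)$. One then verifies these four conditions for the \emph{tensor-product} filtration, which turns a statement about preimage filtrations (hard to control under base change) into a statement about derived tensor products (easy to control). Equally important and absent from your sketch is the reduction to a Noetherian anchor: the twisted filtration $\Fil^\bullet_{\mathrm{tw}}\varphi_A^*M := \varphi_M^{-1}(I^\bullet M)$ is first defined on a Breuil--Kisin prism $(A,I)$, where Noetherianity and regularity give the finiteness and bounded-torsion inputs, and then pushed to the perfection and to arbitrary qrsp covers via \Cref{twist filtration and saturated filtration} and \Cref{saturated Nyg fil satisfies filtered base change proposition}. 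Working directly on qrsp prisms, as you propose, would leave you without a Noetherian handle to establish the requisite boundedness. Finally, your suggestion of ``reducing to the finite projective case via a short resolution coming from the $t$-structure'' is not used in the paper and would run into the same issue that the saturated filtration is not exact in the crystal.

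In short: you have the right architecture and correctly localize the difficulty, but the flatness claim at its heart fails, and the two tools that replace it (the Porism characterization of preimage filtrations and the Breuil--Kisin intermediary with its twisted filtration) are the essential new ideas the proof needs.
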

The functor is defined by equipping an $F$-crystal with the ``saturated Nygaardian filtration'' on quasiregular semiperfectoid rings, and the essential work is to show that the filtration can be glued for quasi-syntomic topology\footnote{The smoothness assumption is crucial in order to show that the saturated Nygaardian filtration glues. 
We do not know if this is true for $F$-crystals over general quasi-syntomic schemes, except when it is \emph{admissible} of weight $[0,1]$ (cf. \Cref{p-div and Fgauge})}.
This is inspired
by the special case of $X=\Spf(\mathcal{O}_K)$ in \cite[Theorem 6.6.13]{Bha23} 
and extends the result in loc.~cit.~to arbitrary 
smooth $p$-adic formal schemes over $\Spf(\mathcal{O}_K)$
\footnote{Analogous to \cite[Proposition.~6.6.3]{Bha23}, one can describe the essential image of the functor $\Pi_X$ in terms of modules over the ring $A[u,t]/(ut-d)$, where $(A,(d))$ is a perfect prism.}.

Next in \Cref{Relative prismatic $F$-gauges}, we study the analogous notion of \emph{relative ($F$-)gauges}
on formal schemes $Z$ that are quasi-syntomic over the reduction of some base prism $(A, I)$
(see also \cite[\S 2]{Tang22}), 
which again satisfies the quasi-syntomic sheaf property (see \Cref{F-gauge sheaf relative}).
By a natural base change process, we have:
\begin{theorem}[= \Cref{restriction functor}]
Let $(A, I)$ be a bounded prism in $Y_{\Prism}$.
There is a natural functor
\[
BC \colon \{F\text{-gauges on } X\} \longrightarrow 
\{\text{relative } F\text{-gauges on } (X_{\overline{A}}/A)\}.
\]
\end{theorem}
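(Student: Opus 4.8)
The plan is to construct $BC$ one quasi-syntomic chart at a time and then glue, using the sheaf properties \Cref{F-gauge sheaf} and \Cref{F-gauge sheaf relative} together with unfolding. Recall that an $F$-gauge on $X$ unfolds to the rule sending a qrsp algebra $S$ with a map $\Spf S \to X$ to the category $\fG(\Spf S)$, while a relative $F$-gauge on $X_{\bar A}/A$ unfolds to the rule sending a qrsp algebra $R$ that is quasi-syntomic over $\bar A$ and carries a map $\Spf R \to X_{\bar A}$ to the category of relative $F$-gauges on $\Spf R$ over $A$; here $X_{\bar A} = X \times_Y \Spf \bar A$ is smooth, hence quasi-syntomic, over $\bar A$ because $f$ is smooth. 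Any such $\Spf R \to X_{\bar A}$ composes with $X_{\bar A} \to X$ to a map $\Spf R \to X$, and $R$ remains qrsp. Hence it suffices to produce, functorially in $R$, a ``relativization'' functor from $\fG(\Spf R)$ to the category of relative $F$-gauges on $\Spf R$ over $A$, and then to check compatibility with quasi-syntomic descent and naturality in $(A,I)$.

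At the chart level the starting point is the canonical map of prisms $\Prism_R \to \Prism_{R/A}$ induced by the forgetful functor $(\Spf R/A)_\Prism \to (\Spf R)_\Prism$ (for $R=\bar A$ this is the tautological map $\Prism_{\bar A}\to A$). The crux is to show that this map is compatible with the absolute Nygaard filtration on $\Prism_R$ and the relative Nygaard filtration on $\Prism_{R/A}$, intertwines the divided Frobenii, and in fact exhibits the relative Nygaard-filtered prism as the base change of the absolute one along $\Prism_{\bar A}\to A$ (endowed with its $I$-adic filtration); concretely, this amounts to a Frobenius-compatible map of Rees algebras $\Rees(\Fil^\bullet_{\mathcal N}\Prism_R)\to\Rees(\Fil^\bullet_{\mathcal N}\Prism_{R/A})$. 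Granting this, $BC$ on the chart $\Spf R$ sends an absolute $F$-gauge -- presented as a Rees module over $\Rees(\Fil^\bullet_{\mathcal N}\Prism_R)$ together with its Frobenius -- to the base change of that Rees module along the above map, equipped with the base-changed Frobenius, which is manifestly a relative $F$-gauge over $A$; one checks along the way that coherence and finite generation of the Nygaard-graded pieces are preserved. Functoriality in $R$ is immediate from that of $R\mapsto(\Prism_R,\Fil^\bullet_{\mathcal N},\varphi)$ and its relative counterpart, so the assignment sheafifies over the quasi-syntomic topology and survives unfolding, and naturality in $(A,I)$ follows from the compatibility of $\Prism_{R/A}\to\Prism_{R/A'}$ with the projection $X_{\bar A'}\to X_{\bar A}$ attached to a map of prisms $A\to A'$ in $Y_\Prism$.

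The main obstacle is exactly the filtered comparison of the absolute and relative Nygaard filtrations under $\Prism_R\to\Prism_{R/A}$. While the underlying module and the Frobenius base-change transparently, the absolute Nygaard filtration is strictly finer than the relative one -- it also records the $I$-adic (Hodge) filtration of the base prism -- so the comparison must be pinned down carefully, with the correct indexing conventions. I expect to establish it by first treating the case in which $\Spf R\to X$ factors through a quasi-perfectoid or Breuil--Kisin-type chart, where the two Nygaard filtrations are related by an explicit formula as in \cite{BL22b} and \cite{Bha23}, and then propagating the statement to all qrsp charts by quasi-syntomic descent, using the boundedness of $(A,I)$ to control the Nygaard completions. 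Once this is in place, the remaining verifications -- landing among coherent relative $F$-gauges, compatibility with descent and unfolding, and naturality in $(A,I)$ -- are routine.
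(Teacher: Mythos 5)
Your high-level strategy --- unfolding, chart-level base change of Rees modules, quasi-syntomic descent --- matches the paper's, but your chart-level filtered ring map has the wrong target, and that is exactly where the content of the argument lies. Recall from \Cref{def rel F-gauge qrsp} that a relative $F$-gauge over $\Spf(R)/A$ carries a filtration not on $E$ but on $E^{(1)} = \varphi_A^* E$, linear over $\Fil^\bullet_N \Prism^{(1)}_{R/A}$. Base changing along a map $\Rees(\Fil^\bullet_N \Prism_R) \to \Rees(\Fil^\bullet_N \Prism_{R/A})$, as you propose, would produce a filtration on $E' = E\,\widehat{\otimes}_{\Prism_R}\Prism_{R/A}$, whereas what is needed is a filtration on $(E')^{(1)}$. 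The key observation --- this is \Cref{restriction functor qrsp} --- is that there is instead a natural filtered map $\Fil^\bullet_N \Prism_R \to \Fil^\bullet_N \Prism^{(1)}_{R/A}$: it comes from factoring the absolute Frobenius as $\Prism_{R/A}\xrightarrow{\mathrm{id}\otimes 1}\Prism^{(1)}_{R/A}\xrightarrow{\varphi_{rel}}\Prism_{R/A}$ and using commutativity of $\Prism_R\to\Prism_{R/A}$ with Frobenii, so that $x\in\varphi_{\Prism_R}^{-1}(I^n\Prism_R)$ lands in $\varphi_{rel}^{-1}(I^n\Prism_{R/A})=\Fil^n_N\Prism^{(1)}_{R/A}$. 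Your heuristic that the absolute Nygaard filtration ``also records the $I$-adic filtration of the base'' points at the right tension, but the resolution is precisely this Frobenius twist; the claimed presentation of the relative Nygaard-filtered ring as a base change of the absolute one along $(\Prism_{\bar A},\Fil^\bullet_N)\to(A,I^\bullet)$ is not what the paper proves or uses, and is not needed --- the paper only constructs the filtered map, not a base change isomorphism.

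There is also a gap in your reduction to a single chart $R$. You take $R$ large quasi-syntomic over $\bar A$ mapping to $X_{\bar A}$ and claim $R$ lies in $X_\qrsp$, but membership in $X_\qrsp$ requires the composite $\Spf(R)\to X_{\bar A}\to X$ to be quasi-syntomic, and the second arrow need not be so (nor even $p$-completely flat) for a general bounded prism $(A,I)\in Y_\Prism$. The paper avoids this by working with a pair: $S\in X_\qrsp$ and a separate $S'$ large quasi-syntomic over $\bar A$ mapping to $\Spf(S)\times_X X_{\bar A}$, and constructs $BC_{(S,S')}$ using the map $\Prism_S\to\Prism^{(1)}_{S'/A}$ rather than $\Prism_R\to\Prism_{R/A}$. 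Without this decoupling, the chart-level restriction of the absolute $F$-gauge you want to relativize is not even defined.
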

Moreover, we can associate a relative $F$-gauge with a \emph{filtered Higgs field} and a \emph{filtered flat connection}.
This is analogous to Hodge--Tate comparison and de Rham comparison of prismatic cohomology in \cite{BS19},
 except that we work with more general coefficients.
Below for a quasi-syntomic formal scheme $Z$ over $\overline{A}$, we let $\FH(Z/A)$ and $\FConn(Z/\overline{A})$ be the categories of filtered Higgs fields over $Z/A$ and filtered flat connections over $Z/\overline{A}$ separately.
These are filtered derived enhancements of the usual notion of Higgs fields
and flat connections (see \Cref{def Higgs} and \Cref{def connections} for details).
Then we have the following constructions:
\begin{theorem}[see \S \ref{sec filtered Higgs} and \S \ref{Completeness of Nygaard filtration via filtered de Rham realization}
for details]
\label{realization functors}
Let $(A,I)$ be a bounded prism, and let $Z$ be a quasi-syntomic formal scheme over $\overline{A}$.
\begin{enumerate}[label=(\roman*)]
\item \emph{Hodge--Tate realization:}  There is a natural functor 
\[
\{\text{relative } F\text{-gauges on } (Z/A)\} \longrightarrow \FH(Z/A)
\]
\item \emph{de Rham realization:} There is a natural functor
\[
\{\text{relative } F\text{-gauges on } (Z/A)\} \longrightarrow \FConn(Z/\overline{A}).
\]
\end{enumerate}
\end{theorem}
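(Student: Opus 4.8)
The plan is to construct both realization functors by quasi-syntomic descent, reducing to the quasi-regular semiperfectoid case where the construction becomes concrete in terms of filtered modules, and then unfolding. By \Cref{F-gauge sheaf relative} the assignment $Z\mapsto\{\text{relative }F\text{-gauges on }Z/A\}$ is a quasi-syntomic sheaf, and $\FH(-/A)$, $\FConn(-/\bar A)$ are quasi-syntomic sheaves as well, being assembled from the cotangent complex, respectively the derived de Rham complex, which satisfy descent; hence it suffices to build the two functors, compatibly in $S$, when $Z=\Spf(S)$ for a qrsp $\bar A$-algebra $S$. In that situation $\Prism_{S/A}$ is concentrated in degree $0$, carries the Nygaard filtration $\Fil^\bullet_{\mathrm N}\Prism_{S/A}$ with divided Frobenii $\varphi_i\colon\Fil^i_{\mathrm N}\Prism_{S/A}\to\Prism_{S/A}$, and --- by its definition in \Cref{Relative prismatic $F$-gauges} --- a relative $F$-gauge on $S/A$ is a suitably finite filtered module $(\mathcal M,\Fil^\bullet_{\mathrm N}\mathcal M)$ over $(\Prism_{S/A},\Fil^\bullet_{\mathrm N}\Prism_{S/A})$ with compatible divided Frobenii $\varphi_{\mathcal M,i}$; equivalently, a graded module with Frobenius over the relative prismatic Rees algebra. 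Throughout I will freely invoke the relative Hodge--Tate and de Rham comparisons for the structure sheaf from \cite{BS19}.

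For the Hodge--Tate realization, I would set $\overline{\mathcal M}:=\mathcal M\otimes_{\Prism_{S/A}}\overline{\Prism}_{S/A}$ and define its conjugate filtration by transporting the Nygaard filtration through the divided Frobenius, mimicking the structure-sheaf case: $\Fil^{\conj}_i\overline{\mathcal M}$ is the image of $\varphi_{\mathcal M,i}(\Fil^i_{\mathrm N}\mathcal M)$ under the natural map to $\overline{\mathcal M}$, with the evident Frobenius twist. Using flatness of $\mathcal M$ over $\Prism_{S/A}$ in the qrsp case, one checks as in \cite{BS19} that this filtration is exhaustive and separated, and that its associated graded is a graded module over $\gr^{\conj}_\bullet\overline{\Prism}_{S/A}$, which by the relative Hodge--Tate comparison is the twisted wedge algebra $\bigoplus_{i\ge 0}\wedge^i L_{S/\bar A}\{-i\}[-i]$. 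Under the Rees-type dictionary of \Cref{def Higgs} identifying graded modules over this algebra with filtered Higgs complexes, $(\overline{\mathcal M},\Fil^{\conj}_\bullet)$ becomes an object of $\FH(S/A)$, functorially in $S$ and $\mathcal M$; unfolding then yields the desired functor on arbitrary quasi-syntomic $Z$.

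For the de Rham realization, I would instead apply the de Rham specialization of \cite{BS19}: the same $I$-adically twisted base change that turns the Nygaard-filtered $\Prism_{S/A}$ into the Hodge-filtered derived de Rham complex $\mathrm{dR}_{S/\bar A}$, applied now to $\mathcal M$, produces $\mathrm{dR}(\mathcal M)$ with a Hodge filtration $\Fil^\bullet_{\mathrm{Hdg}}\mathrm{dR}(\mathcal M)$ inherited from $\Fil^\bullet_{\mathrm N}\mathcal M$. The derived de Rham complex $\mathrm{dR}_{S/\bar A}$ acts on $\mathrm{dR}(\mathcal M)$, and by the description of $\FConn(S/\bar A)$ in \Cref{def connections} this action is exactly a filtered flat connection; the Griffiths transversality $\nabla(\Fil^i_{\mathrm{Hdg}})\subseteq\Fil^{i-1}_{\mathrm{Hdg}}\otimes\Omega^1_{S/\bar A}$ is forced by the compatibility of the divided Frobenii with the Nygaard filtration, which lowers filtration degree by exactly the $I$-adic weight. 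Again the construction is functorial, so unfolding gives the functor on all quasi-syntomic $Z$ over $\bar A$.

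I expect the main obstacle to be the relative comparisons \emph{with coefficients}. The underlying isomorphisms for the structure sheaf are \cite{BS19}, but one must promote them to $\mathcal M$ and --- more delicately --- check that the Nygaard filtration on $\mathcal M$ is carried to an honest, exhaustive and separated conjugate (respectively Hodge) filtration whose graded pieces are computed termwise. In the qrsp case this should reduce to flat base change from the structure-sheaf statements once one extracts sufficient strictness of the filtered module $\mathcal M$ from the $F$-gauge axioms; carrying this out, matching Breuil--Kisin twist conventions, and controlling possible torsion in the derived setting is where the real work lies, whereas the globalization over $Z$ is formal given the sheaf properties already in hand.
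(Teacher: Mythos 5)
Your overall strategy---reduce by quasi-syntomic descent to the large quasi-syntomic case and then globalize---is exactly right, and your de Rham realization matches the paper's: one base-changes the Nygaard-filtered $\Fil^\bullet E^{(1)}$ along the filtered map $\Fil^\bullet_N\Prism^{(1)}_{-/A}\to\Fil^\bullet_H\mathrm{dR}_{-/\bar A}$, which at the level of Rees algebras is the mod-$\tfrac{d}{t}$ specialization of \Cref{structure of Nygaard filtration on twisted-prismatic cohomology}. You do not need to verify Griffiths transversality as a side condition: in \Cref{def connections} an object of $\FConn(Z/\bar A)$ is by definition a filtered $\Fil^\bullet_H\mathrm{dR}$-complex, so transversality is encoded in the module structure.

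For the Hodge--Tate realization, however, your construction has a genuine gap. You propose to define $\Fil^\conj_i\overline{\mathcal M}$ as the \emph{image} of $\varphi_{\mathcal M,i}(\Fil^i_N\mathcal M)$ in $\overline{\mathcal M}$, and you invoke flatness of $\mathcal M$ to control this. But the underlying objects are filtered \emph{perfect complexes} (the paper works with $\ast\in\{\emptyset,\perf,\vect\}$), for which ``image'' is not a meaningful operation, and flatness is simply not available outside of the vector bundle case. This is not a technicality to be cleaned up: without strictness one cannot extract an honest submodule filtration on $\overline{\mathcal M}$ this way, and the approach collapses. The paper's construction bypasses the issue entirely by never taking images. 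One observes that the associated graded $\Gr^\bullet E^{(1)}$ is, tautologically, a graded module over $\Gr^\bullet_N\Prism^{(1)}_{S/A}$; by the Bhatt--Scholze identification (\Cref{Nygaard graded and conjugate filtered}, which is \Cref{structure of Nygaard filtration on twisted-prismatic cohomology} restated) this graded ring \emph{is} the Rees algebra of $\Fil^\conj_\bullet\overline{\Prism}_{S/A}$, so $\Gr^\bullet E^{(1)}$ is already, via the Rees dictionary of \Cref{Nygaard graded and conjugate filtered general}, an object of $\FH(S/A)$ with no further verification. In particular the HT realization functor does not use the Frobenius structure of the $F$-gauge at all: it factors through the underlying gauge. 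The Frobenius only enters a posteriori, in \Cref{Higgs associated to F-gauge cohomology}, where $\varphi_E$ is used to identify the abstract filtered Higgs complex with the Hodge--Tate complex $\overline E$ carrying its conjugate filtration---which is the statement you were trying to take as a definition. Reorganizing your argument in that order (define by taking $\Gr$, then identify via Frobenius) resolves the gap cleanly and also yields the result at the greater generality $\ast\in\{\emptyset,\perf,\vect\}$.

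One smaller bookkeeping slip: in a relative $F$-gauge the filtration lives on $E^{(1)}=\varphi_A^*E$ and is linear over $\Fil^\bullet_N\Prism^{(1)}_{S/A}$, not on $E$ over $\Fil^\bullet_N\Prism_{S/A}$ as you wrote; the twist matters for matching Breuil--Kisin conventions in the comparison isomorphisms.
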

Generalizing Nygaard--conjugate comparison in \cite[Thm.~1.14.(2)]{BS19}, the functors above allow us to describe the graded pieces of a relative $F$-gauge in terms of the filtrations on the associated filtered Higgs complex  and the filtered connection.
For our application, we discuss the prismatic--Higgs relation in details and refer the reader to \Cref{Higgs associated to F-gauge cohomology}.
	
To summarize: at this point we have functorially attached filtrations to our prismatic
$F$-crystal restricted to $(X_{\overline{A}}/A)$.
It remains to study the induced filtration.

\subsection*{Properties}
A key observation introduced in \Cref{absolute weight filtration subsection} is that there is a natural increasing \emph{weight filtration} on the graded pieces of an absolute gauge $E$.
An important feature for this weight filtration is that its graded pieces are generated by coherent complexes over $\mathcal{O}_X$.
Precisely, we have:
\begin{theorem}[= \Cref{thm weight filtration of gauge}]
    Let $X$ be a quasi-syntomic formal scheme, and let $E$ be an $F$-gauge in perfect complexes over $X$.
    Then one can naturally associate the following data to $E$:
    \begin{itemize}
        \item two integers $a\leq b$,
        \item a perfect $\mathcal{O}_X$-complex $\Red_i(E)$ for each integer $i\in [a,b]$, and
        \item a finite exhaustive increasing filtration $\Fil^\wt_i$ on $\Gr^\bullet E$ of range $[a,b]$,
    \end{itemize}
    such that the $i$-th graded piece $\Gr^\wt_i(\Gr^\bullet E)$ is naturally isomorphic to $\Red_i(E) \otimes_{\mathcal{O}_X} \Gr^{\bullet}_N \mathcal{O}_{\Prism}$.
\end{theorem}
The index of the weight filtration is called the \emph{weights}.
Moreover, when an $F$-gauge comes from an $I$-torsionfree coherent
$F$-crystal with Frobenius height in $[a, b]$ as in \Cref{Theorem 1.2}, 
we can relate the weight filtrations on the
graded piece $\Gr^\bullet E$ with the knowledge of the original Frobenius action; see \Cref{Reduction is the graded of twisted filtration}.
In particular, the weight of the associated $F$-gauge also lies in $[a, b]$.
In fact, completely analogous statements hold as well in the relative setting and for filtered Higgs fields.
Moreover the weight filtrations in absolute and relative settings are compatible
under the realization functor in \Cref{realization functors},
see \Cref{wt filtration rel F-gauge}. 
	We can thus understand cohomology of the filtered Higgs field $M$ using coherent cohomology of $\Red_{\bullet}(M)$ and that of sheaves of differentials $\Omega^i_{Z/\overline{A}}$:
	\begin{theorem}[= \Cref{conjugated graded of HT complex}]\label{graded of HT}
		Assume $Z$ is smooth of relative dimension $n$ over $\overline{A}$, and let $M$ be a filtered Higgs field of weight $[a,b]$.
		For each $i\in \mathbb{Z}$, the complex $R\Gamma(Z/A, \Gr_i M)$ admits a finite increasing exhaustive filtration of range $[a,b]$, such that each graded piece is isomorphic to $R\Gamma(Z, \Red_u(M)\otimes_{\mathcal{O}_Z} \Omega^v_{Z/\overline{A}})$ for some $u,v\in \mathbb{Z}$.
	\end{theorem}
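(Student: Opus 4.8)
The plan is to produce the filtration by transporting the weight filtration on the associated graded Higgs field through the Hodge--Tate realization, and then to identify its graded pieces by the relative Hodge--Tate comparison applied to the graded pieces of the structure sheaf. The main input is the analogue for filtered Higgs fields of the weight filtration of \Cref{thm weight filtration of gauge}, recorded in the discussion preceding the statement: $M$ (equivalently its graded Higgs field $\Gr^\bullet M$) carries a finite increasing exhaustive weight filtration $W_\bullet$ of range $[a,b]$, functorial in $M$, whose associated graded $\Gr^W_u M$ is the base change of the graded coherent complex $\Red_\bullet(M)$ on $Z$ along $\mathcal{O}_Z \to \Gr^\bullet_N$ of the relevant relative structure sheaf; this is the relative analogue of \Cref{Reduction is the graded of twisted filtration}, and its compatibility with the Hodge--Tate realization of \Cref{realization functors} is \Cref{wt filtration rel F-gauge}. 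In particular the hypothesis that $M$ has weight $[a,b]$ says exactly that $\Gr^W_u M = 0$ for $u \notin [a,b]$.

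Given this, I would run the following reduction on $R\Gamma(Z/A,\Gr_i M)$. First, by functoriality of the realization and compatibility with $W_\bullet$, the weight filtration induces a finite increasing exhaustive filtration of range $[a,b]$ on $R\Gamma(Z/A,\Gr_i M)$ with $\Gr^W_u R\Gamma(Z/A,\Gr_i M) \simeq R\Gamma(Z/A, \Gr_i \Gr^W_u M)$; the essential point is that the graded Higgs field is \emph{strictly} compatible with $W_\bullet$, which again follows from (the relative analogue of) \Cref{Reduction is the graded of twisted filtration} together with the construction of $\Red_\bullet$. Second, $\Gr_i \Gr^W_u M$ is, up to a Breuil--Kisin twist and a shift, the base change of $\Red_u(M)$ along $\mathcal{O}_Z \to \Gr^{i-u}_N$ of the structure sheaf, so I would invoke the relative Hodge--Tate comparison for the graded pieces of the Nygaard filtration of the structure sheaf (valid since $Z$ is smooth over $\bar A$): $R\Gamma(Z/A, \Gr^m_N \mathcal{O}_{Z/A})$ is a successive extension, along the conjugate filtration, of the terms $R\Gamma(Z, \Omega^v_{Z/\bar A}[-v])$ with Breuil--Kisin twists. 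Third, refining $W_\bullet$ by this conjugate filtration and using that $\Red_u(M)$ is a perfect complex (so that $- \otimes_{\mathcal{O}_Z} \Red_u(M)$ commutes with $R\Gamma(Z,-)$ and with the twists, via the projection formula), one reassembles a single finite increasing exhaustive filtration whose graded pieces are of the form $R\Gamma(Z, \Red_u(M) \otimes_{\mathcal{O}_Z} \Omega^v_{Z/\bar A})$ up to shift and twist, with $u \in [a,b]$ and $v \in \{0,\dots,n\}$ (higher differentials vanishing) determined by $i$ and $u$. Absorbing the shifts and twists into the indexing yields the asserted statement.

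The step I expect to be the main obstacle is the identification of the graded pieces: both the strict compatibility of the graded Higgs field with the weight filtration (so that $W_\bullet$ genuinely survives to the realization) and the precise form of the relative Hodge--Tate comparison for $\Gr^\bullet_N \mathcal{O}_{Z/A}$, with all Breuil--Kisin twists correct, require real work; by contrast, once those are in hand the reassembly is a formal manipulation of finite filtrations, amounting to the degeneration of a spectral sequence whose $E_1$-page consists of the listed terms. A secondary point, routine but worth checking, is that the relative weight filtration is compatible with the conjugate (Hodge--Tate) filtration, so that the two filtrations can be interlaced into a single one of the desired shape.
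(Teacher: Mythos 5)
Your overall plan — apply the weight filtration of \Cref{wt filtration of filtered Higgs} to $M$, push forward, and identify the graded pieces — is the paper's strategy, but Step~2 has a genuine error that then propagates. After filtering by weight, the $u$-th graded piece $\Gr^W_u M$ is $\Red_u(M)\otimes_{\mathcal{O}_Z} \Fil^\conj_\bullet \overline{\Prism}_{-/A}$ (\Cref{wt filtration of filtered Higgs}), and one then passes to $\Gr_i$ of the \emph{conjugate-type increasing filtration} on the Higgs field, which is what the theorem is about. This gives
\[
\Gr_i \Gr^W_u M \;\cong\; \Red_u(M)\otimes_{\mathcal{O}_Z} \Gr^\conj_{i-u}\overline{\Prism}_{Z/A}
\;\cong\; \Red_u(M)\otimes_{\mathcal{O}_Z}\Omega^{i-u}_{Z/\bar A}\{u-i\}[u-i],
\]
a single form degree. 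You instead claim this is the base change along $\mathcal{O}_Z \to \Gr^{i-u}_N$, i.e.\ you substitute $\Gr^{i-u}_N\Prism^{(1)}_{Z/A}\cong \Fil^\conj_{i-u}\overline{\Prism}_{Z/A}\{i-u\}$ — the $(i-u)$-th \emph{filtered} piece of the conjugate filtration, not its graded piece. This conflates the conjugate-graded piece $\Gr_i M$ with the Nygaard-graded piece $\Gr^i E^{(1)}\simeq \Fil_i M\{i\}$ (the identification is \Cref{Higgs associated to F-gauge cohomology}, but the objects are different).

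Your Step~3 then tries to compensate by refining further along the conjugate filtration of $\Gr^{m}_N$, but this produces graded pieces $\Red_u(M)\otimes\Omega^v$ for \emph{all} $v\in[0,\,i-u]$ rather than just $v=i-u$, hence roughly $\sum_{u\in[a,b]}(i-u+1)$ terms instead of $b-a+1$, contradicting the claim that the filtration has range $[a,b]$. A quick sanity check: for $M=\mathcal{O}_Z$ (so $a=b=0$), $\Gr_j M \simeq \Omega^j_{Z/\bar A}\{-j\}[-j]$ and the filtration is a single term, whereas your recipe would produce $j+1$ terms $\Omega^0,\dots,\Omega^j$. The fix is to drop Step~3 entirely; the weight filtration by itself already yields the decomposition into $\Red_u(M)\otimes\Omega^{i-u}$, with $v$ determined by $u$. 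No ``interlacing'' of the weight and conjugate filtrations is needed (or possible without over-counting); the vanishing of $\Omega^v$ for $v\notin[0,n]$ then controls which $u$ contribute. The worries about strictness are unfounded in the derived setting, where $\Gr_i$ is exact; that part of the argument is fine.
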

	Combining these facts, we also get the following byproduct:
the Higgs field coming from a coherent $I$-torsionfree
$F$-crystal has nilpotent Higgs structure, and the order of  nilpotence is bounded above by the length
of the range of Frobenius heights.
	
On the other hand, recall in \cite[\S 15]{BS19} the authors showed
that the graded pieces of Nygaard filtration of prismatic cohomology are
conjugate filtrations of Hodge--Tate cohomology.
Analogous results hold true in the setting of relative $F$-gauges:
Namely	for a relative $F$-gauge $E$ over $Z/A$, 
its Frobenius structure induces isomorphisms between the graded pieces of
$E^{(1)}\colonequals \varphi_A^* E$ and the filtrations of the associated filtered Higgs field
(\Cref{Higgs associated to F-gauge cohomology}).
Combine this with the analysis in \Cref{graded of HT},
we can thus understand the cohomology of graded pieces of $F$-gauges using coherent cohomology.
	
To understand the filtration of $F$-gauges, another ingredient is the fact that we work in the world of perfect
filtered/graded complexes  for the entire construction process. 
In \Cref{Completeness of Nygaard filtration via filtered de Rham realization}
we utilize this fact, coupled with the completeness of the relative Nygaard filtration on 
$\mathrm{R\Gamma}(X_{\overline{A}}/A, \Prism^{(1)})$ 
and some descendability results, 
to show that the induced ``Nygaard filtration''
on the cohomology of $E^{(1)}$ is also complete.
Thanks to the completeness, to bound the cohomology of the filtration of $E^{(1)}$,
it suffices to do so on its graded pieces, 
and hence reduce the calculation to aforementioned Nygaard-conjugate isomorphism.
	
Finally in \Cref{Height of prismatic cohomology} we harvest the fruits of the above discussion
and deduce several results, including those described in the above main theorem.
	
\subsection*{Byproducts and comments}
At the end of this introduction, we mention several byproducts of our work and give some comments on related topics.
\begin{remark}[Torsion of \'etale cohomology with coefficients]
Let $X$ be a smooth $p$-adic formal scheme.
As showed in \cite[Theorem 6.1]{GR22},
taking derived direct image along proper smooth morphism commutes with 
\'etale realization functor on the category of prismatic $F$-crystal in perfect complexes
(see also \Cref{direct image vs etale realization}). 
Moreover, we observe (\Cref{etale realization is t-exact}) that
\'etale realization functor is $t$-exact.
Combine the above observations with \Cref{main}, we get the following
refinement of ``weak \'{e}tale comparison'' (c.f.~\cite[Thm.\ 6.1]{GR22}):
\begin{corollary}[= \Cref{direct image vs etale realization} + \Cref{weak etale comparison}]
Let $f \colon X \to Y$ be a smooth proper morphism
between smooth formal schemes over $\Spf(\mathcal{O}_K)$, then derived pushforward
of $F$-crystals in perfect complexes on $X$ are $F$-crystals in perfect complexes on $Y$,
and the following diagram commutes functorially:
\[
\xymatrix{
\FC^{\perf}(X) \ar[d]_{Rf_{\Prism, *}} \ar[r]^{T(-)} & 
D^{(b)}_{lisse}(X_{\eta}, \mathbb{Z}_p) \ar[d]^{Rf_{\eta, *}} \\
\FC^{\perf}(Y) \ar[r]^{T(-)} & 
D^{(b)}_{lisse}(Y_{\eta}, \mathbb{Z}_p).
}
\]
Moreover we have
$T(R^i f_{\Prism, *} \mathcal{E}) = R^i f_{\eta, *}(T(\mathcal{E}))$.
\end{corollary}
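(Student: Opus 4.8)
The plan is to split the statement into its three parts. The assertion that $Rf_{\Prism,*}$ sends $F$-crystals in perfect complexes on $X$ to $F$-crystals in perfect complexes on $Y$, together with the commutativity of the displayed square at the level of the (derived) categories, is exactly \Cref{direct image vs etale realization} (refining \cite[Theorem~6.1]{GR22}), and functoriality in $\mathcal{E}$ is part of that statement; I would invoke it directly. The remaining content is the degree-wise refinement $T(R^i f_{\Prism,*}\mathcal{E}) = R^i f_{\eta,*}(T(\mathcal{E}))$, i.e.\ \Cref{weak etale comparison}, which I would deduce from the derived-level comparison by a $t$-exactness argument.

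First I would fix the relevant $t$-structures: on $\FC^{\perf}(X)$ and $\FC^{\perf}(Y)$ the standard one whose heart is the category of coherent $F$-crystals (as in \Cref{def coherent crys} and the surrounding discussion), so that by definition $R^i f_{\Prism,*}(-) = H^i(Rf_{\Prism,*}(-))$; since $Rf_{\Prism,*}\mathcal{E}$ is a bounded object, each $R^i f_{\Prism,*}\mathcal{E}$ is again a coherent $F$-crystal on $Y$ (and the height bounds of \Cref{main} apply to its $\mathcal{I}$-torsionfree quotient). On the \'etale side $D^{(b)}_{lisse}(Y_{\eta},\mathbb{Z}_p)$ carries its standard $t$-structure with $R^i f_{\eta,*}(-) = H^i(Rf_{\eta,*}(-))$, and $Rf_{\eta,*}$ preserves boundedness and lisseness by smooth proper finiteness for $p$-adic \'etale cohomology of the rigid generic fibres.

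The computation is then short. Using that $T$ is $t$-exact (\Cref{etale realization is t-exact}), hence commutes with the cohomology functors $H^i$, and that $T \circ Rf_{\Prism,*} \simeq Rf_{\eta,*} \circ T$ by \Cref{direct image vs etale realization},
\[
T(R^i f_{\Prism,*}\mathcal{E}) = T\bigl(H^i(Rf_{\Prism,*}\mathcal{E})\bigr) = H^i\bigl(T(Rf_{\Prism,*}\mathcal{E})\bigr) = H^i\bigl(Rf_{\eta,*}(T(\mathcal{E}))\bigr) = R^i f_{\eta,*}(T(\mathcal{E})),
\]
and each identification is natural in $\mathcal{E}$, which yields the functorial refinement.

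I expect the main obstacle to lie not in this formal chase but in the inputs it quotes. The two load-bearing facts are: (a) that $Rf_{\Prism,*}$ preserves perfectness of the underlying complex and the Frobenius-isogeny datum --- this is coherent duality for proper morphisms of prisms together with flat base change for the Frobenius $\varphi$ after inverting $I$, established inside \Cref{direct image vs etale realization}/\cite{GR22}; and (b) the $t$-exactness of $T$ in \Cref{etale realization is t-exact}, which requires matching the $t$-structure on $\FC^{\perf}$ --- defined via the coherent/module picture --- with the standard $t$-structure on lisse $\mathbb{Z}_p$-sheaves on the generic fibre, and in particular checking that torsion coefficients (allowed here, unlike in \Cref{main}) realize to honest, possibly torsion, lisse sheaves placed in degree $0$. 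Granting those, the corollary follows immediately.
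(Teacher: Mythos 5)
Your proof is correct and takes essentially the same approach as the paper. The paper's argument is exactly this decomposition: \Cref{direct image vs etale realization} supplies the derived-level statements, and the degree-wise refinement of \Cref{weak etale comparison} is obtained by the same $t$-exactness chase with $H^i$ via \Cref{etale realization is t-exact} that you spell out.
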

This allows us to study torsions of individual \'etale cohomology groups of a given lisse \'etale complex over $\mathbb{Z}_p$ that is \emph{crystalline}
\footnote{We refer the reader to \cite[\S 1.2]{Bha23} for the notion and the related discussions.}, using the Frobenius structure and its bound on prismatic cohomology.
\end{remark}
\begin{remark}[$p$-divisible group and $F$-gauges]
Given a quasi-syntomic $p$-adic formal scheme $X$,
it is shown in \cite[Thm~1.16]{ALB23} that there is a natural equivalence for the following categories:
\[
\{\text{admissible prismatic } F\text{-crystals in vector bundles of height } [0,1] \text{ on }X\} \simeq \{p\text{-divisible groups on }X\}.
\]
We show in \Cref{p-div and Fgauge} the aforementioned categories are further equivalent to the category of $F$-gauges in vector bundles of weight $[0,1]$.
\end{remark}
We also mention two natural variants of our results.
\begin{remark}[Coherent $F$-gauge]
As shown in \Cref{boundedness results on twist graded}, an $F$-gauge that arises from an $I$-torsionfree coherent $F$-crystal satisfies some extra condition on the associated graded pieces, and we call such a condition \emph{weakly reflexive}.
This is related to the notion of \emph{reflexiveness} introduced in \cite[Def.\ 6.6.4]{Bha23}, but is slightly more genereal
(as we do not assume the underlying $F$-crystal to be locally free).
We expect that our method can be used to obtain an analogous result of \Cref{main} concerning these general $F$-gauges.
\end{remark}

\begin{remark}[Positive characteristic case]
Although we only work with smooth $p$-adic formal schemes $X$ over $\mathcal{O}_K$ and $F$-crystals/gauges on $X$,
the entire article can be carried to the characteristic $p$ setting: 
Namely we may instead assume $X$ to be a smooth variety over a perfect field $k$,
and consider cohomology of crystalline $F$-crystals/gauges on $X$.
In this case, $p$-torsionfree coherent $F$-crystals naturally give rise to coherent $F$-gauges,
and the Frobenius operator of their cohomology satisfies the same bound as in \Cref{main}.
\end{remark}

\subsection*{Notation and conventions}
We work in infinity categorical language throughout the article, and assume the basics of prisms and quasi-syntomic rings, which we refer the reader to \cite{Lur09}, \cite{Lur17}, \cite{BS19} and \cite{BMS19} for details.
Throughout the paper, the notation $I$ is always a locally principal ideal related to prismatic
constructions.
To ease notation, as long as we believe there is no risk of confusion,
we shall drop ``$\widehat{-}$'' in the notation of
derived $p$-complete or $(p,I)$-complete version of the usual constructions:
Theses include tensor products, pullback functors, Hodge and de Rham complexes.
	
A (decreasingly) filtered complex $\Fil^\bullet C$ is defined as an object in the filtered $\infty$-category $\mathrm{Fun}(\mathbb{Z}^\mathrm{op}, D(\mathbb{Z}))$, 
where the latter is equivalent to the $\infty$-category of quasi-coherent complexes over the algebraic stack $\mathbb{A}^1/\mathbb{G}_m$ and is naturally equipped with a symmetric monoidal structure.
A filtered algebra is then defined as an $\mathbb{E}_\infty$-algebra over the filtered $\infty$-category.
Similarly, we can define the notion of a graded complex and a graded algebra in the derived content.
Here for a filtered complex $\Fil^\bullet C$, we use $\Gr^\bullet C=\bigoplus \Gr^i C$ to denote its associated graded complex,
and use $\Fil^\bullet C \langle i\rangle$ to denote its $i$-th shift of filtered degree, 
whose value at $n\in \mathbb{Z}$ is $\Fil^{i+n}C$.
For our applications, we also use $\Fil_\bullet C$ and $\Gr_\bullet C$ to denote an increasingly filtered complex and its associated graded complex.
For more details, we refer to \cite[\S 5.1]{BMS19} and \cite[\S 2.2.1]{Bha23} for brief introductions on filtered and graded objects.
	
We always fix $K$ to be a complete $p$-adic discretely valued field with perfect residue field $k$, and let $\mathcal{O}_K$ be the ring of integers. 
To differentiate the notions, we use $\mathcal{E}$ to denote an ($F$-)crystal, and use $E$ to denote an ($F$-)gauge.
 
\subsection*{Acknowledgements}
The intellectual debt of this paper to Bhatt's notes \cite{Bha23} should be obvious to readers.
We have benefited from helpful conversations with the following 
mathematicians: Johannes Ansch\"{u}tz, Bhargav Bhatt, Heng Du, Hui Gao, Arthur-C\'{e}sar Le Bras,
Tong Liu, Shubhodip Mondal, Sasha Petrov,
Emanuel Reinecke, Peter Scholze, Yupeng Wang, and Daxin Xu.
We would like to express special thanks to Bhargav Bhatt for discussions on various parts
of this paper.
We are grateful to Tong Liu for mentioning this problem to us.
During the writing of this paper, 
HG is supported by Max Planck Institut f\"ur Mathematik, and
SL is supported by the National Key R $\&$ D Program of China No.~2023YFA1009701 and
the National Natural Science Foundation of China (No.~12288201).

\addtocontents{toc}{\protect\setcounter{tocdepth}{2}}
\section{Graded and filtered homological algebra}
\label{filtered homological algebra}
In this section, we collect various preliminary results concerning graded (resp.~filtered)
complexes over graded (resp.~filtered) rings.
Throughout the whole section, we use the following notation:
\begin{notation}
Let $A$ be a ordinary commutative ring with a finitely generated ideal $J \subset A$
such that $A$ is derived $J$-complete.\footnote{
Later on, whenever we use the results of this section, we are always in the situation where either
$A = \mathbb{Z}_p$ and $J = (p)$, or $(A, I)$ is a bounded prism (whose choice shall be obvious
from the context) and $J = (p, I)$.}

We shall refer to $\mathrm{CAlg}(\mathrm{Fun}(\mathbb{Z}^{\mathrm{disc}}, \mathcal{D}_{J\text{-comp}}(A)))$
as the category of derived $J$-complete graded $\mathbb{E}_{\infty}$-$A$-algebras
indexed by $\mathbb{Z}$.
We say such a graded algebra $B_{\bullet} = \oplus_{i \in \mathbb{Z}} B_i$ is \emph{indexed by $\mathbb{N}$} if 
$B_i = 0$ for all $i < 0$.
We say such a graded algebra $B_{\bullet} = \oplus_{i \in \mathbb{Z}} B_i$ is \emph{connective}
if $B_i \in \mathcal{D}(A)$ is connective for all $i \in \mathbb{Z}$.

We shall refer to $\mathrm{CAlg}(\mathrm{Fun}(\mathbb{Z}^{\mathrm{op}}, \mathcal{D}_{J\text{-comp}}(A)))$
as the category of derived $J$-complete (decreasing) filtered $\mathbb{E}_{\infty}$-$A$-algebras
indexed by $\mathbb{Z}$.
We say such a filtered algebra $\Fil^{\bullet}B$ is \emph{indexed by $\mathbb{N}$} if the induced map
$\Fil^i B \to \Fil^{i-1} B$ is an equivalence for all $i \leq 0$.
We say such a filtered algebra $\Fil^{\bullet}B$ is \emph{connective}
if $\Fil^i B \in \mathcal{D}(A)$ is connective for all $i \in \mathbb{Z}$.
We use $D^{\mathrm{cn}}(A)$ to denote the category of connective complexes.

Whenever there is no risk of confusion, we shall ease the notation by dropping
$L$ and $\widehat{-}$ from the top of $\otimes$: Unless said otherwise, all the tensors are
($J$-complete) derived tensors.
\end{notation}

\subsection{Finite projective modules and perfect complexes}

In this subsection, we shall define the notion of finite projective modules
and perfect complexes in the context of graded or filtered algebras.
We begin with discussing the notion of finite projective modules,
for simplicity we shall restrict ourselves to the connective situation.

\begin{digression}[{\cite[\S 7.2.2]{Lur17}}]
\label{VB digression}
Let $R$ be a connective $\mathbb{E}_1$-ring, then define an object $M \in \mathcal{D}(R)$
to be a \emph{finite projective module} if it is a retract of $R^{\oplus i}$ for some finite natural number $i$.
This is equivalent to any of the following\footnote{See \cite[Corollary 7.2.9 and Proposition 7.2.2.18]{Lur17} for the first two conditions,
and the equivalence to the last condition follows from next sentence.}:
\begin{itemize}
\item being compact projective in $\mathcal{D}^{\mathrm{cn}}(R)$;
\item being flat over $R$ and $\pi_0(M)$ is finitely generated over $\pi_0(R)$;
\item the base change $\pi_0(R) \otimes_R M$ is finite projective over $\pi_0(R)$ (in the usual sense).
\end{itemize}
Suppose $R \to R'$ is a map of connective $\mathbb{E}_1$-rings inducing
$\pi_0(R) \xrightarrow{\cong} \pi_0(R')$, then the base change functor induces an equivalence of
homotopy categories $\mathrm{hProj^{\mathrm{fin}}}(R) \xrightarrow[\cong]{R' \otimes_R -} \mathrm{hProj^{\mathrm{fin}}}(R')$,
see \cite[Corollary 7.2.2.19]{Lur17}.
\end{digression}

Let $R_{\bullet}$ be a connective graded $\mathbb{E}_1$-ring.

\begin{definition}
\label{Definition: graded finite projective}
Define an object $M_{\bullet} \in \DG(R_{\bullet})$ to be 
a graded finite projective module if it is a retract of a finite direct sum of 
$R_{\bullet}\langle i \rangle$'s.
Here $(-)\langle i \rangle$ denotes shift of grading by $i$.
\end{definition}

Recall that given a graded $\mathbb{E}_1$-ring $R_{\bullet}$, we may form its
\emph{direct sum totalization} (abbreviated as totalization below)
$\bigoplus_{i \in \mathbb{Z}} R_i$, and
similarly for a graded left $R_{\bullet}$-complex, we may form 
$\bigoplus_{i \in \mathbb{Z}} M_i$. These are $\mathbb{E}_1$-ring
and left modules over it respectively.

\begin{lemma}
\label{VB in terms of totalization}
An object $M_{\bullet}\in \DG(R_{\bullet})$ is a graded finite projective
module if and only if the totalization
$\bigoplus_{i \in \mathbb{Z}} M_i$ is a finite projective module over
$\bigoplus_{i \in \mathbb{Z}} R_i$.
\end{lemma}

\begin{proof}
The ``only if'' part is clear, let us prove the ``if'' part.
When the totalization is finite projective, choose a finite set of generators of
$\pi_0(\bigoplus_{i \in \mathbb{Z}} M_i) = \bigoplus_{i \in \mathbb{Z}} \pi_0(M_i)$
and consider their grading-wise components,
we see that there is a map 
$f \colon \bigoplus_{i = 0}^m R_{\bullet}\langle n_i \rangle \to M_{\bullet}$
inducing surjection after passing to $\pi_0$ on each graded piece.
Our condition implies that the totalization of the above map admits a section $s$.
Using the following decomposition
\[
\mathrm{Map}_{\bigoplus_{i \in \mathbb{Z}} R_i}(\bigoplus_{i \in \mathbb{Z}} M_i, \bigoplus_{i \in \mathbb{Z}} N_i)
= \bigoplus_{i \in \mathbb{Z}} \mathrm{Map}_{R_{\bullet}}(M, N_{\bullet}\langle i \rangle),
\]
let us look at the component of $s$ in degree $0$, denote it by 
$s_0 \colon M_{\bullet} \to \bigoplus_{i = 0}^m R_{\bullet}\langle n_i \rangle$.
We need to show that $s_0 \circ f$ is an isomorphism on $M_{\bullet}$ or, what is the same,
it induces an isomorphism on totalization.
By construction, we know that it is an endomorphism of flat $(\bigoplus_{i \in \mathbb{Z}} R_i)$-module 
inducing an isomorphism on $\pi_0$, therefore it must induce isomorphisms on all $\pi_i$'s
by \cite[Definition 7.2.2.10]{Lur09}.
\end{proof}

Let $B_{\bullet}$ be a connective derived $J$-complete graded $\mathbb{E}_{\infty}$-$A$-algebras
indexed by $\mathbb{Z}$.

\begin{definition}
\label{Definition: graded complete finite projective}
Define an object $M_{\bullet} \in \DG_{J\text{-comp}}(B_{\bullet})$ to be 
a \emph{graded $J$-completely finite projective module} if its base change $A/J \otimes_A M_{\bullet}$ is
a graded finite projective $A/J \otimes_A B_{\bullet}$-module.
\end{definition}

\begin{lemma}
\label{graded VB only depends on V(J)}
The notion of being graded $J$-completely finite projective only depends on $V(J) \subset \mathrm{Spec}(A)$:
Namely if $J'$ is another finitely generated $A$-algebra with $V(J) = V(J')$, then
being graded $J$-completely finite projective is equivalent to being
graded $J'$-completely finite projective.
\end{lemma}

Note that by \cite[\href{https://stacks.math.columbia.edu/tag/091Q}{Tag 091Q}]{stacks-project},
the notion of being derived $J$-complete only depends on the underlying subset of $V(J)$.

\begin{proof}
It suffices to show that being graded $J$-completely finite projective implies being
graded $J^2$-completely finite projective.
Using \Cref{VB in terms of totalization} and the third characterization in
\Cref{VB digression}, it suffices to show:
If $R \twoheadrightarrow R/J$ is a square-zero thickening of ordinary commutative rings with finitely generated kernel, 
then an $R$-complex
is finite projective if and only if its base change to $R/J$ is so.
Let $M$ be such an $R$-complex, using the triangle
$J \otimes_{R/J} (R/J \otimes_R M) \to M \to R/J \otimes_R M$, we see that $M$ is an ordinary $R$-module,
and our statement follows from \cite[\href{https://stacks.math.columbia.edu/tag/051C}{Tag 051C}]{stacks-project}.
\end{proof}

\begin{remark}
Let us point out that, even in the ungraded setting,
being $J$-completely finite projective does not imply finite projective when the ring is not connective,
see \Cref{Bhargav and Emanuel example} for such an example.
\end{remark}

For our purpose, we need analogous results in the filtered setting.
Via Rees's construction \cite[\S 2.2]{Bha23}, we are led to consider analogs
of the above in the graded setting.
Let $\Fil^{\bullet}R$ be a connective filtered $\mathbb{E}_1$-ring.

\begin{definition}
\label{Definition: filtered finite projective}
Define an object $\Fil^{\bullet}M \in \DF(\Fil^{\bullet}R)$ to be 
a \emph{filtered finite projective module} if it is a retract of a finite direct sum of 
$\Fil^{\bullet}R\langle i \rangle$'s.
Here $(-)\langle i \rangle$ denotes shift of filtration by $i$.
\end{definition}

Recall that given a filtered $\mathbb{E}_1$-ring $\Fil^{\bullet}R$, we may first view it 
as a graded ring by forgetting the arrows $\Fil^i \to \Fil^{i-1}$, then we can form its
totalization $\bigoplus_{i \in \mathbb{Z}} \Fil^{i}R$, this process is nothing but the Rees's construction.
Similarly we may perform it for filtered modules over filtered rings.

\begin{lemma}
\label{VB in terms of Rees}
An object $\Fil^{\bullet}M \in \DF(\Fil^{\bullet}R)$ is a filtered finite projective
module if and only if its Rees's construction
$\bigoplus_{i \in \mathbb{Z}} \Fil^{i}M$ is a finite projective module over
$\bigoplus_{i \in \mathbb{Z}} \Fil^{i}R$.
\end{lemma}

\begin{proof}
The proof is analogous to that of \Cref{VB in terms of totalization},
since we still have the following decomposition
\[
\mathrm{Map}_{\bigoplus_{i \in \mathbb{Z}} \Fil^{i}R}(\bigoplus_{i \in \mathbb{Z}} \Fil^{i}M, \bigoplus_{i \in \mathbb{Z}} \Fil^{i}N)
= \bigoplus_{i \in \mathbb{Z}} \mathrm{Map}_{\Fil^{\bullet}R}(M, N_{\bullet}\langle i \rangle).
\]
\end{proof}

Let $\Fil^{\bullet}B$ be a connective derived $J$-complete filtered $\mathbb{E}_{\infty}$-$A$-algebras
indexed by $\mathbb{Z}$.

\begin{definition}
\label{Definition: filtered complete finite projective}
Define an object $\Fil^{\bullet}M \in \DF_{J\text{-comp}}(\Fil^{\bullet}B)$ to be 
a filtered $J$-completely finite projective module if its base change $A/J \otimes_A M_{\bullet}$ is
a filtered finite projective $A/J \otimes_A \Fil^{\bullet}B$-module.
\end{definition}

Following the exact same proof of \Cref{graded VB only depends on V(J)}, we get the following:

\begin{lemma}
\label{filtered VB only depends on V(J)}
The notion of being filtered $J$-completely finite projective only depends on $V(J) \subset \mathrm{Spec}(A)$:
Namely if $J'$ is another finitely generated $A$-algebra with $V(J) = V(J')$, then
being filtered $J$-completely finite projective is equivalent to being
filtered $J'$-completely finite projective.
\end{lemma}

Next, let us define perfect complexes in the context of graded or filtered algebras.

\begin{digression}[{\cite[\S 7.2.4]{Lur17}}]
\label{Perf digression}
We begin by recalling \cite[Definition 7.2.4.1]{Lur17}:
Let $R$ be an $\mathbb{E}_1$-ring, then the subcategory of left perfect $R$-complexes
in the $\infty$-category of left $R$-complexes is defined as the smallest stable subcategory
containing $R$ and closed under retracts, objects of this subcategory are called
left perfect $R$-complexes.
Immediately after this definition, Lurie proved \cite[Proposition 7.2.4.2]{Lur17}:
In the above setting, left perfect $R$-complexes are the same as compact objects
among left $R$-complexes, and they generate the $\infty$-category of left $R$-complexes.
In fact there is a third characterization of perfectness \cite[Proposition 7.2.4.4]{Lur17}:
it is equivalent to being dualizable.
\end{digression}

Mimicking the above, we arrive at the following:

\begin{definition}
\label{Definition: graded perfect}
Let $R_{\bullet}$ be a graded $\mathbb{E}_1$-ring, then 
the subcategory of graded left perfect $R_{\bullet}$-complexes
in the $\infty$-category of graded left $R_{\bullet}$-complexes is defined as the smallest stable subcategory
containing $R_{\bullet}\langle i \rangle$ for all $i$ and closed under retracts.
We denote this subcategory by $\mathcal{G}\mathrm{LMod}^{\perf}_{R_{\bullet}}$,
objects of which are called
graded left perfect $R_{\bullet}$-complexes.
\end{definition}

Here is an analog of \cite[Proposition 7.2.4.2]{Lur17}.

\begin{proposition}
\label{graded perfect criterion}
Let $R_{\bullet}$ be a graded $\mathbb{E}_1$-ring. Then:
\begin{enumerate}
\item The $\infty$-category $\mathcal{G}\mathrm{LMod}_{R^{\bullet}}$
is compactly generated.
\item An object of $\mathcal{G}\mathrm{LMod}_{R^{\bullet}}$
is compact if and only if it is perfect.
\end{enumerate}
Moreover the property of being perfect/compact is equivalent to
the totalization being perfect/compact as a complex over the totalization
of $R_{\bullet}$.
\end{proposition}

\begin{proof}
The proof of (1) and (2) are exactly the same as that in loc.~cit:
Tracing through the proof there, we just need to observe that if
$\mathrm{Map}_{R_{\bullet}}(R_{\bullet}\langle i \rangle[j], M_{\bullet}) = 0$
for all $i, j$, then $M_{\bullet} = 0$.
As for the last statement: it simply follows from the fact
that taking totalization has a right adjoint preserving filtered colimit:
\[
\mathrm{LMod}_{\bigoplus_{i} R_i} \to \mathcal{G}\mathrm{LMod}_{R_{\bullet}},
M \mapsto M \otimes_{(\bigoplus_{i} R_i)} 
\left(\bigoplus_{j \in \mathbb{Z}} R_{\bullet}\langle j \rangle\right),
\]
where the map $\bigoplus_{i} R_i \to \bigoplus_{j \in \mathbb{Z}} R_{\bullet}\langle j \rangle$
is the canonical map realizing the source as the $\deg = 0$ piece of the target.
\end{proof}

\begin{remark}
If $R_{\bullet}$ is a graded discrete commutative ring, then graded $R_{\bullet}$-complexes
are the same as quasi-coherent complexes on the stack $[\mathrm{Spec}(\bigoplus_i R_i)/\mathbb{G}_m]$,
whose perfectness is defined by that of its pullback to $\mathrm{Spec}(\bigoplus_i R_i)$.
This is a well-defined notion
due to flat descent of perfectness 
(see \cite[\href{https://stacks.math.columbia.edu/tag/066X}{Tag 066X}]{stacks-project}
and \cite[\href{https://stacks.math.columbia.edu/tag/068T}{Tag 068T}]{stacks-project}).
Our proposition above shows that in this special case, the perfectness in classical sense
agrees with our definition here.
\end{remark}

Via Rees's construction \cite[\S 2.2]{Bha23}, we can transport the above discussion from
graded setting to filtered setting.

\begin{definition}
\label{Definition: filtered perfect}
Let $\Fil^{\bullet} S$ be a filtered $\mathbb{E}_1$-ring,
then we may view it as a graded $\mathbb{E}_1$-ring.
We then define the $\infty$-category of filtered left $\Fil^{\bullet} S$-modules
as $\mathcal{F}\mathrm{LMod}_{\Fil^{\bullet} S} \coloneqq \mathcal{G}\mathrm{LMod}_{\Fil^{\bullet} S}$
(c.f.~\cite[Proposition 2.2.6]{Bha23}).
We then define a filtered left complex $\Fil^{\bullet} M$ to be perfect if its image under the above
equivalence is perfect.
Since the above is an equivalence, we immediately see that the subcategory of filtered left perfect
$\Fil^{\bullet} S$-complexes can be alternatively defined as the smallest stable subcategory
containing $(\Fil^{\bullet} S)\langle i \rangle$ for all $i$ and closed under retracts.
Here once again, we use $(-)\langle i \rangle$ to denote shift of filtration by $i$.
\end{definition}

The following is a simple translation of \Cref{graded perfect criterion}
via the Rees equivalence.

\begin{proposition}
\label{filtered perfect criterion}
Let $\Fil^{\bullet} S$ be a filtered $\mathbb{E}_1$-ring. Then:
\begin{enumerate}
\item The $\infty$-category $\mathcal{F}\mathrm{LMod}_{\Fil^{\bullet} S}$
is compactly generated.
\item An object of $\mathcal{F}\mathrm{LMod}_{\Fil^{\bullet} S}$
is compact if and only if it is perfect.
\end{enumerate}
Moreover the property of being perfect/compact is equivalent to
its underlying Rees construction being perfect/compact as a $\Rees(\Fil^{\bullet} S)$-complex.
\end{proposition}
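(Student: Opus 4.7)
The plan is to reduce everything to the graded analog \Cref{graded perfect criterion} by transport along the Rees equivalence
\[
\Rees \colon \mathcal{F}\mathrm{LMod}_{\Fil^{\bullet} S} \xrightarrow{\;\simeq\;} \mathcal{G}\mathrm{LMod}_{R^{\bullet}},
\]
where $R^{\bullet} = \Rees(\Fil^{\bullet} S)$. Since this is an equivalence of stable $\infty$-categories, it preserves and reflects all intrinsic categorical properties: compactness, colimits, retracts, and the smallest stable subcategory generated by a given set of objects. In particular, it identifies the filtered shifts $(\Fil^{\bullet}S)\langle i\rangle$ with the graded shifts $R^{\bullet}\langle i\rangle$, so the two notions of ``perfect'' defined as smallest stable subcategories closed under retracts and containing these respective shifts correspond to each other under $\Rees$.

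With this in hand, (1) follows immediately: by \Cref{graded perfect criterion}(1), the target is compactly generated, hence so is the source. For (2), an object $\Fil^{\bullet}M \in \mathcal{F}\mathrm{LMod}_{\Fil^{\bullet}S}$ is compact iff its image $\Rees(\Fil^{\bullet}M)$ is compact in $\mathcal{G}\mathrm{LMod}_{R^{\bullet}}$ (since compactness is preserved under equivalence), which by \Cref{graded perfect criterion}(2) is equivalent to $\Rees(\Fil^{\bullet}M)$ being perfect in the graded sense, which by the previous paragraph is equivalent to $\Fil^{\bullet}M$ being perfect in our filtered sense.

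The last sentence, that perfectness/compactness is equivalent to perfectness/compactness of the underlying Rees construction viewed as a $\Rees(\Fil^{\bullet}S)$-complex (i.e.\ after forgetting the grading), is precisely the ``moreover'' clause of \Cref{graded perfect criterion} applied to $R^{\bullet}$: the totalization functor there is nothing but forgetting the grading on $R^{\bullet}$, yielding the underlying $\mathbb{E}_1$-ring $\bigoplus_i R^i = \Rees(\Fil^{\bullet}S)$, and similarly for modules.

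There is no genuine obstacle: the entire content of the proposition is formal, and the only thing to check carefully is the identification of the generating objects $(\Fil^{\bullet}S)\langle i\rangle$ with $R^{\bullet}\langle i\rangle$ under the Rees equivalence, which is immediate from the construction recalled in \cite[Proposition 2.6]{Bha23}.
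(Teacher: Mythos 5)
Your proposal is correct and is precisely the argument the paper intends: the paper's one-line proof reads ``The following is a simple translation of \Cref{graded perfect criterion} via the Rees equivalence,'' and your proof simply unpacks that translation, observing that the Rees equivalence preserves compactness, generating stable subcategories, and retracts, identifies the filtered shifts $(\Fil^{\bullet}S)\langle i\rangle$ with the graded shifts $R^{\bullet}\langle i\rangle$, and converts the ``totalization'' functor of \Cref{graded perfect criterion} into forgetting the grading on the Rees construction. The detail you flag as the only thing needing care---matching the generating objects under the Rees dictionary---is exactly the right thing to check, and your identification of the underlying $\mathbb{E}_1$-ring $\bigoplus_i R^i$ with $\Rees(\Fil^{\bullet}S)$ for the ``moreover'' clause is correct.
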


We also get the following useful property of filtered left perfect complexes over
a complete filtered ring.

\begin{proposition}
\label{perfect over complete is complete}
Let $\Fil^{\bullet} S$ be a filtered $\mathbb{E}_1$-ring which is complete with respect
to its filtration.
Any filtered left perfect $\Fil^{\bullet} S$-complex is complete with respect to its filtration.
\end{proposition}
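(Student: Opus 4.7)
My plan is to show that the full subcategory $\mathcal{C} \subset \mathcal{F}\mathrm{LMod}_{\Fil^{\bullet} S}$ of complete filtered complexes (i) contains each generator $(\Fil^{\bullet} S)\langle i \rangle$, (ii) is a stable subcategory (closed under fiber sequences and shifts), and (iii) is closed under retracts. By the definition of filtered perfect complexes — recalled immediately before the statement — as the smallest stable subcategory of $\mathcal{F}\mathrm{LMod}_{\Fil^{\bullet} S}$ closed under retracts containing these generators, this suffices.

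For (i), observe that $(\Fil^{\bullet} S)\langle i \rangle$ takes the value $\Fil^{i+j}S$ at filtration degree $j$, so that
\[
\lim_{j \to \infty} \bigl( j\text{-th piece of } (\Fil^{\bullet} S)\langle i\rangle \bigr) \;\simeq\; \lim_{j' \to \infty} \Fil^{j'} S \;\simeq\; 0
\]
by the hypothesis that $\Fil^{\bullet} S$ is itself complete. Hence $(\Fil^{\bullet} S)\langle i\rangle \in \mathcal{C}$ for every $i \in \mathbb{Z}$.

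For (ii) and (iii), I would invoke the standard fact that the inclusion $\mathcal{C} \hookrightarrow \mathcal{F}\mathrm{LMod}_{\Fil^{\bullet} S}$ admits a left adjoint, namely the completion functor $\Fil^{\bullet} M \mapsto (\Fil^{\bullet} M)^\wedge$, exhibiting $\mathcal{C}$ as a reflective subcategory. Reflectivity immediately gives closure under arbitrary limits and under retracts. In a stable $\infty$-category, closure under finite limits and shifts is equivalent to closure under finite colimits (fiber and cofiber sequences agree up to shift), so $\mathcal{C}$ is indeed a stable subcategory. Combining this with (i) and the universal property of perfect complexes finishes the proof.

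The only conceptual point worth verifying — and it is standard — is that completion in the filtered $\infty$-category is a Bousfield localization with essential image equal to the complete objects. Via the Rees equivalence this translates to $t$-adic completion of graded modules, where it is classical; alternatively it may be deduced directly by writing out the completion as $\Fil^i(\Fil^{\bullet} M)^\wedge \simeq \lim_j \mathrm{cofib}(\Fil^j M \to \Fil^i M)$ shifted into a fiber, and observing idempotency. I do not anticipate any serious obstacle here.
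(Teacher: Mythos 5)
Your proof is correct and takes essentially the same approach as the paper's one-line argument: both observe that complete filtered complexes form a stable subcategory closed under retracts containing the generators $(\Fil^{\bullet} S)\langle i \rangle$, and then conclude by the minimality in the definition of filtered perfect complexes. You simply supply the verification of the closure properties (via reflectivity of the subcategory of complete objects), which the paper leaves implicit.
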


\begin{proof}
We simply observe that the subcategory spanned by filtered complexes being complete with respect
to its filtration is a stable subcategory
containing $(\Fil^{\bullet} S)\langle i \rangle$ for all $i$ and closed under retracts.
Therefore it must contain the subcategory of perfect complexes.
\end{proof}

Since we work with derived complete complexes, we need to introduce the following variant.
Let $\Fil^{\bullet} B$ be a derived $J$-complete (decreasing) filtered $\mathbb{E}_{\infty}$-$A$-algebras
indexed by $\mathbb{Z}$.
\begin{definition}
\label{Definition: filtered complete perfect}
Define an object $\Fil^{\bullet}M \in \DF_{J\text{-comp}}(\Fil^{\bullet}B)$ to be 
\emph{filtered $J$-completely perfect} if its base change $A/J \otimes_A \Fil^{\bullet}M$ is
filtered perfect over $A/J \otimes_A \Fil^{\bullet}B$.
We denote this full subcategory by $\DF^{\perf}_{J\text{-comp}}(\Fil^{\bullet}B)$.
%
%
\end{definition}

Note that the inclusion 
$\DF^{\perf}(\Fil^{\bullet}B) \subset 
\DF^{\perf}_{J\text{-comp}}(\Fil^{\bullet}B)$ need not be an equivalence,
below we sketch an example due to Bhatt and Reinecke which we learn from
private communications.

\begin{remark}
\label{Bhargav and Emanuel example}
Bhargav Bhatt and Emanuel Reinecke gave the following example where perfectness and $p$-completely
perfect genuinely differs. Let us sketch their idea:
Let $R = \mathrm{dR}_{\mathbb{Z}_p\langle x \rangle/\mathbb{Z}_p}^{\widehat{}}$, and take any connection
$\nabla \colon \mathbb{Z}_p\langle x \rangle \to \mathbb{Z}_p\langle x \rangle \cdot dx$ such that
\begin{enumerate}
\item $\nabla \equiv d$ modulo $p$; and
\item $\nabla = 0$ has no solution.
\end{enumerate}
Then we claim that 
the complex $M \coloneqq[\mathbb{Z}_p\langle x \rangle \xrightarrow{\nabla} \mathbb{Z}_p\langle x \rangle \cdot dx]$
gives an example of a $p$-completely perfect $R$-complex which is not perfect.
It is $p$-completely free of rank $1$ by the condition (1) above, let us show it is not perfect.
Suppose otherwise, consider $M \otimes^L_R \mathbb{Z}_p\langle x \rangle$,
which is a perfect $\mathbb{Z}_p\langle x \rangle$-complex, whose reduction modulo $p$
is $\mathbb{F}_p[x]$ due to condition (1) above.
Therefore we see that the $\mathbb{Z}_p$-complex $M \otimes^L_R \mathbb{Z}_p\langle x \rangle$ is non-canonically isomorphic
to $\mathbb{Z}_p\langle x \rangle$.
On the other hand, since $\mathrm{H}^0(R[1/p]) = \mathbb{Q}_p$, one can show that
the $R[1/p]$-complex $\mathbb{Q}_p\langle x \rangle$ admits an exhaustive increasing filtration
with graded pieces given by (infinite) direct sums of $R[1/p]$.
Therefore we see that $\bigl(M \otimes^L_R \mathbb{Z}_p\langle x \rangle\bigr)[1/p]$
admits an exhaustive increasing filtration with graded pieces given by 
direct sums of $M[1/p]$, which has only cohomology in degree $1$ by condition (2) above.
This gives a contradiction as desired.
In fact, one can equip both $M$ and $R$ with Hodge filtration
so that it becomes an example illustrating
$\DF^{\perf}(\Fil^{\bullet}B) \subsetneqq
\DF^{\perf}_{J\text{-comp}}(\Fil^{\bullet}B)$, with $(A, J) = (\mathbb{Z}_p, (p))$.
\end{remark}

In familiar situations, the difference between perfectness and complete perfectness disappears.

\begin{lemma}[{\cite[\href{https://stacks.math.columbia.edu/tag/09AW}{Tag 09AW}]{stacks-project}}]
\label{perf vs completely perf in classical situation}
Let notation be as in \Cref{Definition: filtered complete perfect}.
If $R$ is concentrated in degree $0$ and is $J$-adically complete,
then a $J$-complete $R$-complex
is perfect if and only if it is $J$-completely perfect.
\end{lemma}

It is easy to see that in the above situation, the difference between being finite projective
versus being $J$-completely finite projective also disappears.
Let us show that the same is true as long as the $R$ is a connective derived $J$-complete $\mathbb{E}_{\infty}$-$A$-algebra.

\begin{proposition}
\label{vbperf vs cplt vbperf}
Let $R$ be a connective derived $J$-complete $\mathbb{E}_{\infty}$-$A$-algebra.
Let $M \in \mathcal{D}_{J\text{-comp}}(R)$, then
\begin{enumerate}
\item it is finite projective if and only if it is $J$-completely finite projective;
\item it is perfect if and only if it is $J$-completely perfect.
\end{enumerate}
\end{proposition}

\begin{proof}
The only if part is clear. Let us prove the if part for (1).
Our assumption implies that $M$ is connective, and $\pi_0(M)/J$ is finite projective over
$\pi_0(R)/J$. Choose a finite set of generators and lift to $\pi_0(M)$, we obtain a map
$f \colon R^{\oplus n} \to M$.
Our task is to show that this map admits a splitting given that it does so after applying
$A/J \otimes_A -$.
Choose a finite set of generators $J = (a_1, \ldots, a_m)$, for each $k$, let us consider the
base change of $f$ along $\mathrm{Kos}(R; a_i^k) \otimes_R -$.
Combining \cite[\href{https://stacks.math.columbia.edu/tag/051C}{Tag 051C}]{stacks-project}
and \cite[Corollary 7.2.2.19]{Lur17}, we see that the map $\mathrm{Kos}(R; a_i^k) \otimes_R f$
is again a map from finite free $\mathrm{Kos}(R; a_i^k)$-complex to a finite projective
$\mathrm{Kos}(R; a_i^k)$-complex inducing surjection on $\pi_0$, therefore it admits a section.
Now we compute the mapping space:
\[
\mathrm{Map}_R(M, R^{\oplus n}) \cong \lim_k \mathrm{Map}_R(M, \mathrm{Kos}(R; a_i^k)^{\oplus n})
\cong \lim_k \mathrm{Map}_{\mathrm{Kos}(R; a_i^k)}(\mathrm{Kos}(M; a_i^k), \mathrm{Kos}(R; a_i^k)^{\oplus n}),
\]
and observe that the last term has $\pi_0$, for each $k$, given by
$\mathrm{Hom}_{\pi_0(R)/(a_i^k)}(\pi_0(M)/(a_i^k), \pi_0(R)/(a_i^k)^{\oplus n})$.\footnote{
This follows from the fact that $\mathrm{Kos}(M; a_i^k)$ is a finite projective
$\mathrm{Kos}(R; a_i^k)$-complex.
}
In particular, by Milnor's exact sequence $\mathrm{Hom}_R(M, R^{\oplus n}) \twoheadrightarrow 
\lim_k \mathrm{Hom}_{\pi_0(R)/(a_i^k)}(M/(a_i^k), \pi_0(R)/(a_i^k)^{\oplus n})$,
we may successively lift the section when $k = 1$ along the inverse system.
In conclusion, we have found a map $s \colon M \to R^{\oplus n}$ such that
$\pi_0(R/J) \otimes_R (f \circ s)$ is identity.
This implies first that $A/J \otimes_A (f \circ s)$, being an endomorphism of a finite projective
$A/J \otimes_A R$-module inducing identity on $\pi_0$, is an isomorphism.
Then using derived Nakayama, this shows that $f \circ s$ is actually an isomorphism.

It is straightforward to deduce (2) out of (1): Let us perform induction on the length of
the $\pi_0(R)/J$-Tor-amplitude interval of $\pi_0(R)/J \otimes_R M$.
When the length is $0$, it follows from (2) that $M$ is a shift of a finite projective $R$-module.
Suppose the case of length $\ell$ is proved, then without loss of generality assume
the said Tor-amplitude is in $[- \ell - 1, 0]$. By derived completeness, we see that
$M$ is connective. Let us choose a map $R^{\oplus n} \to M$ inducing a surjection on $\pi_0(-)/J$.
Then the cone is derived $J$-complete, and the said Tor-amplitude length is drop by $1$, so we win
by induction.
\end{proof}

Later on in this section, we shall see that the difference between
graded/filtered perfectness (resp.~finite projective) and completely perfectness
(resp.~completely finite projective) also disappears
under mild condition: see \Cref{graded: perf vs cplt perf}, \Cref{graded: vb vs cplt vb},
\Cref{criteria for filtered perfect}, and \Cref{criteria for filtered vb}.

\subsection{Graded homological algebra}
Throughout this subsection, let $B_{\bullet}$
be a derived $J$-complete graded $\mathbb{E}_{\infty}$-$A$-algebras
\emph{indexed by $\mathbb{N}$}.
To start, we first observe the following reduction functor:

\begin{construction}
\label{reduction of graded B-cplx}
Since $B_{\bullet}$ is indexed by $\mathbb{N}$,
projecting to its degree $0$ piece
is a map of derived $J$-complete graded $\mathbb{E}_{\infty}$-$A$-algebras.
Base changing ($J$-completely) along this map gives us a ``reduction functor'':
\[
\begin{tikzcd}
\Red_B \colonequals  \DG_{J\text{-comp}}(B_{\bullet}) 
\arrow[rr, "- \otimes_{B_{\bullet}} B_0"] &&  \DG_{J\text{-comp}}(B_0);\\
M^\bullet \arrow[rr, mapsto] && M^\bullet \otimes_{B_{\bullet}} B_0.
\end{tikzcd}
\]
Here let us emphasize again that $\otimes$ denotes the $J$-completely derived tensor product.
The $i$-th degree piece of $\Red_B(-)$ is denoted as $\Red_{i,B}(-)$.
When the $B_{\bullet}$ is clear from the context, we omit $B$ in the subscripts and use $\Red$ and $\Red_i$
to abbreviate $\Red_B$ and $\Red_{i,B}$.
\end{construction}

We are interested in the following subcategory of graded $B_{\bullet}$-complexes.
\begin{definition}
We say an $M_{\bullet} \in \DG_{J\text{-comp}}(B_{\bullet})$ has \emph{bounded below grading}
if $M_i = 0$ for all $i \ll 0$.
\end{definition}


Using the reduction functor, we now introduce a \emph{canonical weight}
filtration on those $M_{\bullet}$'s with bounded below grading.
\begin{proposition}
\label{prop weight filtration of graded B-cplx}
Let $M_{\bullet} \in \DG_{J\text{-comp}}(B_{\bullet})$ has bounded below grading,
there is a unique increasing filtration
$\Fil^\wt_i (M_\bullet)$ on $M_\bullet$ satisfying the following axioms:
\begin{enumerate}
\item The filtration is exhaustive;
\item the filtration ``starts at $0$'': i.e.~$\Fil^\wt_{\ll 0} (M_\bullet) = 0$; and
\item the graded piece $\Gr^\wt_i(M_\bullet) \simeq N_i \otimes_{B_0} B_{\bullet}$ 
where $N_i$ is a $J$-complete $B_0$-complex with grading $i$.
\end{enumerate}
Moreover, the induced filtration $\Fil^\wt_i(\Red(M_{\bullet})) \coloneqq \Red(\Fil^\wt_i (M_\bullet))$
is the one induced by grading: 
\[
\Fil^\wt_i(\Red(M_{\bullet})) \cong \bigoplus_{j \leq i} \Red_j(M_{\bullet}).
\]
In particular, there are natural isomorphisms 
$N_i \cong \Red(\Gr^\wt_i(M_\bullet)) \cong \Gr^\wt_i(\Red(M_{\bullet})) \cong \Red_i(M_{\bullet})$.
\end{proposition}

\begin{proof}
We show the uniqueness first.
Let $c$ be the least integer such that $M_c \neq 0$, which exists by assumption.
Let $\Fil_{\ast}(M_{\bullet})$ be any filtration satisfying the list of axioms.
Let $d$ be the least integer such that $\Fil_{d}(M_{\bullet}) \not= 0$, which exists
as $M_{\bullet} \not= 0$ and the exhaustive axiom (1).
By our condition (3), we see that the grading $\leq d$ piece
of the map $\Fil_{i}(M_{\bullet}) \to \Fil_{i + 1}(M_{\bullet})$ is an isomorphism
whenever $i \geq d$. Our assumption on $d$ implies that
\[
0 \not= N_d \cong (\Gr_{d}(M_{\bullet}))_d \cong (\Fil_{\geq d}(M_{\bullet}))_d \cong M_d
\]
and
\[
0 = (\Fil_{\geq d}(M_{\bullet}))_{< d} \cong M_{< d}.
\]
where the last isomorphisms follow from the axiom (1). This shows that $d = c$.
By the above reasoning, we necessarily have $\Fil_{< c}(M_{\bullet}) = 0$
and $\Fil_c(M_{\bullet}) = M_c \otimes_{B_0} B_{\bullet}$ with map to $M_{\bullet}$
induced by the natural map $M_c = M_c \subset M_{\bullet}$ of graded $B_0$-complexes.
We observe that the choice of $\Fil_c(M_{\bullet})$ is exactly so that $(M_{\bullet}/\Fil_c(M_{\bullet}))_{\leq c} = 0$.

Suppose $\Fil_{\leq d}(M_{\bullet})$ is determined such that $(M_{\bullet}/\Fil_d(M_{\bullet}))_{\leq d} = 0$,
then to determine $\Fil_{d+1}$ is the same as to determine $\Fil_{d+1}(M_{\bullet}/\Fil_d(M_{\bullet}))$,
which we have seen that it must be given by $(M_{\bullet}/\Fil_d(M_{\bullet}))_{d+1} \otimes_{B_0} B_{\bullet}$.
Moreover this unique choice exactly makes that $(M_{\bullet}/\Fil_{d +1}(M_{\bullet}))_{\leq (d+1)} = 0$.
Therefore we have the algorithm of determining $\Fil_\ast(M_{\bullet})$, which is uniqueness.

To see existence, we simply notice that the filtration produced by the algorithm from the previous paragraph
satisfies axioms by how it is constructed: (2) and (3) is easy to see, the exhaustive axiom (1) follows from
the property that $(M_{\bullet}/\Fil_d(M_{\bullet}))_{\leq d} = 0$.

The induced filtration on $\Red(M_{\bullet})$ is an increasing exhaustive filtration, starting at $0$,
whose graded pieces are concentrated in different grading. There is no choice but the filtration
induced by grading.
\end{proof}

\begin{corollary}
\label{graded: perf vs cplt perf}
Suppose that $B_{\bullet}$ is indexed by $\mathbb{N}$.
Let $M_{\bullet} \in \DG_{J\text{-comp}}(B_{\bullet})$, then it is graded ($J$-completely) perfect if and only if
it is has bounded below grading and $\Red_B(M_{\bullet}) \in \DG_{J\text{-comp}}(B_{0})$ is graded ($J$-completely) perfect.
\end{corollary}

\begin{proof}
Only if part follows from the fact that $B_{\bullet}$ is indexed by $\mathbb{N}$ (hence has bounded below grading)
and that $\Red_B$ is defined as a base change hence preserves properties of being graded ($J$-completely) perfect.
The reverse follows immediately from \Cref{prop weight filtration of graded B-cplx}
and the fact that a $J$-complete graded $B_0$-complex is graded ($J$-completely) perfect
if and only if it is concentrated in finitely many grading and each grading piece
is ($J$-completely) perfect.
\end{proof}

\begin{corollary}
\label{graded: vb vs cplt vb}
Suppose that $B_{\bullet}$ is indexed by $\mathbb{N}$ and connective.
Then $M_{\bullet} \in \DG_{J\text{-comp}}(B_{\bullet})$
is graded ($J$-completely) finite projective if and only if it has bounded below grading and 
$\Red_B(M_{\bullet}) \in \DG_{J\text{-comp}}(B_{\bullet})$ is graded ($J$-completely) finite projective.
Moreover $M_{\bullet}$ is graded finite projective (resp.~graded perfect)
if and only if it is graded $J$-completely finite projective (resp.~graded $J$-completely perfect).
\end{corollary}

\begin{proof}
The equivalence of graded perfect and graded $J$-completely perfect follows from
\Cref{vbperf vs cplt vbperf} and \Cref{graded: perf vs cplt perf}.
Among all four variants of graded finite projective, the condition of being graded finite projective
is the strongest, and the condition of having bounded below grading with reduction being graded $J$-completely finite projective
is the weakest, it suffices to know that the latter implies the former.
Using \Cref{vbperf vs cplt vbperf} again, we see that $\Red_B(M_{\bullet})$ being graded $J$-completely finite projective
is equivalent to it being graded finite projective.
Combining with \Cref{prop weight filtration of graded B-cplx}, we are done since
any extension $N_1 \otimes_{B_0} B_{\bullet} \to M_{\bullet} \to N_2 \otimes_{B_0} B_{\bullet}$,
where $N_i$'s are finite projective $B_0$-modules placed in a single degree, must split.
\end{proof}

For later purposes, let us observe the following graded version of Nakayama lemma.
\begin{lemma}
\label{graded Nakayama}
Assume $M_\bullet \in \DG_{J\text{-comp}}(B_\bullet)$ has bounded below grading.
\begin{enumerate}
\item Assume $M_\bullet \not= 0$ and let $c$ be the smallest integer $n$ such that $M_n\neq 0$.
Then $\Red_{< c, B}(M_{\bullet}) = 0$, and the natural map
$M_c \to \Red_{c, B}(M_{\bullet})$ is an isomorphism.
\item We have that $M_{\leq m} = 0$ if and only if $\Red_{\leq m, B}(M_{\bullet}) = 0$.
\item In particular, we have $M_{\bullet} = 0$
if and only if $\Red_B(M_{\bullet}) = 0$.
\end{enumerate}
\end{lemma}

\begin{proof}
(2) and (3) follow from (1), below we prove (1).
We take the graded tensor product of $M_\bullet$ with the following fiber sequence of $B_\bullet$-graded complexes
\[
B_{\geq 1} \longrightarrow B_\bullet \longrightarrow B_0,
\]
resulting a fiber sequence of graded $B_0$-complexes:
\[
M_\bullet \otimes_{B_\bullet} B_{\geq 1} \to M_\bullet \to \Red_B(M_{\bullet}).
\]
We now claim that $(M_\bullet \otimes_{B_\bullet} B_{\geq 1})$
has no degree $\leq c$ piece, which implies the first statement.
To show the claim, we use the bar resolution of $M_\bullet \otimes_{B_\bullet} B_{\geq 1}$, 
which says that it can be computed by the colimit of the following simplicial diagram 
in the world of derived $J$-complete $A$-complexes:
\[
M_\bullet \otimes_{B_\bullet} B_{\geq 1} \simeq \colim_{[n] \in \Delta} 
(M_\bullet \otimes_{A} B_\bullet \otimes_{A} \cdots \otimes_{A} B_\bullet \otimes_{A} B_{\geq 1}),
\]
where the $n$-th term in the right hand side has $n$ copies of $B_\bullet$ in the tensor product.
Notice that for a given $[n] \in \Delta$, the degree $i$ piece of the $n$-th term is isomorphic to the direct sum
\[
\bigoplus_{u+w_1+\cdots+w_n+v=i} M_u \otimes_{A} B_{w_1} \otimes_{A} \cdots \otimes_{A} B_{w_n} \otimes_{A}B_v,
\]
where each direct summand is zero unless $u \geq c$, $w_j \geq 0$ and $v \geq 1$.
As a consequence, the $\deg \leq c$ piece of the entire simplicial diagram vanishes.
\end{proof}

\subsection{Filtered homological algebra}
Throughout this subsection, let $\Fil^{\bullet}B$ be a \emph{connective}
derived $J$-complete (decreasing) filtered $\mathbb{E}_{\infty}$-$A$-algebras
\emph{indexed by $\mathbb{N}$}.
Denote $B \coloneqq \Fil^0 B = \Fil^i B$ for any $i \leq 0$.

\begin{definition}
\label{constant filtration definition}
An object $\Fil^{\bullet}M \in \DF_{J\text{-comp}}(\Fil^{\bullet} B)$
is called \emph{constant after $i$-th filtration}
if the transition maps $\Fil^j M \to \Fil^{j-1} M$ are isomorphisms for any $j \leq i$.
In such a situation, we have that 
$\Fil^j M \to \Fil^{-\infty} M \coloneqq \bigl(\mathrm{colim}_{i \to -\infty} \Fil^i M\bigr)^{\wedge}_{J}$
is an isomorphism for any $j \leq i$.

We say an object $\Fil^{\bullet}M \in \DF_{J\text{-comp}}(\Fil^{\bullet} B)$ is \emph{$0$ after $i$-th filtration}
if it is constant after $i$-th filtration and $\Fil^{-\infty} M = 0$,
or equivalently if $\Fil^j M = 0$ for any $j \leq i$.

We say a filtered object is \emph{eventually constant} (resp.~\emph{eventually $0$})
if it is constant (resp.~$0$) after $i$-th filtration for some $i$.
We use $\DF^{\text{ev.const}}_{J\text{-comp}}(\Fil^{\bullet}B)$ to denote the full subcategory spanned by 
eventually constant objects.
\end{definition}

Below for a filtered algebra $\Fil^\bullet B$ that is indexed by $\mathbb{N}$,
we shall view the quotient $\Gr^0 B$ as a filtered $\Fil^\bullet B$-algebra with
its positive filtrations being zero.
The following is a key technical result of this subsection.

\begin{proposition}
\label{criteria for filtered perfect}
Assume that every finite projective $\Gr^0 B$-module
can be lifted to a finite projective $B$-module.
Then for an object $\Fil^{\bullet} M \in \DF_{J\text{-comp}}(\Fil^{\bullet} B)$,
the following four conditions are equivalent:
\begin{enumerate}
\item It is filtered perfect;
\item It is filtered $J$-completely perfect;
\item It is eventually constant with perfect $\Fil^{-\infty} M$
and the filtered base change $\Fil^{\bullet} M \otimes_{\Fil^{\bullet}B} \Gr^0 B$ is filtered perfect;
\item It is eventually constant with $J$-completely perfect $\Fil^{-\infty} M$
and the filtered base change $\Fil^{\bullet} M \otimes_{\Fil^{\bullet}B} \Gr^0 B$ is filtered $J$-completely perfect.
\end{enumerate}
\end{proposition}

Note that we are assuming $\Fil^{\bullet}B$ to be connective,
hence it makes sense to talk about finite projective modules over
$B$ or $\Gr^0 B$. 
Moreover we remind readers that our $\otimes$ is always $J$-completely
derived tensor product, so the filtered base change lives in
$\DF_{J\text{-comp}}(\Gr^0 B)$.

It is clear that (1) implies (2), (2) implies (4).
Moreover (3) and (4) are equivalent due to \Cref{vbperf vs cplt vbperf}.
So we just need to show that (3) implies (1).
%
%
%
To that end, we need some preparations.
First we notice that the filtered base change is compatible with taking graded pieces
via graded base changes:

\begin{lemma}
\label{filtered base change and graded base change}
The following diagram commutes:
\[
\xymatrix{
\DF_{J\text{-comp}}(\Fil^{\bullet}B) \ar[rr]^-{- \otimes_{\Fil^{\bullet}B} \Gr^0 B} 
\ar[d]_{\Gr^{\bullet}} & & \DF_{J\text{-comp}}(\Gr^0 B) \ar[d]^{\Gr^{\bullet}} \\
\DG_{J\text{-comp}}(\Gr^\bullet B) \ar[rr]^-{- \otimes_{\Gr^\bullet B} \Gr^0 B}
& & \DG_{J\text{-comp}}(\Gr^0 B).
}
\]
\end{lemma}

\begin{proof}
It follows from the general fact that taking graded is symmetric monoidal, see \cite[Lemma 3.10]{GL21}.
\end{proof}

\begin{lemma}
\label{general lemma on eventual constancy}
Let $\Fil^{\bullet} M \in \DF_{J\text{-comp}}(\Fil^{\bullet}B)$, we have:
\begin{enumerate}
\item It is constant after $i$-th filtration if and only if
$\Gr^{j} M = 0$ for any $j \leq (i-1)$;
\item It is constant after $i$-th filtration if and only if
it is eventually constant and the filtered base change
$\Fil^{\bullet} M \otimes_{\Fil^{\bullet} B} \Gr^0 B$ is constant after
$i$-th filtration.
\item The natural map
\[
\Fil^{-\infty} M \otimes_B \Gr^0 B \to \Fil^{-\infty}(\Fil^{\bullet} M \otimes_{\Fil^{\bullet} B} \Gr^0 B)
\]
is an isomorphism.
\item Assume $\Fil^{\bullet} M$ is 
$0$ after $i$-th filtration, then the filtered base change 
$\Fil^{\bullet} M \otimes_{\Fil^{\bullet} B} \Gr^0 B$ is
$0$ after $i$-th filtration.
\item $\Fil^{\bullet} M=0$ if and only if it is eventually $0$ and
the filtered base change $\Fil^{\bullet} M \otimes_{\Fil^{\bullet} B} \Gr^0 B=0$.
\end{enumerate}
\end{lemma}

\begin{proof}
(1): the transition map $\Fil^j M \to \Fil^{j-1} M$ is an isomorphism if and only if
its cone, which by definition is $\Gr^{j-1} M$, is $0$.

(2): Using (1), the statement follows from \Cref{graded Nakayama}.(2) and
\Cref{filtered base change and graded base change}.

(3): By derived Nakayama, we may reduce to the case where $J = 0$.
In that case, it is a general fact that filtered base change commutes with taking underlying module of the factors
and taking underlying ring of the tensor base.

(4): Let $\Fil^{\bullet} M \in \DF^{\text{ev.const}}_{I\text{-}comp}(I^{\bullet} A)$
and suppose it is $0$ after $i$-th filtration.
Then the filtered base change $\Fil^{\bullet} M \otimes_{\Fil^{\bullet} B} \Gr^0 B$ is:
\begin{itemize}
\item Constant after $i$-th filtration by (2);
\item Eventually $0$ by (3).
\end{itemize}
Therefore the filtered base change is $0$ after $i$-th filtration.

(5): We only need to see the ``if'' part: by combining the condition with
\Cref{graded Nakayama}.(2) and
\Cref{filtered base change and graded base change},
we see that $\Gr^{\bullet} M = 0$. Since $\Fil^{\bullet} M$ is eventually zero,
we see that $\Fil^i M = \Fil^{-\infty} M = 0$.
\end{proof}

\begin{lemma}
\label{first nonzero filtration}
Let us denote the forgetful functor
$\D(\Gr^0 B) \to \D(B)$ by $\iota_*$.
Assume $\Fil^{\bullet} M \in \DF_{J\text{-comp}}(\Fil^{\bullet}B)$ is $0$ after $i$-th filtration,
then the natural arrow $\Fil^{i+1} M \to \iota_*(\Fil^{i+1}(\Fil^{\bullet} M \otimes_{\Fil^{\bullet} B} \Gr^0 B))$
is an isomorphism.
\end{lemma}

\begin{proof}
By the compatibility in \Cref{filtered base change and graded base change}, we have
\[
\Gr^i(\Fil^{\bullet} M \otimes_{\Fil^{\bullet} B} \Gr^0 B) \cong 
\Gr^i(\Gr^{\bullet} M \otimes_{\Gr^\bullet B} \Gr^0 B).
\]
Our claim now follows from \Cref{graded Nakayama}.(1) and 
\Cref{general lemma on eventual constancy}.(4).
\end{proof}

Using the above preparations, we can show that (3) implies (1)
in \Cref{criteria for filtered perfect} when $\Fil^{-\infty}M = 0$.

\begin{proposition}
\label{criteria for filtered perfect (special)}
Assume that every finite projective $\Gr^0 B$-module
can be lifted to a finite projective $B$-module.
Then an object $\Fil^{\bullet} M \in \DF_{J\text{-comp}}(\Fil^{\bullet}B)$
is filtered perfect if
\begin{enumerate}
\item It is eventually $0$; and
\item the filtered base change $\Fil^{\bullet} M \otimes_{\Fil^{\bullet} B} \Gr^0 B$
is filtered perfect.
\end{enumerate}
\end{proposition}

\begin{proof}
Without loss of generality, we may assume that $\Fil^{\bullet} M$ is $0$ after
$0$-th filtration. By our assumption of filtered perfectness of 
$\Fil^{\bullet} M \otimes_{\Fil^{\bullet} B} \Gr^0 B \eqqcolon \Fil^{\bullet} \overline{M}$,
combined with \Cref{general lemma on eventual constancy}.(4), we see that $\Fil^{\bullet} \overline{M}$ looks like
\[
\left(\cdots \xrightarrow{=} 0 = \Fil^n\overline{M} \to \Fil^{n-1}\overline{M} \to \cdots \to \Fil^1\overline{M}
\to 0 = \Fil^0\overline{M} \xrightarrow{=} \cdots \right),
\]
where all $\Fil^i\overline{M} \in \Perf(\Gr^0 B)$.
We shall prove by a double induction, the first layer of which is on the natural number $n$.
When $n = 0$, by \Cref{general lemma on eventual constancy}.(5) we see that
$\Fil^{\bullet} M = 0$, hence trivially filtered perfect.

We need to prove the induction step, we claim that there exists a filtered perfect 
$\Fil^{\bullet}N \in \DF_{J\text{-comp}}(\Fil^{\bullet}B)$, which is $0$
after $0$-th filtration, together with a map
$\Fil^{\bullet}N \to \Fil^{\bullet} M$ such that its cone $\Fil^{\bullet} C$ has reduction
looking like
\[
\left(\cdots \xrightarrow{=} 0 = \Fil^n\overline{C} \to \Fil^{n-1}\overline{C} \to \cdots \to 0 = \Fil^1\overline{C}
\xrightarrow{=} \cdots \right).
\]
Granting the claim, then by shifting the filtration of $\Fil^{\bullet}C$, we see by induction that
$\Fil^{\bullet}C$ is filtered perfect, therefore $\Fil^{\bullet} M$ is filtered perfect as well.

Next we prove the above claim, using induction on the length of tor-amplitude of $\Fil^1\overline{M} \in \Perf(\Gr^0 B)$.
Since we may perform cohomological shift of $\Fil^{\bullet} M$, let us assume the tor-amplitude of
$\Fil^1\overline{M} \in \Perf(\Gr^0 B)$ is in $[0, m]$.
The base case is $m = 0$, namely when $\Fil^1\overline{M}$ is a finite projective $\Gr^0 B$-module.
Our assumption says that there exists a finite projective $B$-module
$V$ with $V \otimes_B \Gr^0 B \cong \Fil^1\overline{M}$. Applying \Cref{first nonzero filtration} with $i = 0$, we see that
there is a map $V \to \Fil^1M \cong \iota_* \Fil^1\overline{M}$ in $D(B)$ inducing
$\iota^*V = V \otimes_B \Gr^0 B \xrightarrow{\cong} \Fil^1\overline{M}$. This implies that there is a map
from the filtered perfect
\[
\Fil^{\bullet} N' \coloneqq (\cdots \xrightarrow{=} 0 = \Fil^2  \to V = \Fil^1 \to 0 = \Fil^0 \xrightarrow{=} \cdots)
\longrightarrow \Fil^{\bullet} M
\]
in $\DF_{J\text{-comp}}(B)$ where the target is viewed as an object via forgetful functor 
$\DF_{J\text{-comp}}(\Fil^{\bullet}B) \to \DF_{J\text{-comp}}(B)$.
By adjunction, we get a map $\Fil^{\bullet} N \coloneqq \Fil^{\bullet} N' \otimes_B (\Fil^{\bullet} B) \to \Fil^{\bullet}M$.
Its reduction is given by
\[
\Fil^{\bullet} \overline{N} = (\cdots \xrightarrow{=} 0 = \Fil^2  \to V \otimes_B \Gr^0 B = \Fil^1 \to 0 = \Fil^0 \xrightarrow{=} \cdots)
\to \Fil^{\bullet} \overline{M}
\]
which in $\Fil^1$ is exactly the isomorphism $\iota^*V = V \otimes_B \Gr^0 B \xrightarrow{\cong} \Fil^1\overline{M}$.
This proves the claim for the case $m=0$.

Lastly, we prove the case for general $m$: Choose a surjection from a finite free
$\pi_0(\Gr^0 B)^{\oplus r} \twoheadrightarrow H^m(\Fil^1\overline{M})$, which can be lifted to a map
$(\Gr^0 B)^{\oplus r}[-m] \to \Fil^1\overline{M}$ whose cone has tor-amplitude $[0, m-1]$.
Repeat the previous argument, we have a map in $\DF_{J\text{-comp}}(\Fil^{\bullet}B)$:
\[
\Fil^{\bullet} L \coloneqq (\cdots \xrightarrow{=} 0 = \Fil^2  \to B^{\oplus r}[-m] = \Fil^1 
\to 0 = \Fil^0 \xrightarrow{=} \cdots) \otimes_B \Fil^{\bullet}B \longrightarrow \Fil^{\bullet} M,
\]
whose cone has its reduction of the form
\[
\left(\cdots \xrightarrow{=} 0 = \Fil^n\overline{M} \to \Fil^{n-1}\overline{M} \to \cdots \to 
\mathrm{Cone}(\Gr^0 B^{\oplus r}[-m] \to \Fil^1\overline{M})
\to 0 = \Fil^0\overline{M} \xrightarrow{=} \cdots \right).
\]
Since $\Fil^{\bullet} L$ is $0$ after $0$-th filtration, filtered perfect,
and the tor-amplitude of $\Fil^1\overline{(M/L)}$ 
is now $[0, m-1]$, we are done by induction.
\end{proof}

We are finally ready to prove \Cref{criteria for filtered perfect}.

\begin{proof}[Proof of \Cref{criteria for filtered perfect}]
According to the discussion after its statement, it suffices to show that (3) implies (1).
Without loss of generality, we assume that $\Fil^{\bullet}M$ is constant after $0$-th filtration.
Then $\Fil^0M \cong \Fil^{-\infty}M$ is perfect over $B$.
Similar to the argument as in the third paragraph of the proof of 
\Cref{criteria for filtered perfect (special)}, we have a map
$(\cdots \to \Fil^1 M = 0 \to \Fil^0 M \xrightarrow{=} \Fil^{-1} M \xrightarrow{=} \cdots)
\otimes_B \Fil^{\bullet} B \to \Fil^{\bullet}M$ which is an isomorphism
in $\Fil^{\geq 0}$. Hence its cone is $0$ after $0$-th filtration and 
has filtered perfect reduction.
By \Cref{criteria for filtered perfect (special)} we see that this cone is filtered perfect, hence we win.
\end{proof}

We have the following analog for filtered finite projective modules.

\begin{proposition}
\label{criteria for filtered vb}
Assume that for every finite projective $B$-module $P$,
every direct summand $\overline{P_1} \subset \overline{P} \coloneqq P \otimes_B \Gr^0 B$
can be lifted to a direct summand $P_1 \subset P$.
Then for an object $\Fil^{\bullet} M \in \DF_{J\text{-comp}}(\Fil^{\bullet} B)$,
the following four conditions are equivalent:
\begin{enumerate}
\item It is a finite direct sum of $N_i \otimes_B \Fil^{\bullet} B \langle n_i \rangle$
where each $N_i$ is a finite projective $B$-module.
\item It is filtered finite projective;
\item It is filtered $J$-completely finite projective;
\item It is eventually constant with finite projective $\Fil^{-\infty} M$
and the filtered base change $\Fil^{\bullet} M \otimes_{\Fil^{\bullet}B} \Gr^0 B$ is filtered finite projective;
\item It is eventually constant with $J$-completely finite projective $\Fil^{-\infty} M$
and the filtered base change $\Fil^{\bullet} M \otimes_{\Fil^{\bullet}B} \Gr^0 B$ is filtered $J$-completely 
finite projective.
\end{enumerate}
\end{proposition}

\begin{proof}
Similar to the proof of \Cref{criteria for filtered perfect}, it is easy to see that
(1) implies (2), (2) implies (3), and (3) implies (5). Moreover, for any connective derived $J$-complete
$\mathbb{E}_{\infty}$-$A$-algebra $R$, an object
$\Fil^{\bullet} M \in \DF_{J\text{-comp}}(R)$ is filtered (resp.~$J$-completely)
filtered finite projective if and only if the filtration is complete and exhaustive
with finitely many nonzero graded pieces and each graded piece is finite projective
(resp.~$J$-completely finite projective).
Therefore we see that (4) and (5) are again equivalent, by applying \Cref{vbperf vs cplt vbperf}
to $R = B$ and $R = \Gr^0 B$. Below let us show that (4) implies (1).

Without loss of generality, let us assume that $\Fil^{\bullet} M$ is constant after $0$-th filtration.
By \Cref{general lemma on eventual constancy}.(2), we know that 
$\Fil^{\bullet} M \otimes_{\Fil^{\bullet} B} \Gr^0 B \eqqcolon \Fil^{\bullet} \overline{M}$
is also constant after $0$-th filtration, therefore it looks like
\[
\left(\cdots \xrightarrow{=} 0 = \Fil^n\overline{M} \to \Fil^{n-1}\overline{M} \to \cdots \to \Fil^1\overline{M}
\to \Fil^0\overline{M} \xrightarrow{=} \cdots \right)
\]
where each $\Fil^i \overline{M} \to \Fil^{i-1} \overline{M}$ is a direct summand of
finite projective $\Gr^0 B$-modules.
Let us perform an induction on $n$, the base case being $n = 1$.
When $n = 1$, we consider the cone $C$ of $\Fil^0 M \otimes_B \Fil^{\bullet} B \to \Fil^{\bullet} M$
induced by the natural map $\Fil^0 M \xrightarrow{=} \Fil^0 M$.
The cone is $0$ after $0$-th filtration, and its filtered base change 
\[
\Fil^{\bullet}\overline{C} \cong \mathrm{Cone}\Bigl((\cdots \xrightarrow{=} 0 = \Fil^1 \to \Fil^{0}= \Fil^{0}M \otimes_B \Gr^0 B\xrightarrow{=} \cdots) \to \Fil^{\bullet} \overline{M}\Big)
\]
is $0$ by \Cref{general lemma on eventual constancy}.(3).
Therefore we see that $C = 0$, by \Cref{general lemma on eventual constancy}.(5).
Since $\Fil^{-\infty} M=\Fil^0 M$ is a finite projective $B$-module, we get that
$\Fil^0 M \otimes_B \Fil^{\bullet} B = \Fil^{\bullet} M$ is also filtered finite projective
over $\Fil^{\bullet} B$.

In general, choose a splitting $\Fil^0\overline{M} \cong \Fil^1\overline{M} \oplus \Gr^0 \overline{M}$
and lift it to $\Fil^0 M \cong N_1 \oplus N_2$.
Now we consider the cone $C$ of $N_2 \otimes_B \Fil^{\bullet} B \to \Fil^{\bullet} M$
induced by the decomposition in $\Fil^0$. It is constant after $0$-th step and this constant is
$N_1$ which is a finite projective $B$-module.
Moreover, by construction, its filtered base change $\Fil^{\bullet} \overline{C}$
has constant filtration after the $1$-st filtration and the induced map
$\Fil^{\geq 1} \overline{M} \to \Fil^{\geq 1} \overline{C}$ is an isomorphism.
Therefore, by induction, we see that $C$ is of the form described in (1).
Lastly, we only need to know that a triangle in $\DF_{J\text{-comp}}(\Fil^{\bullet} B)$
of the form
\[
\Fil^{\bullet} M' \to \Fil^{\bullet} M
\to V \otimes_B \Fil^{\bullet} B \langle n \rangle
\]
must split if the first term is connective and $V$
is finite projective over $B$: Indeed, the first condition implies
$\pi_0(\Fil^{-n}M) \twoheadrightarrow \pi_0(V)$ is surjective,
hence there is a splitting $s \colon V \to \Fil^{-n}M$ of the map of $B$-complexes $\Fil^{-n}M \to V$,
therefore we get the desired filtered splitting $s \otimes_B \Fil^{\bullet} B$.
\end{proof}

\subsection{Descent statements}

In this subsection, we establish descent statements in the filtered
and derived $J$-complete setting.

\begin{digression}
Recall that a map of connective $\mathbb{E}_{1}$-ring $R \to S$ is said to be 
(faithfully) flat if $\pi_0(R) \to \pi_0(S)$ is (faithfully) flat
and the induced maps $\pi_i(R) \otimes_{\pi_0(R)} \pi_0(S) \to \pi_i(S)$
are isomorphisms for all $i$, see \cite[Definition 7.2.2.10]{Lur17}.
In \cite[Theorem 7.2.2.15]{Lur17} one can find many equivalent conditions characterizing
flatness.
\end{digression}

\begin{definition}
We call a map of connective derived $J$-complete $\mathbb{E}_{\infty}$-$A$-algebra
$R \to S$ to be \emph{$J$-completely (faithfully) flat} if its base change
$A/J \otimes_A R \to A/J \otimes_A S$ is so.
\end{definition}

By \cite[Proposition 2.7.3.2]{Lur18}, for any connective $R'$ with a factorization $R \to R' \to A/J \otimes_A R$ such that $\pi_0(R') \to \pi_0(R)/J$
has nilpotent kernel, the $J$-completely (faithful) flatness is equivalent to having the base change
$R' \to R' \otimes_R S$ being (faithfully) flat.
In particular, the above definition only depends on
$V(J) \subset \mathrm{Spec}(A)$.

\begin{proposition}
\label{completely filtered fpqc descent}
Let $\Fil^{\bullet}B \to \Fil^{\bullet}B^{(0)}$ be a map of connective
derived $J$-complete (decreasing) filtered $\mathbb{E}_{\infty}$-$A$-algebras
indexed by $\mathbb{N}$, and form the $i$-th Cech nerve
$\Fil^{\bullet}B^{(i)}$ for $[i] \in \Delta$. 
Assume that the map of Rees's construction
$(\bigoplus_{m \in \mathbb{Z}} \Fil^{m}B) \to (\bigoplus_{m \in \mathbb{Z}} \Fil^{m}B^{(0)})$
is $J$-completely faithfully flat.
Then the following natural functors induced by base change:
\begin{enumerate}
\item $\DF_{J\text{-comp}}(\Fil^{\bullet} B) \to \lim_{[i] \in \Delta} \DF_{J\text{-comp}}(\Fil^{\bullet} B^{(i)})$;
\item $\DF_{J\text{-comp}}^{\perf}(\Fil^{\bullet} B) \to \lim_{[i] \in \Delta} 
\DF_{J\text{-comp}}^{\perf}(\Fil^{\bullet} B^{(i)})$; and
\item $\DF_{J\text{-comp}}^{\vect}(\Fil^{\bullet} B) \to \lim_{[i] \in \Delta} 
\DF_{J\text{-comp}}^{\vect}(\Fil^{\bullet} B^{(i)})$
\end{enumerate}
are all equivalences.
Here $\vect$ and $\perf$ denote the full subcategory of filtered $J$-completely finite projective
and filtered $J$-completely perfect objects respectively.
\end{proposition}

\begin{proof}
Let us show (1) first. It is equivalent to showing:
\begin{enumerate}
\item For any $\Fil^{\bullet} M \in \DF_{J\text{-comp}}(\Fil^{\bullet} B)$, the natural arrow
$\Fil^{\bullet} M \to \lim_{[i] \in \Delta} \Fil^{\bullet} B^{(i)} \otimes_{\Fil^{\bullet} B} \Fil^{\bullet} M$
is an equivalence; and
\item For any $\Fil^{\bullet} M^{(i)} \in \lim_{[i] \in \Delta} \DF_{J\text{-comp}}(\Fil^{\bullet} B^{(i)})$,
the natural arrow $\Fil^{\bullet} B^{(m)} \otimes_{\Fil^{\bullet} B} (\lim_{[i] \in \Delta} \Fil^{\bullet} M^{(i)})
\to \Fil^{\bullet} M^{(m)}$ is an equivalence for all $[m] \in \Delta$.
\end{enumerate}
By derived Nakayama, it suffices to show the above are equivalences after applying
$\mathrm{Kos}(A; f_i) \otimes_A -$, where $\{f_i\}$ is a chosen finite set of generators of $J$.
Since $\mathrm{Kos}(A; f_i)$ is a perfect complex over $A$, the functor commutes with limit.
Therefore we are reduced to the case where $J = 0$.
Since Rees's construction commutes with cosimplicial limit, and \emph{when $J = 0$} it
is symmetric monoidal: Namely, it sends the filtered tensor product
to the usual tensor product, we are finally reduced to the usual fpqc descent 
\cite[Corollary D.6.3.3]{Lur18}.

Next we show (2) and (3): Given (1), it suffices to show that
$\Fil^{\bullet} B^{(0)} \otimes_{\Fil^{\bullet} B} \Fil^{\bullet} M$ being filtered $J$-completely
finite projective (resp.~perfect) implies that the original $\Fil^{\bullet}M$
is filtered $J$-completely finite projective (resp.~perfect).
Using \Cref{VB in terms of Rees} and \Cref{filtered perfect criterion},
we are reduced to the usual equivalence of being finite projective (resp. perfect)
before and after the faithfully flat base change \cite[Proposition 2.8.4.2]{BL22b}.
\end{proof}

We are also interested in descent even when the filtered rings involved are \emph{not}
connective. For that purpose.
we shall use the notion of ``descendability'' by Mathew, see \cite[Definition 3.18]{Mat16}.
Below let us collect some useful facts about descendable maps between $\mathbb{E}_{\infty}$-rings.

\begin{digression}[{\cite[\S 3]{Mat16}, see also \cite[\S 11.2]{BS17}}]
\label{descendable discussion}
Descendability is preserved under base change, see \cite[Corollary 3.21]{Mat16}.
If $R \to S$ is a descendable map between $\mathbb{E}_{\infty}$-rings,
and let $S^{(\bullet)}$ be the Cech nerve,
then \cite[Proposition 3.22]{Mat16} gives the descent statement:
the natural functor induced by base change
$\mathcal{D}(R) \to \lim_{[i] \in \Delta} \mathcal{D}(S^{(\bullet)})$
is an equivalence.
By \cite[Proposition 3.28]{Mat16}, we see that an $R$-complex $M$
is perfect if and only if $S \otimes_R M \in \mathcal{D}(S)$
is perfect.
\end{digression}

In fact, Mathew works with general ``\emph{stable homotopy theory}'', examples of which include
the $\DF_{J\text{-comp}}(A)$ that we have been working on throughout this section.
Below we explicate the consequences that one can draw out of Mathew's theory of descendability,
when specialized to $\DF_{J\text{-comp}}(A)$.

\begin{proposition}
\label{completely filtered descendable descent}
Let $\Fil^{\bullet}B \to \Fil^{\bullet}B^{(0)}$ be a \emph{descendable} map of
derived $J$-complete (decreasing) filtered $\mathbb{E}_{\infty}$-$A$-algebras
indexed by $\mathbb{Z}$, and form the $i$-th Cech nerve
$\Fil^{\bullet}B^{(i)}$ for $[i] \in \Delta$. 
Then the following natural functors induced by base change:
\begin{enumerate}
\item $\DF_{J\text{-comp}}(\Fil^{\bullet} B) \to \lim_{[i] \in \Delta} \DF_{J\text{-comp}}(\Fil^{\bullet} B^{(i)})$; and
\item $\DF_{J\text{-comp}}^{\perf}(\Fil^{\bullet} B) \to \lim_{[i] \in \Delta} 
\DF_{J\text{-comp}}^{\perf}(\Fil^{\bullet} B^{(i)})$;
\end{enumerate}
are both equivalences.
\end{proposition}

\begin{proof}
The first statement follows from \cite[Proposition 3.22]{Mat16}.
For (2), we need to know that $\Fil^{\bullet}M$ is filtered $J$-completely perfect as soon as
$\Fil^{\bullet} B^{(0)} \otimes_{\Fil^{\bullet} B} \Fil^{\bullet} M$ is filtered $J$-completely
perfect. By definition, we need to perform $A/J \otimes_A -$ and ask filtered perfectness,
hence we are reduced to the case of $J = 0$. In this case, the Rees's construction is symmetric monoidal.
By \cite[Corollary 3.21]{Mat16}, we see that the map of Rees's construction
$(\bigoplus_{m \in \mathbb{Z}}\Fil^m B) \to (\bigoplus_{m \in \mathbb{Z}}\Fil^m B^{(0)})$
is descendable in $\mathcal{D}(A)$. The perfectness of $\Fil^{\bullet}M$ now follows from
the combination of \Cref{filtered perfect criterion} and \cite[Proposition 3.28]{Mat16}.
\end{proof}

\section{Absolute prismatic $F$-crystals and prismatic $F$-gauges}
In this section, we recall the notion of absolute prismatic $F$-crystals and absolute prismatic $F$-gauges, and consider their relation.

\subsection{The coherent prismatic ($F$-)crystals}
\label{The coherent prismatic ($F$-)crystals}
Let $X$ be a quasi-syntomic $p$-adic formal scheme over $\mathcal{O}_K$.
Recall from \cite{BS19} and \cite{BS21} that we can attach to $X$ the following ringed sites:
\begin{definition}\label{def prismatic site}
 The \emph{(absolute) prismatic site of $X$} is defined as the opposite category $X_\Prism$ of bounded prisms $(A,I)$ over $X$ with the $(p,I)$-complete flat topology, and is equipped with a sheaf of rings and a sheaf of ideals
\[
\mathcal{O}_\Prism \colonequals (A,I) \longmapsto A;~\mathcal{I}_\Prism \colonequals (A,I) \longmapsto I.
\]
The prismatic structure sheaf $\mathcal{O}_\Prism$ admits a natural Frobenius action which lifts the usual
Frobenius on its reduction mod $p$.
\end{definition}
\begin{definition}\label{def qrsp site}
	The	\emph{quasiregular semiperfectoid site of $X$} is the opposite category $X_\qrsp$ of quasiregular semiperfectoid algebras 
 which are quasi-syntomic over $X$, equipped with the quasi-syntomic topology.
 It comes with a sheaf of prisms:
	\[
	(\Prism_\bullet, I_\bullet) \colonequals S \longmapsto (\Prism_S, I_{\Prism_S})
	\]
	sending $S\in X_\qrsp$ to the initial prism of $\Spf(S)_\Prism$ as in \cite[Prop.~7.2]{BS19}.
 The fact that they are quasi-syntomic sheaves follows from the theory of Nygaard filtration
 and quasi-syntomic descent for conjugate filtrations, see \cite[\S 12]{BS19}.
 Notice that for every $S$, the ideal $I_{\Prism_S}$ is non-canonically principal.
	The Frobenius endomorphism of prisms induces a natural endomorphism $\varphi$ on $\Prism_\bullet$.
\end{definition}
The above allows us to define various notions of coefficients associated to $X$.
Below we let $*\in \{\vect, \perf,\emptyset\}$ to denote the corresponding category with coefficients in either vector bundles, perfect complexes or general complexes.
\begin{definition}\label{def Fcrystal}
	Denote by $\fC^*(X)$ the category of \emph{prismatic} ($F$-)\emph{crystals} 
 (in either vector bundles or perfect complexes or general complexes) over $X$,
 in the sense of \cite[Definition 4.1]{BS21}.
\end{definition}

Concretely, an $F$-crystal consists of a vector bundle/perfect complex/general complex
$\mathcal{E}$ over the prismatic site $X_\Prism$ together with a Frobenius-isogeny, 
namely an $\mathcal{O}_\Prism$-linear isomorphism
\[
\varphi_\mathcal{E}: (\varphi_{\mathcal{O}_\Prism}^*\mathcal{E})[1/\mathcal{I}]  \simeq \mathcal{E}[1/\mathcal{I}].
\]
By \cite[Proposition 2.13 and Proposition 2.14]{BS21}, 
one also has the following descriptions when $*\in \{\vect, \perf\}$
\[
\fC^*(X) \simeq \lim_{S\in X_\qrsp} \fC^*(\Spf(S)).
\]

Among objects in $X_\Prism$, the following type of prisms plays a very useful role.
\begin{definition}
\label{BK prism}
	Assume $X$ is smooth over $\mathcal{O}_K$.
	A \emph{Breuil--Kisin} prism $(A,I)$ in $X_{\Prism}$ is a prism such that $\Spf(\overline{A})\to X$ is an open immersion
 (c.f.~\cite[Example 3.4]{DLMS} and \cite[proof of Theorem 5.10]{GR22}).
\end{definition}

Below we show that such $A$ is always with a faithfully flat Frobenius endomorphism $\phi_A:A\to A$.
We have the following simple description of Breuil--Kisin prisms.

\begin{lemma}
\label{concrete description of BK prisms}
Let $(A,I)$ be a Breuil--Kisin prism over $X$ as above,
and let $\widetilde{R}$ be a smooth lift of $\overline{A}/\pi$ over $W$.
\begin{enumerate}
    \item If $e > 1$, 
    then there is an isomorphism of pairs $(A, I) \simeq (\widetilde{R}[\![u]\!], E(u))$,
    where $E(u)$ is the Eisenstein polynomial of a uniformizer $\pi$.
    \item If $e = 1$, assuming $\Spf(A)$ is oriented, then we have an isomorphism of pairs
    $(A, I) \simeq (\widetilde{R}[\![u]\!], u - p)$.
\end{enumerate}
\end{lemma}

\begin{proof}
By deformation theory, we can lift the isomorphism $\widetilde{R}/p \simeq \overline{A}/\pi$
to an isomorphism
$\widetilde{R} \otimes_{W} \mathcal{O}_K \xrightarrow{\simeq} \overline{A}$.
Again by deformation theory, we can lift the morphism 
$\widetilde{R} \to \overline{A}$
further to $\widetilde{R} \to A$.

Now we can start the proof. Assume $e > 1$, pick a uniformizer 
$\pi \in \mathcal{O}_K \subset \overline{A}$,
lift it to an element $\widetilde{\pi} \in A$. Noticing that $\widetilde{\pi}$
is topologically nilpotent in $A$, we get a map
$\widetilde{R}[\![u]\!] \to A$, sending $u$ to $\widetilde{\pi}$.
Now we observe that the Eisenstein polynomial $E(u)$ is sent to zero in $\overline{A}$,
hence $E(u)$ necessarily lands in $I \subset A$, and the source is $E(u)$-adically
complete whereas the target is $I$-adically complete.
Therefore to finish our proof, we just need to check that $E(\widetilde{\pi})$ generates $I$
in $A$. Writing $E(\widetilde{\pi}) = \widetilde{\pi}^n + p \cdot f$,
where $f$ is a unit, and let us compute 
\[
p \delta(E(\widetilde{\pi})) = 
\big(\widetilde{\pi}^p + p \delta(\widetilde{\pi})\big)^n + p \cdot \varphi(f)
- E(\widetilde{\pi})^p.
\]
Using the fact that $n > 1$, one can check that the right hand side is
$p \cdot \varphi(f) + p \cdot g$ where $g \in (p, \widetilde{\pi})$.
Since $A$ is $p$-torsion free, this implies that
$\delta(E(\widetilde{\pi}))$ is a unit. Therefore \cite[Lemma 2.24]{BS19}
gives $E(\widetilde{\pi}) = I$.

If instead $e = 1$ and $(A, I)$ is oriented.
Let us choose a generator $d$ of $I$. Then we get an isomorphism
$\widetilde{R}[\![\widetilde{u}]\!] \simeq A$ by sending $\widetilde{u}$ to $d$.
Now we just let $u = \widetilde{u} + p$.
\end{proof}

\begin{corollary}
\label{BK is regular with flat Frob}
Let $(A,I)$ be a Breuil--Kisin prism over $X$ as above,
then $A$ is a $p$-torsionfree regular ring and the Frobenius endomorphism $\phi_A:A\to A$ is 
quasi-syntomic, finite, and faithfully flat.
\end{corollary}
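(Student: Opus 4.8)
The plan is to leverage the explicit description from \Cref{concrete description of BK prisms} and reduce everything to a statement about the structure ring $\widetilde{R}[\![u]\!]$. By working locally (the statement is local on $X$ for the open immersion $\Spf(\bar{A})\to X$, and $A$ is oriented after a faithfully flat base change which does not affect the assertions to be checked), we may assume $(A,I)\simeq (\widetilde{R}[\![u]\!], E(u))$ where $\widetilde{R}$ is a smooth (hence regular) $W$-algebra and $E(u)$ is either the Eisenstein polynomial (if $e>1$) or $u-p$ (if $e=1$). Regularity and $p$-torsionfreeness of $A$ are then immediate: $\widetilde{R}$ is regular, hence so is the power series ring $\widetilde{R}[\![u]\!]$, and $p$ is a nonzerodivisor in it since $\widetilde{R}$ is $p$-torsionfree.

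For the Frobenius endomorphism, first I would analyze $\phi$ on $\widetilde{R}$. Since $\widetilde{R}/p$ is a smooth $k$-algebra and $k$ is perfect, the absolute Frobenius on $\widetilde{R}/p$ is finite and faithfully flat (a smooth variety over a perfect field has finite flat Frobenius, the fibers being Artinian of constant length $p^{\dim}$); lifting, $\phi_{\widetilde{R}}\colon \widetilde{R}\to\widetilde{R}$ is $(p)$-completely faithfully flat, and since both sides are Noetherian and $p$-adically complete this gives genuine faithful flatness, and finiteness follows by topological Nakayama from finiteness mod $p$. Next, $\phi_A$ sends $u\mapsto u^p$ (up to a $\delta$-correction, but for the flatness/finiteness analysis we may use that $A$ is finite flat over $\phi_A(A)$ iff it is after reduction, and mod $p$ the map $u\mapsto u^p$ on the power series ring is finite free of rank $p$). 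Combining, $A=\widetilde{R}[\![u]\!]$ is finite free over $\phi_A(\widetilde{R})[\![u^p]\!]=\phi_A(A)$, so $\phi_A$ is finite and faithfully flat. Quasi-syntomicity then follows because a finite flat morphism of Noetherian rings with $p$-completely finite Tor-dimension (here Tor-dimension zero) between $p$-complete rings is quasi-syntomic by definition, or alternatively because $\phi_A$ is a composition of the quasi-syntomic maps $\phi_{\widetilde{R}}$ and $\widetilde{R}[\![u]\!]\to \widetilde{R}[\![u]\!]$, $u\mapsto u^p$.

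The main obstacle I anticipate is the bookkeeping around orientations and the $\delta$-structure: the Frobenius $\phi_A$ does not literally send $u$ to $u^p$ but to $u^p + p\,\delta(u)$ for some choice, so one must argue that the finiteness and flatness conclusions are insensitive to this correction term — most cleanly by checking everything modulo $p$, where the correction disappears, and then invoking $p$-complete faithful flatness plus Noetherianity to descend to the integral statements. A secondary point requiring care is the case $e=1$: there $\bar{A}=\widetilde{R}\otimes_W\mathcal{O}_K$ with $\mathcal{O}_K$ possibly a nontrivial unramified extension of $W$, but since $k$ is still perfect the same Frobenius-flatness input applies, and \Cref{concrete description of BK prisms}(2) already puts $A$ in the desired power-series form after orienting. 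I would present the argument in the order: (1) reduce to the power-series model via \Cref{concrete description of BK prisms}; (2) deduce regularity and $p$-torsionfreeness; (3) establish finite faithful flatness of Frobenius mod $p$ and lift; (4) conclude quasi-syntomicity.
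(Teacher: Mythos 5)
Your argument is correct in outline and shares the first step with the paper (reduce to the explicit model $(\widetilde{R}[\![u]\!], E(u))$ of \Cref{concrete description of BK prisms}, from which regularity and $p$-torsionfreeness are immediate), but it then diverges for the flatness of $\phi_A$. You establish finite flatness by checking it mod $p$ and lifting (finiteness by complete Nakayama, flatness by the local flatness criterion at $(p)$, using $p$-torsionfreeness to kill the relevant $\mathrm{Tor}_1$). The paper instead invokes \emph{miracle flatness}: once $\phi_A$ is known to be finite and $A$ is regular, flatness is automatic from equidimensionality, with no lifting argument needed; the paper then gets faithfulness by observing that a flat map is open and that $\phi_A$ is a homeomorphism on $V(p)$, which contains all closed points by $p$-completeness. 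Both paths work, but the miracle-flatness route is slicker precisely because it sidesteps the lifting-of-flatness step that your approach must carry out. A caution on your write-up: the identity $\phi_A(A)=\phi_A(\widetilde{R})[\![u^p]\!]$ and the factorization $\phi_A=(u\mapsto u^p)\circ\phi_{\widetilde{R}}$ are not literally true integrally, since $\phi_A(u)=u^p+p\,\delta(u)$ and, more fundamentally, $\widetilde{R}$ is not a $\delta$-subring of $A$ (the lift $\widetilde{R}\to A$ in \Cref{concrete description of BK prisms} is a plain ring map, so $\phi_A$ need not preserve $\widetilde{R}$). Your hedge — ``check after reduction mod $p$'' — is exactly what saves the argument, since on $A/p$ the Frobenius does factor as claimed and all corrections vanish; it would be worth making this the stated form of the argument rather than an aside, since the integral factorization as written is false. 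For quasi-syntomicity both proofs converge again: verify lci mod $p$ and use $p$-completeness, so this part matches the paper.
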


\begin{proof}
The fact that $A$ is $p$-torsionfree and regular follows immediately from the concrete description of $A$
in \Cref{concrete description of BK prisms}.
Also from the description, we see that the Frobenius on $A/p$ is finite and lci,
therefore its lift is also finite and quasi-syntomic thanks to $p$-completeness of $A$.
Using regularity, finiteness, and
miracle flatness \cite[\href{https://stacks.math.columbia.edu/tag/00R4}{Tag 00R4}]{stacks-project},
we get that $\phi_A$ is flat. Faithfulness follows from $p$-completeness:
the induced map on spectrum is a homeomorphism on the locus $V(p)$ which contains all
closed points by $p$-completeness, noticing that flat map is open we see that
the induced map on spectrum has to be surjective.
\end{proof}

If $X=\Spf(R)$ is further assumed to be affine, then we can find such an $A$ so that $(A,I)$
covers the final object in $Shv(X_\Prism)$.
It is because with mild assumptions, we can construct absolute product of any prism with Breuil--Kisin prisms
in $X_{\Prism}$, and they enjoy good complete flatness properties.

\begin{proposition}
\label{BK covering property}
Assume $X$ is separated and smooth over $\Spf(\mathcal{O}_K)$, let $(A,I)$ be a Breuil--Kisin
prism in $X_{\Prism}$ and let $(A', I')$ be a prism in $X_{\Prism}$.
Then their product in $Shv(X_{\Prism})$ exists and is given by a prism $(C, K)$.
It has the following properties:
\begin{enumerate}
\item The map $(A', I') \to (C, K = I' C)$ is $(p, I')$-completely flat;
\item If $\mathrm{Im}(\Spf(\overline{A'}) \to X) \subset \Spf(\overline{A})$, 
then the map above is $(p, I')$-completely faithfully flat;
\item If the map $\Spf(\overline{A'}) \to X$ is $p$-completely flat,
then the map $(A, I) \to (C, K = I C)$ is $(p,I)$-completely flat.
\item If in (3) we assume furthermore that $\Spf(\overline{A}) \subset \mathrm{Im}(\Spf(\overline{A'}) \to X)$, 
then the map above is $(p, I)$-completely faithfully flat.
\end{enumerate}
Consequently, if $X=\Spf(R)$ is affine, then we can find a Breuil--Kisin prism $(A, I)$ which covers
the final object $* \in Shv(X_{\Prism})$.
\end{proposition}

This Proposition is certainly well-known to experts, for the convenience of the reader
we spell out the proof in detail.
Here we temporarily abuse the notation of $K$.

\begin{proof}
Let us construct the prism $(C,K)$ first: it is supposed to be the absolute pushout of $(A, I)$
and $(A', I')$ in the category of prisms in $X_{\Prism}$.
Unwinding definition, we see that $(C, K)$ is the initial object of prisms $(D, L)$
receiving map from both $(A, I)$ and $(A', I')$ with the property that the induced map
$\overline{A} \widehat{\otimes}_{W(k)} \overline{A'} \to D/L$ factors through 
$\overline{A} \widehat{\otimes}_{\mathcal{O}_X} \overline{A'}$.
Here we make two remarks:
\begin{itemize}
    \item For any prism $(D, L) \in X_{\Prism}$, using the fact that $D/L^n$
    are $p$-complete, we see that the map $W(k) \to \mathcal{O}_X \to D/L$
    lifts uniquely to a map $W(k) \to D$ of $\delta$-rings.
    This explains why the first tensor product above is over $W(k)$.
    \item Since $X$ was assumed to be separated, the (completed) tensor product
    $\overline{A} \widehat{\otimes}_{\mathcal{O}_X} \overline{A'}$ is given by a (derived) $p$-complete ring
    corresponding to the formal scheme $\Spf(\overline{A}) \times_X \Spf(\overline{A'})$.
\end{itemize}
Back to the construction of $(C, K)$, from the above description we see that it is necessarily given
by the delta envelope of $(B, J)$, whose meaning we shall explain below: 
Here $B \coloneqq A \widehat{\otimes^L}_{W(k)} A'$
denotes the derived $(p, 1 \otimes I', I \otimes 1)$-completed tensor product,
and one checks that it is concentrated in degree $0$ and is a $(p, I')$-completely flat $\delta$-$A'$-algebra.
Indeed if we derived mod $B$ by $1 \otimes I'$, it becomes the derived $(p, I)$-completed tensor product
$A \widehat{\otimes^L}_{W(k)} \overline{A'}$ which is seen to be a $p$-completely flat
$\overline{A'}$-algebra due to \Cref{concrete description of BK prisms}.
Let $J$ denote the kernel ideal of the map $B \to \overline{A} \widehat{\otimes}_{\mathcal{O}_X} \overline{A'}$.
Below we shall check that our ideal $J$ meets the regularity condition in \cite[Proposition 3.13]{BS19}.
Granting this the cited proposition constructs the prismatic envelope and moreover checks that
it satisfies the property (1). 
Now let us check the regularity condition.
Due to the Zariski locality of the regularity condition,
we may assume that $A$ is given by the concrete description in \Cref{concrete description of BK prisms}.
Now we see that the ideal $J/(1 \otimes I')$ is Zariski locally on $\Spf(B/(1 \otimes I'))$
given by $u - 1 \otimes \pi$ and (the lift of) a regular sequence corresponding to the kernel of 
$\widetilde{R} \widehat{\otimes}_{W(k)} \overline{A'} \twoheadrightarrow 
\overline{A} \widehat{\otimes}_{\mathcal{O}_X} \overline{A'}$, where both completed tensor product are $p$-completed tensor product.
The said kernel is Zariski locally a regular sequence because $\widetilde{R}$ is $p$-completely smooth
$W(k)$-algebra and $\overline{A}$ defines an open of $X$, so the surjection is
from a $p$-completely smooth $\overline{A'}$-algebra
to a $p$-completely Zariski open $\overline{A'}$-algebra, which is lci mod $p$.

By the above paragraph, we have constructed $(C, K)$ and it satisfies the properties listed
in \cite[Proposition 3.13]{BS19}. In particular, it satisfies (1) of our proposition.
Let us check property (2): we need to utilize the perfection 
$(A_{\infty}, I_{\infty}) \coloneqq (A, I)_{\mathrm{perf}}$
of $(A, I)$ introduced in \cite[Lemma 3.9]{BS19}.
Now \Cref{BK is regular with flat Frob}
says that the Frobenius $\phi_A$ is quasi-syntomic and faithfully flat, by construction spelled out
in loc.~cit.~we see that $(A, I) \to (A_{\infty}, I_{\infty})$ is also quasi-syntomic and $(p, I)$-completely
faithfully flat. Let us stare at the following diagram:
\[
\xymatrix{
(A', I') \ar[r] & (C, K) \ar[r] & (E, M) \\
& (A, I) \ar[u] \ar[r] & (A_{\infty}, I_{\infty}) \ar[u],
}
\]
where the square is a $(p, I)$-completely pushing out defining the prism $(E, M)$.
By the above discussion, we know the rightward arrows in this square are $(p, I)$-completely
faithfully flat. Therefore we reduce our problem to checking the composite $(A', I') \to (E, M)$
is $(p, I')$-completely faithfully flat.
To that end, let us study $(E, M)$ from a slightly different perspective:
By how it is defined, we see that $(E, M)$ is the absolute pushout of $(A_{\infty}, I_{\infty})$
and $(A', I')$ in the category of prisms in $X_{\Prism}$.
Since perfect prisms are initial prisms of their reductions (\cite[Theorem 3.10]{BS19}),
we see that $(E, M)$ is the initial object in 
$\big(S/A'\big)_{\Prism}$,
where $S \coloneqq \overline{A'} \widehat{\otimes}_{\overline{A}} \overline{A_{\infty}}$.
This is where we use the assumption that $\mathrm{Im}(\Spf(\overline{A'}) \to X) \subset \Spf(\overline{A})$.
Now we check that $S$ is a large quasi-syntomic $p$-completely faithfully flat $\overline{A'}$-algebra 
(see \cite[Definition 15.1]{BS19} for the meaning of being ``large''):
It is the $p$-complete base change of a quasi-syntomic
$p$-completely faithfully flat
morphism; as for largeness, just notice that
any perfectoid algebra is large (being a quotient of the Witt vector of a perfect ring),
hence so are their base changes.
By \cite[Theorem 15.2]{BS19}, we learn that $E$ is given by the derived prismatic cohomology
$\Prism_{S/A'}$ explained in \cite[Construction 7.6]{BS19}.
Recall that we wanted to show $\overline{A'} \to \bar{E}$ is $p$-completely faithfully flat,
the map factors through the $p$-completely faithfully flat map $\overline{A'} \to S$,
hence it suffices to show $S \to \bar{E}$ is $p$-completely faithfully flat.
To that end, one just notices that this map is the $0$-th conjugate filtration on the 
derived Hodge--Tate cohomology $\bar{E} = \overline{\Prism}_{S/A'}$,
and the higher graded pieces of the conjugate filtration are $p$-completely flat $S$-modules:
indeed they are given by $\Gamma^{\bullet}_S(\mathbb{L}_{S/A'}[-1])^{\wedge}$ and
the shifted cotangent complex $\mathbb{L}_{S/A'}[-1]$ is a $p$-completely flat $S$-module
(c.f.~the discussion right after \cite[Definition 15.1]{BS19}).

The proof of (3) is fairly easy: it is equivalent to asking the map
$\overline{A} \to \overline{C}$ to be $p$-completely flat.
We stare at the following commutative diagram:
\[
\xymatrix{
\Spf(\overline{C}) \ar[r] \ar[d] & \Spf(\overline{A}) \ar[d] \\
\Spf(\overline{A'}) \ar[r] & X.
}
\]
Our assumption, together with (1) above,
implies that the map $\Spf(\overline{C}) \to X$ is $p$-completely flat.
Since the map $\Spf(\overline{A}) \to X$ is an open immersion,
we get what we want.

As for (4): since $X$ is separated,
the preimage of $\Spf(\overline{A}) \subset X$ defines 
an affine open in $\Spf(\overline{A'})$ which is also an affine open $U \subset \Spf(A')$,
we may replace $A'$ by $\mathcal{O}_{\Spf(A')}(U)$ and replace $X$ by $\Spf(\overline{A})$
without changing $(C, K)$. Now we are reduced to the case where 
$X = \Spf(\overline{A}) = \mathrm{Im}(\Spf(A'))$.
Looking again at the above diagram, we see that $\Spf(\overline{C}) \to X = \Spf(\overline{A})$
is $p$-completely faithfully flat due to (2) and (3).
This finishes the proof of properties (1)-(4) listed in this Proposition.

For the last sentence, simply notice that when $X$ is affine,
one can find a Breuil--Kisin prism $(A, I)$ with $\Spf(A/I) \xrightarrow{\simeq} X$,
see for instance \cite[Example 3.4]{DLMS} or \cite[proof of Theorem 5.10]{GR22}.
The statement now follows from property (2) of product with any other
prism in $X_{\Prism}$.
\end{proof}

This allows us to put a $t$-structure on prismatic ($F$-)crystals
in perfect complexes on $X$ smooth over $\mathcal{O}_K$, as follows.
\begin{construction}
\label{construction of t-structure}
Let us assume first that $X$ is separated and smooth over $\mathcal{O}_K$.
By \Cref{BK covering property}, we see that one can find a family of Breuil--Kisin
prisms $(A_{\lambda}, I_{\lambda})$ indexed by ${\lambda \in \Lambda}$ 
which jointly covers $X_{\Prism}$.
Now we may form the Cech nerve of the cover 
$\bigsqcup_{\lambda \in \Lambda} (A_{\lambda}, I_{\lambda}) \twoheadrightarrow *$,
whose $n$-th spot is given by 
$(A_{\lambda_1}, I_{\lambda_1}) \times \cdots \times (A_{\lambda_{n+1}}, I_{\lambda_{n+1}})$
indexed by $\Lambda^{n+1}$.
Below we will simply denote this Cech nerve by $\Spf(A^{[\bullet]})$
with the corresponding family of rings $A^{[\bullet]}$,
so each $A^{[n]}$ really stands for the above family of rings indexed by $\Lambda^{n+1}$.
By $(p, I)$-completely faithfully flat descent (c.f.~\cite[Theorem 5.8]{Mat22} and \cite[Theorem 2.2]{BS21}),
the $\infty$-category $\fC^{\perf}(X)$ is given by the cosimplicial limit of the 
derived $\infty$-categories of ($F$-)perfect complexes on $\Spf(A^{[\bullet]})$,
we define a $t$-structure by requiring an ($F$-)crystals in perfect complexes
lives in $\leq 0$ part (resp.~$\geq 0$ part) if the underlying complexes
on $\Spf(A^{[\bullet]})$ is concentrated in cohomological degrees $\leq 0$
(resp.~$\geq 0$).
\end{construction}

By \Cref{BK covering property} (2), the maps from $A^{[0]}$
to any of $A^{[\bullet]}$
are all $(p, I)$-completely flat, so are all the maps composed with the Frobenius
on $A^{[\bullet]}$.
Hence all these maps are already flat as $A^{[0]}$ consists
of $(p, I)$-complete Noetherian rings.
Therefore it suffices to check
the cohomological degrees for the underlying complexes on $\Spf(A_{\lambda})$'s.
We call this the \emph{standard $t$-structure}, below we shall check that
it is indeed a $t$-structure and is independent of the choice of the 
family $(A_{\lambda}, I_{\lambda})$.
We refer readers to \cite[Chapter 1]{BBDG82} and \cite[Section 1.2.1]{Lur17} for a general discussion on $t$-structures.

\begin{lemma}
\Cref{construction of t-structure} defines a $t$-structure on 
$\fC^{\perf}(X)$.
When $X$ is quasi-compact, this $t$-structure is exhaustive and separated.
\end{lemma}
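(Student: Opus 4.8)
The claim has two parts: (i) that Construction~\ref{construction of t-structure} genuinely defines a $t$-structure on $\fC^{\perf}(X)$ for $X$ separated and smooth over $\mathcal{O}_K$, including the assertion (implicit in calling it ``standard'') that it does not depend on the chosen covering family; and (ii) that when $X$ is quasi-compact this $t$-structure is exhaustive and separated. I would organize the proof around the observation already recorded in the excerpt: by $(p,I)$-completely faithfully flat descent, $\fC^{\perf}(X) \simeq \lim_{[\bullet]\in\Delta} \Perf(\Spf(A^{[\bullet]}))$ as $\infty$-categories, and, since all the transition maps $A^{[0]}\to A^{[n]}$ (and their Frobenius twists) are flat maps of Noetherian $(p,I)$-complete rings, the cohomological-degree condition can be tested after restriction to $\Spf(A^{[0]})=\bigsqcup_\lambda \Spf(A_\lambda)$ alone. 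So the whole statement is really about transporting the standard $t$-structure on a product of $\Perf(A_\lambda)$ along a limit diagram.

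\textbf{Step 1: the subcategories.} Define $\fC^{\perf}(X)^{\leq 0}$ (resp.\ $^{\geq 0}$) to be the full subcategory of objects whose restriction to each $\Spf(A_\lambda)$ lies in $\Perf(A_\lambda)^{\leq 0}$ (resp.\ $^{\geq 0}$) for the standard $t$-structure on perfect complexes over the Noetherian ring $A_\lambda$. These are clearly closed under the relevant (co)limits and shifts. The orthogonality $\mathrm{Map}(M,N)=0$ for $M\in {}^{\leq -1}$, $N\in{}^{\geq 0}$ follows because mapping spaces in the limit $\infty$-category are computed as a totalization of the mapping spaces over $\Spf(A^{[\bullet]})$, each of which vanishes by the $t$-structure on $\Perf(A^{[n]})$ — here one uses flatness of $A^{[0]}\to A^{[n]}$ (and of the Frobenius-twisted maps, since the descent datum involves $\varphi^*$) to know that the restriction of $M$ to $A^{[n]}$ is still connective and of $N$ still coconnective, so Ext-vanishing in non-negative degrees holds spotwise, hence in the totalization.

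\textbf{Step 2: truncation functors.} The main work is producing $\tau^{\leq 0}$ and $\tau^{\geq 1}$. I would build these spotwise: apply the truncation functors of the standard $t$-structure on each $\Perf(A^{[n]})$ to the underlying cosimplicial object. The point requiring care is that these spotwise truncations assemble into an object of the limit $\infty$-category, i.e.\ are compatible with the cosimplicial structure maps; this is exactly where flatness of the transition maps enters decisively, since for a flat ring map $R\to R'$ one has $(\tau^{\leq 0}M)\otimes_R R' \simeq \tau^{\leq 0}(M\otimes_R R')$, and the descent (co)cartesian structure is built from such base changes (including the Frobenius-twisted ones). Given that, $\tau^{\leq 0}$ and $\tau^{\geq 1}$ of a crystal are again crystals, and the fiber sequence $\tau^{\leq 0}\to \mathrm{id}\to \tau^{\geq 1}$ is inherited spotwise. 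This establishes that we have a $t$-structure. Independence of the covering family then follows formally: two families have a common refinement, and the associated limit categories are equivalent via restriction functors that are manifestly $t$-exact on the nose by the spotwise definition; alternatively, note the $t$-structure is intrinsically characterized by ``connective $=$ restriction to every Breuil--Kisin prism is connective'', which makes no reference to a chosen family.

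\textbf{Step 3: exhaustive and separated, $X$ quasi-compact.} When $X$ is quasi-compact we may take $\Lambda$ finite, so $\fC^{\perf}(X)$ is carved out of a \emph{finite} limit of the $\Perf(A^{[n]})$. Exhaustivity, $\bigcup_n \fC^{\perf}(X)^{\leq n}$ generates, and separatedness, $\bigcap_n \fC^{\perf}(X)^{\geq n} = 0$, can both be tested after the conservative restriction to the finite product $\prod_\lambda \Perf(A_\lambda)$ — conservativity is part of the descent equivalence — and there they hold because each $A_\lambda$ is Noetherian of finite Krull dimension (being, by \Cref{concrete description of BK prisms}, a power series ring over a $p$-completely smooth $W$-algebra, hence regular of finite dimension), so perfect complexes over $A_\lambda$ have amplitude a bounded integer and the standard $t$-structure on $\Perf(A_\lambda)$ is bounded, a fortiori exhaustive and separated; a finite product of such is again such.

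\textbf{Main obstacle.} The only genuinely nontrivial point is Step~2: checking that spotwise truncation is compatible with the cosimplicial descent diagram, i.e.\ that truncation commutes with the base-change functors appearing in the limit. This is clean precisely because \Cref{BK covering property}(2), together with Noetherianity, upgrades the ``$(p,I)$-completely flat'' maps $A^{[0]}\to A^{[n]}$ to honestly flat maps (and likewise after composing with Frobenius, using \Cref{BK is regular with flat Frob}), so that derived base change is $t$-exact. Without that flatness — e.g.\ if one only had completed flatness and non-Noetherian rings — one would have to be considerably more careful, so flagging and using the Noetherian reduction is the crux.
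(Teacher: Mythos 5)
Your proposal is correct and takes essentially the same route as the paper: orthogonality is checked spotwise via the cosimplicial presentation of mapping spaces, truncations are defined spotwise and shown to assemble because the transition maps $A^{[0]}\to A^{[n]}$ and their Frobenius twists are honestly flat (Noetherianity upgrading $(p,I)$-complete flatness), and perfectness of the truncations follows from regularity of Breuil--Kisin prisms; exhaustiveness and separatedness for quasi-compact $X$ are checked after restriction to a finite family $A^{[0]}$ of regular rings. The only stylistic difference is that you fold in a sketch of independence of the covering family, which the paper defers to a separate statement (\Cref{Evaluating at flat prism is t-exact}) rather than treating as ``formal''.
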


\begin{proof}
Given $\mathcal{F}$ (resp.~$\mathcal{G}$) in $\fC^{\perf}(X)$
with its values on $\Spf(A_{\lambda})$'s concentrated in
degrees $\leq 0$ (resp.$\geq 1$),
we see immediately that $\mathrm{Map}_{\fC^{\perf}(X)}(\mathcal{F}, \mathcal{G})$
is the cosimplicial limit of contractible spaces: This is simply because
their values on each $\Spf(A^{\bullet})$'s have contractible mapping spaces
for degree reason.
Therefore we get that the mapping space above is itself a contractible space.
To see the existence of $\tau^{\leq 0}$ truncation, one just notices that term-wise
$\tau^{\leq 0}$ truncation still satisfies the crystal condition
thanks to the discussion prior to this Lemma.
Moreover, the resulting object is a perfect complex: By crystal condition
it suffices to check this on the family of rings 
$A^{[0]} \coloneqq \{A_{\lambda}\}_{\lambda \in \Lambda}$,
but this is automatic as Breuil--Kisin prisms 
are regular by \Cref{BK is regular with flat Frob}.
Due to flatness of $A^{[0]} \to A^{[i]} \xrightarrow{\varphi_{A^{[i]}}} A^{[i]}$,
we see that the $\leq 0$ truncation
of $\varphi_{A^{[i]}}^* \mathcal{F}(A^{[i]})$ is the same as the Frobenius twist
of the $\leq 0$ truncation of $\mathcal{F}(A^{[i]})$.
Hence the linearized Frobenius extends canonically to the truncations, proving the existence
of truncations for objects in $\FC^{\perf}(X)$.

When $X$ is quasi-compact, one can find a disjoint union of finitely many
Breuil--Kisin prisms to cover $X_{\Prism}$, and the exhaustiveness and separatedness
follows from the same properties of the standard $t$-structure on
$\Perf(R)$ for any regular ring $R$.
\end{proof}

Granting the claim that in the separated case
the standard $t$-structure is independent of the choice of
covering families of Breuil--Kisin prisms,
we make the following definition.

\begin{definition}
\label{def coherent crys}
Let $X$ be smooth over $\mathcal{O}_K$,
we define the \emph{standard $t$-structure} on $\fC^{\perf}(X)$ by:
An object lives in $\leq 0$ part (resp.~$\geq 0$ part) if its restriction
to $\fC^{\perf}(U)$ is so for any separated open $U \subset X$.
We say an object in $\fC^{\perf}(X)$ is \emph{coherent}
if it belongs to the heart of the standard $t$-structure,
we denote this heart by $\fC^{\coh}(X)$.
By taking its associated homotopy category, we may regard $\fC^{\coh}(X)$
as an abelian category.
\end{definition}


Applying the usual Zariski descent to Breuil--Kisin prisms,
one sees that it suffices to check the above condition for a family
of separated opens $U \subset X$ which jointly covers $X$.
What is left to show is the claim about independence of the
choice of covering family of Breuil--Kisin prisms.
In fact, we prove the following stronger statement.

\begin{proposition}
\label{Evaluating at flat prism is t-exact}
Let $X$ be separated and smooth over $\mathcal{O}_K$.
Fix a covering family of Breuil--Kisin prisms 
$\bigsqcup_{\lambda \in \Lambda} (A_{\lambda}, I_{\lambda}) \twoheadrightarrow *$
which gives rise to a $t$-structure as in \Cref{construction of t-structure}.
Let $(A', I') \in X_{\Prism}$
and suppose that $\Spf(\overline{A'}) \to X$ is $p$-completely flat,
then the evaluation functor 
$\mathrm{Crys}^{\perf}(X) \to \Perf(A')$
is $t$-exact with respect to the standard $t$-structures on both sides.
\end{proposition}

In particular, being in the heart of the $t$-structure with respect to
one covering family of Breuil--Kisin prisms forces its value
on any other Breuil--Kisin prism to concentrate in degree $0$.
This way, we see that the $t$-structure we get is indeed independent
of the choice of covering families of Breuil--Kisin prisms.

\begin{proof}
Let $(C_{\lambda}, K_{\lambda})$ be the product of $(A_{\lambda}, I_{\lambda})$
and $(A', I')$ as in \Cref{BK covering property}.
Then property (2) in said Proposition implies that $\Spf(C_{\lambda})$
gives rise to a $(p, I')$-completely flat cover of $\Spf(A')$.
By definition, the product $(C^{\bullet}, K^{\bullet})$
of $(A^{\bullet}, I^{\bullet})$ with $(A', I')$
exists and is given by the Cech nerve of the cover
$\Spf(C^{[0]}) \coloneqq \bigsqcup_{\lambda \in \Lambda} \Spf(C_{\lambda})
\twoheadrightarrow \Spf(A')$.

Now let $\mathcal{F} \in \mathrm{Crys}^{\coh}(X)$ be in the heart of the $t$-structure
defined by $(A_{\lambda}, I_{\lambda})$.
Then its values on $C^{[\bullet]}$ defines a descent datum giving rise to
the value on $A'$ (here we suppresses the ideal for simplicity).
By \Cref{BK covering property} (3), we know that all of $C^{[\bullet]}$
are $(p, I)$-completely flat over $A^{[0]}$.
Since $A^{[0]}$ consists of derived $(p, I)$-complete rings which are Noetherian,
we see that all of $C^{[\bullet]}$ are flat over $A^{[0]}$.
The complexes in descent datum are (non-canonically) given by
$\mathcal{F}(C^{[\bullet]}) \simeq \mathcal{F}(A^{[0]}) \otimes_{A^{[0]}} C^{[\bullet]}$,
we learn that they consist of modules sitting in degree $0$.
Now \Cref{complete descent of modules is a module} below ensures that the descent
$\mathcal{F}(A')$ also lives in degree $0$, hence proving the assertion.
\end{proof}

\begin{lemma}
\label{complete descent of modules is a module}
Let $A' \to C^{[0]}$ be a $J$-completely faithfully flat map
of derived $J$-complete rings, where $J = (f_1, \cdots, f_r) \subset A'$ is a finitely generated
ideal. Denote the Cech nerve by $C^{[\bullet]}$.
Then in the equivalence $D_{J-\text{comp}}(A') \simeq 
\lim_{\Delta} D_{J-\text{comp}}(C^{[\bullet]})$ given by $J$-complete faithfully flat descent,
a descent datum of the right hand side consisting of complexes $M^{[\bullet]}$
concentrated in cohomological degree $0$ has its descent $M \in D_{J-\text{comp}}(A')$
concentrated in cohomological degree $0$ as well.
\end{lemma}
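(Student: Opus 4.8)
The plan is to compute the descent $M$ explicitly and then bound its cohomological amplitude from both sides. Under the descent equivalence $D_{J-\text{comp}}(A') \simeq \lim_{\Delta} D_{J-\text{comp}}(C^{[\bullet]})$ the object $M$ is recovered as the totalization $\lim_{\Delta} M^{[\bullet]}$, where each $M^{[n]}$ is regarded in $D_{J-\text{comp}}(A')$ by restriction of scalars. Since each $M^{[n]}$ is concentrated in degree $0$, this totalization is simply the normalized Amitsur cochain complex
\[
M \;\simeq\; \big[\, M^{[0]} \longrightarrow M^{[1]} \longrightarrow M^{[2]} \longrightarrow \cdots \,\big]
\]
placed in non-negative cohomological degrees. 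In particular $M$ already lies in $D(A')^{\geq 0}$, and $H^i(M)$ is the $i$-th Amitsur cohomology; so it remains only to show $\tau^{\geq 1} M = 0$.

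For this I run faithfully flat descent in the other direction. The descent equivalence carries $M$ to the descent datum $(M^{[n]})_n$, so in particular $M \widehat{\otimes}_{A'} C^{[0]} \simeq M^{[0]}$ is concentrated in degree $0$. (Concretely: base-changing the cosimplicial object $M^{[\bullet]}$ along $A' \to C^{[0]}$ yields, via $M^{[n]} \widehat{\otimes}_{A'} C^{[0]} \simeq M^{[n+1]}$ and the Cech structure, a split cosimplicial object augmented over $M^{[0]}$, whose totalization is $M^{[0]}$; it computes $M \widehat{\otimes}_{A'} C^{[0]}$ since $A' \to C^{[0]}$ is descendable of finite index by the flatness hypothesis, cf.~\cite[Theorem 5.8]{Mat22}, \cite[Theorem 2.2]{BS21}.) Now $\tau^{\geq 1} M$ is again derived $J$-complete, since for a finitely generated ideal the standard truncations preserve derived $J$-completeness (cf.~\cite{stacks-project}). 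Granting that the derived $J$-completed base change $-\widehat{\otimes}_{A'} C^{[0]}$ is $t$-exact — which is precisely where $J$-complete flatness enters — we obtain $(\tau^{\geq 1} M) \widehat{\otimes}_{A'} C^{[0]} \simeq \tau^{\geq 1}(M^{[0]}) = 0$; and since the base change is conservative (by $J$-complete faithful flatness together with derived Nakayama), $\tau^{\geq 1} M = 0$. Combined with the first paragraph, $M$ is concentrated in degree $0$.

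The part I expect to be the main obstacle is the $t$-exactness invoked above: that for a derived $J$-complete complex $N$ the complex $N \otimes^{L}_{A'} C^{[0]}$ is itself derived $J$-complete with the expected amplitude, so that re-completing after the tensor is unnecessary and introduces nothing in negative degrees. By dévissage along the finitely many generators of $J$ and a passage to the limit over their powers, this reduces to the flatness of $C^{[0]}/^{L} J$ over $A'/^{L} J$; in the Noetherian situation actually needed for \Cref{Evaluating at flat prism is t-exact} — where $C^{[\bullet]}$ is even honestly flat over the Noetherian ring $A^{[0]}$ — it is immediate, and in general it is the standard behaviour of $J$-completely flat morphisms as in \cite{BMS19} and \cite{BS19}. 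The conservativity is the easy half: if $N$ is derived $J$-complete and $N \widehat{\otimes}_{A'} C^{[0]} = 0$, then $N/^{L} J$ vanishes after the faithfully flat base change $A'/^{L} J \to C^{[0]}/^{L} J$, hence $N/^{L} J = 0$, hence $N = 0$.
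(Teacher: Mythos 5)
Your first step (the totalization lies in $D^{\geq 0}$) matches the paper, and your conservativity argument is correct. The step that fails is the upper cohomological bound: you invoke the $t$-exactness of $-\widehat{\otimes}_{A'} C^{[0]}$ on derived $J$-complete complexes, and in particular that a degree-$0$ derived $J$-complete module stays in degree $0$ after $J$-completed base change. This is \emph{exactly} the assertion the paper flags as unknown --- the remark immediately following this Lemma reads, ``Let us point out that we do not know if the converse to the above holds true: Namely given a $J$-complete $A$-module $M$, base change $M$ along a $J$-completely flat ring map $A \to B$ might not be concentrated in degree $0$ anymore.'' You mark this step as the expected obstacle, but the d\'evissage you sketch does not supply it: the obstruction is not whether $C^{[0]}/^L J$ is flat over $A'/^L J$ (that is the hypothesis), but whether the derived $J$-completion --- an $R\lim$ over Koszul tensors --- of $N \otimes^L_{A'} C^{[0]}$, a complex that is neither $J$-complete nor of controlled amplitude before completion, stays in the expected range. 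For a merely $J$-completely flat map this is not automatic. Note also that $\tau^{\geq 1}M \otimes^L_{A'} C^{[0]}$ need not lie in $D^{\geq 1}$ before completing, and the identification $(\tau^{\geq 1}M)\widehat{\otimes}_{A'}C^{[0]} \simeq \tau^{\geq 1}(M^{[0]})$ needs both one-sided bounds simultaneously.

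The paper's proof avoids the issue entirely by working at the level of Koszul complexes: the base change formula identifies $M \otimes^L_{A'} \mathrm{Kos}(A'; f_i^m) \otimes^L_{\mathrm{Kos}(A'; f_i^m)} \mathrm{Kos}(C^{[0]}; f_i^m)$ with $M^{[0]} \otimes^L_{C^{[0]}} \mathrm{Kos}(C^{[0]}; f_i^m)$, which is visibly in degrees $\leq 0$; since the map of Koszul $\mathbb{E}_\infty$-rings is faithfully flat on $\pi_0$ (an honest flat map of discrete quotients $A'/(f_i^m) \to C^{[0]}/(f_i^m)$), amplitude descends, giving $M \otimes^L_{A'} \mathrm{Kos}(A'; f_i^m) \in D^{\leq 0}$. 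Surjectivity of the $\pi_0$-transition maps then keeps the $R\lim$, which recovers $M$ by $J$-completeness, in degrees $\leq 0$. This one-sided argument never asserts $t$-exactness of the completed base change; it only detects the amplitude of $M$ after reduction mod powers of $J$, where flatness is honest. If your proposed $t$-exactness were available, it would in particular settle the paper's avowedly open ``converse'', which is a strong signal the gap is genuine.
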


\begin{proof}
Since the descent $M$ is calculated by $\lim_{\Delta} M^{[\bullet]}$, it is concentrated
in cohomological degrees $\geq 0$.
Below we show the reverse cohomological degree estimate.

We look at the following base change formula for any natural number
$m$:
\[
M \otimes^{L}_{A'} \mathrm{Kos}(A'; f_i^m) \otimes^{L}_{\mathrm{Kos}(A'; f_i^m)}
\mathrm{Kos}(C^{[0]}; f_i^m) \cong M^{[0]} \otimes^{L}_{C^{[0]}}
\mathrm{Kos}(C^{[0]}; f_i^m).
\]
The right hand side lives in degrees $\leq 0$ by our condition that $M^{[0]}$
lives in degree $0$.
Therefore we see $M \otimes^{L}_{A'} \mathrm{Kos}(A'; f_i^m)$
also lives in degrees $\leq 0$,
as $A' \to C^{[0]}$ is $J$-completely faithfully flat.
By the same argument, the transition maps
$\pi_0(M \otimes^{L}_{A'} \mathrm{Kos}(A'; f_i^{m+1})) \to
\pi_0(M \otimes^{L}_{A'} \mathrm{Kos}(A'; f_i^m))$
are all surjective.
Using the fact that $M$ is derived $J$-complete,
we get that $M = \mathrm{R}\lim_m
\big( M \otimes^{L}_{A'} \mathrm{Kos}(A'; f_i^m) \big)$,
which lives in degrees $\leq 0$ due to the above analysis.
\end{proof}

Let us point out that we do not know if the converse to the above holds true:
Namely given a $J$-complete $A$-module $M$, base change $M$ along a $J$-completely
flat ring map $A \to B$ might not be concentrated in degree $0$ anymore.
Now we can show the aforementioned independence of the choice of Breuil--Kisin prisms.

\begin{corollary}
\label{value of t-inequality crystal on flat object}
    Let $X$ be a smooth formal scheme over $\mathcal{O}_K$.
    An object $\mathcal{F}\in \fC^\perf(X)$ lives in $\leq 0$ part (resp. $\geq 0$ part) if and only if any of the following equivalent condition holds:
    \begin{enumerate}
        \item There exists a covering family of Breuil--Kisin prisms $(A_\lambda, I_\lambda)$ in $X_\Prism$, 
        such that $\mathcal{F}(A_\lambda, I_\lambda)$ lives in $D^{\leq 0}(A_\lambda)$ (resp. $D^{\geq 0}(A_\lambda)$).
        \item For any Breuil--Kisin prism $(A,I)\in X_\Prism$, 
        we have $\mathcal{F}(A,I)\in D^{\leq 0}(A)$ (resp. $D^{\geq 0}(A)$).
        \item There exists a covering family of prisms $(A,I)\in X_\Prism$ with $\Spf(\overline{A})\to X$ being $p$-completely flat,
        such that $\mathcal{F}(A,I)\in D^{\leq 0}(A)$ (resp. $D^{\geq 0}(A)$).
        \item For any prism $(A,I)\in X_\Prism$ such that $\Spf(\overline{A})\to X$ is $p$-completely flat, 
        we have $\mathcal{F}(A,I)\in D^{\leq 0}(A)$ (resp. $D^{\geq 0}(A)$).
    \end{enumerate}
\end{corollary}

\begin{proof}
We shall prove the ``living in $\leq 0$ part'' statement, as the other part can be proved similarly.
It is easy to see that (4) implies (3) and (2), (2) implies (1), and (1) implies that $\mathcal{F}$
lives in $\leq 0$ part.
The last condition implies (4): This is exactly \Cref{Evaluating at flat prism is t-exact}.

What is left to show is that (3) also implies that $\mathcal{F}$
lives in $\leq 0$ part. Now we look at the truncation $\mathcal{G} \coloneqq \tau^{>0}(\mathcal{F})$.
Equivalently, we need to show $\mathcal{G}$ is zero. By
\Cref{Evaluating at flat prism is t-exact} and condition (3), we see that $\mathcal{G}$
has value $0$ on a covering family of prisms, hence it is zero.
\end{proof}

\begin{definition}[{$I$-torsionfree crystal}]
We say an $\mathcal{E} \in \fC^{\coh}(X)$ is $I$-torsionfree if the map of coherent crystals:
$\mathcal{I}_{\Prism} \otimes_{\mathcal{O}_{\Prism}} \mathcal{E} \to \mathcal{E}$ 
is injective, i.e.~has no kernel with respect to the standard $t$-structure.
Equivalently, this is asking $\mathcal{E}(A)$ to be $I$-torsionfree
for all (or a covering family of) Breuil--Kisin prisms $(A, I) \in X_{\Prism}$.
\end{definition}

\begin{remark}
We have the following fully faithful inclusion of categories:
\[
\fC^\vect(X) \subset \fC^\mathrm{an}(X) \subset \fC^{I\text{-tf}} \subset \fC^\coh(X),
\]
for the last inclusion, where the target is regarded as an abelian category (since it is the heart
of the standard $t$-structure), see \cite[Theorem 5.10]{GR22}.
By definition, we have a natural fully faithful inclusion of $\infty$-categories
\[
\fC^\coh(X) \subset \fC^\perf(X).
\]
\end{remark}

Recall in \cite[\S 3]{BS21} the authors introduced the notion of Laurent $F$-crystals
(see Definition 3.2 in loc.~cit.). According to the Corollary 3.7 in loc.~cit.,
Laurent $F$-crystals on a bounded $p$-adic formal scheme $X$
are functorially identified with the locally constant derived category 
of the adic generic fiber of $X$:
\[
D_{\mathrm{perf}}(X_{\Prism}, \mathcal{O}_{\Prism}[1/I_{\Prism}]^{\wedge}_p)^{\varphi = 1}
\simeq D^{(b)}_{lisse}(X_{\eta}, \mathbb{Z}_p).
\]
Notice that the right hand side has its own standard $t$-structure.

\begin{lemma}
\label{etale realization is t-exact}
Let $X$ be a smooth formal scheme over $\Spf(\mathcal{O}_K)$, then the base change
functor compose with the above \'{e}tale realization functor
$\big(- \otimes_{\mathcal{O}_{\Prism}} \mathcal{O}_{\Prism}[1/I_{\Prism}]^{\wedge}_p\big)^{\varphi = 1}
\colon \FC^{\perf}(X) \to D^{(b)}_{lisse}(X_{\eta}, \mathbb{Z}_p)$
is $t$-exact.
\end{lemma}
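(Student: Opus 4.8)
The plan is to reduce the $t$-exactness statement to a computation on a single well-chosen prism, exploiting the fact (just proven) that evaluation at a $p$-completely flat prism detects the standard $t$-structure on $\FC^{\perf}(X)$. Concretely, I would first reduce to the separated case by Zariski descent (both the source $t$-structure, by \Cref{def coherent crys}, and the étale realization can be checked on a separated open cover of $X$). Then, using \Cref{BK covering property}, I would fix a covering family of Breuil--Kisin prisms $(A_\lambda, I_\lambda)$; by \Cref{value of t-inequality crystal on flat object}, an object $\mathcal{F} \in \FC^{\perf}(X)$ lies in degrees $\leq 0$ (resp.\ $\geq 0$) if and only if each $\mathcal{F}(A_\lambda)$ does in $D(A_\lambda)$. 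On the target side, the standard $t$-structure on $D^{(b)}_{lisse}(X_\eta, \mathbb{Z}_p)$ can likewise be checked after pulling back to the adic generic fiber of $\Spf(\bar{A}_\lambda)$, since those cover $X_\eta$ and pullback along an étale (or more generally flat) cover of adic spaces is $t$-exact and conservative.

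The heart of the argument is then the local computation: for a Breuil--Kisin prism $(A, I)$ with concrete description $(A, I) \simeq (\widetilde{R}[\![u]\!], E(u))$ as in \Cref{concrete description of BK prisms}, I need to show that the functor $\mathcal{F} \mapsto \big(\mathcal{F}(A) \otimes_A A[1/I]^\wedge_p\big)^{\varphi = 1}$ sends complexes concentrated in degree $0$ to complexes concentrated in degree $0$. The key point is that inverting $I$ and $p$-completing, i.e.\ passing to $A[1/I]^\wedge_p$, is a flat operation on $A$, so $\mathcal{F}(A) \otimes_A A[1/I]^\wedge_p$ stays in degree $0$; and the étale realization identifies the $\varphi = 1$-fixed points of this Laurent $F$-crystal over $A$ with a lisse $\mathbb{Z}_p$-sheaf on the generic fiber, where taking fixed points of an isomorphism $\varphi$ on a degree-$0$ object (a finite projective or at least flat $A[1/I]^\wedge_p$-module with semilinear automorphism) is computed by a two-term complex $[\,M \xrightarrow{\varphi - 1} M\,]$ — and the claim is that this is concentrated in degree $0$ on the generic fiber. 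This last fact is precisely the content of the equivalence of \cite[Corollary 3.7]{BS21} between Laurent $F$-crystals and lisse $\mathbb{Z}_p$-sheaves being compatible with $t$-structures: a Laurent $F$-crystal in \emph{modules} (degree $0$) corresponds to an honest lisse sheaf (degree $0$), which can be seen étale-locally on $X_\eta$ where $\varphi - 1$ becomes, after a perfectoid cover, surjective with locally free kernel by Artin--Schreier--type vanishing for $(\varphi - 1)$ on $\mathbb{Z}_p$-local systems.

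The main obstacle I anticipate is making precise and rigorous the claim that $(\varphi - 1)$ is ``surjective enough'' so that the two-term fixed-point complex has no $H^1$ as a lisse sheaf — one has to be careful that $\varphi$ here is a Frobenius-semilinear automorphism, not linear, so one genuinely needs to pass to a perfectoid or pro-étale cover where Frobenius acts trivially on the base and the Artin--Schreier sequence $0 \to \mathbb{Z}_p \to \mathbb{Z}_p^\wedge \xrightarrow{\varphi - 1} \mathbb{Z}_p^\wedge \to 0$ (or its local-system analogue) becomes available. I would lean on the already-established equivalence $D_{\mathrm{perf}}(X_\Prism, \mathcal{O}_\Prism[1/I_\Prism]^\wedge_p)^{\varphi=1} \simeq D^{(b)}_{lisse}(X_\eta, \mathbb{Z}_p)$ of \cite{BS21} and simply observe that it matches the evident ``connective'' subcategories on both sides — Laurent $F$-crystals whose underlying module is in degree $0$ on one side, lisse sheaves in degree $0$ on the other — and that our functor factors as (evaluation at a flat prism, which is $t$-exact by \Cref{Evaluating at flat prism is t-exact}) followed by (base change to $\mathcal{O}_\Prism[1/I_\Prism]^\wedge_p$, flat hence $t$-exact) followed by (the Laurent-crystal-to-lisse-sheaf equivalence, $t$-exact by the refined version of \cite[Corollary 3.7]{BS21}). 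Assembling these three $t$-exactness statements yields the lemma; the only genuinely new input beyond citations is the bookkeeping that the three steps compose to the stated functor.
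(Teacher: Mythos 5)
Your high-level skeleton is close to the paper's: reduce to a local Breuil--Kisin prism $(A,I)$ where $\mathcal{F}(A)$ is a finite $A$-module in degree $0$, use flatness of $A \to A_{\perf}[1/I]^\wedge_p$ to keep the Laurent base change in degree $0$, then argue that taking $\varphi=1$-invariants does not introduce a shift. You also correctly identify that the last step is the sole nontrivial point, and that it morally comes down to an Artin--Schreier-type surjectivity of $\varphi - 1$ after a perfectoid/pro-\'etale cover.

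However, your treatment of exactly that step is circular as written. You say you would ``lean on the already-established equivalence of \cite[Cor.~3.7]{BS21} and simply observe that it matches the evident connective subcategories on both sides.'' But that observation \emph{is} the lemma being proved: an equivalence of stable $\infty$-categories carries no automatic compatibility of $t$-structures, and \cite[Cor.~3.7]{BS21} does not state one. The ``refined version'' you invoke does not exist in the cited source; it has to be produced. Your fallback (``pass to a perfectoid cover where Artin--Schreier becomes available'') points in the right direction but is left unexecuted, and you explicitly flag this as ``the main obstacle I anticipate.'' That anticipated obstacle is where the proof actually lives.

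Here is what the paper does at that point, which you would need to supply. First, the Breuil--Kisin prism is chosen more carefully than a generic one: $(A,I)$ is taken from the toric-chart construction of \cite[Ex.~3.4]{DLMS} so that $\varphi_A \colon A/I[1/p] \to A/\varphi(I)[1/p]$ is finite \'etale; this is what makes $\Spf(A_{\perf}/I)_\eta \to X_\eta$ a pro-finite-\'etale cover, so one may check the $t$-structure on $D^{(b)}_{lisse}(X_\eta,\mathbb{Z}_p)$ after restriction to it. Second, to show $\varphi - 1$ is surjective on $\mathcal{E}(A_{\perf})[1/I]^\wedge_p$ (as pro-\'etale sheaves), one observes the cokernel is derived $p$-complete, so it suffices to prove the claim after reduction mod $p$; equivalently, $T(\mathcal{E}/^L p)$ must live in degrees $\leq 0$. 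This is then extracted from \cite[Prop.~3.6]{BS21}: after trivializing $T(\mathcal{E}/^L p)$ on an \'etale neighborhood $U$ of $\mathrm{Spec}(A_{\perf}[1/I]/p)$, one has $T(\mathcal{E}/^L p)(U) \otimes_{\mathbb{F}_p} \mathcal{O}_U \simeq \mathcal{E}(A_{\perf})[1/I]/^L p \otimes_{A_{\perf}[1/I]/p} \mathcal{O}_U$, and the right side is visibly in degrees $\leq 0$, whence so is $T(\mathcal{E}/^L p)(U)$ by faithful flatness. Neither the careful choice of prism nor the derived-$p$-completeness-plus-trivialization argument appears in your write-up, and without them the proposal does not close.
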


\begin{proof}
It suffices to prove the statement locally on $X$, so we may assume $X = \Spf(R)$
is affine which has a Breuil--Kisin prism $(A, I)$ with $A/I = R$ and the Frobenius $\varphi_A \colon A/I[1/p] \to A/(\varphi(I))[1/p]$
is finite \'{e}tale:
Indeed, we may assume that $X$ admits a toric chart, then the Breuil--Kisin prism constructed in
\cite[Example 3.4]{DLMS} has these properties.
Suppose $\mathcal{E} \in \FC^{\coh}(X)$, we need to show its \'{e}tale realization
$T(\mathcal{E}) \in D^{(b)}_{lisse}(X_{\eta}, \mathbb{Z}_p)$ sits in cohomological degree $0$.

To that end, let $R_{\infty} \coloneqq A_{\perf}/I$ be the reduction of the perfection of $(A, I)$,
whose adic generic fiber is an affinoid perfectoid pro-finite-\'{e}tale cover of $X$.
It suffices to show that the restriction of
$T(\mathcal{E})$ to $\Spf(R_{\infty})_{\eta}$ sits in cohomological degree $0$.
Tracing through the proof of \cite[Corollary 3.7]{BS21}, we see that the restriction of $T(\mathcal{E})$
is computed (via the tilting equivalence) by the $\varphi$-invariants (as a pro-\'{e}tale sheaf) of 
$\mathcal{E}(A_{\perf})[1/I]^{\wedge}_p = \mathcal{E}(A) \otimes_A A_{\perf}[1/I]^{\wedge}_p$.
By definition, the perfect complex $\mathcal{E}(A)$ is given by a finitely presented $A$-module
in degree $0$. The ring map $A \to A_{\perf}[1/I]^{\wedge}_p$ is $p$-completely
flat, hence is flat thanks to Noetherianity of $A$. 
Therefore we see that $\mathcal{E}(A_{\perf})[1/I]^{\wedge}_p$ is also concentrated in degree $0$.
Now our claim is equivalent to the statement that $\varphi - 1$ on $\mathcal{E}(A_{\perf})[1/I]^{\wedge}_p$
is surjective (as pro-\'{e}tale $\mathbb{Z}_p$-sheaves on $A_{\perf}[1/I]^{\wedge}_p$).
This is well-known, but let us still sketch a proof: Since $\mathcal{E}(A_{\perf})[1/I]^{\wedge}_p$
is derived $p$-complete, the cokernel is also derived $p$-complete, therefore it suffices
to show its mod $p$ reduction is $0$. 
Equivalently it suffices to show the (derived) $\varphi$-invariants of the derived mod $p$
reduction $\mathcal{E}(A_{\perf})[1/I]/p$ lives in degrees $\leq 0$,
let us denote it by $T(\mathcal{E}/^L p)$.

Finally we may use \cite[Proposition 3.6]{BS21}: The authors show that after trivializing $T(\mathcal{E} /^L p)$
on an \'{e}tale neighborhood $U$ of $\mathrm{Spec}(A_{\perf}[1/I]/p)$,
so that $T(\mathcal{E} /^L p)(U)$ is just an $\mathbb{F}_p$ complex (which we need to show to be concentrated in degrees $\leq 0$),
we have an isomorphism
$T(\mathcal{E} /^L p)(U) \otimes_{\mathbb{F}_p} \mathcal{O}_U \simeq \mathcal{E}(A_{\perf})[1/I]/^L p \otimes_{A_{\perf}[1/I]/p} \mathcal{O}_U$, as pro-\'{e}tale sheaves.
Now we claim victory as the right hand side lives in degrees $\leq 0$.
\end{proof}

Another notion of central interest to our article is the \emph{height} of an $I$-torsionfree 
prismatic $F$-crystal.
\begin{definition}
\label{def height}
Let $X$ be a smooth formal scheme over $\Spf(\mathcal{O}_K)$,
and let $(\mathcal{E},\varphi_\mathcal{E}) \in \FC^{I\text{-tf}}(X)$.
We say it has \emph{height} in $[a, b]$ if the linearized Frobenius
\[
\varphi_\mathcal{E} \colon \varphi_{\mathcal{O}_\Prism}^* \mathcal{E} \longrightarrow \mathcal{E}[1/I]
\]
has its image squeezed in the following manner:
$I^b \mathcal{E} \subset \mathrm{Im}(\varphi_\mathcal{E}) \subset I^a \mathcal{E}$,
viewed as sub-$\mathcal{O}_\Prism$-modules in $\mathcal{E}[1/I]$.
When $[a,b] = [0,h]$, we simply say it is effective of height $\leq h$,
this terminology is compatible with \cite[Definition 3.14]{DLMS}.
\end{definition}

\begin{remark}
One can check height of $(\mathcal{E}, \varphi_\mathcal{E})$ on a covering family of prisms
whose reduction is flat over $X$.
\end{remark}

\begin{example}
Fix notation as above, the prismatic structure sheaf with its Frobenius is effective 
and has height $\leq 0$.
Assuming that $X$ is smooth proper of relative equidimension $d$ over $\Spf(\mathcal{O}_K)$,
then the $i$-th derived pushforward of $\mathcal{O}_{\Prism}$ has its $I$-torsionfree quotient
being effective of height $\leq \min\{i, d\}$:
\cite[Corollary 15.5]{BS19} shows the $\leq i$ inequality, and the $\leq d$
inequality can be directly proved using \cite[Lemma 7.8]{LL20}.
In fact the bound in terms of dimension $d$ holds more generally,
one can translate an argument due to Bhatt (see \cite[Remark 8.11]{GR22})
to the following concrete estimate:
if $X \to Y$ is an equi-$d$-dimensional smooth (without properness!)
map of smooth formal schemes over $\Spf(\mathcal{O}_K)$
and let $(\mathcal{E}, \varphi_\mathcal{E})$ be an $I$-torsionfree effective $F$-crystal on $X$ with height $\leq h$,
then for any integer $i$ the $i$-th derived pushforward $(R^i f_{\Prism, *}\mathcal{E}, \varphi_\mathcal{E})$
has its $I$-torsionfree quotient of height $\leq (d + h)$.
Our paper aims at generalizing these height estimates, see \Cref{bound of Frob height of cohomology}.
\end{example}

\subsection{Prismatic $F$-gauges}
\label{Prismatic $F$-gauges}
In this subsection we shall recall the notion of prismatic $F$-gauges, introduced by Drinfeld and Bhatt--Lurie.
Let us start with $F$-gauges over a quasiregular semiperfectoid ring.
\begin{definition}[{\cite[Definition 5.5.17 and Example 6.1.7]{Bha23}}]
\label{def F-gauge qrsp}
	Let $S$ be a quasiregular semiperfectoid ring.

A \emph{prismatic gauge} on $\Spf(S)$ is a $(p,I)$-complete filtered complex $\Fil^\bullet E$ over $\Fil_N^\bullet \Prism_S$. 
A \emph{prismatic $F$-gauge} $E=(E, \Fil^\bullet E, \widetilde{\varphi}_{E})$ on $\Spf(S)$ consists of the following data:
	\begin{itemize}
		\item a prismatic gauge $(E, \Fil^\bullet E)$, called \emph{the underlying gauge} of $E$;
		\item a map of filtered complexes
		\[
		\widetilde{\varphi}_{E}: \Fil^\bullet E \longrightarrow I^\mathbb{Z}\Prism_S\otimes_{\Prism_S} E,
		\]
which is linear over the filtered map $\varphi_{\Prism_S} \colon \Fil_N^\bullet \Prism_S \to I^\bullet\Prism_S$;
		\item such that the filtered linearization 
		\[
		\varphi_{E}: \Fil^\bullet E\otimes_{\Fil_N^\bullet \Prism_S} I^\mathbb{Z} \Prism_S \longrightarrow I^\mathbb{Z} E
		\]
		is a filtered isomorphism in $\DF_{(p,I)\text{-comp}}(I^\mathbb{Z}\Prism_S) \simeq \mathrm{D}_{(p,I)\text{-comp}}(\Prism_S)$.
	\end{itemize}
\end{definition}
Following the convention of \cite{Bha23}, we call $\Fil^\bullet E$ a \emph{Nygaardian filtration} on $E$.
So a prismatic gauge is nothing but a prismatic crystal equipped with a Nygaardian filtration.
\begin{definition}\label{def F-gauge category}
	Let $S$ be a quasiregular semiperfectoid ring.
\begin{enumerate}
\item We use $\fG(\Spf(S))$ to denote the category of prismatic ($F$-)gauges over $\Spf(S)$.
\item We use $\fG^*(\Spf(S))$ with $*\in\{\vect, \perf\}$ to denote the subcategory of $\fG(\Spf(S))$
  consisting of those $E\in \fG(\Spf(S))$ such that the underlying filtered 
  $\Fil^\bullet_N\Prism_S$-complex $\Fil^\bullet E$ is 
  filtered $(p, I)$-completely finite projective (resp.~filtered $(p, I)$-completely perfect),
  see \Cref{Definition: filtered complete finite projective} and \Cref{Definition: filtered complete perfect}.
\item We use $\fG^\coh(\Spf(S))$ to denote the full subcategory of $\fG^\perf(\Spf(S))$ 
  such that the underlying gauge is a module over $\Prism_S$ filtered by submodules.
\end{enumerate}


\begin{remark}
The mapping space between two coherent ($F$-)gauges on  $\Spf(S)$ is discrete.
\end{remark}

Below let us show that for gauges over quasiregular semiperfectoid rings,
there is no difference between filtered completely
finite projective (resp.~filtered completely perfect) versus filtered finite projective
(resp.~filtered perfect).

\begin{proposition}
\label{Gauge: cplt vs alg conditions}
Let $S$ be a quasiregular semiperfectoid ring and let 
$\Fil^\bullet E \in \mathrm{Gauge}(\Spf(S))$. Then we have:
\begin{enumerate}
\item $\Fil^{\bullet} E$ is filtered finite projective over $\Fil^{\bullet}_N\Prism_S$ if and
only if it is filtered $(p, I)$-completely finite projective over $\Fil^{\bullet}_N\Prism_S$; and
\item $\Fil^{\bullet} E$ is filtered perfect over $\Fil^{\bullet}_N\Prism_S$ if and
only if it is filtered $(p, I)$-completely perfect over $\Fil^{\bullet}_N\Prism_S$.
\end{enumerate}
\end{proposition}

\begin{proof}
According to \cite[Lemma 4.28]{ALB23}, the pair $(\Prism_S, \Fil^1_N \Prism_S)$
is henselian. Then by \cite[Lemma 4.27]{ALB23} 
(or \cite[\href{https://stacks.math.columbia.edu/tag/0D4A}{Tag 0D4A}]{stacks-project}),
we see that every finite projective $(\Gr^0_N \Prism_S = S)$-module
can be lifted to a finite projective $\Prism_S$-module.
Moreover, we claim that for every finite projective $\Prism_S$-module $P$,
every direct summand $\overline{P_1} \subset \overline{P} \coloneqq P \otimes_{\Prism_S} S$
can be lifted to a direct summand $P_1 \subset P$: By the previous sentence,
we may first lift $\overline{P}_1$ to a finite projective $P_1$
over $\Prism_S$, then (by the finite projectiveness of $P_1$) we may lift the map $\overline{P_1} \subset \overline{P}$
to a map $P_1 \to P$, which is necessarily also a direct summand because
$\Fil^1_N \Prism_S$ is contained in the Jacobson ideal of $\Prism_S$.
Our proposition now follows from \Cref{criteria for filtered vb}
and \Cref{criteria for filtered perfect} respectively.
\end{proof}

\end{definition}
\begin{remark}\label{functorial abs}
	For a map of quasiregular semiperfectoid rings $S_1\to S_2$ in $X_\qrsp$ and $*\in\{\emptyset, \vect, \perf\}$, the completed filtered base change induces a natural functor
	\begin{align*}
		\Phi_{(S_1,S_2)}:\fG^*(\Spf(S_1)) &\longrightarrow \fG^*(\Spf(S_2)),\\
		(E, \Fil^\bullet E, (\widetilde{\varphi}_{E})) &\longmapsto 
  (E\bigotimes_{\Prism_{S_1}} \Prism_{S_2}, \Fil^\bullet E\bigotimes_{\Fil^\bullet_N \Prism_{S_1}} \Fil^\bullet_N \Prism_{S_2}, (\widetilde{\varphi}_{E}\otimes \varphi_{\Prism_{S_2}})).
	\end{align*}
    From the construction, the induced functor on underlying ($F$-)crystals is the natural pullback functor.
\end{remark}
By taking limit of the above categories, we obtain the definition of ($F$-)gauges for more general schemes.
\begin{definition}\label{def F-gauge general}
	Let $X$ be a quasi-syntomic formal scheme.
	The category of \emph{prismatic ($F$-)gauges} over $X$ is defined as the limit of $\infty$-categories
	\[
	\fG^*(X)\coloneqq\lim_{S\in X_\qrsp} \fG^*(\Spf(S)),
	\]
	where $*\in \{\emptyset, \vect, \perf, \coh\}$.
\end{definition}
\begin{remark}
There is a natural functor from ($F$-)gauges to ($F$-)crystals by forgetting the filtration,
and it is compatible with the limit formula in \cite[Proposition 2.14]{BS21}.
\end{remark}
\begin{remark}
In the stacky formalism of \cite{Bha23}, the categories $\fG^*(\Spf(S))$ for $*\in\{\emptyset, \vect, \perf\}$ 
are equivalent to the categories of complexes (resp.~vector bundles, resp.~perfect complexes) over the 
filtered prismatization $X^{\mathcal{N}}$ (resp.~syntomification $X^\mathrm{syn}$) of $X$. 
However the analogous statement is probably not true when $* = \coh$: Our notion of coherent $F$-gauges
should be stronger than being a ``coherent sheaf on the syntomification of $X$''.
\end{remark}

A key fact is that the category of ($F$-)gauges form a sheaf under quasi-syntomic topology. 
Before proving this, let us record some useful flatness criteria in commutative algebra.

\begin{lemma}[{\cite[\href{https://stacks.math.columbia.edu/tag/0H7N}{Tag 0H7N}]{stacks-project}}]
\label{useful flatness criterion 1}
Let $A \to B$ be a ring map, let $f \in A$ be a nonzerodivisor in both $A$ and $B$.
Assume that $A/f \to B/f$ and $A[1/f] \to B[1/f]$ are (faithfully) flat, then $A \to B$ is (faithfully) flat.
If $J \subset A$ is a finitely generated ideal, then the analogous statement with all ``(faithfully) flat''
replaced by ``$J$-completely (faithfully) flat'' also holds.
\end{lemma}

\begin{proof}
Let us prove the ``flat'' statement first, the faithful part is easy.
Let $M$ be an $A$ module, denote by $N$ the submodule consisting of $f$-power torsion elements,
then $M/N$ is an $f$-torsion free $A$ module.
Since $\mathrm{Tor}_i^A(M, B)$ is squeezed between $\mathrm{Tor}_i^A(N, B)$
and $\mathrm{Tor}_i^A(M/N, B)$, it is equivalent to showing the later two vanish for $i > 0$.

Let us treat $N$ first. Since $N$ is the filtered colimit of its finitely generated submodules $N'$
each of which is annihilated by a power (depending on the finitely generated submodule) of $f$,
we are further reduced to showing: If $f^c$ annihilates $N'$, then $N' \otimes^L_A B$ lives in degree $0$.
Now we write $N' \otimes^L_A B = N' \otimes^L_{A/f^c} A/f^c \otimes^L_A B = N' \otimes^L_{A/f^c} B/f^c$:
For the second equality we use the fact that $f$ is a nonzerodivisor in $B$.
We may appeal to \cite[\href{https://stacks.math.columbia.edu/tag/051C}{Tag 051C}]{stacks-project}
and see that $A/f^c \to B/f^c$ is flat.

Now we treat $M/N$. First we observe $M/N \otimes^L_A B \otimes^L_B B/f = (M/N)/^L f \otimes^L_{A/f} B/f$
which lives in degree $0$ by $f$-torsion free assumption on $M/N$ and flatness of $A/f \to B/f$. Therefore we see that
$\mathrm{Tor}_i^A(M/N \otimes^L_A B)$ are $f$-invertible for all $i > 0$:
Indeed one simply observes that $\mathrm{Tor}_i^A(M/N \otimes^L_A B)/f$ and $\mathrm{Tor}_{i-1}^A(M/N \otimes^L_A B)[f]$
are subquotients of $\mathrm{H}_i((M/N \otimes^L_A B)\otimes^L_B B/f)$.
Hence we may invert $f$ and get $\mathrm{Tor}_i^A(M/N, B) = \mathrm{Tor}_i^{A[1/f]}(M/N[1/f], B[1/f]) = 0$ for all $i > 0$,
here the first equality follows from previous sentence and the second equality follows from the flatness assumption
on $A[1/f] \to B[1/f]$.

For the $J$-complete analog, one simply runs the above argument with the starting assumption that $M$ is an $A/J$ module.
\end{proof}

\begin{lemma}
\label{useful flatness criterion 2}
Let $(A, \Fil_{\bullet}(A)) \to (B, \Fil_{\bullet}(B))$ be a filtered map of commutative
unital rings equipped with increasing exhaustive multiplicative $\mathbb{N}$-indexed filtrations.
If the graded ring map $\Gr_{\bullet}(A) \to \Gr_{\bullet}(B)$ is (faithfully) flat, then $A \to B$ is (faithfully) flat.
If $J \subset \Fil_0(A)$ is a finitely generated ideal and if the increasing filtrations on $A$ and $B$
are $J$-completely exhaustive, then the analogous statement with all ``(faithfully) flat''
replaced by ``$J$-completely (faithfully) flat'' also holds.
\end{lemma}

\begin{proof}
It is equivalent to showing the following: let $K \subset A$ be an ideal, then
$K \otimes_A B \to B$ is injective (for the faithfully flat statement
we need to further show that this map is not surjective unless $K = A$ is the unit ideal).
Now we equip $K$ with the induced filtration $\Fil_i(K) = K \cap \Fil_i(A)$,
and consider the following filtered map $\Fil_{\bullet}(K \otimes_A B) \coloneqq 
\Fil_{\bullet}(K) \otimes^L_{\Fil_{\bullet}(A)} \Fil_{\bullet}(B) \to \Fil_{\bullet}(B)$.
The source is equipped with exhaustive increasing $\mathbb{N}$-indexed filtration and the above map has its graded pieces given by
$\Gr_{\bullet}(K \otimes_A B) = \Gr_{\bullet}(K) \otimes^L_{\Gr_{\bullet}(A)} \Gr_{\bullet}(B) \to \Gr_{\bullet}(B)$.
As $\Gr_{\bullet}K$ is a graded ideal inside of $\Gr_{\bullet}A$, the
(faithfully) flatness of $\Gr_{\bullet}(A) \to \Gr_{\bullet}(B)$ implies that the above has its source
living in degree $0$ and the induced map is injective (and not surjective unless $\Gr_{\bullet}(K) = \Gr_{\bullet}(A)$).
This together with the snake lemma implies that $K \otimes_A B \to B 
= \colim_{i \to \infty} (\Fil_i(K \otimes_A B) \to \Fil_i(B))$ is injective 
(and not surjective unless $K = A$).
For the completely (faithfully) flat statement, one just runs the above argument with
the assumption that $K$ contains $J$.
\end{proof}

The quasi-syntomic descent of ($F$-)gauges follows from \cite[Theorem 5.5.10 and Remark 5.5.18]{Bha23},
and since our definition (which does not involve the stacky approach) a priori does not agree with the one given in Bhatt's notes,
we give a direct proof.

\begin{proposition}[c.f.~{\cite[Theorem 5.5.10 and Remark 5.5.18]{Bha23}}]
\label{F-gauge sheaf}
	Let $S\to S^{(0)}$ be a quasi-syntomic cover of quasiregular semiperfectoid rings, and let $S^{(\bullet)}$ be the $p$-completed \v{C}ech nerve.
	Then for $*\in \{\emptyset, \perf,\vect\}$, we have a natural equivalence
	\[
	\fG^*(\Spf(S)) \simeq \lim_{[n]\in \Delta} \fG^*(\Spf(S^{(n)})).
	\]
\end{proposition}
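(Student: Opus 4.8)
The plan is to reduce the statement to three descent assertions whose combination yields the full equivalence: descent for the filtered structure sheaf $\Fil_N^\bullet\Prism_{(\bullet)}$ itself, descent for the underlying modules over this filtered sheaf, and the observation that the Frobenius-isogeny data glue automatically once the underlying gauges do. First I would recall (from the discussion in \Cref{def qrsp site}, relying on \cite[\S 12]{BS19}) that $S\mapsto\Prism_S$ together with its Nygaard filtration and the associated graded are quasi-syntomic sheaves; concretely, for the $p$-completed \v{C}ech nerve $S^{(\bullet)}$ one has $\Fil_N^\bullet\Prism_S\xrightarrow{\ \simeq\ }\lim_{[n]\in\Delta}\Fil_N^\bullet\Prism_{S^{(n)}}$ in the filtered derived $\infty$-category, and likewise on graded pieces. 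The key input making the descent effective is a flatness statement: for a quasi-syntomic cover $S\to S^{(0)}$ of qrsp rings, the induced map on Rees algebras $\Rees(\Fil_N^\bullet\Prism_S)\to\Rees(\Fil_N^\bullet\Prism_{S^{(0)}})$ is $(p,I)$-completely faithfully flat. This is exactly where I would invoke \Cref{useful flatness criterion 2}: the associated graded $\Gr_N^\bullet\Prism_S\to\Gr_N^\bullet\Prism_{S^{(0)}}$ is identified (via the Nygaard-to-conjugate comparison) with a base change of the conjugate-filtered Hodge--Tate sheaf, whose graded pieces $\wedge^i\mathbb{L}_{S/\mathbb{Z}_p}[-i]$ behave well under quasi-syntomic covers, giving $(p,I)$-complete faithful flatness on the graded level, hence on the filtered/Rees level.

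Granting this, the descent for $*=\emptyset$ is the statement that quasi-coherent complexes over the Rees stack $\Spf(\Rees(\Fil_N^\bullet\Prism_{(\bullet)}))/\mathbb{G}_m$ satisfy $(p,I)$-complete flat descent along the cover $\Rees(\Fil_N^\bullet\Prism_S)\to\Rees(\Fil_N^\bullet\Prism_{S^{(0)}})$, which is standard $(p,I)$-complete faithfully flat descent of modules (c.f.~\cite[Theorem 5.8]{Mat22} or \cite[Theorem 2.2]{BS21}) applied $\mathbb{G}_m$-equivariantly, i.e.~in the graded setting. For $*\in\{\vect,\perf\}$ one notes that the descended object is again in $\Vect$ (resp.~$\Perf$): perfectness and the vector-bundle property descend along faithfully flat maps, exactly as in the proof of \Cref{construction of t-structure} but now over the Rees algebra. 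Finally, for the $F$-structure: a prismatic $F$-gauge adds the datum of $\widetilde\varphi_E\colon\Fil^\bullet E\to I^\mathbb{Z}\Prism_S\otimes_{\Prism_S}E$ linear over $\varphi_{\Prism_S}$, subject to the invertibility condition on the filtered linearization. Since $\varphi_{\Prism_{(\bullet)}}$ is itself a morphism of quasi-syntomic sheaves of filtered rings, a descent datum of gauges equipped with compatible $\widetilde\varphi$'s descends to a gauge with a $\widetilde\varphi$; the isomorphism condition can be checked after the faithfully flat base change $S\to S^{(0)}$, so it too descends. Assembling: $\fG^*(\Spf(S))$ is the limit of $\fG^*(\Spf(S^{(n)}))$.

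The main obstacle I anticipate is the flatness input for the Rees algebra of the Nygaard filtration --- i.e.~checking the hypotheses of \Cref{useful flatness criterion 2} for $\Fil_N^\bullet\Prism_S\to\Fil_N^\bullet\Prism_{S^{(0)}}$. One must be careful that the Nygaard filtration is only $\mathbb{N}$-indexed and exhaustive in a $(p,I)$-complete sense, that $\Fil_N^0\Prism_S=\Prism_S$ contains a suitable finitely generated ideal (namely $(p,I)$, or after a choice of orientation $(p,d)$), and that the associated graded really is computed by the conjugate filtration on Hodge--Tate cohomology so that its flatness can be read off from the cotangent complex being $p$-completely flat in the qrsp (hence quasi-syntomic) setting. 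A secondary but genuinely routine point is bookkeeping the $(p,I)$-completions: all tensor products, limits, and \v{C}ech nerves must be interpreted in the derived $(p,I)$-complete sense, and one should confirm the \v{C}ech nerve of a $(p,I)$-completely faithfully flat cover of Rees algebras computes the correct descent. Once the flatness is in hand, the rest is a formal application of faithfully flat descent in a filtered/graded and $F$-equivariant context, closely paralleling the arguments already given for $\fC^\perf(X)$ earlier in the paper.
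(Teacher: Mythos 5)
Your overall strategy is the same as the paper's: reduce to faithful flatness of the Rees algebra map $\Rees(\Fil_N^\bullet\Prism_S)\to\Rees(\Fil_N^\bullet\Prism_{S^{(0)}})$, prove that by working down to graded pieces and the cotangent complex, and then note that the Frobenius datum descends once the underlying gauges do. But there are two genuine gaps in how you fill this in.

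First, the step ``faithful flatness on the graded level, hence on the filtered/Rees level'' is exactly where the work is, and invoking \Cref{useful flatness criterion 2} alone does not close it. That lemma applies to a ring equipped with an increasing $\mathbb{N}$-indexed exhaustive filtration, which is not how the Rees algebra of the (decreasing) Nygaard filtration presents itself. The paper instead peels off the Rees variable $t$ using \Cref{useful flatness criterion 1}: modulo $t$ one gets $\Gr_N^\bullet$, and inverting $t$ one gets $\Prism_S[t^{\pm 1}]$, so one must separately know that $\Prism_S\to\Prism_{S^{(0)}}$ is $(p,I)$-completely faithfully flat --- a fact you never mention. Then, after identifying $\Gr_N^\bullet$ with the Rees construction of the conjugate filtration on Hodge--Tate cohomology (BS19 Thm.\ 12.2), \Cref{useful flatness criterion 1} is applied a second time (with $f=d/t$) before \Cref{useful flatness criterion 2} finally comes in for the conjugate-filtered Hodge--Tate ring, reducing to the Hodge algebra and the cotangent complex. (Also, the graded pieces you want are $\Gamma^i_S(\mathbb{L}_{S/R}[-1])$ for a chosen perfectoid $R\to S$, not $\wedge^i\mathbb{L}_{S/\mathbb{Z}_p}[-i]$.)

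Second, and more seriously: you argue as if faithful flatness of $\Rees(\Fil_N^\bullet\Prism_S)\to\Rees(\Fil_N^\bullet\Prism_{S^{(0)}})$ alone implies the displayed limit. But the right-hand side of the equivalence you want is $\lim_{[n]}\fG^*(\Spf(S^{(n)}))$, a limit over modules on the Rees algebras of the $\Prism_{S^{(n)}}$, whereas flat descent a priori gives you the limit over modules on the \v{C}ech nerve of the Rees algebra cover, i.e.\ over $\Rees(\Fil_N^\bullet\Prism_{S^{(0)}})^{\otimes_{\Rees(\Fil_N^\bullet\Prism_S)}(n+1)}$. To identify the two, one needs a filtered base change formula
\[
\Fil^{\bullet}_N(\Prism_{\widetilde{S}}) \widehat{\otimes}_{\Fil^{\bullet}_N(\Prism_S)} \Fil^{\bullet}_N(\Prism_{S^{(0)}}) \xrightarrow{\cong} \Fil^{\bullet}_N(\Prism_{\widetilde{S}\,\widehat{\otimes}_S S^{(0)}}),
\]
and verifying it again requires working down to the conjugate/Hodge graded pieces. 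Without this input the ``formal application of faithfully flat descent'' doesn't actually identify the two limit diagrams, so this step needs to be made explicit.
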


\begin{proof}
Let us treat the descent of gauges first, we fix a map $R \to S$ where $R$ is perfectoid
with its associated perfect prism $(A, I = (d))$.
Via Rees's construction (see \cite[\S 2.2]{Bha23}), we can regard gauges as graded $(p, I)$-complete complexes
over the graded ring
$\mathrm{Rees}(\Fil^{\bullet}_N) \coloneqq \bigoplus_{i \in \mathbb{Z}} \Fil^i_N t^{-i}$.
By \Cref{completely filtered fpqc descent}, the descent of $*$-gauges
follows from the following two statements:
\begin{enumerate}
    \item The map $\mathrm{Rees}(\Fil^{\bullet}_N(\Prism_S)) \to \mathrm{Rees}(\Fil^{\bullet}_N(\Prism_{S^{(0)}}))$ is $(p, I)$-completely
    faithfully flat.
    \item For any map of qrsp algebras $S \to \widetilde{S}$ with $\widetilde{S}^{(0)} \coloneqq \widetilde{S} \widehat{\otimes}_S S^{(0)}$, we have a filtered isomorphism:
    \[
    \Fil^{\bullet}_N(\Prism_{\widetilde{S}}) \widehat{\otimes}_{\Fil^{\bullet}_N(\Prism_S)} \Fil^{\bullet}_N(\Prism_{S^{(0)}}) \xrightarrow{\cong} \Fil^{\bullet}_N(\Prism_{\widetilde{S}^{(0)}}).
    \]
\end{enumerate}
Let us prove (1) first. Using \Cref{useful flatness criterion 1} (with the $f$ there being $t$ here),
we see that it suffices to show that both the underlying ring map $\Prism_S \to \Prism_{S^{(0)}}$
and the graded algebra map 
\[
\bigoplus_{i \in \mathbb{N}} \Gr^i_N(\Prism_S) t^{-i} \to \bigoplus_{i \in \mathbb{N}} \Gr^i_N(\Prism_{S^{(0)}}) t^{-i}
\]
are completely faithfully flat.
By \cite[Theorem 12.2]{BS19}, the graded algebra is identified with Rees's construction of the (twisted) conjugate filtered Hodge--Tate algebra 
(in particular the filtration satisfies the assumption in \Cref{useful flatness criterion 2}), namely
\[
\bigoplus_{i \in \mathbb{N}} \Gr^i_N(\Prism_S) t^{-i} \cong \bigoplus_{i \in \mathbb{N}}\Fil_{i}^\conj\overline{\Prism}_S\{i\} t^{-i}
\]
(similarly with $S$ replaced by $S^{(0)}$).
Applying again \Cref{useful flatness criterion 1} (with the $f$ there being $d/t$ here),
we see that it suffices to show
$\overline{\Prism}_S \to \overline{\Prism}_{S^{(0)}}$ and 
$\bigoplus_{i \in \mathbb{N}} \Gr_i^\conj(\overline{\Prism}_S) t^{-i} \to \bigoplus_{i \in \mathbb{N}} \Gr_i^\conj(\overline{\Prism}_{S^{(0)}}) t^{-i}$ 
are completely faithfully flat.
Here we have also used the fact that $\Prism_S \to \Prism_{S^{(0)}}$ is $(p, I)$-completely faithfully flat
if and only if $\overline{\Prism}_S \to \overline{\Prism}_{S^{(0)}}$ is $p$-completely faithfully flat.
Now we apply \Cref{useful flatness criterion 2} to the conjugate filtered Hodge--Tate rings,
we see that all we need to show is the $p$-completely faithfully flatness of the
graded algebra map (of conjugate filtration).
Since the graded algebra of conjugate filtration is the (derived) Hodge algebra, 
the map ($p$-completed) is identified with $\Gamma^*_{S}(\mathbb{L}_{S/R}[-1])^{\wedge} \to \Gamma^*_{S^{(0)}}(\mathbb{L}_{S^{(0)}/R}[-1])^{\wedge}$.
Since the target is $S^{(0)}$-algebra, we see this map can be factored through the base change along $S \to S^{(0)}$
which is $p$-completely faithfully flat. Finally, we are reduced to showing the map
$\Gamma^*_{S^{(0)}}(\mathbb{L}_{S/R}[-1] \widehat{\otimes}_S S^{(0)})^{\wedge} \to \Gamma^*_{S^{(0)}}(\mathbb{L}_{S^{(0)}/R}[-1])^{\wedge}$
(induced by the natural map of cotangent complexes)
is $p$-completely faithfully flat. This follows from the fact that 
\[
\mathrm{Cone}(\mathbb{L}_{S/R}[-1] \widehat{\otimes}_S S^{(0)} \to \mathbb{L}_{S^{(0)}/R}[-1]) = \mathbb{L}_{S^{(0)}/S}[-1]
\]
is a $p$-completely flat $S^{(0)}$-module thanks to the quasi-syntomicity of $S \to S^{(0)}$ and the assumption that $S^{(0)}$ is semiperfectoid.

Let us now show the statement (2) above. Such a filtered map is a filtered isomorphism
if and only if its underlying and graded maps are both isomorphisms.
By \cite[Theorem 12.2]{BS19}, the graded map is identified with:
    \[
    \big(\bigoplus_{i \in \mathbb{N}} \Fil_i^\conj(\overline{\Prism}_{\widetilde{S}}) t^{-i}\big)
    \widehat{\otimes}_{\big(\bigoplus_{i \in \mathbb{N}} \Fil_i^\conj(\overline{\Prism}_S) t^{-i}\big)} 
    \big(\bigoplus_{i \in \mathbb{N}} \Fil_i^\conj(\overline{\Prism}_{S^{(0)}}) t^{-i}\big)
    \to \big(\bigoplus_{i \in \mathbb{N}} \Fil_i^\conj(\overline{\Prism}_{\widetilde{S}^{(0)}}) t^{-i}\big).
    \]
Via the Rees's construction, the above being an isomorphism is equivalent to the following
filtered map being an isomorphism:
\[
\Fil_{\bullet}^\conj(\overline{\Prism}_{\widetilde{S}}) \widehat{\otimes}_{\Fil_{\bullet}^\conj(\overline{\Prism}_S)} 
\Fil_{\bullet}^\conj(\overline{\Prism}_{S^{(0)}}) 
\xrightarrow{\cong} \Fil_{\bullet}^\conj(\overline{\Prism}_{\widetilde{S}^{(0)}}).
\]
Now for a map of increasingly $\mathbb{N}$-indexed $p$-completely exhaustively filtered
complexes, being a filtered isomorphism is equivalent to its graded map being an isomorphism.
After taking graded algebras, we are reduced to showing the following graded
map being an isomorphism:
\[
\Gamma^*_{\widetilde{S}}(\mathbb{L}_{\widetilde{S}/R}[-1])^{\wedge} \widehat{\otimes}_{\Gamma^*_{S}(\mathbb{L}_{S/R}[-1])^{\wedge}} 
\Gamma^*_{S^{(0)}}(\mathbb{L}_{S^{(0)}/R}[-1])^{\wedge}
\xrightarrow{\cong} \Gamma^*_{\widetilde{S}^{(0)}}(\mathbb{L}_{\widetilde{S}^{(0)}/R}[-1])^{\wedge}.
\]
The left hand side is identified with
$\Gamma^*_{\widetilde{S}^{(0)}}\big(\mathrm{Cone}(\mathbb{L}_{S/R}[-1] \widehat{\otimes}_S \widetilde{S}^{(0)} \to
\mathbb{L}_{S^{(0)}/R}[-1] \widehat{\otimes}_{S^{(0)}} \widetilde{S}^{(0)} \oplus 
\mathbb{L}_{\widetilde{S}/R}[-1] \widehat{\otimes}_{\widetilde{S}} \widetilde{S}^{(0)})\big)^{\wedge}$,
and we are further reduced to showing the following map of $\widetilde{S}^{(0)}$-modules
is an isomorphism:
\[
\mathrm{Cone}(\mathbb{L}_{S/R}[-1] \widehat{\otimes}_S \widetilde{S}^{(0)} \to
\mathbb{L}_{S^{(0)}/R}[-1] \widehat{\otimes}_{S^{(0)}} \widetilde{S}^{(0)} \oplus 
\mathbb{L}_{\widetilde{S}/R}[-1] \widehat{\otimes}_{\widetilde{S}} \widetilde{S}^{(0)})
\to \mathbb{L}_{\widetilde{S}^{(0)}/R}[-1]^{\wedge},
\]
which follows from the fact that $\widetilde{S}^{(0)}$ is given by $p$-completely (derived) tensor of
$S^{(0)}$ and $\widetilde{S}$ over $S$ and functoriality of cotangent complexes.
This finishes the proof of the graded part of the Nygaard-filtered base change formula,
and it also simultaneously proves the part about underlying complexes:
Since in the process we have showed the following
filtered map being an isomorphism:
\[
\Fil_{\bullet}^\conj(\overline{\Prism}_{\widetilde{S}}) \widehat{\otimes}_{\Fil_{\bullet}^\conj(\overline{\Prism}_S)} 
\Fil_{\bullet}^\conj(\overline{\Prism}_{S^{(0)}}) 
\xrightarrow{\cong} \Fil_{\bullet}^\conj(\overline{\Prism}_{\widetilde{S}^{(0)}}).
\]
Taking its underlying map, we get the following isomorphism:
$\overline{\Prism}_{\widetilde{S}} \widehat{\otimes}_{\overline{\Prism}_S} \overline{\Prism}_{S^{(0)}}
\xrightarrow{\cong} \overline{\Prism}_{\widetilde{S}^{(0)}}$,
which in turn shows that its $I$-completely deformed map is also an isomorphism:
$\Prism_{\widetilde{S}} \widehat{\otimes}_{\Prism_S} \Prism_{S^{(0)}}
\xrightarrow{\cong} \Prism_{\widetilde{S}^{(0)}}$ (by derived Nakayama's Lemma).

As for the descent of $F$-gauges, since we have established the descent of gauges
we are reduced to showing descent of the Frobenius morphism
which follows from flat descent.
\end{proof}

\begin{lemma}
\label{descent of coherent gauge is coherent}
In \Cref{F-gauge sheaf}, the descent of coherent ($F$-)gauges
is again coherent.
\end{lemma}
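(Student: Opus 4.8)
The plan is to reduce the whole statement to the module-level descent result \Cref{complete descent of modules is a module}. Recall from \Cref{def F-gauge category} and \Cref{def F-gauge general} that a coherent ($F$-)gauge is a perfect ($F$-)gauge whose underlying prismatic crystal is a $\Prism_S$-module concentrated in cohomological degree $0$ and whose Nygaardian filtration $\Fil^\bullet E$ consists of submodules of $E$. Given a descent datum $E^{(\bullet)}$ of coherent ($F$-)gauges along $S\to S^{(0)}$, \Cref{F-gauge sheaf} applied with $*=\perf$ already produces a descended perfect ($F$-)gauge $E$, and the Frobenius structure of an $F$-gauge descends by flat descent exactly as in the last paragraph of the proof of \Cref{F-gauge sheaf}; so the task is reduced to checking that the underlying gauge of $E$ is a module filtered by submodules.

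First I would set up the module-descent input. By the base-change isomorphism for Nygaard-filtered prisms established in the proof of \Cref{F-gauge sheaf} (statement (2) there), passing to underlying rings identifies $\Prism_{S^{(\bullet)}}$ with the $(p,I)$-completed \v{C}ech nerve of $\Prism_S\to\Prism_{S^{(0)}}$, and this ring map is $(p,I)$-completely faithfully flat, again as shown in that proof. Moreover the forgetful functor from gauges to crystals and the functors $\Fil^i(-)$ extracting a fixed step of the Nygaardian filtration both preserve limits, so I can compute the underlying crystal and the filtration steps of the descended gauge as $E\simeq \lim_{[n]\in\Delta}E^{(n)}$ and $\Fil^i E\simeq \lim_{[n]\in\Delta}\Fil^i E^{(n)}$ inside $D_{(p,I)\text{-comp}}(\Prism_S)$ via the equivalence of \Cref{complete descent of modules is a module} (applied with $A'=\Prism_S$, $C^{[0]}=\Prism_{S^{(0)}}$, $J=(p,I)$); here the right-hand sides are genuine descent data in the sense of that lemma because the transition isomorphisms of $E^{(\bullet)}$ are filtered, hence induce descent data on each $\Fil^i$ and on the underlying crystal.

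The rest is then quick. Each $E^{(n)}$ and each $\Fil^i E^{(n)}$ is a $(p,I)$-complete module sitting in cohomological degree $0$ by coherence of $E^{(n)}$, so \Cref{complete descent of modules is a module} shows $E$ and every $\Fil^i E$ sit in cohomological degree $0$, i.e.\ are honest modules. To see that $\Fil^i E\to E$ is injective I would pass to the cofibers $Q^{(\bullet)}\colonequals \mathrm{cofib}(\Fil^i E^{(\bullet)}\to E^{(\bullet)})$, which by coherence are the modules $E^{(n)}/\Fil^i E^{(n)}$ in degree $0$ and assemble into a descent datum; exactness of the descent equivalence identifies their descent with $\mathrm{cofib}(\Fil^i E\to E)$, and one more application of \Cref{complete descent of modules is a module} puts it in cohomological degree $0$. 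For a morphism between modules placed in degree $0$ the vanishing of the $(-1)$-st cohomology of the cofiber is exactly injectivity, so $\Fil^i E\hookrightarrow E$ for all $i$ and the descended ($F$-)gauge is coherent.

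The one genuinely substantive input — and the reason \Cref{complete descent of modules is a module} was isolated beforehand — is that $(p,I)$-complete faithfully flat descent does not on its own preserve (co)connectivity of complexes. Granting that for modules, everything else is bookkeeping together with the base-change identifications already recorded in the proof of \Cref{F-gauge sheaf}. I do not expect a serious obstacle; the only point that requires a little care is checking that the forgetful and $\Fil^i$ functors are compatible with the descent equivalences (so that the module-level lemma genuinely applies), which is immediate once one observes these functors preserve limits.
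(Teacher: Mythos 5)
Your overall plan --- perfectness via \Cref{F-gauge sheaf}, cohomological concentration via \Cref{complete descent of modules is a module}, then a cofiber trick for injectivity of the filtration --- has the right shape and matches the structure of the paper's argument. But the step where you invoke \Cref{complete descent of modules is a module} is not justified. You apply it with $A'=\Prism_S$, $C^{[0]}=\Prism_{S^{(0)}}$, $J=(p,I)$ to the families $\{\Fil^i E^{(\bullet)}\}$ (and then to the cofibers), asserting these are genuine descent data over the \v{C}ech nerve $\Prism_{S^{(\bullet)}}$ because the transition isomorphisms of $E^{(\bullet)}$ are filtered. This does not give the cartesian base-change property the lemma actually uses: a descent datum of gauges supplies a \emph{filtered} isomorphism $\Fil^\bullet E^{(m)} \simeq \Fil^\bullet E^{(n)}\widehat{\otimes}_{\Fil^\bullet_N\Prism_{S^{(n)}}}\Fil^\bullet_N\Prism_{S^{(m)}}$, and the degree-$i$ piece of a filtered (equivalently, graded Rees) tensor product mixes all filtration degrees --- for instance $\Fil^{i-1}E^{(n)}\cdot\Fil^1_N\Prism_{S^{(m)}}$ contributes to $\Fil^i E^{(m)}$ as well --- so $\Fil^i E^{(m)}$ is \emph{not} the plain base change $\Fil^i E^{(n)}\widehat{\otimes}_{\Prism_{S^{(n)}}}\Prism_{S^{(m)}}$. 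Hence $\{\Fil^i E^{(\bullet)}\}$ is not a cartesian section over $\Prism_{S^{(\bullet)}}$, and the key base-change identity that the proof of \Cref{complete descent of modules is a module} relies on is simply unavailable. Your levelwise limit formula $\Fil^i E=\lim_n\Fil^i E^{(n)}$ is correct, but by itself it only gives coconnectivity, which is the easy direction.

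The repair is to move one ring ``up''. Via the Rees dictionary, a descent datum of gauges \emph{is} a cartesian section of graded modules over the cosimplicial graded ring $\Rees(\Fil^\bullet_N\Prism_{S^{(\bullet)}})$, and the proof of \Cref{F-gauge sheaf} (its statements (1) and (2)) shows that this is the $(p,I)$-completed \v{C}ech nerve of the $(p,I)$-completely faithfully flat map $\Rees(\Fil^\bullet_N\Prism_S)\to\Rees(\Fil^\bullet_N\Prism_{S^{(0)}})$. Applying the evident graded adaptation of \Cref{complete descent of modules is a module} over this Rees ring shows that the descended Rees module $\Rees(\Fil^\bullet E)$ is concentrated in degree $0$, so in particular every $\Fil^i E$ is an honest module. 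For injectivity of the transitions, pass to the quotient by the Rees parameter $t$: the associated gradeds $\Gr^\bullet E^{(\bullet)}$ do form a cartesian section over the graded rings $\Gr^\bullet_N\Prism_{S^{(\bullet)}}$, so one more application of the lemma puts $\Gr^\bullet E$ in degree $0$, and the fiber sequence exhibiting $\Gr^\bullet E$ as the cofiber of multiplication by $t$ on $\Rees(\Fil^\bullet E)$ then forces $t$ to act injectively, i.e.\ forces the filtration transitions to be injective. This Rees-level reading is, I believe, what the paper's terse proof intends by ``the cohomological concentration property.''
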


\begin{proof}
Perfectness was already shown in loc.~cit.,
so all we need to check is the cohomological concentration property, which is
equivalent to both $\Fil^{\bullet}$ and $\Gr^{\bullet}$ being concentrated in cohomological degree $0$.
Both can be seen by exactly following the proof of \Cref{complete descent of modules is a module}.
\end{proof}

The following is our main theorem in this subsection,
which says that any coherent $I$-torsionfree prismatic $F$-crystal can be canonically extended
to a coherent prismatic $F$-gauge equipped with ``the saturated Nygaardian filtration''.
This extends the special case of $X=\Spf(\mathcal{O}_K)$ in \cite[Theorem 6.6.13]{Bha23} to arbitrary 
smooth $p$-adic formal schemes over $\Spf(\mathcal{O}_K)$.
\begin{theorem}
\label{thm crystal to gauge}
Let $X$ be  a smooth formal scheme over $\mathcal{O}_K$.
\begin{enumerate}
\item There is a functor 
\[
\Pi_X: \FC^{I\text{-}\mathrm{tf}}(X) \longrightarrow \FG^\coh(X),
\]
characterized by the requirement that for any 
$S \in X_\qrsp$, where $\Spf(S) \to X$ is $p$-completely flat,
the restriction of $\Pi_X(\mathcal{E})$ on $\Spf(S)_\Prism$ satisfies
$\Fil^\bullet(\Pi_X(\mathcal{E})(\Prism_S)) = \widetilde{\varphi}_\mathcal{E}^{-1}(I^\bullet \mathcal{E}(\Prism_S))$.
Here we are regarding both the source and target as additive categories.
\item The functor $\Pi_X$ is right adjoint to the forgetful functor from $I$-torsionfree $F$-gauges to
$I$-torsionfree $F$-crystals.
In fact, for any pair of $(E_1, \Fil^{\bullet} E_1, \widetilde{\varphi}_{E_1}) \in \FG^{\coh}(X)$ 
and $(\mathcal{E}_2, \widetilde{\varphi}_{\mathcal{E}_2}) \in \FC^{I\text{-}\mathrm{tf}}(X)$, we have an identification of homomorphisms:
\[
\mathrm{Hom}_{\FC^{\coh}(X)}(E_1, \mathcal{E}_2) = \mathrm{Hom}_{\FG^{\coh}(X)}(E_1, \Pi_X(\mathcal{E}_2)).
\]
\item The functor $\Pi_X$ is compatible with \'{e}tale pullback in $X$.
\end{enumerate}
\end{theorem}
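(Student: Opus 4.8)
The plan is to build $\Pi_X$ by hand on $p$-completely flat quasiregular semiperfectoid algebras, where everything becomes a module in degree $0$, then glue by quasi-syntomic descent, and finally read off the adjunction and the \'etale compatibility formally. The only substantial point is that the ``saturated'' filtration genuinely satisfies the $F$-gauge axioms; I isolate this as the main obstacle.

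\textbf{Construction and gluing (part (1)).} Fix $S\in X_\qrsp$ with $\Spf(S)\to X$ being $p$-completely flat. By \Cref{Evaluating at flat prism is t-exact} applied to the prism $\Prism_S$, together with coherence and $I$-torsionfreeness of $\mathcal{E}$, the complex $\mathcal{E}(\Prism_S)$ is a finitely presented $I$-torsionfree $\Prism_S$-module in degree $0$, carrying a $\varphi_{\Prism_S}$-semilinear Frobenius $\widetilde\varphi_{\mathcal{E}}\colon \mathcal{E}(\Prism_S)\to\mathcal{E}(\Prism_S)[1/I]$. For $i\in\mathbb{Z}$ set $\Fil^i\bigl(\Pi_X(\mathcal{E})(\Prism_S)\bigr):=\widetilde\varphi_{\mathcal{E}}^{-1}\bigl(I^i\mathcal{E}(\Prism_S)\bigr)$, a $\Prism_S$-submodule of $\mathcal{E}(\Prism_S)$; these form a decreasing $\mathbb{Z}$-indexed filtration, and since $\varphi_{\Prism_S}$ carries $\Fil^i_N\Prism_S$ into $I^i\Prism_S$ it is a filtered module over $\Fil^\bullet_N\Prism_S$ on which $\widetilde\varphi_{\mathcal{E}}$ restricts to a map $\Fil^\bullet E\to I^\bullet\Prism_S\otimes_{\Prism_S}\mathcal{E}(\Prism_S)$ linear over $\varphi_{\Prism_S}$. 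For a map $S_1\to S_2$ of such algebras the base change $\Prism_{S_1}\to\Prism_{S_2}$ is $(p,I)$-completely flat, hence commutes with forming these preimages, so the construction is compatible with the filtered base-change functors $\Phi_{(S_1,S_2)}$ of \Cref{functorial abs}; since the $p$-completely flat objects generate the quasi-syntomic topology, the data assemble, via \Cref{F-gauge sheaf} and \Cref{descent of coherent gauge is coherent}, to an object of $\fG^\coh(X)=\lim_{S\in X_\qrsp}\fG^\coh(\Spf(S))$, functorial in $\mathcal{E}$. The displayed formula then holds by construction and pins down $\Pi_X$ uniquely.

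\textbf{The $F$-gauge axiom (main obstacle).} What remains, and what I expect to be the hard part, is that on each such $\Prism_S$ the filtered linearization
\[
\varphi_E\colon \Fil^\bullet E\otimes_{\Fil^\bullet_N\Prism_S}I^{\mathbb{Z}}\Prism_S\ \longrightarrow\ I^{\mathbb{Z}}\mathcal{E}(\Prism_S)
\]
is a filtered isomorphism and that the associated Rees module is perfect, so that the gauge is coherent. The underlying map is the Frobenius isogeny, an isomorphism after inverting $I$; the content lies entirely on associated gradeds, where one must show $\sum_i\widetilde\varphi_{\mathcal{E}}(\Fil^iE)\cdot I^{\,n-i}=I^n\mathcal{E}(\Prism_S)$ for all $n$ and control finite generation -- here the height bound $[a,b]$ of $(\mathcal{E},\varphi_{\mathcal{E}})$ is what makes these sums finite and bounds the range of the filtration. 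I would handle this by reducing, via \'etale/Zariski descent, to $X=\Spf(R)$ affine carrying a single Breuil--Kisin prism $(A,I)$ covering $X_\Prism$ (\Cref{concrete description of BK prisms}, \Cref{BK covering property}), then, using quasi-syntomic descent for coherent gauges, checking the assertion after a quasi-syntomic cover of $\Spf(S)$ on which $\Prism_S$ is expressed through the perfection $(A,I)_\perf$ as in the proof of \Cref{BK covering property}. On such a cover the identification $\Gr^\bullet_N\Prism_S\cong\bigoplus_i\Fil^\conj_i\overline{\Prism}_S\{i\}t^{-i}$ from \cite[Theorem 12.2]{BS19}, the regularity and $(p,I)$-complete faithful flatness of the Breuil--Kisin Frobenius from \Cref{BK is regular with flat Frob}, and the explicit shape $A\simeq\widetilde R[\![u]\!]$ reduce the graded computation to the same bookkeeping carried out for $X=\Spf(\mathcal{O}_K)$ in \cite[Theorem 6.6.13]{Bha23}, of which the present statement is the relative version.

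\textbf{Adjunction and \'etale compatibility (parts (2) and (3)).} For (2), let $f\colon E_1\to\mathcal{E}_2$ be a morphism of the underlying $F$-crystals. Evaluating on a $p$-completely flat $S\in X_\qrsp$ and using that $f$ intertwines the semilinear Frobenii and that $\widetilde\varphi_{E_1}(\Fil^iE_1)\subseteq I^i\cdot E_1$ inside $E_1[1/I]$ by the gauge axiom for $E_1$, one gets $\widetilde\varphi_{\mathcal{E}_2}(f(\Fil^iE_1))\subseteq f(I^iE_1)\subseteq I^i\mathcal{E}_2$, i.e.\ $f(\Fil^iE_1)\subseteq\widetilde\varphi_{\mathcal{E}_2}^{-1}(I^i\mathcal{E}_2)=\Fil^i\Pi_X(\mathcal{E}_2)$. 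Thus every morphism of $F$-crystals is automatically filtered and promotes uniquely to a morphism of $F$-gauges $E_1\to\Pi_X(\mathcal{E}_2)$, compatibly over all such $S$; conversely every morphism of $F$-gauges forgets to one of $F$-crystals, and since the relevant mapping spaces are discrete these two assignments are mutually inverse, which gives the asserted identification $\mathrm{Hom}_{\FC^\coh(X)}(E_1,\mathcal{E}_2)=\mathrm{Hom}_{\FG^\coh(X)}(E_1,\Pi_X(\mathcal{E}_2))$ and exhibits $\Pi_X$ as right adjoint to the forgetful functor. For (3), if $j\colon U\to X$ is \'etale then every $S\in U_\qrsp$ lies in $X_\qrsp$ with the same prism $\Prism_S$, the same Nygaard filtration, and $(j^*\mathcal{E})(\Prism_S)=\mathcal{E}(\Prism_S)$ with the same semilinear Frobenius, so $\Pi_U(j^*\mathcal{E})$ and $j^*\Pi_X(\mathcal{E})$ have equal restriction to every $S\in U_\qrsp$ and therefore coincide.
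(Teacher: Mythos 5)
Your overall architecture matches the paper's: define the saturated Nygaardian filtration on values at flat qrsp objects, glue by quasi-syntomic descent of coherent $F$-gauges, and extract the adjunction and the \'etale compatibility formally afterward. Parts (2) and (3) of your argument are essentially correct; in fact your treatment of (3) (directly comparing characterizing properties at flat $S$, rather than going through the twisted filtration on a lifted Breuil--Kisin prism as the paper does) is arguably cleaner.

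However, the gluing step in your part (1) has a real gap. You assert that because $\Prism_{S_1}\to\Prism_{S_2}$ is $(p,I)$-completely flat, the saturated filtrations $\widetilde\varphi^{-1}(I^\bullet\mathcal{E}(\Prism_{S_i}))$ automatically satisfy filtered base change. This does not follow from flatness. Two things go wrong. First, the gluing datum for $\fG^\coh(X)=\lim_S\fG^\coh(\Spf(S))$ requires the \emph{filtered} tensor product $\Fil^\bullet E(\Prism_{S_1})\widehat\otimes_{\Fil^\bullet_N\Prism_{S_1}}\Fil^\bullet_N\Prism_{S_2}$ to agree with the saturated filtration on $\mathcal{E}(\Prism_{S_2})$; even when the Rees ring map is faithfully flat (the nontrivial content of \Cref{F-gauge sheaf}), filtered base change does not commute with forming preimages term-by-term, because the degree-$i$ piece of a filtered tensor product is a colimit over degree decompositions, not simply $\Fil^i\otimes_{\Prism_{S_1}}\Prism_{S_2}$. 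Second, $\widetilde\varphi_\mathcal{E}$ is $\varphi$-semilinear, so ``take preimage, then base change'' versus ``base change, then take preimage'' is not a flatness argument even at the level of unfiltered modules. This is precisely why the paper introduces the twisted filtration on $\varphi_A^*\mathcal{E}(A)$ over a Breuil--Kisin prism (\Cref{twist filtration}), establishes boundedness of its graded pieces and of $N[1/u]/N$ using the height interval $[a,b]$ (\Cref{boundedness results on twist graded}), and only then proves the filtered base-change formula for the saturated filtration in \Cref{saturated Nyg fil satisfies filtered base change proposition}, via the criterion isolated as \Cref{Porism in Bha23}. Likewise \Cref{perfectness of twist filtration} is what makes the resulting Rees module perfect; your ``main obstacle'' paragraph correctly locates where this work lives and names the right tools (Breuil--Kisin prisms, the conjugate-filtration description of $\Gr^\bullet_N$, \cite[Thm.~6.6.13]{Bha23}), but it stops at a roadmap rather than carrying out the boundedness and base-change estimates. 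To fill the gap you would need to state and prove the analogues of \Cref{boundedness results on twist graded} through \Cref{saturated Nyg fil satisfies filtered base change proposition}; everything else in your writeup then goes through as you describe.
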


We need some preparatory discussion on local situations first.
We fix the following assumption for the rest of the subsection.
\begin{situation}
\label{local situation}
Let $U = \Spf(R)$ be an affine open of $X$, and let $(A, I = (d))$ be an
oriented Breuil--Kisin prism of $U$
whose existence is again guaranteed by deformation theory (see for example \cite[Example 3.4]{DLMS}).
Denote the perfection $(A_{\perf}, I A_{\perf})$ by $(A^{(0)}, I A^{(0)})$.
We denote the value of $\mathcal{E}$ on $(A, I)$ and $(A^{(0)}, I A^{(0)})$ by $M$ and $M^{(0)}$,
they are equipped with linearized Frobenii $\varphi_M$ and $\varphi_{M^{(0)}}$ respectively.
\end{situation}

\begin{construction}
\label{twist filtration}
Recall that the $F$-crystal structure provides us with a linearized Frobenii
$\varphi^*_A M \xrightarrow{\varphi_M} M[1/I]$ and
$\varphi^*_{A^{(0)}} M^{(0)} \xrightarrow{\varphi_{M^{(0)}}} M^{(0)}[1/I]$.
We define the \emph{twisted filtration} on $\varphi^*_A M$ (resp.~$\varphi^*_{A^{(0)}} M^{(0)}$)
by the following formula
\[
\Fil^{\bullet}_{\mathrm{tw}} \varphi^*_A M \coloneqq \varphi_M^{-1}\big(I^{\bullet} M\big)
\text{ (resp.~}\Fil^{\bullet}_{\mathrm{tw}} \varphi^*_{A^{(0)}} M^{(0)}
\coloneqq \varphi_{M^{(0)}}^{-1}\big(I^{\bullet} M^{(0)}\big)\text{).}
\]
\end{construction}

\begin{lemma}
\label{boundedness results on twist graded}
Let $E$ be an $I$-torsionfree $F$-crystal having height in $[a, b]$,
and denote the associated graded of the twisted filtration on
$\varphi^*_A M = \varphi^*_A E(A)$ above by $N$,
viewed as a graded $R[u] = \Gr_{I^{\mathrm{N}}}(A)$-module.
Then $N$ is $u$-torsionfree,
lives in degrees $\geq a$, and $N^{\deg = i} \xrightarrow{\cdot u} N^{\deg = i+1}$
is an isomorphism provided $i \geq b$.
In particular, both $N$ and $N[1/u]/N$ have bounded $p$-power torsion.
\end{lemma}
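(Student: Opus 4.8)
The plan is to work entirely over the fixed oriented Breuil--Kisin prism $(A,I=(d))$ of \Cref{local situation}, writing $M=E(A)$ and $\varphi_M\colon \varphi_A^*M\to M[1/d]$ for the $A$-linear Frobenius, which becomes an isomorphism after inverting $d$. Two preliminary observations set everything up. First, since $\varphi_A$ is finite (\Cref{BK is regular with flat Frob}) and $A$ is Noetherian, $\varphi_A^*M$ is a finite $A$-module, so each $\Fil^i_{\mathrm{tw}}\varphi_A^*M=\varphi_M^{-1}(d^iM)$ is a finite $A$-module; by $A$-linearity of $\varphi_M$ we have $d\cdot\Fil^i_{\mathrm{tw}}\subseteq\Fil^{i+1}_{\mathrm{tw}}$, so each graded piece $N^{\deg=i}=\Fil^i_{\mathrm{tw}}/\Fil^{i+1}_{\mathrm{tw}}$ is a finite $R=A/I$-module and $N$ is indeed a graded $R[u]$-module. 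Second, $\ker(\varphi_M)\subseteq\Fil^j_{\mathrm{tw}}$ for every $j\in\mathbb{Z}$ (because $0\in d^jM$), and the height hypothesis gives $\mathrm{Im}(\varphi_M)\subseteq d^aM$, hence $\Fil^i_{\mathrm{tw}}\varphi_A^*M=\varphi_A^*M$ for all $i\le a$; this already shows $N$ is supported in degrees $\ge a$.

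Next I would establish the two structural facts about $\cdot u$ on $N$. Injectivity (i.e.\ $u$-torsionfreeness) is where $I$-torsionfreeness of $\mathcal{E}$ enters: if $x\in\Fil^i_{\mathrm{tw}}$ and $dx\in\Fil^{i+2}_{\mathrm{tw}}$ then $d\varphi_M(x)=\varphi_M(dx)\in d^{i+2}M$, and since $d$ is a nonzerodivisor on $M$ this forces $\varphi_M(x)\in d^{i+1}M$, i.e.\ $x\in\Fil^{i+1}_{\mathrm{tw}}$. For surjectivity when $i\ge b$, take $x\in\Fil^{i+1}_{\mathrm{tw}}$ and write $\varphi_M(x)=d^{i+1}m$ with $m\in M$; since $i\ge b$ we have $d^im\in d^iM\subseteq d^bM=I^bM\subseteq\mathrm{Im}(\varphi_M)$ by the lower height bound, so $d^im=\varphi_M(y)$ with $y\in\Fil^i_{\mathrm{tw}}$, whence $\varphi_M(x)=d\varphi_M(y)=\varphi_M(dy)$ and $x-dy\in\ker(\varphi_M)\subseteq\Fil^{i+2}_{\mathrm{tw}}$; hence $\Fil^{i+1}_{\mathrm{tw}}=d\,\Fil^i_{\mathrm{tw}}+\Fil^{i+2}_{\mathrm{tw}}$, which says exactly that $\cdot u\colon N^{\deg=i}\to N^{\deg=i+1}$ is onto. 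Together these give that $\cdot u$ is an isomorphism in degrees $\ge b$.

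The bounded-torsion assertions are then formal consequences. For $N$ itself: by the isomorphisms just proved $N^{\deg=i}\cong N^{\deg=b}$ for all $i\ge b$, so $N$ is generated over $R[u]$ by its finitely many graded pieces in the range $a\le i\le b$, each a finite $R$-module; hence $N$ is a finite module over the Noetherian ring $R[u]$, so it is Noetherian and its $p$-power torsion submodule is finitely generated, hence killed by some $p^c$. For $N[1/u]/N$: the transition maps $\cdot u$ stabilize to isomorphisms past degree $b$, so $(N[1/u])^{\deg=i}\cong N^{\deg=b}$ canonically for every $i$, with $(N[1/u]/N)^{\deg=i}=0$ for $i\ge b$ (there $N^{\deg=i}\to(N[1/u])^{\deg=i}$ is an isomorphism), equal to the cokernel of the iterated multiplication $N^{\deg=i}\to N^{\deg=b}$ for $a\le i<b$, and equal to $N^{\deg=b}$ itself for $i<a$ (as $N^{\deg=i}=0$ there). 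Thus every graded piece of $N[1/u]/N$ is a quotient of the single finite $R$-module $N^{\deg=b}$, and only finitely many distinct such quotients occur; since $R$ is Noetherian each has bounded $p$-power torsion, and the maximum of these finitely many bounds works uniformly.

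The point I expect to be the real obstacle — and the reason the statement is phrased the way it is — is this last one: $N[1/u]/N$ is not finitely generated (it is nonzero in all degrees $<a$), so one cannot simply invoke Noetherianity of a module; one has to notice and use the ``eventually constant going to $-\infty$, zero going to $+\infty$'' shape of its graded pieces, which is itself the output of the two height bounds. Everything else is bookkeeping with finite modules over the Noetherian rings $A$, $R$, and $R[u]$.
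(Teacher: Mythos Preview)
Your proof is correct and follows essentially the same strategy as the paper's: extract $u$-torsionfreeness and the stabilization in degrees $\ge b$ from the two height bounds, then deduce bounded $p$-power torsion from finite generation over the Noetherian ring $R[u]$ and the explicit description of the graded pieces of $N[1/u]/N$.

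The one genuine difference is your treatment of $\ker(\varphi_M)$. The paper first observes that $I$-torsionfreeness of $\mathcal{E}$ forces $\varphi_M$ to be injective, whence $\Fil^i_{\mathrm{tw}}\cong\mathrm{Im}(\varphi_M)\cap I^iM$ and $N^{\deg=i}$ is literally the image of this intersection in $I^iM/I^{i+1}M$; $u$-torsionfreeness is then immediate (submodule of a $u$-torsionfree module), and $N^{\deg\ge b}$ is identified with copies of $M/IM$. You instead carry $\ker(\varphi_M)$ along explicitly, using only that it lies in every $\Fil^j_{\mathrm{tw}}$, and prove the two structural facts by direct element-chasing. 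This buys you robustness---your argument would go through even without knowing $\varphi_M$ is injective---at the cost of the paper's cleaner identification of $N$ inside $\Gr_{I^\bullet}(M)$. Your analysis of $N[1/u]/N$ (every graded piece a quotient of the fixed finite $R$-module $N^{\deg=b}$, only finitely many distinct such) is exactly the paper's, modulo the paper naming $N^{\deg=b}\cong M/IM$.
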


\begin{proof}
Since $E$ is $I$-torsionfree, we know that the linearized Frobenius
is injective.
Therefore we have an identification: $\Fil^i_{\mathrm{tw}}(M) = \mathrm{Im}(\varphi_M) \cap I^i M$.
Consequently, we have 
\[
N^{\deg = i} = \mathrm{Im}(\mathrm{Im}(\varphi_M) \cap I^i M
\to I^{i}M/I^{i+1}M),
\]
which immediately implies $u$-torsionfreeness.
The rest of the second sentence follows from the height condition.
It follows that the $R[u]$-module $N$ is generated by its degree $[a,b]$
pieces, 
and since each graded piece is finitely generated over $R$,
we see $N$ is finitely generated over $R[u]$. 
Since $R[u]$ is Noetherian, we see that
$N$ has bounded $p$-power torsion.
The $R$-module $N[1/u]/N$ is given by the direct sum of infinite copies of $M/I$
together with $\mathrm{Coker}(N^{\deg = i} \xrightarrow{\cdot u^{b-i}} N^{\deg = b})$ where $i \in [a,b-1]$.
This explicit description shows the boundedness claim of $N[1/u]/N$.
\end{proof}

We get the following consequence concerning perfectness of the twisted filtration
on $\varphi^*_A M$.

\begin{lemma}
\label{perfectness of twist filtration}
The filtered $I^{\mathbb{N}}A$-module $\Fil^{\bullet}_{\mathrm{tw}} \varphi^*_A M$
is filtered perfect.
\end{lemma}

\begin{proof}
Since $A$ is $I$-adically complete, we may apply \Cref{criteria for filtered perfect},
therefore we need to check that
\begin{enumerate}
\item The twisted filtration is eventually constant (see \Cref{constant filtration definition});
\item The underlying $\varphi^*_A M$ is perfect over $A$; and
\item the filtered base change
$\Fil^{\bullet}_{\mathrm{tw}} \varphi^*_A M \otimes_{I^{\mathbb{N}} A} A/I$
is filtered perfect over $A/I$.
\end{enumerate}
By \Cref{general lemma on eventual constancy}.(1) and \Cref{boundedness results on twist graded},
the twisted filtration is constant after $a$-th filtration.
(2) follows from the fact that $M$ is $A$-perfect.

Below let us show (3) above. Using the resolution
$0 \to I \otimes_A I^{\mathbb{N}}A \langle -1 \rangle \to I^{\mathbb{N}}A \to A/I \to 0$
of the filtered $I^{\mathbb{N}}A$-algebra $A/I = \Gr^0(I^{\mathbb{N}}A)$,
we see that the filtered base change has its filtered pieces given by:
\[
\Fil^i\bigl(\Fil^{\bullet}_{\mathrm{tw}} \varphi^*_A M \otimes_{I^{\mathbb{N}} A} A/I \bigr) = 
\mathrm{Coker}(I \otimes_A \Fil^{i-1}_{\mathrm{tw}} \varphi^*_A M \to \Fil^{i}_{\mathrm{tw}} \varphi^*_A M).
\]
Therefore we see that $\Fil^{\geq (b+1)}$ of the filtered base change is $0$.
Note that by our convention, the Rees's construction $\Rees(A/I) = R[t]$
is graded by $-\mathbb{N}$, and the filtered base change has bounded above grading by the previous sentence.
By \Cref{graded: perf vs cplt perf}, we are reduced to showing
\[
\Rees(\Fil^{\bullet}_{\mathrm{tw}} \varphi^*_A M \otimes_{I^{\mathbb{N}} A} A/I) \otimes_{\Rees(A/I)} A/I
\cong \Gr^{\ast}(\Fil^{\bullet}_{\mathrm{tw}} \varphi^*_A M \otimes_{I^{\mathbb{N}} A} A/I)
\cong N \otimes_{R[u]} R = N/^L u
\]
is graded perfect over $R$.
Here the first equivalence follows from the fact that
$A/I = \mathrm{Cone}(\Rees(A/I) \xrightarrow{\cdot t} \Rees(A/I)$,
and the second equivalence follows from the compatibility in
\Cref{filtered base change and graded base change}.
By \Cref{boundedness results on twist graded}, we see that $N/^L u$
is concentrated in gradings between $a$ and $b$.
By Noetherianity of $R$, we see that in each degree, they are all finitely generated
$R$-modules, therefore they are all perfect $R$-complexes as $R$ is regular.
\end{proof}

We need the following auxiliary lemma which says that the saturated Nygaardian filtration
on $\Pi_U(E) \mid_{A^{(0)}}$ has a model over the filtered ring $(A, I^{\mathbb{N}})$.

\begin{lemma}
\label{twist filtration and saturated filtration}
We have the following filtered base change formula between twisted filtrations
constructed in \Cref{twist filtration}:
\[
\Fil^{\bullet}_{\mathrm{tw}} \varphi^*_A M \widehat{\otimes}_{(A, I^{\mathbb{N}})} (A^{(0)}, I^{\mathbb{N}} A^{(0)})
\xrightarrow{\cong} \Fil^{\bullet}_{\mathrm{tw}} \varphi^*_{A^{(0)}} M^{(0)},
\]
as well as the following filtered base change formula between the twisted filtration
and the saturated Nygaardian filtration:
\[
\Fil^{\bullet}_{\mathrm{tw}} \varphi^*_{A^{(0)}} M^{(0)} \widehat{\otimes}_{(A^{(0)}, I^{\mathbb{N}} A^{(0)}), \varphi^{-1}} 
(A^{(0)}, \varphi^{-1}(I^{\mathbb{N}} A^{(0)}) = \Fil^{\bullet}_N(A^{(0)})) \xrightarrow{\cong} \Fil^{\bullet} M^{(0)}.
\]
\end{lemma}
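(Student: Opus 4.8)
The two displayed isomorphisms are essentially independent, so I would prove them separately. The second one is the more formal. Since $A^{(0)} = A_{\perf}$ is a perfect $\delta$-ring, its Frobenius $\varphi$ is an automorphism, and by definition the Nygaard filtration of the associated perfect prism is $\Fil^\bullet_N(A^{(0)}) = \varphi^{-1}(I^\bullet A^{(0)})$; hence $\varphi$ is a \emph{filtered} isomorphism $(A^{(0)}, \Fil^\bullet_N(A^{(0)})) \xrightarrow{\sim} (A^{(0)}, I^\bullet A^{(0)})$. Thus the base change on the left of the second formula is base change of a filtered module along the filtered isomorphism $\varphi^{-1}$, i.e.\ mere transport of structure (on Rees algebras it relabels the grading by an isomorphism). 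Chasing this transport through the canonical identification $\varphi^*_{A^{(0)}} M^{(0)} \cong M^{(0)}$ afforded by the perfectness of $A^{(0)}$, one finds that the linearized Frobenius becomes the unlinearized crystalline Frobenius $\varphi^{\sharp}_{M^{(0)}} \colon M^{(0)} \to M^{(0)}[1/I]$ (defined since the crystal is $I$-torsionfree) and the transported filtration becomes $(\varphi^{\sharp}_{M^{(0)}})^{-1}(I^\bullet M^{(0)})$ inside $M^{(0)}$ with its $\Fil^\bullet_N(A^{(0)})$-module structure, which is exactly the saturated Nygaardian filtration on the $F$-crystal $M^{(0)}$ over the perfect prism $A^{(0)}$. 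The $(p,I)$-completion in the displayed base change is harmless because by \Cref{perfectness of twist filtration} the source is already $(p,I)$-complete and $\varphi^{-1}$ is an isomorphism.

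For the first formula I would begin with the crystal property for $(A,I) \to (A^{(0)}, I A^{(0)})$, which gives $M^{(0)} = M \widehat{\otimes}_A A^{(0)}$ and $\varphi^*_{A^{(0)}} M^{(0)} = (\varphi^*_A M) \widehat{\otimes}_A A^{(0)}$ compatibly with the linearized Frobenii; this reduces the claim to matching the two twisted filtrations under $(p,I)$-completed base change along the $(p,I)$-completely faithfully flat map $A \to A^{(0)}$. Passing to Rees modules, the left side is the perfect $\Rees(I^{\mathbb{N}}) = A[t,u]/(ut-d)$-module $\Rees(\Fil^\bullet_{\mathrm{tw}} \varphi^*_A M)$ of \Cref{perfectness of twist filtration}, base changed to $\Rees(I^{\mathbb{N}} A^{(0)}) = A^{(0)}[t,u]/(ut-d)$, and there is an evident comparison map to $\Rees(\Fil^\bullet_{\mathrm{tw}} \varphi^*_{A^{(0)}} M^{(0)})$. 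To see this map is an isomorphism it suffices, by \cite[\href{https://stacks.math.columbia.edu/tag/068V}{Tag 068V}]{stacks-project}, to check after $(p,I)$-completely inverting $t$ and after reducing modulo $t$. Inverting $t$ reduces to the underlying base change $\varphi^*_A M \widehat{\otimes}_A A^{(0)} = \varphi^*_{A^{(0)}} M^{(0)}$ already in hand, while reducing modulo $t$ turns the statement into the graded base change $N \widehat{\otimes}_R R_\infty \cong N^{(0)}$, where $R_\infty = A^{(0)}/I$, $N$ is the graded $R[u]$-module of \Cref{boundedness results on twist graded}, and $N^{(0)}$ is its analogue for the perfect prism.

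This graded statement carries the real content, and I would handle it using the explicit structure in \Cref{boundedness results on twist graded}. Fix a range $[a,b]$ containing the height of $\mathcal{E}$ (one exists since $M$ is finitely generated over the Noetherian ring $A$). Outside $[a,b]$ the twisted filtration on $\varphi^*_A M$ is either all of $\varphi^*_A M$ (in degrees $\le a$) or the isomorphic image $\varphi_M^{-1}(I^i M) \cong I^i M$ of an $I$-adic shift of $I^b M$ (in degrees $\ge b$), and in both ranges base change along $A \to A^{(0)}$ is immediate from flatness and the crystal property. In the remaining range $i \in (a,b)$, each $\mathrm{Im}(\varphi_M) \cap I^i M$ is an intersection of two finitely generated $A$-submodules of the finitely generated $A$-module $I^a M$ (using $I$-torsionfreeness of $M$ and $\mathrm{Im}(\varphi_M) \subset I^a M$), so, $A$ being Noetherian, flat base change along $A \to A^{(0)}$ commutes with this intersection and with the subquotients cutting out $N$, which gives $N \widehat{\otimes}_R R_\infty \cong N^{(0)}$ and hence the first formula. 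I expect the only genuine nuisance to be the $p$- and $(p,I)$-completion bookkeeping in this middle range: it is harmless because every module appearing there is finitely generated over the Noetherian $(p,I)$-complete ring $A$, hence already complete, while the infinite tail in degrees $\ge b$ base changes $I$-adically. The conceptual content is just that flat base change commutes with finite limits of finitely generated modules over a Noetherian ring.
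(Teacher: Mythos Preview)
Your treatment of the second formula is essentially identical to the paper's: both argue that $\varphi_{A^{(0)}}$ is a filtered isomorphism $(A^{(0)},\Fil^\bullet_N)\xrightarrow{\sim}(A^{(0)},I^\bullet)$, so base changing along $\varphi^{-1}$ is transport of structure, and under this transport the linearized Frobenius $\varphi_{M^{(0)}}$ becomes the semi-linear $\widetilde{\varphi}_{M^{(0)}}$, whence the twisted filtration becomes the saturated Nygaardian filtration.

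For the first formula your argument is correct but more elaborate than the paper's. The paper observes directly that the Rees algebra identity $\Rees(I^{\mathbb{N}}A)\widehat{\otimes}_A A^{(0)}\cong\Rees(I^{\mathbb{N}}A^{(0)})^{\wedge}$ says precisely that the filtered base change along $(A,I^{\mathbb{N}})\to(A^{(0)},I^{\mathbb{N}}A^{(0)})$ is nothing but term-by-term ordinary base change along $A\to A^{(0)}$; then, since $A\to A^{(0)}$ is flat (it is $(p,I)$-completely flat and $A$ is Noetherian) and $\varphi_M$ base-changes to $\varphi_{M^{(0)}}$, the preimage $\varphi_M^{-1}(I^iM)$ base-changes to $\varphi_{M^{(0)}}^{-1}(I^iM^{(0)})$ for every $i$ in one stroke. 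Your route---splitting on the Rees side into the $t$-inverted and $t$-reduced loci, then analysing the graded module $N$ via the height window $[a,b]$ and invoking that flat base change over Noetherian rings commutes with intersections of finitely generated submodules---arrives at exactly this last fact but only after unnecessary case analysis. The $t$-dichotomy and the appeal to \Cref{perfectness of twist filtration} and \Cref{boundedness results on twist graded} are not needed here; those lemmas are used elsewhere in the paper (for perfectness of the resulting gauge and for the weight analysis), not for this base change. What your approach buys is a self-contained check that does not require noticing the Rees-algebra simplification; what the paper's approach buys is a two-line proof.
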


Here we are using the fact that $A^{(0)}$ is a perfect prism, hence its Frobenius is invertible.

\begin{proof}
Note that we have the following relation between Rees algebras associated with various
filtered rings showing up in the statement:
\[
\mathrm{Rees}(I^{\mathbb{N}}A) \widehat{\otimes}_A A^{(0)} \cong \mathrm{Rees}(I^{\mathbb{N}} A^{(0)})^{\wedge}
\text{ and }
\]
\[
\mathrm{Rees}(I^{\mathbb{N}} A^{(0)}) \widehat{\otimes}_{A^{(0)}, \varphi^{-1}} A^{(0)}
\cong \mathrm{Rees}(\varphi^{-1}(I^{\mathbb{N}} A^{(0)}))^{\wedge} = \mathrm{Rees}(\Fil^{\bullet}_{N}\Prism_{A^{(0)}/I})^{\wedge}.
\]
Via Rees's construction (see \cite[\S 2.2]{Bha23}), our first base change formula is equivalent to the following formula:
\[
\Fil^{\bullet}_{\mathrm{tw}} \varphi^*_A M \widehat{\otimes}_A A^{(0)} 
\xrightarrow{\cong} \Fil^{\bullet}_{\mathrm{tw}} \varphi^*_{A^{(0)}} M^{(0)}
\]
which follows from the fact that $A \to A^{(0)}$ is flat and under this flat map
the $\varphi_M$ is base changed to $\varphi_{M^{(0)}}$.
Similarly, our second base change formula is equivalent to:
\[
\Fil^{\bullet}_{\mathrm{tw}} \varphi^*_{A^{(0)}} M^{(0)} \widehat{\otimes}_{A^{(0)}, \varphi^{-1}} 
A^{(0)} \xrightarrow{\cong} \Fil^{\bullet} M^{(0)}.
\]
Recall that the filtration $\Fil^{\bullet} M^{(0)}$ is defined by the preimage of $I$-adic filtration
under the semi-linear Frobenius $\widetilde{\varphi}_{M^{(0)}}$.
Therefore our second base change formula follows from the fact that
$\varphi^{-1}_{A^{(0)}}$ is an isomorphism and upon base changing the source via $\varphi^{-1}_{A^{(0)}}$
the linearized Frobenius $\varphi_{M^{(0)}}$ becomes the semi-linear Frobenius $\widetilde{\varphi}_{M^{(0)}}$.
\end{proof}

The following crucial proposition, which is inspired by the proof of \cite[Lemma 6.6.10]{Bha23},
shows that the saturated Nygaardian filtration satisfies filtered base change
formula in a particular situation.

\begin{proposition}
\label{saturated Nyg fil satisfies filtered base change proposition}
Let $R^{(0)} \coloneqq A^{(0)}/I$ and let $S$ be a $p$-completely flat quasi-syntomic $R^{(0)}$-algebra, 
then the natural filtered base change
is a filtered isomorphism:
\[
\left(M^{(0)}, \widetilde{\varphi}_\mathcal{E}^{-1}(I^{\bullet} M^{(0)})\right) \widehat{\otimes}_{(A^{(0)}, \Fil^{\bullet}_N)}
(\Prism_S, \Fil^{\bullet}_N) \xrightarrow{\cong} \left(\mathcal{E}(\Prism_S), \widetilde{\varphi}_\mathcal{E}^{-1}(I^{\bullet} \mathcal{E}(\Prism_S))\right).
\]
Consequently, if $S_1 \to S_2$ is a morphism in $X_{\qrsp}$ and suppose they are $p$-completely flat over some $R^{(0)}$
constructed out of an oriented Breuil--Kisin prism $(A, I = (d))$, then the natural filtered base change
is a filtered isomorphism:
\[
\left(\mathcal{E}(\Prism_{S_1}), \widetilde{\varphi}_\mathcal{E}^{-1}(I^{\bullet} \mathcal{E}(\Prism_{S_1}))\right) 
\widehat{\otimes}_{(\Prism_{S_1}, \Fil^{\bullet}_N)}
(\Prism_{S_2}, \Fil^{\bullet}_N) \xrightarrow{\cong} 
\left(\mathcal{E}(\Prism_{S_2}), \widetilde{\varphi}_\mathcal{E}^{-1}(I^{\bullet} \mathcal{E}(\Prism_{S_2}))\right).
\]
\end{proposition}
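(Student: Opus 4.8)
Assuming the first assertion, applying it to $S = S_1$ and to $S = S_2$ identifies each of
$\big(\mathcal{E}(\Prism_{S_i}), \widetilde{\varphi}_\mathcal{E}^{-1}(I^\bullet \mathcal{E}(\Prism_{S_i}))\big)$
with the completed filtered base change of
$\big(M^{(0)}, \widetilde{\varphi}_\mathcal{E}^{-1}(I^\bullet M^{(0)})\big)$
along $(A^{(0)}, \Fil^\bullet_N) \to (\Prism_{S_i}, \Fil^\bullet_N)$. Since the structure maps $A^{(0)} \to \Prism_{S_1} \to \Prism_{S_2}$ compose and completed filtered base change is transitive, the base change of the saturated filtration from $S_1$ to $S_2$ is again a saturated filtration, which is exactly the second assertion. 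So from now on I focus on the first assertion.

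\textbf{Passing to Rees modules and reducing to associated graded.} Via the Rees construction we must show that the natural map
\[
\Rees\big(\widetilde{\varphi}_\mathcal{E}^{-1}(I^\bullet M^{(0)})\big) \widehat{\otimes}_{\Rees(\Fil^\bullet_N A^{(0)})} \Rees(\Fil^\bullet_N \Prism_S) \longrightarrow \Rees\big(\widetilde{\varphi}_\mathcal{E}^{-1}(I^\bullet \mathcal{E}(\Prism_S))\big)
\]
is an isomorphism. First, by \Cref{twist filtration and saturated filtration} (using that $\varphi_{A^{(0)}}$ is an isomorphism) the first tensor factor on the left is, as a $\Rees(\Fil^\bullet_N A^{(0)})$-module, a base change of $\Rees(\Fil^\bullet_{\mathrm{tw}} \varphi^*_A M)$, which is perfect over $\Rees(I^{\mathbb{N}} A) = A[t,u]/(ut-d)$ by \Cref{perfectness of twist filtration}; hence it is a perfect $\Rees(\Fil^\bullet_N A^{(0)})$-module concentrated in degree $0$. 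Next I claim the ring map $\Rees(\Fil^\bullet_N A^{(0)}) \to \Rees(\Fil^\bullet_N \Prism_S)$ is $(p,I)$-completely faithfully flat: by \Cref{useful flatness criterion 1} (with $f = t$) this reduces to the underlying map $A^{(0)} \to \Prism_S$, which is $(p,I)$-completely faithfully flat, and to the graded map $\Gr^\bullet_N A^{(0)} \to \Gr^\bullet_N \Prism_S$, which by \cite[Theorem 12.2]{BS19} is the Rees algebra of the (twisted) conjugate-filtered Hodge--Tate algebra of $S$ over $R^{(0)}$; one then runs exactly the argument in the proof of \Cref{F-gauge sheaf} (apply \Cref{useful flatness criterion 1} with $f = u$, then \Cref{useful flatness criterion 2}, then use the $p$-complete flatness of the graded pieces $\Gamma^*_S(\mathbb{L}_{S/R^{(0)}}[-1])^{\wedge}$, which holds because $S$ is quasi-syntomic over $R^{(0)}$). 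Consequently the left-hand side is the honest (underived) flat base change of a perfect degree-$0$ module, so is itself perfect and concentrated in degree $0$; moreover the displayed map is an isomorphism on underlying complexes by the crystal base change property of $\mathcal{E}$ together with \Cref{twist filtration and saturated filtration}. Using completeness of the (saturated) Nygaard filtrations involved, it therefore suffices to check that the displayed map is an isomorphism after applying $\Gr^\bullet$, i.e.\ on associated graded pieces.

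\textbf{The graded identification, and the main obstacle.} After $\Gr^\bullet$ the left-hand side becomes $\Gr^\bullet_{\mathrm{tw}}(\varphi^*_{A^{(0)}} M^{(0)}) \widehat{\otimes}_{\Gr^\bullet_N A^{(0)}} \Gr^\bullet_N \Prism_S$, and by \Cref{boundedness results on twist graded} the first factor is the finitely generated, $u$-torsionfree graded $\Gr^\bullet_N A^{(0)} = R^{(0)}[u]$-module $N^{(0)} \coloneqq N \otimes_{R[u]} R^{(0)}[u]$; by the flatness just established this is the plain base change $N^{(0)} \otimes_{R^{(0)}[u]} \Gr^\bullet_N \Prism_S$, in degree $0$. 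It remains to identify this with $\Gr^\bullet$ of the saturated Nygaard filtration $\widetilde{\varphi}_\mathcal{E}^{-1}(I^\bullet \mathcal{E}(\Prism_S))$ on $\mathcal{E}(\Prism_S)$. Here I would use: the crystal property gives $\varphi^*_{\Prism_S}\mathcal{E}(\Prism_S) = (\varphi^*_{A^{(0)}} M^{(0)}) \widehat{\otimes}_{A^{(0)}} \Prism_S$ with $\varphi_{\mathcal{E},\Prism_S}$ the flat base change of $\varphi_{\mathcal{E},A^{(0)}}$; since $\mathcal{E}$ is $I$-torsionfree (so $\varphi_\mathcal{E}$ is injective) and the relevant cokernels are coherent, the twisted filtration base changes, i.e.\ $\Fil^\bullet_{\mathrm{tw}}\varphi^*_{\Prism_S}\mathcal{E}(\Prism_S) = \Fil^\bullet_{\mathrm{tw}}\varphi^*_{A^{(0)}} M^{(0)} \widehat{\otimes}_{(A^{(0)}, I^{\mathbb{N}})}(\Prism_S, I^{\mathbb{N}})$; the natural $\varphi_{\Prism_S}$-semilinear injection $\mathcal{E}(\Prism_S) \hookrightarrow \varphi^*_{\Prism_S}\mathcal{E}(\Prism_S)$ exhibits $\widetilde{\varphi}_\mathcal{E}^{-1}(I^\bullet \mathcal{E}(\Prism_S))$ as the preimage of $\Fil^\bullet_{\mathrm{tw}}$; and passing to associated graded and invoking \cite[Theorem 12.2]{BS19} once more to identify $\Gr^\bullet$ of $\varphi_{\Prism_S}\colon \Fil^\bullet_N \Prism_S \to I^\bullet\Prism_S$ with the conjugate-filtration inclusion $\Gr^\bullet_N \Prism_S \hookrightarrow \Gr^\bullet_{\mathcal{I}}\Prism_S$, one matches $\Gr^\bullet\big(\widetilde{\varphi}_\mathcal{E}^{-1}(I^\bullet \mathcal{E}(\Prism_S))\big)$ with $N^{(0)} \otimes_{R^{(0)}[u]} \Gr^\bullet_N \Prism_S$, as needed. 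The main obstacle is precisely this last identification: the saturated Nygaard filtration is defined as a preimage, an operation that does not commute with base change in general, and the content of the proposition is that here it does — which works only because of the perfectness of the twisted Rees module $\Rees(\Fil^\bullet_{\mathrm{tw}}\varphi^*_A M)$ and the $p$-complete flatness of $\Gr^\bullet_N A^{(0)} \to \Gr^\bullet_N \Prism_S$ established above.
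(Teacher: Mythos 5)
Your reduction of the second assertion to the first, the Rees-module setup, the perfectness of $\Rees(\Fil^\bullet_{\mathrm{tw}}\varphi^*_A M)$, the $(p,I)$-complete faithful flatness of $\Rees(\Fil^\bullet_N A^{(0)}) \to \Rees(\Fil^\bullet_N \Prism_S)$, and the isomorphism on underlying complexes are all correct and in line with the paper's ingredients. But the final and decisive step is not actually carried out, and the route you sketch for it would not close as stated.

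The gap is in the last paragraph. You want to show that the comparison map on associated gradeds,
\[
\Gr_{\mathrm{tw}}(\varphi^*_A M)\widehat{\otimes}_{R[u]}\Gr^\bullet_N\Prism_S \longrightarrow \Gr^\bullet\bigl(\widetilde{\varphi}_\mathcal{E}^{-1}(I^\bullet \mathcal{E}(\Prism_S))\bigr),
\]
is an isomorphism. The right-hand side is the graded of a preimage filtration, i.e.\ it encodes the intersections $\mathrm{Im}(\widetilde{\varphi}_\mathcal{E}) \cap I^i\mathcal{E}(\Prism_S)$, and your phrase ``one matches \ldots as needed'' is restating the assertion to be proved, not proving it. Moreover, the intermediate step you invoke — passing from the saturated Nygaardian filtration on $\mathcal{E}(\Prism_S)$ to the twisted filtration on $\varphi^*_{\Prism_S}\mathcal{E}(\Prism_S)$ by ``untwisting Frobenius'' — only works over a prism whose Frobenius is invertible (this is exactly how \Cref{twist filtration and saturated filtration} is proved over the \emph{perfect} prism $A^{(0)}$), and $\Prism_S$ is not perfect. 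A secondary soft spot: your reduction from the filtered statement to the graded one appeals to ``completeness of the (saturated) Nygaard filtrations involved''; completeness of the right-hand filtration is precisely what you do not know a priori (it is rather a consequence of the proposition). What one actually needs for that reduction is exhaustiveness together with the isomorphism on underlying complexes and on graded pieces — which is fine — but the real missing piece is the graded isomorphism itself.

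The paper avoids this obstacle entirely by using \Cref{Porism in Bha23}: instead of trying to compute the graded of the preimage filtration and comparing, it verifies a set of conditions on the Frobenius map $\bigl(\mathcal{E}(\Prism_S),\Fil^\bullet_{\text{tensor}}\bigr)\xrightarrow{\widetilde{\varphi}_\mathcal{E}}\bigl(\mathcal{E}(\Prism_S)[1/I],I^\bullet\bigr)$ which automatically force the tensor-product filtration to coincide with the saturated one. The crucial point is that condition (iv) of the Porism only asks the graded Frobenius map to have \emph{coconnective cone} (equivalently, to be injective on $\pi_0$), which is strictly weaker than the graded isomorphism you would need, and which reduces, after base change, to the bounded $p$-power torsion statement $\bigl(\Gr_{\mathrm{tw}}(\varphi^*_A M)[1/u]/\Gr_{\mathrm{tw}}(\varphi^*_A M)\bigr)$ of \Cref{boundedness results on twist graded}. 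That is the idea your proposal is missing; without it, or something equivalent, the final graded identification is left unestablished.
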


\begin{proof}
Since the map always induces an isomorphism of the underlying complex, thanks to $\mathcal{E}$ being a crystal,
all we need to check is that the tensor product filtration (defined by the left hand side of the above equation)
and the saturated Nygaardian filtration (defined by the right hand side of the above equation) agree.

Now we recall in the proof of \cite[Lemma 6.6.10]{Bha23}, Bhatt observes the following characterization
of the saturated Nygaardian filtration:
\begin{porism}[{Follows from the proof of \cite[Lemma 6.6.10]{Bha23}}]
\label{Porism in Bha23}
Let $A$ be a ring and let $(M_1, \Fil^{\bullet} M_1) \xrightarrow{\widetilde{\varphi}} (M_2, \Fil^{\bullet} M_2)$
be a map of two decreasing filtered objects in $\DF(A)$ such that 
\begin{enumerate}[label=(\roman*)]
\item $\Fil^{\ll 0} M_1 \to M_1$ is an isomorphism,
all $\Fil^{\bullet} M_1$'s are connective;
\item The graded complexes $\Gr^{\bullet}(M_1)$ are coconnective; 
\item $(\Fil^{\bullet} M_2)$ are $A$-modules with injective transitions; and
\item The graded map $\Gr(\widetilde{\varphi}) \colon \Gr^{\bullet}(M_1) \to \Gr^{\bullet}(M_2)$
have coconnective cone.
\end{enumerate}
Then $(M_1, \Fil^{\bullet} M_1)$ is also an honest decreasingly filtered $A$-module
with its filtration given by $\Fil^{\bullet} M_1 = \widetilde{\varphi}^{-1} (\Fil^{\bullet} M_2)$.
\end{porism}
\begin{proof}
This is an exercise in homological algebra, below we give some hints. 
The conditions (i) and (ii) imply that $(M_1, \Fil^{\bullet} M_1)$ is
an honest decreasingly filtered $A$-module.
Then condition (iii) and (iv) implies that $\Fil^{i+1} M_1 = \Fil^i M_1 \cap \widetilde{\varphi}^{-1} (\Fil^{i+1} M_2)$,
finishing the proof.
\end{proof}
We shall verify that the filtered map
\[
\left(M^{(0)}, \widetilde{\varphi}_\mathcal{E}^{-1}(I^{\bullet} M^{(0)})\right) \widehat{\otimes}_{(A^{(0)}, \Fil^{\bullet}_N)}
(\Prism_S, \Fil^{\bullet}_N) \xrightarrow{\widetilde{\varphi}_\mathcal{E}}
\left(\mathcal{E}(\Prism_S)[1/I], I^{\bullet} \mathcal{E}(\Prism_S)\right).
\]
satisfies the conditions of \Cref{Porism in Bha23}, which will show
that the tensor product filtration agrees with the saturated Nygaardian filtration.
The condition (i) is stable under filtered base
change between honestly filtered algebras, therefore the tensor product filtration
satisfies the condition (i) as $(M^{(0)}, \widetilde{\varphi}_\mathcal{E}^{-1}(I^{\bullet} M^{(0)}))$
clearly satisfies it.
The condition (iii) is easily seen to be satisfied.
Finally we need to verify (ii) and (iv) in the Porism above concerning the behavior of graded pieces.

For (ii): The graded pieces of the tensor product filtration is given by
$\Gr(M^{(0)}) \widehat{\otimes}_{\Gr_N(A^{(0)})} \Gr_N(\Prism_S)$.
By \Cref{twist filtration and saturated filtration}, we get the following description
of $\Gr(M^{(0)}) = \Gr_{\mathrm{tw}}(\varphi^*_A M) \widehat{\otimes}_{R[u]} \Gr_N(A^{(0)})$.
Here the tensor is $p$-completed and the base change map is given by taking graded map
of composition of the following filtered maps (where $u$ is the image of $d$ in $I/I^2$):
\[
(A, I^{\mathbb{N}} A) \to (A^{(0)}, I^{\mathbb{N}} A^{(0)}) \xrightarrow{\varphi^{-1}_{A^{(0)}}}
(A^{(0)}, \Fil^{\bullet}_N).
\]
In the proof of \Cref{F-gauge sheaf}, we have seen that the map
$\Gr_N(A^{(0)}) \to \Gr_N(\Prism_S)$ is $p$-completely flat. 
Precomposing with the $p$-completely flat map $R[u] \to \Gr_N(A^{(0)})$, we see that the induced map
$R[u] \to \Gr_N(\Prism_S)$ is $p$-completely flat.
Therefore we see that $\Gr_{\mathrm{tensor}}(E(\Prism_S)) = 
\Gr_{\mathrm{tw}}(\varphi^*_A M) \widehat{\otimes}_{R[u]} \Gr_N(\Prism_S)$
is concentrated in degree $0$ because $\Gr_{\mathrm{tw}}(\varphi^*_A M)$ has bounded $p$-power torsion
(\Cref{boundedness results on twist graded}).

As for (iv): 
just like the proof of \cite[Lemma 6.6.10]{Bha23}, we may identify the map
\[
\Gr_{\mathrm{tensor}}(E(\Prism_S)) \xrightarrow{\widetilde{\varphi}_\mathcal{E}} \Gr_{I\text{-adic}}(E(\Prism_S)[1/I])
\]
as the $p$-completely base change of $\Gr_N(M^{(0)}) \to \Gr_N(M^{(0)})[1/u]$
along the map $\Gr_N(A^{(0)}) \to \Gr_N(\Prism_S)$.
Using \Cref{twist filtration and saturated filtration} again, we may identify the above
map further as the following $p$-completely base changed map
\[
\left(\Gr_{\mathrm{tw}}(\varphi^*_A M) \to \Gr_{\mathrm{tw}}(\varphi^*_A M)[1/u]\right)
\widehat{\otimes}_{R[u]} \Gr_N(\Prism_S).
\]
Therefore it suffices to notice that 
$\left(\Gr_{\mathrm{tw}}(\varphi^*_A M)[1/u]/\Gr_{\mathrm{tw}}(\varphi^*_A M)\right)$
has bounded $p$-power torsion 
(\Cref{boundedness results on twist graded}).
\end{proof}

Now we are ready to prove \Cref{thm crystal to gauge}, let us stress again that 
it is inspired by \cite[Lemma 6.6.10]{Bha23}.

\begin{proof}[Proof of \Cref{{thm crystal to gauge}}]
(1): we adopt the notation in the discussion right after the statement.
Let $U = \Spf(R)$ be an affine open of $X$.
Let $R^{(0)} \coloneqq A^{(0)}/I$ and let $R^{(\bullet)}$ be the Cech nerve
of the quasi-syntomic cover $R \to R^{(0)}$ with their absolute prismatic
cohomology $A^{(\bullet)} \coloneqq \Prism_{R^{(\bullet)}}$.
We denote $E(A^{(\bullet)})$ by $M^{(\bullet)}$, and by abuse
of notation we denote the semi-linear Frobenii on these $M^{(\bullet)}$
by the same symbol $\widetilde{\varphi}_\mathcal{E}$.

Since $F$-gauges form a quasi-syntomic sheaf (\Cref{F-gauge sheaf}),
we need to first check that the ``saturated Nygaardian filtrations''
on $\mathcal{E}(\Prism_{R^{(i)}})$ satisfies filtered base change with respect
to the various maps induced by the simplicial maps between the $R^{(i)}$'s.
To that end, let $[i] \to [j]$ be an arrow in $\Delta$, we need to verify
the natural map
\[
\left(M^{(i)}, \widetilde{\varphi}_\mathcal{E}^{-1}(I^{\bullet} M^{(i)})\right) \widehat{\otimes}_{(A^{(i)}, \Fil^{\bullet}_N)}
(A^{(j)}, \Fil^{\bullet}_N) \to \left(M^{(j)}, \widetilde{\varphi}_\mathcal{E}^{-1}(I^{\bullet} M^{(j)})\right)
\]
is a filtered isomorphism.
This immediately follows from \Cref{saturated Nyg fil satisfies filtered base change proposition}.

This shows that the saturated Nygaardian filtration defines an $F$-gauge $\Pi_U(\mathcal{E}|_U)$ on $U$,
and the construction is easily seen to be functorial in $\mathcal{E}$ and $U$.
The perfectness follows from combining
\Cref{perfectness of twist filtration} and \Cref{twist filtration and saturated filtration}.
Next we verify that these $\Pi_U(\mathcal{E}|_U)$'s glue to an $F$-gauge on $X$.
To that end, it suffices to check the following
\begin{claim}
\label{saturated claim}
For any $\Spf(S) \in U_{\qrsp}$
with $p$-completely flat structural map to $U$, the value
$\Pi_U(\mathcal{E}|_U)(\Spf(S))$ has its filtration given by the saturated Nygaardian filtration.
\end{claim}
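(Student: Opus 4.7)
The plan is to establish \Cref{saturated claim} by quasi-syntomic descent, reducing the filtration identification to the base change statement \Cref{saturated Nyg fil satisfies filtered base change proposition}.

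Fix $\Spf(S)\in U_{\qrsp}$ with $p$-completely flat structural map. The map $R \to R^{(0)}$ is $p$-completely faithfully flat — this follows by reducing modulo $I$ from the $(p,I)$-completely faithful flatness of $A \to A^{(0)} = A_{\perf}$ established in the proof of \Cref{BK covering property}. Hence there is a quasi-syntomic cover $S \to S'$ in $U_{\qrsp}$ with $S'$ a qrsp algebra $p$-completely flat over $R^{(0)}$ (e.g., the qrsp envelope of $S \widehat{\otimes}_R R^{(0)}$). Writing $S^{\prime(\bullet)}$ for its Cech nerve in $S_{\qrsp}$, each $S^{\prime(n)}$ is again $p$-completely flat and quasi-syntomic over $R^{(0)}$.

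By the quasi-syntomic sheaf property of prismatic $F$-gauges (\Cref{F-gauge sheaf} combined with \Cref{descent of coherent gauge is coherent}),
\[
\Pi_U(\mathcal{E}|_U)(\Spf(S)) \;=\; \lim_{[n]\in\Delta} \Pi_U(\mathcal{E}|_U)(\Spf(S^{\prime(n)})).
\]
By the definition of $\Pi_U$ via the cosimplicial cover $R \to R^{(\bullet)}$, each term on the right at level $n$ is the filtered base change of $(M^{(0)}, \widetilde{\varphi}_\mathcal{E}^{-1}(I^{\bullet} M^{(0)}))$ along the filtered map $(A^{(0)}, \Fil^{\bullet}_N) \to (\Prism_{S^{\prime(n)}}, \Fil^{\bullet}_N)$. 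Applying the first half of \Cref{saturated Nyg fil satisfies filtered base change proposition} to the $p$-completely flat $R^{(0)} \to S^{\prime(n)}$, this identifies $\Pi_U(\mathcal{E}|_U)(\Spf(S^{\prime(n)}))$ with $\mathcal{E}(\Prism_{S^{\prime(n)}})$ equipped with the saturated Nygaardian filtration $\widetilde{\varphi}_\mathcal{E}^{-1}(I^{\bullet} \mathcal{E}(\Prism_{S^{\prime(n)}}))$.

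It remains to identify the cosimplicial limit of these saturated filtrations with the saturated Nygaardian filtration on $\mathcal{E}(\Prism_S)$. The underlying map is an isomorphism by the crystal property. Since formation of the preimage under $\widetilde{\varphi}_\mathcal{E}$ commutes with cosimplicial limits, the filtered identification reduces to the submodule equality $I^i \mathcal{E}(\Prism_S) = \mathcal{E}(\Prism_S) \cap \lim_{[n]} I^i \mathcal{E}(\Prism_{S^{\prime(n)}})$ inside $\mathcal{E}(\Prism_S)$. This follows from the $(p,I)$-completely faithful flatness of $\Prism_S \to \Prism_{S^{\prime(0)}}$ combined with the $I$-torsionfreeness of $\mathcal{E}$, which together give injectivity of $\mathcal{E}(\Prism_S)/I^i \hookrightarrow \mathcal{E}(\Prism_{S^{\prime(0)}})/I^i$. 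The main technical obstacle is this $(p,I)$-completely faithful flatness of $\Prism_S \to \Prism_{S^{\prime(0)}}$, which does not follow formally from the quasi-syntomicity of $S \to S'$; we handle it along the lines of the proof of \Cref{F-gauge sheaf}, by first verifying the $p$-complete faithful flatness of $\overline{\Prism}_S \to \overline{\Prism}_{S^{\prime(0)}}$ via analysis of the conjugate filtered graded pieces as derived Hodge algebras over the $p$-completely faithfully flat $S \to S^{\prime(0)}$, and then lifting the conclusion back along $I$-adic completion using derived Nakayama.
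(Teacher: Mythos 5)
Your proof follows the paper's strategy for the first half (reduce via the quasi‑syntomic sheaf property and \Cref{saturated Nyg fil satisfies filtered base change proposition} to identifying the descent filtration), but diverges at the final step. The paper invokes \Cref{descent of coherent gauge is coherent} and then \Cref{Porism in Bha23}, whose conditions (iii)--(iv) package the needed homological input; you instead try to compute the cosimplicial limit of saturated filtrations directly, reducing everything to the submodule identity $I^i\mathcal{E}(\Prism_S) = \lim_{[n]} I^i\mathcal{E}(\Prism_{S^{\prime(n)}})$, i.e.\ to injectivity of $\mathcal{E}(\Prism_S)/I^i \to \mathcal{E}(\Prism_{S^{\prime(0)}})/I^i$.

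The gap is in your justification of that injectivity. You assert it follows from $(p,I)$-completely faithful flatness of $\Prism_S \to \Prism_{S^{\prime(0)}}$ together with $I$-torsionfreeness of $\mathcal{E}$, but completely faithfully flat base change does \emph{not} in general preserve injectivity (or concentration in degree $0$) for modules --- this is exactly the pitfall the paper flags right after \Cref{complete descent of modules is a module}. The ingredient you actually need, and do not cite, is the $t$-exactness of evaluation on prisms with flat reduction (\Cref{Evaluating at flat prism is t-exact}, \Cref{value of t-inequality crystal on flat object}). From this one deduces that each $\mathcal{E}(\Prism_{S^{\prime(n)}})$ is an honest module concentrated in degree $0$ which is $I$-torsionfree (transferring $I$-torsionfreeness of the sheaf $\mathcal{E}$ to its values requires this $t$-exactness), hence $\mathcal{E}(\Prism_{S^{\prime(n)}})/^L I^j$ sits in degree $0$; then since $/^L I^j$ commutes with totalization and the totalization $\mathcal{E}(\Prism_S)/^L I^j$ is likewise a module in degree $0$, it is identified with $\ker\bigl(\mathcal{E}(\Prism_{S^{\prime(0)}})/I^j \rightrightarrows \mathcal{E}(\Prism_{S^{\prime(1)}})/I^j\bigr)$ and hence injects into the $0$-th term. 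This is essentially the content of the Porism in this special case; the paper's route through conditions (iii) and (iv) (the latter passing to graded pieces, which do commute with cosimplicial limits in the stable setting) is both what makes the argument work and what you are implicitly reproving. A smaller point: your concern about $(p,I)$-completely faithful flatness of $\Prism_S \to \Prism_{S^{\prime(0)}}$ is not the real obstacle --- that flatness is already established in the course of proving \Cref{F-gauge sheaf} and does not need a fresh proof here.
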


Granting the above claim, then for any two affine opens $U$ and $V$,
the restriction of $\Pi_U(\mathcal{E}|_U)|_{U \cap V}$ and $\Pi_V(\mathcal{E}|_V)|_{U \cap V}$
will be canonically identified: their values on any $\Spf(S)$ which are flat and quasi-syntomic
over $U \cap V$ are canonically
identified (the filtrations are given by the saturated Nygaardian filtration),
finally we just notice that these $\Spf(S)$'s form a basis of $(U \cap V)_{\qrsp}$,
and $F$-gauges form a quasi-syntomic sheaf (\Cref{F-gauge sheaf}).

\begin{proof}[Proof of the above claim:]
Let us base change the Cech nerve $R^{(\bullet)}$ along the map $R \to S$,
and denote the resulting Cech nerve $S^{(\bullet)}$.
The underlying gauge of $\Pi_U(\mathcal{E}|_U)(\Spf(S))$ is the descent of the gauges on $\Spf(S^{(\bullet)})$,
and the latter gauges are given by the filtered base change
$\left(M^{(i)}, \widetilde{\varphi}_\mathcal{E}^{-1}(I^{\bullet} M^{(i)})\right) \widehat{\otimes}_{(A^{(i)}, \Fil^{\bullet}_N)}
(\Prism_{S^{(i)}}, \Fil^{\bullet}_N)$.
Using \Cref{saturated Nyg fil satisfies filtered base change proposition}, we see that the latter
gauges are nothing but the saturated Nygaardian filtered module $\mathcal{E}(\Prism_{S^{(i)}})$.
By \Cref{descent of coherent gauge is coherent}, we see the descent gauge
$(\mathcal{E}(\Prism_S), \Fil_{\mathrm{descent}})$ is coherent,
namely it is an honest decreasingly filtered $\Prism_S$-module.
Finally we apply \Cref{Porism in Bha23} for the Frobenius filtered map 
$(\mathcal{E}(\Prism_S), \Fil_{\mathrm{descent}}^{\bullet})
\xrightarrow{\widetilde{\varphi}_\mathcal{E}} (\mathcal{E}(\Prism_S)[1/I], I^{\bullet}\mathcal{E}(\Prism_S))$:
we have verified conditions (i) and (ii); the condition (iii) follows from coherence of $\mathcal{E}$
and \Cref{value of t-inequality crystal on flat object};
finally the condition (iv) is stable under descent as taking limit preserves co-connectivity,
hence it is satisfied for our map (as this map is the descent of maps which satisfy condition (iv)).
Therefore we may conclude that the descent filtration is again the saturated Nygaardian filtration,
which finishes our proof of the claim.
\end{proof}

To finish the proof of (1), we need to see that for any $\Spf(S) \in X_{\qrsp}$
with $p$-completely flat structural map to $X$, the value
$\Pi_X(\mathcal{E})(\Spf(S))$ has its filtration given by the saturated Nygaardian filtration.
We may choose a Zariski cover of $\Spf(S)$ by affine opens such that each affine open
maps to an affine open in $X$. Then the filtration on $\Pi_X(\mathcal{E})(\Spf(S))$
is given by descent of saturated Nygaardian filtration (by the above claim),
therefore we can use the same argument as above again to conclude that the filtration
we get is the saturated Nygaardian filtration on $\mathcal{E}(\Prism_S)$.

Next we show (2): Let $E_1$ and $\mathcal{E}_2$ be as in the statement.
Notice that mapping spaces 
$\mathrm{Map}_{\FC^{\coh}(X)}(E_1, \mathcal{E}_2)$ and $\mathrm{Map}_{\FG^{\coh}(X)}(E_1, \Pi_X(\mathcal{E}_2))$
are discrete,
we see that the Hom groups are glued from the Hom groups
on affine opens of $U \subset X$, so we have reduced ourselves to the
case of $X$ being an affine.
In this case, the Hom groups are computed by the corresponding Hom groups 
of values of $E_1$, $\mathcal{E}_2$, and $\Pi_X(\mathcal{E}_2)$ at those $\Spf(S) \in X_{\qrsp}$ which
are flat over $X$.
So we are finally reduced to checking the analogous statement for $X$ replaced
by these qrsp $\Spf(S)$.
Since any Frobenius-equivariant map $E_1(\Prism_S) \to \mathcal{E}_2(\Prism_S)$ uniquely extends to
a filtered map $E_1(\Prism_S) \to \Pi_X(\mathcal{E}_2)(\Prism_S)$,
as the latter is equipped with the saturated Nygaardian filtration,
the two Hom groups are canonically identified as desired.

Lastly we deal with (3): 
Let $V \to U \to X$ be two affines in $X_{\acute{e}t}$,
we just need to show the natural functor $\FG^\perf(U) \to \FG^\perf(V)$ sends
$\Pi_U(\mathcal{E} \mid_U) \mapsto \Pi_V(\mathcal{E} \mid_V)$.
Choose a Breuil--Kisin prism $(A, I)$ for $U$, the \'{e}tale map $V \to U$ canonically
lifts $(A, I)$ to a Breuil--Kisin prism $(A', IA')$ over $(A, I)$ in $U_{\Prism}$.
Notice that $A \to A'$ is $(p, I)$-completely flat, hence flat by Noetherianity of $A$.
Tracing through the argument of proving (1), we see that it suffices to show
the twisted filtrations on $\varphi_A^* \mathcal{E}(A)$ and $\varphi_{A'}^* \mathcal{E}(A')$
are related via
\[
\Fil_{\mathrm{tw}} \varphi_A^* \mathcal{E}(A) \widehat{\otimes}_A A'
\xrightarrow{\cong} \Fil_{\mathrm{tw}} \varphi_{A'}^* \mathcal{E}(A'),
\]
which follows directly from the flatness of $A \to A'$
and how the twisted filtrations are defined.
\end{proof}

\begin{remark}
In fact we may define the functor on all of coherent $F$-crystals, by pulling back
filtrations from their $I$-torsionfree quotients.
The resulting graded modules will agree with the graded modules of the $F$-gauges
of their $I$-torsionfree quotients.
Since we do not need this greater generality, we restrict ourselves to working with
$I$-torsionfree coherent $F$-crystals only.
\end{remark}

\subsection{Weight filtration on graded pieces of gauges}
\label{absolute weight filtration subsection}
In this subsection, we introduce a so-called weight filtration on the associated graded of a gauge.

To start, we first define the notion of \emph{weight} for an ($F$-)gauge, 
using a reduction functor that sends the graded piece of a gauge over $X$ to a graded complex over $\mathcal{O}_X$.
\begin{construction}[c.f.~\Cref{reduction of graded B-cplx}]
\label{reduction of gauge}
	Let $X$ be a quasi-syntomic $p$-adic formal scheme, and let $\ast\in \{\emptyset, \perf,\vect\}$.
	We define the \emph{reduction functor} on the category $\DG^\ast_{p\text{-comp}}(X_\qrsp, \Gr_N^\bullet \Prism) \simeq \lim_{S\in X_\qrsp} \DG^\ast_{p\text{-comp}} (\Gr_N^\bullet \Prism_S)$ as the following composition of functors
	\[
	\begin{tikzcd}
		\Red_X \colonequals  \DG^\ast_{p\text{-comp}}(\Gr_N^\bullet \Prism) \arrow[rr, "-\otimes_{\Gr_N^\bullet \Prism} \mathcal{O}_\qrsp"] &&  \DG^\ast_{p\text{-comp}}(\mathcal{O}_X);\\
		 M^\bullet \arrow[rr, mapsto] && M^\bullet\bigotimes_{\Gr_N^\bullet \Prism} \mathcal{O}_\qrsp,
	\end{tikzcd}
    \]
    where $\mathcal{O}_\qrsp = \Gr_N^0 \Prism$ is regarded as a graded $\Gr_N^\bullet \Prism$-algebra via projection
    onto degree $0$ piece.
    The $i$-th graded piece of $\Red_X(-)$ is denoted as $\Red_{i,X}(-)$.
    Here we implicitly use the $p$-completely flat descent of $p$-complete complexes (resp. perfect complexes, resp. vector bundles) for the target category, namely
    \[
    \DG^\ast_{p\text{-comp}}(\mathcal{O}_X) \simeq \lim_{S\in X_\qrsp} \DG^\ast_{p\text{-comp}}(S).
    \]
    In particular, we have the natural limit formula for the reduction functor
    \[
    \Red_X \simeq \lim_{S\in X_\qrsp} \Red_S.
    \]
    
    When the choice of the formal scheme $X$ is clear, we omit $X$ in the subscripts and use $\Red$ and $\Red_i$ to abbreviate $\Red_X$ and $\Red_{i,X}$.
    By a slight abuse of notation, for a gauge $(E,\Fil^\bullet E)$ over $X$, we also abbreviate the notation $\Red_X(\Gr^\bullet E)$ as $\Red_X(E)$ when there is no confusion.
\end{construction}

We then define the notion of weights for a gauge.
\begin{definition}
\label{def of wt of gauge}
Let $X$ be a quasi-syntomic formal scheme, and let $[a, b]$ be an interval in $\mathbb{R}\cup \{-\infty, \infty\}$.
For a graded complex $M^\bullet\in \DG^\ast_{p\text{-comp}}(X_\qrsp, \Gr_N^\bullet \Prism)$,
we say it has \emph{weights} in $[a,b]$ if $M^n = 0$ for $n \ll 0$ and
\[
\Red_i(M^\bullet)=0,~\forall i\notin [a,b].
\]
For a gauge $E=(E,\Fil^\bullet E)$ on $X$, we say it has \emph{weights} in $[a,b]$ if
its associated graded $M^\bullet=\Gr^\bullet E$ is so.
\end{definition}

\begin{remark}
In the stacky language, the reduction functor on the category of gauges can be translated in terms of 
the pullback functor along the closed immersion $X\times B\mathbb{G}_m \to X^N$ as in \cite[Rmk.\ 5.3.14]{Bha23}.
Similarly the ``weights'' defined above corresponds to the ``Hodge--Tate weights'' defined in loc.~cit.
\end{remark}

Using the reduction functor, we now introduce the weight filtration on the associated graded of a gauge.
\begin{theorem}
\label{thm weight filtration of gauge}
Let $X$ be a quasi-syntomic $p$-adic formal scheme, let $\ast\in \{\emptyset, \perf,\vect\}$, 
and let $a\leq b$ be two integers.
Assume $M^\bullet \in \DG^\ast_{p\text{-}\mathrm{comp}}(X_\qrsp, \Gr_N^\bullet \Prism)$ 
has weights in $[a,b]$.
Then there is a unique increasing filtration
$\Fil^\wt_i (M^\bullet)$ on $M^\bullet$ satisfying the following axioms:
\begin{enumerate}
\item The filtration is exhaustive;
\item the filtration ``starts at $0$'': i.e.~$\Fil^\wt_{\ll 0} (M^\bullet) = 0$; and
\item the graded piece $\Gr^\wt_i(M^\bullet) \simeq N_i \otimes_{\mathcal{O}_X} \Gr_N^\bullet \Prism$ 
where $N_i \in \DG^\ast_{p\text{-comp}}(\mathcal{O}_X)$ has grading $i$.
\end{enumerate}
Moreover, the induced filtration $\Fil^\wt_i(\Red(M^{\bullet})) \coloneqq \Red(\Fil^\wt_i (M^\bullet))$
is the one induced by grading: 
\[
\Fil^\wt_i(\Red(M^{\bullet})) \cong \bigoplus_{j \leq i} \Red_j(M^{\bullet}).
\]
In particular, there are natural isomorphisms
$N_i \cong \Red(\Gr^\wt_i(M^\bullet)) \cong \Gr^\wt_i(\Red(M^{\bullet})) \cong \Red_i(M^{\bullet})$
in $\mathcal{D}^\ast_{p\text{-comp}}(\mathcal{O}_X)$.
Therefore, the filtration is indexed by $i \in [a,b]$.
\end{theorem}
Note that in the special case when $M^\bullet=\Gr^\bullet E$ where
$E=(E,\Fil^\bullet E)\in \mathrm{Gauge}^\ast(X)$, we get a weight filtration 
$\Fil^\wt_i(\Gr^\bullet E)$ on $\Gr^\bullet E$ with 
$\Gr^\wt_i(\Gr^\bullet E) \simeq \Red_i (E)\otimes_{\mathcal{O}_X} \Gr^\bullet_N \Prism$.

\begin{proof}
The existence part follows exactly from \Cref{prop weight filtration of graded B-cplx},
by getting such a filtration when evaluated on each $S \in X_{\qrsp}$: the outcome automatically
satisfies the axioms above, hence they will also satisfy graded base change property.
This shows the case of $\ast = \emptyset$, the case of $\ast = \perf \text{ and } \vect$
follows from \Cref{graded: perf vs cplt perf} and \Cref{graded: vb vs cplt vb} respectively.
The uniqueness can be seen by exactly following the argument of \Cref{prop weight filtration of graded B-cplx}.
\end{proof}

\begin{remark}
For an integer $a$ and some $b\in \mathbb{Z}_{\geq a}\cup \{\infty\}$, 
 we let $\DG^\ast_{p\text{-comp}}(X, \Gr^\bullet_N \Prism)_{[a,b]}^+$ be the full subcategory of 
 $\DG^\ast_{p\text{-comp}}(X, \Gr^\bullet_N \Prism)$
 whose objects have weights in $[a,b]$.
Then the natural full embedding $\DG^\ast_{p\text{-comp}}(X, \Gr^\bullet_N \Prism)_{[a+1,b]}^+ \to \DG^\ast_{p\text{-comp}}(X, \Gr^\bullet_N \Prism)_{[a,b]}^+$, admits a left adjoint given by
sending an object $M^\bullet$ to the grade complex $M^\bullet/\Fil^{\wt}_aM^\bullet$.
\end{remark}

We have the following concrete understanding of the reduction of graded pieces of $F$-gauges
which come from $I$-torsionfree $F$-crystals as in \Cref{thm crystal to gauge}.

\begin{theorem}
\label{Reduction is the graded of twisted filtration}
Let $X$ be  a smooth formal scheme over $\mathcal{O}_K$,
and let $(\mathcal{E}, \varphi_\mathcal{E}) \in \FC^{I\text{-}\mathrm{tf}}(X)$ having height in $[a, b]$.
\begin{enumerate}
\item Then the gauge $\Pi_X(\mathcal{E})$ constructed in \Cref{thm crystal to gauge} has
the reduction of its graded pieces given by a graded coherent sheaf
having gradings in $[a,b]$.
\item In the setting of \Cref{local situation}, we have the following concrete description of
\[
\Red_U(\Gr^{\bullet}(\Pi_U(\mathcal{E}|_U))) \cong 
\Gr^{\bullet}_{\mathrm{tw}}(\varphi_A^*M) /^L u \otimes_R \mathcal{O}_{\qrsp}.
\]
\end{enumerate}
\end{theorem}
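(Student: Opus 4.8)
The plan is to derive part (1) from part (2) by a gluing argument, and to prove part (2) by stringing together the filtered base-change identifications already established and then performing a single reduction. Throughout I would work in the setting of \Cref{local situation}; write $N \colonequals \Gr^{\bullet}_{\mathrm{tw}}(\varphi_A^* M)$, a finitely generated graded module over $R[u] = \Gr_{I^{\mathbb{N}}}(A)$ in which $u$ denotes the class of $d$ in $I/I^2$, and recall from \Cref{thm crystal to gauge}(1) that for any $\Spf(S)\in U_{\qrsp}$ whose structure map to $U$ is $p$-completely flat, the filtration on $\Pi_U(\mathcal{E}|_U)(\Prism_S)$ is the saturated Nygaardian filtration $\widetilde{\varphi}_{\mathcal{E}}^{-1}(I^{\bullet}\mathcal{E}(\Prism_S))$.

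For part (2), the first step is the identification, for such $S$,
\[
\Gr^{\bullet}\!\big(\Pi_U(\mathcal{E}|_U)(\Prism_S)\big) \;\cong\; N \,\widehat{\otimes}_{R[u]}\, \Gr^{\bullet}_N \Prism_S ,
\]
in which the structure map $R[u]\to\Gr^{\bullet}_N\Prism_S$ carries $u$ into filtration degree one. This I would obtain by combining the two filtered base-change formulas of \Cref{twist filtration and saturated filtration} (which identify $\Gr^{\bullet}_N M^{(0)}$ with $N\,\widehat{\otimes}_{R[u]}\Gr^{\bullet}_N A^{(0)}$) with \Cref{saturated Nyg fil satisfies filtered base change proposition}; since both sides above are quasi-syntomic sheaves in $S$ it suffices to check this on the basis of those $S$ that are $p$-completely flat over $R^{(0)} = A^{(0)}/I$, where the cited proposition applies directly. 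The second step is to apply the reduction functor, that is, to base change along the projection $\Gr^{\bullet}_N \Prism_S \twoheadrightarrow \Gr^0_N \Prism_S = S$ onto the degree-zero part. Since $u$ sits in positive degree it is annihilated by this projection, so the composite $R[u]\to S$ factors through $R[u]/u = R \to S$, and therefore
\[
\Red_S\!\big(\Gr^{\bullet}\Pi_U(\mathcal{E}|_U)(\Prism_S)\big) \;\cong\; N\,\widehat{\otimes}_{R[u]}\, S \;\cong\; (N /^{L} u)\,\widehat{\otimes}_R\, S .
\]
Letting $S$ vary over the basis yields the formula asserted in (2). I expect the only delicate points here to be bookkeeping ones: confirming that $u$ genuinely has positive filtration degree so that the reduction kills it, and checking that no derived $p$-completion corrections intervene; both are settled by \Cref{boundedness results on twist graded}, which tells us that $N$ is $u$-torsionfree (so $N/^{L}u = N/u$) and has bounded $p$-power torsion.

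For part (1): by \Cref{boundedness results on twist graded} the module $N$ is finitely generated over the Noetherian ring $R[u]$, is concentrated in degrees $\geq a$, and has $u\colon N^{\deg=i}\to N^{\deg=i+1}$ an isomorphism for $i\geq b$; consequently $N/u$ is a finitely generated graded $R$-module concentrated in degrees $[a,b]$, i.e.\ a graded coherent sheaf on $U$ with gradings in $[a,b]$, and by part (2) one has $\Red_U(\Gr^{\bullet}\Pi_U(\mathcal{E}|_U)) \cong (N/u)\otimes_R\mathcal{O}_{\qrsp}$. Finally, since the formation of $\Pi$ is compatible with restriction to affine opens (\Cref{thm crystal to gauge}) and $\Red$ is computed as a limit over the quasi-syntomic site, these local coherent sheaves are independent of the chosen Breuil--Kisin prism and glue along an affine open cover of $X$ to a graded coherent sheaf on $X$ with gradings in $[a,b]$, namely $\Red_X(\Gr^{\bullet}\Pi_X(\mathcal{E}))$. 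The main obstacle is thus really organizational rather than mathematical: all of the substance has already been proved in \Cref{twist filtration and saturated filtration}, \Cref{saturated Nyg fil satisfies filtered base change proposition} and \Cref{boundedness results on twist graded}, and the present statement is their harvest.
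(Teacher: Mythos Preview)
Your proposal is correct and follows essentially the same route as the paper: first establish the filtered base-change identity $\Fil^{\bullet}_{\mathrm{tw}}(\varphi_A^*M)\otimes_{(A,I^{\mathbb{N}})}\Fil^{\bullet}_N\Prism \cong \Fil^{\bullet}_N\Pi_U(\mathcal{E})$ via \Cref{twist filtration and saturated filtration} and \Cref{saturated Nyg fil satisfies filtered base change proposition}, then pass to graded pieces and apply the reduction functor to obtain (2), and finally deduce (1) Zariski-locally from (2) using \Cref{boundedness results on twist graded}. Your added remarks about $u$-torsionfreeness and bounded $p$-torsion are correct but not strictly needed, and your gluing discussion for (1) is slightly more explicit than the paper's, which simply says one may work Zariski locally.
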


We remind readers that the twisted filtration on $\varphi_A^*M$ was discussed
around \Cref{twist filtration}, and the symbol $u$ stands for the
image of $d \in I$ in $\Gr^{\bullet}(I^{\mathbb{N}}A)$
(so it has degree $1$).

\begin{proof}
Let us show (2) first. Tracing through the proof of \Cref{thm crystal to gauge},
especially \Cref{twist filtration and saturated filtration}
and \Cref{saturated Nyg fil satisfies filtered base change proposition}
(notice that the $S$ there forms a basis of the qrsp site of $U$),
we get the following relation between twisted filtration on $\varphi_A^*M$
and saturated filtration on $\Pi_U(\mathcal{E})$:
\[
\Fil^{\bullet}_{\mathrm{tw}}(\varphi_A^*M) \otimes_{(A, I^{\mathbb{N}})}
\Fil^{\bullet}_N \Prism \cong \Fil^{\bullet}_N \Pi_U(\mathcal{E}).
\]
Passing to graded pieces and applying the reduction functor,
we get:
\[
\Red_U(\Gr^{\bullet}(\Pi_U(\mathcal{E}|_U))) \cong 
\Gr^{\bullet}_{\mathrm{tw}}(\varphi_A^*M) \otimes_{R[u]} \Gr^{\bullet}_N\Prism 
\otimes_{\Gr^{\bullet}_N\Prism} \mathcal{O}_{\qrsp}
\cong \Gr^{\bullet}_{\mathrm{tw}}(\varphi_A^*M) \otimes_{R[u]} R (=R[u]/u)
\otimes_{R} \mathcal{O}_{\qrsp}.
\]

To prove (1), we may work Zariski locally on $X$.
Now the statement follows from
(2) and the explicit knowledge of $\Gr^{\bullet}_{\mathrm{tw}}(\varphi_A^*M)$
as a graded-$R[u]$-module, see \Cref{boundedness results on twist graded}.
\end{proof}

In the remainder of this subsection, we discuss the relation
between weights of an $F$-gauge and heights of its underlying $F$-crystal.
First we observe that the Frobenius twist of an $F$-gauge is eventually $I$-adically filtered.
\begin{proposition}
\label{filtration on Frobenius twist}
Let $S$ be a quasiregular semiperfectoid ring, let $E=(E,\Fil^\bullet E, \widetilde{\varphi}_E)\in \FG^\perf(X)$ be of weight $[a,b]$.
Denote by $\Fil^\bullet (\varphi^*E)$ the filtered base change of $\Fil^\bullet E$ along $\varphi_{\Prism_S}:\Fil^\bullet_N \Prism_S\to I^\bullet \Prism_S$.
\begin{enumerate}[(i)]
	\item The Frobenius structure $\widetilde{\varphi}_E$ induces a filtered map  $v:\Fil^\bullet (\varphi^*E) \to I^\bullet\otimes E$, which is an isomorphism on $\Fil^{\geq b}(-)$.
	In particular, the filtered complex $\Fil^{\geq b} (\varphi^*E)$ is isomorphic to the $I$-adic filtration $I^{\geq b}\otimes E$.
	\item For $i\in \mathbb{Z}$, the weight filtration induces a finite increasing exhaustive filtration of range $[a,b]$ on the map $\Gr^i (v): \Gr^i(\varphi^*E) \to \bar{I}^{i}E$, such that its $j$-th graded piece of the map is 
 \[
 \begin{cases}
     \Red_j(E)\otimes_S \bar{I}^{i-j} \overline{\Prism}_S \xrightarrow{\simeq} \Red_j(E)\otimes_S \bar{I}^{i-j} \overline{\Prism}_S,~i\geq j;\\
     0 \longrightarrow  \Red_j(E)\otimes_S \bar{I}^{i-j} \overline{\Prism}_S,~i<j.
 \end{cases}
 \]
\end{enumerate}
\end{proposition}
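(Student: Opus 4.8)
The plan is to realize both sides of $v$ as filtered base changes of $\Fil^\bullet E$, reducing the whole statement to a computation of associated gradeds that can be attacked with the weight filtration of \Cref{thm weight filtration of gauge}. First I would construct $v$: the semilinear Frobenius $\widetilde\varphi_E\colon \Fil^\bullet E\to I^\mathbb{Z}\Prism_S\otimes_{\Prism_S}E = I^\bullet\otimes E$ is linear over $\varphi_{\Prism_S}\colon \Fil^\bullet_N\Prism_S\to I^\bullet\Prism_S$, so the universal property of filtered base change along $\varphi_{\Prism_S}$ yields the filtered map
\[
v\colon \Fil^\bullet(\varphi^* E)=\Fil^\bullet E\otimes_{\Fil^\bullet_N\Prism_S}I^\bullet\Prism_S\longrightarrow I^\bullet\otimes E .
\]
Moreover, via the filtered isomorphism $\varphi_E\colon \Fil^\bullet E\otimes_{\Fil^\bullet_N\Prism_S}I^\mathbb{Z}\Prism_S\xrightarrow{\ \sim\ }I^\mathbb{Z} E = I^\bullet\otimes E$ from the gauge axiom, the map $v$ is identified with the base change (of $\Fil^\bullet E$) of the filtered $\Fil^\bullet_N\Prism_S$-algebra inclusion $I^\bullet\Prism_S\hookrightarrow I^\mathbb{Z}\Prism_S$, which is an isomorphism in every filtration degree $\geq 0$. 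As $E$ is a perfect gauge, $\Fil^\bullet E$, and hence both sides of $v$, are complete filtered complexes, so it will suffice to understand $\Gr^\bullet(v)$.

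Passing to associated gradeds: since $\Gr^\bullet$ is symmetric monoidal, and using the identification of $I^\bullet\otimes E$ above,
\[
\Gr^\bullet(\varphi^* E)\simeq \Gr^\bullet E\otimes_{\Gr^\bullet_N\Prism_S}\Gr^\bullet(I^\bullet\Prism_S),\qquad \Gr^\bullet(I^\bullet\otimes E)\simeq \Gr^\bullet E\otimes_{\Gr^\bullet_N\Prism_S}\Gr^\bullet(I^\mathbb{Z}\Prism_S),
\]
where $\Gr^\bullet(I^\bullet\Prism_S)=\bigoplus_{k\geq 0}\bar{I}^k$, $\Gr^\bullet(I^\mathbb{Z}\Prism_S)=\bigoplus_{k\in\mathbb{Z}}\bar{I}^k$, and $\Gr^\bullet(v)$ is the base change of the graded inclusion $\bigoplus_{k\geq 0}\bar{I}^k\hookrightarrow\bigoplus_{k\in\mathbb{Z}}\bar{I}^k$ (the identity in degrees $\geq 0$, and $0\hookrightarrow\bar{I}^k$ in degrees $<0$). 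Now I would feed in the weight filtration $\Fil^\wt_j$ on $M^\bullet\colonequals\Gr^\bullet E$: its graded piece $\Gr^\wt_j(M^\bullet)\simeq\Red_j(E)\otimes_S\Gr^\bullet_N\Prism_S$ is the extension of scalars, along $S=\Gr^0_N\Prism_S\hookrightarrow\Gr^\bullet_N\Prism_S$, of the complex $\Red_j(E)$ placed in grading degree $j$; hence its base change along $\Gr^\bullet_N\Prism_S\to\Gr^\bullet(I^\bullet\Prism_S)$ (resp.\ $\to\Gr^\bullet(I^\mathbb{Z}\Prism_S)$) is $\Red_j(E)\otimes_S\Gr^\bullet(I^\bullet\Prism_S)$ (resp.\ $\Red_j(E)\otimes_S\Gr^\bullet(I^\mathbb{Z}\Prism_S)$), placed in degrees $\geq j$, with degree-$i$ piece $\Red_j(E)\otimes_S\bar{I}^{i-j}\overline{\Prism}_S$ — vanishing for $i<j$ in the first case, a Breuil--Kisin twist for all $i$ in the second. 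The induced map on the $j$-th weight-graded piece of $\Gr^i(v)$ is then exactly as in (ii): an isomorphism for $i\geq j$, and $0\to\Red_j(E)\otimes_S\bar{I}^{i-j}\overline{\Prism}_S$ for $i<j$. Here one uses $S=\Gr^0_N\Prism_S=\mathcal{O}_\qrsp$ and $\Gr^0(I^\bullet\Prism_S)=\overline{\Prism}_S$ from \cite[Theorem 12.2]{BS19}. This proves (ii).

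Finally, for (i): if $i\geq b$, then $i\geq j$ for all $j\in[a,b]$, so by (ii) every weight-graded piece of $\Gr^i(v)$ is an isomorphism; the weight filtration being finite and exhaustive, $\Gr^i(v)$ is an isomorphism for all $i\geq b$. Hence $\Fil^{\geq b}(v)$ is a map of complete filtered complexes which is an isomorphism on every associated graded, so it is a filtered isomorphism, and $\Fil^{\geq b}(I^\bullet\otimes E)$ is by construction the ($b$-truncated) $I$-adic filtration $I^{\geq b}\otimes E$; this gives (i). The step I expect to be the main obstacle is making these base-change manipulations precise in the derived, $(p,I)$-completed filtered world — that $\Gr^\bullet$ of a completed filtered base change is the graded base change, that the gauge axiom identifies $\Gr^\bullet(I^\bullet\otimes E)$ with a base change of $\Gr^\bullet E$, and above all that the weight filtration of \Cref{thm weight filtration of gauge} — which is a filtration of the graded object $\Gr^\bullet E$ — is preserved by, and has the asserted graded pieces after, base change along $\Gr^\bullet_N\Prism_S\to\Gr^\bullet(I^\bullet\Prism_S)$ (via the extension-of-scalars description of $\Gr^\wt_j$); once this and the completeness of perfect filtered complexes are in place, the graded computation upgrades to the filtered statement (i).
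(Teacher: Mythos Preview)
Your proposal is correct and follows essentially the same approach as the paper: factor $v$ through the filtered isomorphism $\varphi_E$ to reduce to the map $u$ coming from $I^\bullet\Prism_S\hookrightarrow I^\mathbb{Z}\Prism_S$, use perfectness to get filtered completeness so that it suffices to work on associated gradeds, and then apply the weight filtration on $\Gr^\bullet E$ to compute the $j$-th graded piece of $\Gr^\bullet(u)$ as $\Red_j(E)\otimes_S\bar I^{\mathbb N}\overline\Prism_S\to\Red_j(E)\otimes_S\bar I^{\mathbb Z}\overline\Prism_S$, from which both (i) and (ii) follow. The only cosmetic difference is that the paper first states (i) from this formula and then reads off (ii) by applying $\Gr^i$, whereas you deduce (ii) first and then (i); your worries about completed base change and preservation of the weight filtration are exactly the points the paper handles by citing \Cref{perfect over complete is complete} and the base-change description of $\Gr^\wt_j$ from \Cref{thm weight filtration of gauge}.
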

\begin{proof}
	We first notice that as $\Fil^\bullet E$ is a filtered perfect complex over $\Fil^\bullet_N \Prism_S$, its filtered base change
	\[
    \Fil^\bullet E \underset{\Fil^\bullet_N \Prism_S, \varphi_{\Prism_S}}{\bigotimes} I^\bullet \Prism_S
	\]
	is also a filtered perfect complex over $I^\bullet \Prism_S$.
	Moreover, as the ring $\Prism_S$ is  $(p,I)$-complete, the filtered ring $I^\bullet \Prism_S$ is thus filtered complete. 
	In particular, by \Cref{perfect over complete is complete} and the perfectness, the filtered complex $\Fil^\bullet (\varphi^* E)$ and hence $\Fil^{\geq b} (\varphi^* E)$ is also filtered complete and $(p,I)$-complete automatically.
	Notice that by the perfectness again, the $\Prism_S$-complex $E$ is $I$-adic complete.
	So to show the filtered map $\Fil^{\geq b}(v)$ is a filtered isomorphism, it suffices to check the associated map of the graded pieces.
	
	We then note by construction that the map $v$ can be factored as the composition 
	\[
	\begin{tikzcd}
		\Fil^\bullet E \underset{\Fil^\bullet_N \Prism_S, \varphi_{\Prism_S}}{\widehat{\bigotimes}} I^\bullet \Prism_S \arrow[r,"u"]& \Fil^\bullet E \underset{\Fil^\bullet_N \Prism_S, \varphi_{\Prism_S}}{\widehat{\bigotimes}} I^\mathbb{Z} \Prism_S \arrow[r, "\varphi_E", "\simeq"'] & I^\mathbb{Z} E,
	\end{tikzcd}
	\]
	where the map $\varphi_E$ is a filtered isomorphism, 
	and to prove (i) it suffices to show that $\Gr^{\geq b} (u)$ is a graded isomorphism.
	On the other hand, the weight filtration of $\Gr^\bullet E$ induces a finite increasing exhaustive filtration on the graded map $\Gr^\bullet (u)$ above, whose $j$-th associated graded is
	\[
	V_j \otimes_S \bar{I}^\mathbb{N} \overline{\Prism}_S  \longrightarrow V_j \otimes_S \bar{I}^\mathbb{Z} \overline{\Prism}_S, \tag{$\ast$}
	\]
	where $V_j\simeq \Red_j(E)$ is a perfect $S$-complex of graded degrees $j\in [a,b]$.
	This finishes the proof of (i), since $j$ is assumed to be $\leq b$.	
	By applying $\Gr^i(-)$ at the graded map $(\ast)$, we further obtain (ii).
\end{proof}
\begin{remark}
	It is clear from the proof that \Cref{filtration on Frobenius twist} can be extended to more general $F$-gauges $E=(E,\Fil^\bullet E, \widetilde{\varphi}_E)$:
we are only using their weights are in $[a,b]$ and $\Fil^\bullet E$ is filtered complete.
\end{remark}
In the following, we let $\FG^{I\text{-}\mathrm{tf}}(X)$ be the category of coherent $F$-gauges with
$I$-torsionfree underlying $F$-crystals.
\begin{corollary}
\label{weight 1 always saturated}
Assume $X$ is smooth over $\mathcal{O}_K$, 
and $E=(E,\Fil^\bullet E, \widetilde{\varphi}_E)\in \FG^{I\text{-}\mathrm{tf}}(X)$ is of height $[a,a+1]$.
Then $E = \Pi_X(\mathcal{E})$ where $\mathcal{E}$ is the underlying $F$-crystal of $E$
and $\Pi_X$ is the functor obtained in \Cref{thm crystal to gauge}. 
In particular, $E$ is saturated.
\end{corollary}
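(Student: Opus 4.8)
The plan is to show that the canonical map
\[
\eta_E \colon E \longrightarrow \Pi_X(\mathcal{E}),
\]
where $\mathcal{E}$ denotes the underlying ($I$-torsionfree coherent) $F$-crystal of $E$ and $\eta_E$ is the unit of the adjunction in \Cref{thm crystal to gauge}(2), is an isomorphism of $F$-gauges. By construction $\eta_E$ is the identity on underlying $F$-crystals, and concretely it is the refinement of the inclusion $\Fil^\bullet_N E \hookrightarrow \widetilde{\varphi}_E^{-1}(I^\bullet \mathcal{E})$, which exists because the gauge Frobenius $\widetilde{\varphi}_E$ is a filtered map and hence $\widetilde{\varphi}_E(\Fil^i_N E) \subseteq I^i \mathcal{E}$; so $\eta_E$ is an isomorphism precisely when this inclusion is an equality of sub-$\mathcal{O}_\Prism$-modules on a basis of $X_\qrsp$. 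Since the assertion is local on $X$ and $F$-gauges form a quasi-syntomic sheaf (\Cref{F-gauge sheaf}), we place ourselves in the setting of \Cref{local situation}: $X = \Spf(R)$ with an oriented Breuil--Kisin prism $(A, I = (d))$ and its perfection $A^{(0)} = A_{\perf}$, so that $R^{(0)} \colonequals A^{(0)}/I$ is a perfectoid ring with $\Spf(R^{(0)}) \to X$ quasi-syntomic and $p$-completely faithfully flat (using \Cref{BK is regular with flat Frob}), hence a covering object of $X_\qrsp$. Because the values of both $E$ and $\Pi_X(\mathcal{E})$ on any $\Prism_S$ are the filtered base changes of their values on the initial perfect prism $A^{(0)} = \Prism_{R^{(0)}}$ — for $E$ this is the defining compatibility of an $F$-gauge, and for $\Pi_X(\mathcal{E})$ it is \Cref{saturated Nyg fil satisfies filtered base change proposition} — it suffices to prove that $\eta_E$ becomes an isomorphism after evaluation on $A^{(0)}$; descent along the \v{C}ech nerve of $R \to R^{(0)}$ then recovers the local statement.

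Write $M^{(0)} \colonequals \mathcal{E}(A^{(0)})$, with Nygaardian filtration $\Fil^\bullet \colonequals \Fil^\bullet_N E(A^{(0)})$ and semilinear Frobenius $\widetilde{\varphi}_E$ (which equals $\widetilde{\varphi}_\mathcal{E}$). We must show $\Fil^i = \widetilde{\varphi}_E^{-1}(I^i M^{(0)})$ for every $i \in \mathbb{Z}$, as the right-hand side is by definition $\Fil^i$ of the saturated gauge $\Pi_X(\mathcal{E})(A^{(0)})$; the inclusion ``$\subseteq$'' is automatic, and we treat two complementary ranges. For $i \le a$: applying the graded Nakayama lemma \Cref{graded Nakayama}(1) to the $\mathbb{N}$-graded ring $\Gr^\bullet_N A^{(0)}$ and its module $\Gr^\bullet_N E(A^{(0)})$, the lowest nonzero degree $c$ satisfies $\Red_c(E(A^{(0)})) \neq 0$, whence $c \ge a$ by the weight hypothesis and \Cref{def of wt of gauge}; thus $\Gr^j_N E(A^{(0)}) = 0$ for all $j < a$, and by exhaustiveness of the Nygaardian filtration of the perfect filtered complex $\Fil^\bullet E$ we get $\Fil^i = M^{(0)}$ for $i \le a$. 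Then $\widetilde{\varphi}_E(M^{(0)}) = \widetilde{\varphi}_E(\Fil^a) \subseteq I^a M^{(0)} \subseteq I^i M^{(0)}$, so $\widetilde{\varphi}_E^{-1}(I^i M^{(0)}) = M^{(0)} = \Fil^i$. For $i \ge a+1$: \Cref{filtration on Frobenius twist}(i) (with $b = a+1$) identifies $\Fil^{\ge a+1}(\varphi^* E)$ with the $I$-adic filtration $I^{\ge a+1} \otimes M^{(0)}$ via the map $v$ induced by $\widetilde{\varphi}_E$. Since $A^{(0)}$ is a perfect prism, $\varphi_{A^{(0)}}$ is an automorphism and $\varphi_{A^{(0)}} \colon \Fil^\bullet_N A^{(0)} \xrightarrow{\sim} I^\bullet A^{(0)}$ is a filtered isomorphism, so the natural base-change map $\Fil^i \to \Fil^i(\varphi^* E)$ is a bijection of abelian groups whose composite with $v$ is $\widetilde{\varphi}_E|_{\Fil^i}$; hence $\widetilde{\varphi}_E$ restricts to a bijection $\Fil^i \xrightarrow{\sim} I^i M^{(0)}$ for $i \ge a+1$. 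As $\mathcal{E}$ is $I$-torsionfree, $\widetilde{\varphi}_E$ is injective on $M^{(0)}$ (its linearization is the crystal isomorphism after inverting $I$, and $\varphi^*M^{(0)} \cong M^{(0)}$ is $I$-torsionfree over the perfect prism), so any $x$ with $\widetilde{\varphi}_E(x) \in I^i M^{(0)}$ is the unique preimage lying in $\Fil^i$, i.e.\ $x \in \Fil^i$; this gives ``$\supseteq$'' in this range. The two ranges exhaust $\mathbb{Z}$, so $\Fil^i = \widetilde{\varphi}_E^{-1}(I^i M^{(0)})$ for all $i$.

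Consequently $\eta_E$ is an isomorphism over $A^{(0)}$, hence over $X$, so $E \cong \Pi_X(\mathcal{E})$ and in particular $E$ is saturated. The step I expect to be the main obstacle is the translation, in the range $i \ge a+1$, of \Cref{filtration on Frobenius twist}(i) — stated in terms of the Frobenius twist $\varphi^* E$ and the $I$-adic filtration on $E$ — into the literal identity $\Fil^i_N E = \widetilde{\varphi}_E^{-1}(I^i \mathcal{E})$ of submodules of $\mathcal{E}$; this is precisely why it is essential to first reduce to a perfect prism, where the Nygaard-to-$I$-adic comparison is realized by the honest automorphism $\varphi$ rather than by a possibly non-flat base change, and where one may freely chase elements.
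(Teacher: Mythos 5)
Your proof is correct and follows the same strategy as the paper: reduce to a perfectoid object $\Spf(S)$ with $S = \overline{A^{(0)}}$ over an affine open, use \Cref{filtration on Frobenius twist}(i) (with $b = a+1$) together with the fact that $\varphi_{\Prism_S}$ is a filtered \emph{isomorphism} on a perfect prism to identify $\Fil^{\ge a+1}$ with the saturated filtration, handle $\Fil^{\le a} = E$ via the lower weight bound, then propagate through the Čech nerve and descend. The extra detail you supply beyond the paper — framing the statement as showing the adjunction unit $\eta_E$ is an isomorphism, invoking \Cref{graded Nakayama} to justify $\Fil^a = E$, and spelling out the injectivity of $\widetilde{\varphi}_E$ on the $I$-torsionfree $M^{(0)}$ needed to upgrade the isomorphism of \Cref{filtration on Frobenius twist}(i) into the preimage identity — is consistent with the paper's argument and fills in steps the paper leaves implicit; the only soft spot is the sentence claiming the value on ``any $\Prism_S$'' is base-changed from $A^{(0)}$, which is only literally true for $S$ receiving a map from $R^{(0)}$, but since you then run the argument on the Čech nerve and descend, the logic goes through.
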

\begin{proof}
We adopt the notation in \Cref{local situation}.
In particular, we let $U = \Spf(R)$ be an affine open of $X$,
and let $(A, I)$ be a Breuil--Kisin prism of $U$ with perfection $(A^{(0)}, I A^{(0)})$ 
and $S \coloneqq \overline{A^{(0)}}$.

Next we show that the value of $E$ at $\Spf(S)$ is equipped with the saturated
Nygaardian filtration.
As $E$ is of height $\geq a$, we have $\Fil^a E(S) = E(S)$.
Moreover, by \Cref{filtration on Frobenius twist}.(i), 
$\Fil^{\geq a+1} (\varphi_{\Prism_S}^* E(S))$ is isomorphic to the $I$-adic filtration on $I^{a+1}E(S)$
and thus equals to $\varphi_E^{-1}(I^{\geq a+1} E(S))$.
Notice that since $\varphi_{\Prism_S}:\Fil^\bullet_N \Prism_S \to I^\bullet \Prism_S$ is a filtered isomorphism,
we can untwist $\varphi_{\Prism_S}$ to get $\Fil^{\geq a+1} E(S) \simeq 
\widetilde{\varphi}_E^{-1} (I^{\geq a+1} E(S))$.
Hence the value $E(S)$ is saturated.

Now the same proof of \Cref{thm crystal to gauge} (1) shows that the value of
$E$ at $\Spf(\overline{A^{(i)}/I})$ are equipped with saturated Nygaardian filtrations
for all $i$.
Lastly the same proof of \Cref{saturated claim} shows that for any $\Spf(T) \in X_{\qrsp}$
with $p$-completely flat structural map to $U$, the value of $E$ at $\Spf(T)$
has saturated Nygaardian filtration, this finishes the proof.
\end{proof}

We also give a relation on the weight of an $F$-gauge and the height of the underlying $F$-crystal.
\begin{proposition}
\label{weight and height}
Let $X$ be a quasi-syntomic $p$-adic formal scheme, and let $E=(E,\Fil^\bullet E, \widetilde{\varphi}_E)\in \FG^{I\text{-}\mathrm{tf}}(X)$.
\begin{enumerate}[(i)]
	\item 	If $E$ is of weight $[a,b]$, then its underlying $F$-crystal $(\mathcal{E},\varphi_\mathcal{E})$ has height within $[a,b]$ as well.
	\item  Conversely, assume $(\mathcal{E},\varphi_\mathcal{E})$ is of height $[a',b']$ with $E$ being saturated, and there is a quasi-syntomic cover of $X$ by perfectoids.
	Then $E$ is of weight $[a',b']$.
\end{enumerate}		
\end{proposition}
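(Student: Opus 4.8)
The plan is to prove both statements by restricting to the formal spectra of quasiregular semiperfectoid rings $S\in X_\qrsp$, where \Cref{filtration on Frobenius twist} applies, and then descending back to $X$. For (i), fix such an $S$, set $A=\Prism_S$, and write $\mathcal{E}_S=\mathcal{E}(\Prism_S)$, $E_S=E(\Prism_S)$. Since $E$ has weights in $[a,b]$, the associated graded $\Gr^\bullet E_S$ vanishes in degrees $<a$ (as recorded at the beginning of the proof of \Cref{thm weight filtration of gauge}), so the exhaustive filtration $\Fil^\bullet E_S$ satisfies $\Fil^a E_S=E_S$. Feeding $E_S$ into \Cref{filtration on Frobenius twist} furnishes a filtered map $v\colon \Fil^\bullet(\varphi^*E_S)\to I^\bullet\otimes E_S$ into the $I$-adic filtration on $E_S[1/I]$, whose underlying map is the linearized Frobenius $\varphi_{\mathcal{E}}\colon \varphi_A^*\mathcal{E}_S\to \mathcal{E}_S[1/I]$ of the underlying $F$-crystal and which restricts to an isomorphism $\Fil^{\ge b}(\varphi^*E_S)\xrightarrow{\ \sim\ }I^{\ge b}\otimes E_S$. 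Evaluating $v$ in filtered degree $a$ — where the source is all of $\varphi_A^*\mathcal{E}_S$ because $\Fil^a E_S=E_S$ — gives $\mathrm{Im}(\varphi_{\mathcal{E}})\subseteq I^a\mathcal{E}_S$, while evaluating it in filtered degree $b$ gives $I^b\mathcal{E}_S=v(\Fil^b(\varphi^*E_S))\subseteq \mathrm{Im}(\varphi_{\mathcal{E}})$. As this holds for every $S\in X_\qrsp$ and height is detected on the quasiregular semiperfectoid site, we conclude $I^b\mathcal{E}\subseteq \mathrm{Im}(\varphi_{\mathcal{E}})\subseteq I^a\mathcal{E}$, i.e.\ $\mathcal{E}$ has height within $[a,b]$.

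For (ii), note first that the vanishing conditions $\Red_j(\Gr^\bullet E)=0$ for $j\notin[a',b']$ and $\Gr^n E=0$ for $n\ll 0$ may be tested after pulling back along a $p$-completely faithfully flat cover of $X$, because the reduction functor $\Red_X$ lands in $\DG_{p\text{-comp}}(\mathcal{O}_X)\simeq\lim_{S\in X_\qrsp}\DG_{p\text{-comp}}(S)$ and is compatible with restriction. Using the hypothesis I would take such a cover by perfectoid rings and fix a perfectoid $S$ in it; then $A=\Prism_S$ is a perfect prism with $I=(d)$, $\varphi_A$ is an automorphism, $\Fil^i_N A=\varphi^{-1}(d)^iA$, and $\Gr^\bullet_N A\cong\overline{\Prism}_S[v]$ is a polynomial ring on the degree-$1$ class $v=\varphi^{-1}(d)\bmod \varphi^{-1}(d)^2$. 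Hence $\Red_{j,S}(\Gr^\bullet E(\Prism_S))$ is computed by the two-term Koszul complex for multiplication by $v$ on $\Gr^\bullet_N E(\Prism_S)$, i.e.\ it equals $\mathrm{cofib}\bigl(\Gr^{j-1}_N E(\Prism_S)\xrightarrow{\,\cdot v\,}\Gr^j_N E(\Prism_S)\bigr)$.

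To control the right-hand side I would use that $E$ is saturated, so $\Fil^\bullet_N E(\Prism_S)=\widetilde{\varphi}_{\mathcal{E}}^{-1}\bigl(I^\bullet \mathcal{E}(\Prism_S)\bigr)$; untwisting along the automorphism $\varphi_A$ identifies $\bigl(\Gr^\bullet_N E(\Prism_S),\cdot v\bigr)$ with the associated graded $(N,\cdot u)$ of the twisted filtration $\Fil^\bullet_{\mathrm{tw}}\varphi_A^*\mathcal{E}(\Prism_S)=\varphi_{\mathcal{E}}^{-1}(I^\bullet\mathcal{E}(\Prism_S))$, where $u$ is the image of $d$. The computation in \Cref{boundedness results on twist graded} — whose relevant conclusions use only $I$-torsionfreeness and the height bound $[a',b']$, not Noetherianity of the base — then shows that $N$ is $u$-torsionfree, vanishes in degrees $<a'$, and that $N^i\xrightarrow{\,\cdot u\,}N^{i+1}$ is an isomorphism for $i\ge b'$. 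Consequently $\mathrm{cofib}(N^{j-1}\xrightarrow{\,\cdot u\,}N^j)=\mathrm{coker}(N^{j-1}\xrightarrow{\,\cdot u\,}N^j)$ vanishes both for $j<a'$ (both terms are zero) and for $j>b'$ (the map is invertible), and moreover $\Gr^n_N E(\Prism_S)=0$ for $n<a'$; descending to $X$ we obtain that $E$ has weights in $[a',b']$.

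Part (i) is essentially immediate once \Cref{filtration on Frobenius twist} is in hand. The real content is in part (ii), and I expect the main obstacles to be: (a) justifying that reductions and the ``$n\ll 0$''-condition are detected on the perfectoid cover; (b) the explicit description of $\Gr^\bullet_N\Prism_S$ and of $\Red_S$ on a perfect prism (including tracking the Frobenius untwist); and (c) checking that the $I$-torsionfree-plus-height analysis of \Cref{boundedness results on twist graded} survives over a non-Noetherian perfectoid base. The last of these is the most delicate point.
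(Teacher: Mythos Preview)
Your proof of (i) is essentially the paper's argument: both restrict to qrsp $S$, observe $\Fil^a E=E$ from the weight condition, and invoke \Cref{filtration on Frobenius twist} to see that $v$ lands in $I^a E$ and restricts to an isomorphism on $\Fil^{\ge b}$.

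For (ii) your route is correct but genuinely different from the paper's. The paper also reduces to perfectoid $S$ and uses saturation plus the fact that $\varphi_{\Prism_S}$ is a filtered isomorphism to identify $\Fil^\bullet(\varphi_{\Prism_S}^*E)=\varphi_{\mathcal{E}}^{-1}(I^\bullet\mathcal{E})$; from $\varphi_{\mathcal{E}}^{-1}(I^{a'}\mathcal{E})=\varphi_{\Prism_S}^*\mathcal{E}$ it reads off $\Fil^{a'}E=E$ and hence weight $\ge a'$. For weight $\le b'$ the paper argues by contrapositive via \Cref{filtration on Frobenius twist}(ii): if the top weight were some $b_0>b'$, then $\Gr^l(v)$ (and hence $\Fil^l(v)$) would fail to be an isomorphism for all $l<b_0$, contradicting the isomorphism $\varphi_{\mathcal{E}}^{-1}(I^{b'}\mathcal{E})\xrightarrow{\sim}I^{b'}\mathcal{E}$ coming from height $\le b'$. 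Your argument is instead a direct computation: you identify $\Gr^\bullet_N\Prism_S\cong S[v]$ on a perfect prism, realize $\Red_{j,S}$ as the Koszul cofiber for $v$, untwist to the twisted-filtration graded $(N,u)$, and read off the vanishing from the elementary structural facts of \Cref{boundedness results on twist graded}. This is more hands-on and requires (as you note) checking that the $u$-torsionfreeness, the vanishing in degrees $<a'$, and the isomorphism $N^i\xrightarrow{\cdot u}N^{i+1}$ for $i\ge b'$ do not use Noetherianity---they do not, since they follow directly from injectivity of $\varphi_{\mathcal{E}}$ and the inclusions $I^{b'}\mathcal{E}\subseteq\mathrm{Im}(\varphi_{\mathcal{E}})\subseteq I^{a'}\mathcal{E}$. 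The paper's route is shorter and stays inside the abstract framework of \Cref{filtration on Frobenius twist}; yours makes the perfectoid simplification completely explicit and gives a transparent formula for $\Red_{j,S}$ as a cokernel, which is a pleasant byproduct.
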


Note that $\Fil^\bullet E$ is saturated when $E$ is an $F$-gauge in vector bundle (\Cref{FGVB is saturated}).
As a consequence, in this case the notion of weight of $E$ and the height of the underlying $F$-crystal coincide.
\begin{proof}
    	As both weight and height can be checked locally with respect to quasi-syntomic topology, it suffices to assume $X=\Spf(S)$ is quasiregular semiperfectoid.
    	We first assume that $E$ is of weight $[a,b]$.
    	By construction (cf. \Cref{thm weight filtration of gauge}), we have $\Fil^i E = E$ for $i\leq a$.
    	In particular, since $\Fil^i E\subseteq \widetilde{\varphi}_E^{-1}(I^iE)$, the image $\widetilde{\varphi}_E(E)$ and hence the image of the linearlized map $\varphi_E(E)$ are contained in $I^a E$;
    	namely the $F$-crystal $(\mathcal{E},\varphi_\mathcal{E})$ has height $\geq a$.
    	Moreover, by \Cref{{filtration on Frobenius twist}}, since the filtered map $v$ is an isomorphism on $\Fil^{\geq b}(-)$, by forgetting the filtration and looking at the underlying map of modules, we get the image of the linearized map $\varphi_E(E)$ contains the submodule $I^bE$, and thus the associated $F$-crystal is of height $\leq b$.
        Here we also note that if $\Red_b(E)$ is nonzero, then by looking at the $b$-th graded factor of the map $(\ast)$, for any $j<b$ we have
        \[
        \Gr^b(V_j\otimes_S \bar{I}^\mathbb{N} \overline{\Prism}_S )=0 \longrightarrow \Gr^b(V_j \otimes_S \bar{I}^\mathbb{Z} \overline{\Prism}_S) = V_j\otimes_S \overline{\Prism}_S \neq 0.
        \]
        In particular, for any given integer $l<b$, the map $\Gr^l u$ and hence $\Fil^l u$ is not surjective and hence not an isomorphism.

        For (ii), we assume  the underlying $F$-crystal of $E$ is of height $[a',b']$, and $S$ is perfectoid.
        Then the image $\varphi_\mathcal{E}(\mathcal{E})$ satisfies the inclusions 
        \[
        I^{b'}\mathcal{E} \subseteq \varphi_\mathcal{E}(\mathcal{E}) \subseteq I^{a'} \mathcal{E}.
        \]
        So by taking the preimage along the $\Prism_S$-linear map $\varphi_\mathcal{E}$, we see $\varphi^{-1}_\mathcal{E}(I^{a'} \mathcal{E}) = \varphi_{\Prism_S}^* \mathcal{E}$, and the map $\varphi_\mathcal{E}$ induces an isomorphism $\varphi^{-1}_\mathcal{E}(I^{\geq b'} \mathcal{E}) \simeq I^{\geq b'} \mathcal{E}$.
        On the other hand, since $\varphi_{\Prism_S}$ is a filtered isomorphism, by the assumption that $E$ has saturated filtration, we get
        \[
        \Fil^\bullet (\varphi_{\Prism_S}^* E) = \varphi^{-1}_\mathcal{E}(I^\bullet \mathcal{E}).
        \]
        This in particular implies that $\Fil^{a'} E=E$, and $E$ is of weight $\geq a'$.
        Moreover, by \Cref{filtration on Frobenius twist}.(ii), the map  $\varphi^{-1}_E(I^i E) \to I^i E$ is not an isomorphism unless $i\geq \max\{l~|~\Red_l(E)\neq 0\}$, where the number $\max\{l~|~\Red_l(E)\neq 0\}$ is by definition the largest possible weight of $E$ (\Cref{def of wt of gauge}).
        Thus $E$ is of weight $\leq b'$.
\end{proof}

\subsection{$F$-gauges in vector bundles and $p$-divisible groups}

Recall that in \cite{ALB23} the authors established,
for every quasi-syntomic $p$-adic formal scheme $X$,
an anti-equivalence between $p$-divisible groups over $X$ and 
admissible prismatic Dieudonn\'{e} crystals over $X$ (see \cite[Theorem 1.4.4]{ALB23}).
Here a prismatic Dieudonn\'{e} crystal over $X$ is the same as an $F$-crystal in vector bundles
over $\Spf(R)_{\Prism}$ which is effective of height $\leq 1$,
and admissibility is a technical condition introduced in \cite[Def.~4.5]{ALB23}, 
and is automatic if $X$ is regular and admits a quasi-syntomic cover by perfectoids,
(see \cite[Definition 1.3.1 and Remark 1.4.9]{ALB23}\footnote{Although the authors only
		stated that admissibility is automatic for complete regular local rings with a perfect characteristic $p$
		residue field, their proof works for
		any $p$-complete ring $R$ admitting a quasi-syntomic cover by a perfectoid.}).
Below, for  quasi-syntomic $X$, we let $\FC^{\mathrm{adm},\vect}_{[0,1]}(X)$ be the subcategory of $\FC^\vect_{[0,1]}(X)$ that consists of admissible prismatic $F$-crystals of height $[0,1]$.
We then establish an equivalence of the above
category with yet another category.
	
As a preparation, we show that the filtration on an $F$-gauge in vector bundles is always saturated.
\begin{proposition}\label{FGVB is saturated}
Let $S$ be a quasiregular semiperfectoid ring, and let $E=(E,\Fil^\bullet E, \widetilde{\varphi}_E)$ be an $F$-gauge in vector bundles over $S$.
Then $E$ is saturated, or more explicitly we have $\Fil^i E = \widetilde{\varphi}_E^{-1}(I^i E)$ for $i\in \mathbb{Z}$.
\end{proposition}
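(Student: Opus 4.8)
The plan is to apply the porism \Cref{Porism in Bha23} to the Frobenius map of $E$ and read off its conclusion as the saturation statement. Concretely, in loc.\ cit.\ I would take the ring to be $\Prism_S$ and the filtered map to be
\[
\widetilde{\varphi}_E\colon \bigl(E,\Fil^\bullet E\bigr)\longrightarrow \bigl(E[1/I],\ I^\bullet E\bigr),
\]
where the target carries the $I$-adic filtration $\Fil^i = I^iE$, remembering on both sides only the underlying $\Prism_S$-module structures. As in the proof of \Cref{thm crystal to gauge}, the porism and its proof use only the additivity of $\widetilde{\varphi}_E$ together with the $\Prism_S$-linearity of the transition maps, so the $\varphi_{\Prism_S}$-semilinearity of $\widetilde{\varphi}_E$ causes no trouble. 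Once the four hypotheses of \Cref{Porism in Bha23} are verified, that porism yields $\Fil^\bullet E = \widetilde{\varphi}_E^{-1}(I^\bullet E)$, which is exactly the assertion.

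The first three hypotheses I would dispatch immediately. Since $E$ is an $F$-gauge in vector bundles, $\Fil^\bullet E$ is an exhaustive decreasing filtration of $E$ by $\Prism_S$-submodules in cohomological degree $0$ (the Rees ring $\Rees(\Fil_N^\bullet\Prism_S)$ being discrete, a vector bundle over it has discrete graded pieces), so hypothesis (i) holds; its associated graded $\Gr^\bullet E$ is a vector bundle over $\Gr^\bullet_N\Prism_S$, hence concentrated in cohomological degree $0$ and in particular coconnective, giving hypothesis (ii); and $\Prism_S$ is $I$-torsionfree because $I$ is locally generated by a nonzerodivisor, so the transition maps $I^{i+1}E\hookrightarrow I^iE$ inside $E[1/I]$ are injective, giving hypothesis (iii).

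The crux is hypothesis (iv): that $\Gr(\widetilde{\varphi}_E)\colon \Gr^\bullet E\to \bigoplus_i I^iE/I^{i+1}E$ has coconnective cone. As both sides sit in cohomological degree $0$, this is equivalent to $\Gr(\widetilde{\varphi}_E)$ being injective. By the definition of an $F$-gauge, $\widetilde{\varphi}_E$ factors as the base-change unit $\Fil^\bullet E\to \varphi^*E$ followed by the filtered isomorphism $\varphi_E\colon \varphi^*E\xrightarrow{\ \simeq\ } I^\mathbb{Z}E$, where $\varphi^*E$ is the base change of $\Fil^\bullet E$ along $\varphi_{\Prism_S}\colon \Fil^\bullet_N\Prism_S\to I^\bullet\Prism_S$. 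Passing to associated gradeds, $\Gr(\varphi_E)$ is an isomorphism, and — since forming associated gradeds commutes with this base change — the unit becomes the base change $\Gr^\bullet E\otimes_{\Gr^\bullet_N\Prism_S}\bigl(\Gr^\bullet_N\Prism_S\to \Gr^\bullet(I^\mathbb{Z}\Prism_S)\bigr)$ of the graded ring map induced by $\varphi_{\Prism_S}$. Here I would invoke \cite[Theorem 12.2]{BS19}: this graded ring map is, up to Breuil--Kisin twists, the inclusion $\bigoplus_i\Fil_i^\conj\overline{\Prism}_S\hookrightarrow \bigoplus_i\overline{\Prism}_S$ of the conjugate filtration (and the zero map into negative degrees), which is injective because for quasiregular semiperfectoid $S$ the conjugate filtration on the discrete ring $\overline{\Prism}_S$ is an exhaustive increasing filtration by submodules. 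Since $\Gr^\bullet E$ is a vector bundle, hence flat, over $\Gr^\bullet_N\Prism_S$, base change preserves this injectivity, and composing with the isomorphism $\Gr(\varphi_E)$ shows $\Gr(\widetilde{\varphi}_E)$ is injective.

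I expect step (iv) to be the only real content: identifying the graded of $\varphi_{\Prism_S}$ with the conjugate-filtration inclusion and using flatness of $\Gr^\bullet E$ is exactly where ``vector bundle'' rather than ``perfect complex'' $F$-gauge enters — the statement genuinely fails for general perfect $F$-gauges — whereas the remaining verifications are formal. (Alternatively, one could obtain the injectivity of the relevant graded maps from \Cref{filtration on Frobenius twist} after observing, via graded Nakayama \Cref{graded Nakayama}, that a vector-bundle $F$-gauge has bounded weights.)
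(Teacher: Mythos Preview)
Your proof is correct and follows the same core strategy as the paper's: reduce saturation to the injectivity of $\Gr^i(\widetilde{\varphi}_E)$ for each $i$, then deduce this from the injectivity of $\Gr^\bullet_N\Prism_S \to \bar{I}^\mathbb{Z}\overline{\Prism}_S$ (from \cite[Thm.~12.2]{BS19}) together with flatness coming from the vector-bundle hypothesis. The only cosmetic differences are that you package the diagram chase as an application of \Cref{Porism in Bha23} and use flatness of $\Gr^\bullet E$ over $\Gr^\bullet_N\Prism_S$ directly, whereas the paper spells out the induction by hand and routes the flatness step through the weight filtration of \Cref{thm weight filtration of gauge} to reduce to modules of the form $V\otimes_S \Gr^\bullet_N\Prism_S$.
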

\begin{corollary}
    	Let $X$ be a quasi-syntomic $p$-adic formal scheme.
    	Then the forgetful functor below from $F$-gauges in vector bundles to its underlying $F$-crystals is fully faithful
    	\[
    	\FG^\vect(X) \longrightarrow \FC^\vect(X).
    	\]
\end{corollary}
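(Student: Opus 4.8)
The plan is to reduce immediately to the affine quasiregular semiperfectoid case, where the statement becomes a direct consequence of \Cref{FGVB is saturated}. Recall $\FG^\vect(X)=\lim_{S\in X_\qrsp}\FG^\vect(\Spf(S))$ by \Cref{def F-gauge general} and $\FC^\vect(X)\simeq\lim_{S\in X_\qrsp}\FC^\vect(\Spf(S))$ by \cite[Proposition 2.14]{BS21}; moreover the forgetful functor is the limit of the level-wise forgetful functors $\FG^\vect(\Spf(S))\to\FC^\vect(\Spf(S))$, which are compatible with the transition functors since the underlying crystal of $\Phi_{(S_1,S_2)}$ is the pullback functor (see \Cref{functorial abs}). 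Since mapping spaces in a limit of $\infty$-categories are the corresponding limits of mapping spaces, it suffices to prove that the forgetful functor $\FG^\vect(\Spf(S))\to\FC^\vect(\Spf(S))$ is fully faithful for every $S\in X_\qrsp$; the relevant mapping spaces being discrete, this amounts to a bijection on $\mathrm{Hom}$ sets.

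So fix $S$ and two $F$-gauges in vector bundles $E_j=(E_j,\Fil^\bullet E_j,\widetilde{\varphi}_{E_j})$, $j=1,2$, with underlying $F$-crystals $(\mathcal{E}_j,\varphi_{\mathcal{E}_j})$. By \Cref{FGVB is saturated} we have $\Fil^i E_j=\widetilde{\varphi}_{E_j}^{-1}(I^i E_j)$ for all $i$, where the preimage is taken inside $I^\mathbb{Z}\Prism_S\otimes_{\Prism_S}E_j$; in particular the filtration has injective transition maps and $\Fil^{n}E_j=E_j$ for $n\ll 0$. Faithfulness is then immediate: a filtered map intertwining the $\widetilde{\varphi}_{E_j}$ is determined by its restriction to $\Fil^{\ll 0}(-)$, which is exactly the underlying map of $F$-crystals. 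For fullness, start from a morphism $f\colon\mathcal{E}_1\to\mathcal{E}_2$ of $F$-crystals. Unwinding the definition, the $F$-crystal condition says precisely that $f$ intertwines the semi-linear Frobenii, i.e.\ $\widetilde{\varphi}_{E_2}\circ f=(\mathrm{id}\otimes f)\circ\widetilde{\varphi}_{E_1}$ on the underlying modules. Hence for $x\in\Fil^i E_1$ we get $\widetilde{\varphi}_{E_2}(f(x))=(\mathrm{id}\otimes f)(\widetilde{\varphi}_{E_1}(x))\in I^i E_2$, so $f(x)\in\Fil^i E_2$. Thus $f$ automatically respects the filtrations and therefore upgrades, uniquely by faithfulness, to a morphism of $F$-gauges, proving fullness.

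I expect that none of this presents a real obstacle: the substantive input is \Cref{FGVB is saturated}, and what remains is bookkeeping — tracking that the preimage defining the saturated filtration lives in the correct ambient object $I^\mathbb{Z}\Prism_S\otimes_{\Prism_S}E_j$ (harmless here since $\Prism_S$, and hence all the vector bundles involved, are $I$-torsionfree), translating between the linearized and semi-linear forms of the Frobenius compatibility, and checking that a level-wise fully faithful morphism of diagrams of $\infty$-categories induces a fully faithful functor on limits. The mildest care is needed only for the last point, which follows formally from the description of mapping spaces in an inverse limit together with the compatibility recorded in \Cref{functorial abs}.
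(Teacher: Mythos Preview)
Your proposal is correct and follows essentially the same approach the paper has in mind: the paper states the corollary immediately after \Cref{FGVB is saturated} without giving it a separate proof, treating it as an immediate consequence of saturatedness. You have simply spelled out the limit reduction and the filtration-preservation argument that the paper leaves implicit.
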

\begin{proof}
    	By assumption, as $\Fil^\bullet E$ is a filtered vector bundle over the filtered ordinary ring $\Fil^\bullet_N \Prism_S$, each $\Fil^i E$ is a submodule inside $E$ such that $\widetilde{\varphi}_E(\Fil^i E) \subset I^iE$.
    	Thus $\Fil^i E \subseteq \widetilde{\varphi}_E^{-1}(I^i E)$, and we want to show that each inclusion is an equality.
    	Here we note that by the assumption of $\Fil^\bullet E$, we know the inclusion is an equality when $i<<0$.

    	For each $i\in \mathbb{Z}$, the Frobenius structure $\widetilde{\varphi}_E$ induces the following commutative diagram of short exact sequences
    	\[
    	\begin{tikzcd}
    		0 \ar[r] & \Fil^{i+1} E \arrow[d, "\widetilde{\varphi}_E^{i+1}"] \ar[r] & \Fil^i E \arrow[d, "\widetilde{\varphi}_E^i"] \ar[r] & \Gr^i E \arrow[d, "\Gr^i \widetilde{\varphi}"] \ar[r] & 0\\
    		0 \ar[r] & I^{i+1} E \ar[r] & I^i E \ar[r] & I^iE/I^{i+1}E \ar[r] &0,
    	\end{tikzcd}
        \]
        where $\widetilde{\varphi}_E^i$ is the restriction of $\widetilde{\varphi}_E$ on $\Fil^i E$.
        We then claim that the map $\Gr^i E \to I^iE/I^{i+1}E$ is injective for each $i\in \mathbb{Z}$.
        Granting the claim, a simple diagram chasing implies that $\Fil^{i+1} E= (\widetilde{\varphi}_E^i)^{-1}(I^{i+1} E) = \widetilde{\varphi}_E^{-1} ( I^{i+1} E)$.
        Hence we get the saturatedness of $\Fil^\bullet E$ by induction and the observation that $\Fil^i E=E$ for $i<<0$.
        
        Now we check the above claim.
        Recall that the graded map $\Gr^\bullet \widetilde{\varphi}:\Gr^\bullet E \to \bar{I}^\bullet E$ factors through the linearlization 
        \[
        \begin{tikzcd}
        	\Gr^\bullet E \ar[r] & \Gr^\bullet E \underset{\Gr^\bullet_N \Prism_S, \Gr^\bullet \varphi_{\Prism_S}}{\widehat{\bigotimes}} \bar{I}^\mathbb{N} \overline{\Prism}_S \arrow[rr, "\Gr^\bullet \varphi_E"] && \bar{I}^\bullet E,
        \end{tikzcd}
        \]
        such that a further base change to $\bar{I}^\mathbb{Z} \overline{\Prism}_S$ is a graded isomorphism (cf. \Cref{def F-gauge qrsp}).
        In particular, to show the injection of $\Gr^i E \to \bar{I}^i E$, it suffices to show that the following map is a graded injection
        \[
        \Gr^\bullet E \longrightarrow  \Gr^\bullet E\underset{\Gr^\bullet_N \Prism_S, \Gr^\bullet \varphi_{\Prism_S}}{\widehat{\bigotimes}} \bar{I}^\mathbb{Z} \overline{\Prism}_S.
        \]
        We then notice that since $\Gr^\bullet E$ is a graded vector bundle over $\Gr^\bullet_N \Prism_S$, its reduction $\Red_S (E)$, which is the graded base change of $\Gr^\bullet E$ from $\Gr^\bullet_N \Prism_S$ to $S$, is a graded vector bundle over $S$.
        Moreover, by \Cref{thm weight filtration of gauge}, the complex $\Gr^\bullet E$ is filtered by a finite exhaustive weight filtration with graded pieces being $\Red_{j}(E) \otimes_S \Gr^\bullet_N \Prism_S$.
        Thus by a finite induction, it suffices to assume $\Gr^\bullet E$ is of the form $V\otimes_S \Gr^\bullet_N \Prism_S$ for a finite projective $S$-module $V$.
        
        Finally, as $S$ is quasiregular semiperfectoid, by \cite[Thm.~12.2]{BS19} the graded map $\Gr^\bullet \varphi_{\Prism_S}: \Gr^\bullet_N \Prism_S \to \bar{I}^\mathbb{N} \overline{\Prism}_S$ and thus $\Gr^\bullet_N \Prism_S \to \bar{I}^\mathbb{Z} \overline{\Prism}_S$ is injective.
        So by the flatness of $V$ over $S$, we get the injection of 
        \[
        V\otimes_S \Gr^i_N \Prism_S \longrightarrow \Gr^i\left( (V\otimes_S \Gr^\bullet_N \Prism_S) \underset{\Gr^\bullet_N \Prism_S, \Gr^\bullet \varphi_{\Prism_S}}{\widehat{\bigotimes}} \bar{I}^\mathbb{Z} \overline{\Prism}_S \right) \simeq V\otimes_S \bar{I}^i \overline{\Prism}_S, 
        \]
        hence the claim.
\end{proof}
\begin{theorem}
\label{p-div and Fgauge}
Let $X$	be a quasi-syntomic $p$-adic formal scheme.
There is a natural equivalence
\[
\FG^\vect_{[0,1]}(X) \simeq \FC^{\mathrm{adm},\vect}_{[0,1]}(X).
\]
It is compatible with \Cref{thm crystal to gauge} when $X$ is smooth over  $\Spf(\mathcal{O}_K)$.
\end{theorem}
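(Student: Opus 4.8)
The plan is to realize both categories as full subcategories of $\FC^\vect(X)$ and identify them. By the corollary following \Cref{FGVB is saturated}, the forgetful functor $\FG^\vect(X)\to\FC^\vect(X)$ is fully faithful, so its restriction $\FG^\vect_{[0,1]}(X)\hookrightarrow\FC^\vect(X)$ is fully faithful, and it suffices to show its essential image is exactly $\FC^{\mathrm{adm},\vect}_{[0,1]}(X)$. Since $F$-gauges in vector bundles, $F$-crystals in vector bundles, and the admissibility condition of \cite{ALB23} all satisfy quasi-syntomic descent (and, using full faithfulness, ``lying in the essential image'' is a quasi-syntomic-local condition), I may assume $X=\Spf(S)$ with $S$ quasiregular semiperfectoid. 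There, exactly as in the proof of the corollary following \Cref{FGVB is saturated}, the filtered map $\varphi_{\Prism_S}\colon\Fil^\bullet_N\Prism_S\to I^\bullet\Prism_S$ lets one untwist Frobenius, so a saturated Nygaardian filtration $\widetilde\varphi_\mathcal{E}^{-1}(I^\bullet\mathcal{E}(\Prism_S))$ on an $F$-crystal $\mathcal{E}$ corresponds to the preimage (``Hodge'') filtration $\varphi_\mathcal{E}^{-1}(I^\bullet\mathcal{E}(\Prism_S))$ on $\varphi_{\Prism_S}^*\mathcal{E}$; the latter, together with the fact that $\mathrm{coker}(\varphi_\mathcal{E})$ is an $\overline{\Prism}_S$-module, is the data governed by admissibility in \cite[Def.~4.5]{ALB23}.

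\emph{From a gauge to an admissible $F$-crystal.} Let $E=(E,\Fil^\bullet E,\widetilde\varphi_E)\in\FG^\vect_{[0,1]}(X)$ with underlying $F$-crystal $\mathcal{E}$. Being a vector bundle, $\mathcal{E}$ is $\mathcal{I}$-torsionfree, so \Cref{weight and height}(i) applies and shows $\mathcal{E}$ has height in $[0,1]$; in particular it is effective of height $\le 1$, i.e.\ a prismatic Dieudonn\'e crystal. On $\Spf(S)$ as above, \Cref{FGVB is saturated} gives $\Fil^iE=\widetilde\varphi_E^{-1}(I^iE)$, and since $\Fil^\bullet E$ is a filtered vector bundle over $\Fil^\bullet_N\Prism_S$ the reduction $\Red_\bullet(E)$ is a finite graded vector bundle over $S$; untwisting along $\varphi_{\Prism_S}$ and comparing associated graded identifies this with the statement that the cokernel of the linearized Frobenius of $\mathcal{E}$ is finite locally free, which is the admissibility condition. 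Hence $\mathcal{E}\in\FC^{\mathrm{adm},\vect}_{[0,1]}(X)$.

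\emph{From an admissible $F$-crystal to a gauge.} Conversely, given $\mathcal{E}\in\FC^{\mathrm{adm},\vect}_{[0,1]}(X)$, equip it with the saturated Nygaardian filtration $\Fil^iE:=\widetilde\varphi_\mathcal{E}^{-1}(I^i\mathcal{E})$, which equals $\mathcal{E}$ for $i\le 0$ since $\mathcal{E}$ is effective. Admissibility says $\mathrm{coker}(\varphi_\mathcal{E})$ is a finite locally free $\overline{\Prism}_S$-module; untwisting along $\varphi_{\Prism_S}$, this forces every graded piece $\Gr^iE$, hence also every $\Fil^iE$, to be a vector bundle, and one checks that the resulting Rees module is a vector bundle over $\Rees(\Fil^\bullet_N\Prism_S)$ — this is where the two-step nature of the filtration is used. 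The Frobenius $\widetilde\varphi_\mathcal{E}$ makes $(E,\Fil^\bullet E,\widetilde\varphi_\mathcal{E})$ an $F$-gauge: the filtered linearization becomes a filtered isomorphism after inverting $\mathcal{I}$ precisely because $\mathcal{E}$ has height $\le 1$. Since this $F$-gauge in vector bundles has a finite weight interval (locally its graded is a summand of a finite sum of shifts of $\Gr^\bullet_N\Prism_S$), the weight-equals-height statement for saturated $F$-gauges recorded in the note following \Cref{weight and height} shows its weight is $[0,1]$, so it lies in $\FG^\vect_{[0,1]}(X)$ and maps to $\mathcal{E}$. The two constructions are manifestly mutually inverse and functorial, giving the equivalence.

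Finally, for compatibility with \Cref{thm crystal to gauge}: when $X$ is smooth over $\Spf(\mathcal{O}_K)$ it is regular and admits a quasi-syntomic cover by perfectoids, so every object of $\FC^\vect_{[0,1]}(X)$ is automatically admissible by \cite[Rmk.~1.4.9]{ALB23}; the gauge produced above carries the saturated Nygaardian filtration, which is precisely the filtration characterizing $\Pi_X(\mathcal{E})$ in \Cref{thm crystal to gauge}(1), whence $\Pi_X(\mathcal{E})\in\FG^\vect_{[0,1]}(X)$ and corresponds to $\mathcal{E}$ under the equivalence. I expect the main difficulty to be bookkeeping: matching the untwisted description of the saturated filtration with the precise formulation of admissibility in \cite{ALB23}, and verifying that a two-step decreasing filtration by vector bundles genuinely defines a filtered vector bundle over the filtered ring $\Fil^\bullet_N\Prism_S$, and not merely a coherent $F$-gauge.
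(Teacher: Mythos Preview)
Your overall strategy---reduce to a single qrsp $S$ via descent and full faithfulness, then compare the saturated Nygaardian filtration with the admissibility data---matches the paper's. But two concrete gaps prevent the argument from going through as written.

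First, you have the wrong admissibility condition. In \cite[Def.~4.5, Def.~4.10]{ALB23} (as the paper also recalls), admissibility of $(M_S,\widetilde\varphi_{M_S})$ means that the image $F_S$ of $M_S\xrightarrow{\widetilde\varphi_{M_S}} M_S\to M_S/IM_S$ is a vector bundle over $S$ (not over $\overline{\Prism}_S$) and that the linearization $F_S\otimes_S\overline{\Prism}_S\to M_S/IM_S$ is injective. This is not the statement that $\mathrm{coker}(\varphi_\mathcal{E})$ is locally free over $\overline{\Prism}_S$, and the two are not obviously equivalent. In the gauge-to-crystal direction the paper checks the actual condition by identifying $F_S=\Gr^0 M_S$ (a vector bundle over $S=\Gr^0_N\Prism_S$ since $\Fil^\bullet E$ is a filtered vector bundle) and then verifying the injectivity of $\Gr^0 M_S\otimes_S\overline{\Prism}_S\to M_S/IM_S$ via the weight filtration on $\Gr^\bullet M_S$ and the injectivity of $\Gr^\bullet_N\Prism_S\to\bar I^\mathbb{Z}\overline{\Prism}_S$.

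Second, in the crystal-to-gauge direction you correctly flag as the crux the claim that the saturated filtration makes $\Fil^\bullet M_S$ a \emph{filtered vector bundle} over $\Fil^\bullet_N\Prism_S$ rather than merely a coherent gauge, but you do not prove it. Knowing that $\Gr^0$ and $\Gr^1$ are projective over $S$ and $\overline{\Prism}_S$ is not enough; one must exhibit $\Rees(\Fil^\bullet M_S)$ as locally free over $\Rees(\Fil^\bullet_N\Prism_S)$. The paper does this by invoking \cite[Prop.~4.29, Lem.~4.23, Prop.~4.22]{ALB23} to obtain a Hodge-type splitting $M_S\simeq L_0\oplus L_1$ with $\Fil^1 M_S\simeq (\Fil^1_N\Prism_S)\otimes L_0\oplus L_1$, whence $\Fil^\bullet M_S\simeq \Fil^\bullet_N\Prism_S\otimes L_0\oplus(\Fil^\bullet_N\Prism_S)\langle 1\rangle\otimes L_1$ is manifestly a filtered vector bundle. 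This splitting genuinely uses admissibility (it is the input to those results in \cite{ALB23}), so your ``two-step nature'' remark is on the right track but the argument requires that structural input. Your descent-and-full-faithfulness framing does let you bypass the paper's separate verification of the filtered base change formula (its claim (2)), which is a nice economy; but the splitting above is the missing ingredient you need to complete the proof.
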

The subscript $[0,1]$ denotes objects which are effective of height $\leq 1$.
\begin{proof}
		Let $(\mathcal{E},\varphi_\mathcal{E})$ be a given object in $\FC^{\mathrm{adm},\vect}_{[0,1]}(X)$.
		We use $(M_S, \Fil^\bullet M_S, \widetilde{\varphi}_{M_S})$ to denote the value of $\Pi_{\Spf(S)}(\mathcal{E}|_{\Spf(S)_\qrsp})$ at $S\in X_\qrsp$, which by \Cref{thm crystal to gauge} (1) satisfies
		\[
		\Fil^n M_S = \widetilde{\varphi}_{M_S}^{-1} (I^nM_S) \subset M_S.
		\]
		We then claim that 
		\begin{enumerate}[(1)]
			\item\label{claim p div 1} the filtered module $\Fil^\bullet M_S$ is a filtered vector bundle over $\Fil^\bullet_N \Prism_S$, and
			\item\label{claim p div 2} for a map of rings $S_1\to S_2$ in $X_\qrsp$, the natural filtered linearlization map below is a filtered isomorphism
			\[
			\Fil^\bullet M_1 \otimes_{\Fil^\bullet_N \Prism_{S_1}} \Fil^\bullet_N \Prism_{S_2} \longrightarrow \Fil^\bullet M_2.
			\]
		\end{enumerate}
	    Here we use $M_i$ to simplify the notation $M_{S_i}$.
	    Granting the above two claims, we see every admissible prismatic $F$-crystal in vector bundles of height $[0,1]$ naturally underlies an $F$-gauge in vector bundles of weight $[0,1]$.
	    Conversely, given an $F$-gauge in vector bundles of weight $[0,1]$, its underlying $F$-crystal is an $F$-crystal in vector bundle of height $[0,1]$ by \Cref{weight and height}.(i).
	    
	Since both statements can be checked locally on $S$, we are allowed to localize $S$.
 By doing so, we may first assume that the ideal $I$ in $\Prism_S$ is generated by $d$.
	    By \cite[Prop.~4.29, Lem.~4.23, Prop.~4.22]{ALB23} (applied at $(A,\Fil A,\varphi,\varphi_1)=(\Prism_S,\Fil^1_N \Prism_S,\varphi_{\Prism_S}, \frac{1}{d}\varphi_{\Prism_S})$, which satisfies the assumption of \cite[Prop.~4.22]{ALB23} by loc.~cit. Lem~4.27, Lem.~4.28), there are finite projective $\Prism_S$-modules $L_0$ and $L_1$, such that
	    \[
	    M_S \simeq L_0\oplus L_1, \quad \Fil^1 M_S \simeq \Fil^1_N \Prism_S \otimes_{\Prism_S} L_0 \oplus L_1.
	    \]
	    Moreover, as in loc.~cit., if we localize on $S$ so that $L_0$ and $L_1$ are finite free
     and choose a basis $e_i$ (resp.~$f_j$) of $L_0$ (resp.~$L_1$), then we know that 
     $\{\varphi(e_i), \varphi_1(f_j)\}$ is a basis of $M$.
        This implies that as a filtered module over $\Fil^\bullet_N \Prism_S$, we have
        \[
        \Fil^\bullet M = \Fil^\bullet_N \Prism_S \otimes_{\Prism_S} L_0 \oplus \Fil^{\bullet - 1}_N \Prism_S \otimes_{\Prism_S} L_1.
        \]
        
        For \ref{claim p div 2}, we apply \ref{claim p div 1} at $S=S_1$, to get the aforementioned decomposition.
        By the same reasoning as before, we see $\Fil^\bullet M_2 \simeq \Fil^\bullet_N \Prism_{S_2} \otimes_{\Prism_{S_1}} L_0 \oplus \Fil^{\bullet -1}_N \Prism_{S_2} \otimes_{\Prism_{S_1}} L_1$,
        and the natural filtered linearization $\Fil^\bullet_N \Prism_{S_2} \otimes_{\Fil^\bullet_N \Prism_{S_1}} \Fil^\bullet M_1 \to \Fil^\bullet M_2$ is a filtered isomorphism.     
        
        Finally we show the essential surjectivity.
        By \Cref{FGVB is saturated}, an $F$-gauge in vector bundle has the saturated Nygaardian filtration and is uniquely determined by the underlying $F$-crystal.
        So it is left to check that the underlying $F$-crystal is admissible in the sense of \cite[Def.~4.5, Def.~4.10]{ALB23}.
        Namely for $S\in X_\qrsp$, we want to show that the image of $M_S \xrightarrow{\widetilde{\varphi}_{M_S}} M_S \to M_S/IM_S$ is a vector bundle $F_S$ over $S$, such that the linearization $ F_S \otimes_S \overline{\Prism}_S \to M_S/IM_S$ is injective.
        By \Cref{FGVB is saturated}, $\Fil^1 M_S$ is equal to $\widetilde{\varphi}^{-1}(IM_S)$, and thus the image $F_S$ is equal to $M_S/\Fil^1 M_S=\Gr^0 M_S$.
        The zero-th graded piece $\Gr^0 M_S=F_S$ is a vector bundle over $\Gr^0_N \Prism_S=S$, thanks to the assumption that $\Fil^\bullet M_S$ is a filtered vector bundle over $\Fil^\bullet_N \Prism_S$.
        Moreover, by taking the zero-th graded piece of the filtered isomorphism $\varphi_{M_S}:\Fil^\bullet M_S \otimes_{\Fil^\bullet_N \Prism_S, \varphi_{\Prism_S}} I^\mathbb{Z} \Prism_S \simeq I^\mathbb{Z} M_S$, the required injectivity of the linearlization above is equivalent to the injectivity of the map 
        \[
        \Gr^0 M_S \otimes_S \overline{\Prism}_S \longrightarrow \Gr^0 \left(\Fil^\bullet M_S \otimes_{\Fil^\bullet_N \Prism_S, \varphi_{\Prism_S}} I^\mathbb{Z} \Prism_S \right).
        \]
        Here by an induction on the weight filtration, we may assume $\Gr^\bullet M_S =V \otimes_S \Gr^\bullet_N \Prism_S$ for a finite projective $S$-module $V$ of graded degree $0$ or $1$.
        In both cases, the injection follows from that of the structure sheaf, which finishes the proof.
\end{proof}

\section{Relative prismatic $F$-gauges and realizations}
We now turn our focus to a relative version of prismatic $F$-gauges, 
and its relation with the absolute version studied in the previous section.
Throughout this section, let us fix $(A, I)$ a bounded prism
with $\overline{A} \coloneqq A/I$.

\subsection{Quasi-syntomic sheaves in relative setting}
Let $Z$ be a quasi-syntomic $\overline{A}$-formal scheme.
Consider $(Z/\overline{A})_{\qrsp} \subset Z_{\qsyn}$,
where $(Z/\overline{A}){\qrsp}$ denote the category of large quasi-syntomic 
$\overline{A}$-algebras (in the sense of \cite[Definition 15.1]{BS19})
together with a quasi-syntomic map to $Z$ equipped with quasi-syntomic topology.
Then we have the following analog of the ``\emph{unfolding process}''
in \cite[Proposition 4.31]{BMS19}:

\begin{proposition}
\label{unfolding process in relative setting}
Restriction to qrsp objects induces an equivalence
\[
\mathrm{Shv}_{\mathcal{C}}(Z_{\qsyn}) \cong 
\mathrm{Shv}_{\mathcal{C}}((Z/\overline{A})_{\qrsp})
\]
for any presentable $\infty$-category $\mathcal{C}$.
\end{proposition}

\begin{proof}
The proof is identical to that in loc.~cit.
\end{proof}

\begin{remark}
\label{how to compute unfolding remark}
Concretely, suppose we are given a quasi-syntomic sheaf $\mathcal{F}$ on 
$(Z/\overline{A})_{\qrsp}$. Let $R$ be an object in $Z_{\qsyn}$,
and suppose we are given a large quasi-syntomic cover $R \to S^{(0)}$ with
its Cech nerve being $S^{(\bullet)}$,
then the unfolding evaluated at $R$ can be computed as the cosimplicial limit
of $\mathcal{F}(S^{(\bullet)})$.
\end{remark}

\begin{example}[see {\cite[\S 1.3 and \S 5.2]{BMS19}} and {\cite[\S 15]{BS19}}]
Here are some examples of quasi-syntomic sheaves on $\Spf(\overline{A})_{\qsyn}$:
the relative prismatic cohomology $\Prism_{-/A}$ and its $\varphi_A^*$-variant
$\Prism^{(1)}_{-/A} \coloneqq \Prism_{-/A} \widehat{\otimes}_{A, \varphi_A} A$,
the $i$-th Nygaard filtration on $\Prism^{(1)}_{-/A}$ for any $i$,
the relative Hodge--Tate cohomology $\overline{\Prism}_{-/A} \coloneqq \Prism_{-/A}/I$
and its $i$-th conjugate filtration for any $i$,
the $p$-adic derived de Rham cohomology $\mathrm{dR}_{-/\overline{A}}$ 
and its $i$-th Hodge filtration for any $i$.
By restriction, we get quasi-syntomic sheaves on $Z_{\qsyn}$.
\end{example}

The following relation between the various filtrations appearing above
will often be used (see \cite[Theorem 15.2 and 15.3]{BS19}):
we have fiber sequences
\[
\Fil^{i+1}_N \Prism^{(1)}_{-/A} \to \Fil^{i}_N \Prism^{(1)}_{-/A} \to 
\Gr^{i}_N \Prism^{(1)}_{-/A} \cong \Fil_i^{\conj} \overline{\Prism}_{-/A} \{i\}; \text{ and }
\]
\[
\Fil^{i-1}_N \Prism^{(1)}_{-/A} \otimes_A I \to \Fil^{i}_N \Prism^{(1)}_{-/A}
\to \Fil^i_H \mathrm{dR}_{-/\overline{A}}.
\]
In terms of Rees's construction, we can formulate them uniformly:

\begin{proposition}
\label{structure of Nygaard filtration on twisted-prismatic cohomology}
For simplicity, let us assume $(A, I = (d))$ is a bounded oriented prism.
Now we view $\Rees(\Fil^{\bullet}_N \Prism^{(1)}_{-/A}) \coloneqq 
\bigoplus_{i \in \mathbb{Z}} \Fil^i_N \Prism^{(1)}_{-/A} \cdot t^{-i}$
as a graded $\Rees(I^{\mathbb{N}}A) \coloneqq A[t, \frac{d}{t}]$-algebra.
Then we have
\[
\Rees(\Fil^{\bullet}_N \Prism^{(1)}_{-/A}) /^L (t) \cong 
\bigoplus_{i \in \mathbb{N}} \Fil_i^{\conj} \overline{\Prism}_{-/A} \cdot (\frac{d}{t})^i; \text{ and }
\]
\[
\Rees(\Fil^{\bullet}_N \Prism^{(1)}_{-/A}) /^L (\frac{d}{t}) \cong
\Rees(\Fil^{\bullet}_H \mathrm{dR}_{-/\overline{A}}) \coloneqq 
\bigoplus_{i \in \mathbb{Z}} \Fil^i_H \mathrm{dR}_{-/\overline{A}} \cdot t^{-i}.
\]
\end{proposition}

The following flatness result is the analog of (proof of) \Cref{F-gauge sheaf}.

\begin{proposition}
\label{flatness of Nygaard filtered ring map}
Let $S_1 \to S_2$ be a quasi-syntomic flat (resp.~faithfully flat) map
of two quasi-syntomic $\overline{A}$-algebras and let $S_1 \to S_3$ be an arbitrary map
of quasi-syntomic $\overline{A}$-algebras with $S_4 \coloneqq S_2 \widehat{\otimes}_{S_1} S_3$, then:
\begin{enumerate}
\item The map $\mathrm{Hdg}(S_1/\overline{A}) \to \mathrm{Hdg}(S_2/\overline{A})$
is $p$-completely flat (resp.~faithfully flat), and the graded map
\[
\mathrm{Hdg}(S_2/\overline{A}) \widehat{\otimes}_{\mathrm{Hdg}(S_1/\overline{A})} \mathrm{Hdg}(S_3/\overline{A})
\to \mathrm{Hdg}(S_4/\overline{A})
\]
is a graded isomorphism;
\item The filtered map $\Fil^{\conj}_{\bullet}(\overline{\Prism}_{S_1/A})
\to \Fil^{\conj}_{\bullet}(\overline{\Prism}_{S_2/A})$ has its corresponding Rees's
construction map being $p$-completely flat (resp.~faithfully flat), and the filtered map
\[
\Fil^{\conj}_{\bullet}(\overline{\Prism}_{S_2/\overline{A}}) 
\widehat{\otimes}_{\Fil^{\conj}_{\bullet}(\overline{\Prism}_{S_1/\overline{A}})} 
\Fil^{\conj}_{\bullet}(\overline{\Prism}_{S_3/\overline{A}})
\to \Fil^{\conj}_{\bullet}(\overline{\Prism}_{S_4/\overline{A}})
\]
is a filtered isomorphism;
\item The filtered map $\Fil^{\bullet}_H(\mathrm{dR}_{S_1/\overline{A}})
\to \Fil^{\bullet}_H(\mathrm{dR}_{S_2/\overline{A}})$ has its corresponding Rees's
construction map being $p$-completely flat (resp.~faithfully flat), and the filtered map
\[
\Fil^{\bullet}_H(\mathrm{dR}_{S_2/\overline{A}}) 
\widehat{\otimes}_{\Fil^{\bullet}_H(\mathrm{dR}_{S_1/\overline{A}})} 
\Fil^{\bullet}_H(\mathrm{dR}_{S_3/\overline{A}})
\to \Fil^{\bullet}_H(\mathrm{dR}_{S_4/\overline{A}})
\]
is a filtered isomorphism;
\item The filtered map $\Fil^{\bullet}_N(\Prism^{(1)}_{S_1/A})
\to \Fil^{\bullet}_N(\Prism^{(1)}_{S_2/A})$ has its corresponding Rees's
construction map being $(p, I)$-completely flat (resp.~faithfully flat), and the filtered map
\[
\Fil^{\bullet}_N(\Prism^{(1)}_{S_2/A}) 
\widehat{\otimes}_{\Fil^{\bullet}_N(\Prism^{(1)}_{S_1/A})} 
\Fil^{\bullet}_N(\Prism^{(1)}_{S_3/A})
\to \Fil^{\bullet}_N(\Prism^{(1)}_{S_4/A})
\]
is a filtered isomorphism.
\end{enumerate}
\end{proposition}

\begin{proof}
The proof follows from similar argument appearing in the proof of
\Cref{F-gauge sheaf}: the proof of (1) has appeared there.
Then similar to there, with \Cref{structure of Nygaard filtration on twisted-prismatic cohomology}
in mind, we can use (1) together with 
\Cref{useful flatness criterion 1} and \Cref{useful flatness criterion 2} to prove (2), and use
(2) and \Cref{useful flatness criterion 1} to prove both of (3) and (4).
\end{proof}

\subsection{Relative prismatic $F$-gauges}
\label{Relative prismatic $F$-gauges}
Let $(A,I)$ be a fixed bounded prism.
For a quasi-syntomic $\overline{A}$-formal scheme $Z$, we denote by $(Z/\overline{A})_\qrsp$ the category of large quasi-syntomic 
$\overline{A}$-algebra over $Z$ together with quasi-syntomic topology, in the sense of \cite[Definition 15.1]{BS19}.
\begin{definition}[{c.f.~\cite[Definition 2.14]{Tang22}}]
\label{def rel F-gauge qrsp}
Let $S$ be a large quasi-syntomic $\overline{A}$-algebra.
A \emph{prismatic gauge} $E=(E, \Fil^\bullet E^{(1)})$
over $\Spf(S)/A$ consists of the following data:
\begin{itemize}
\item a derived $(p,I)$-complete complex $E$ over $\Prism_{S/A}$;
\item a derived $(p,I)$-complete filtration $\Fil^\bullet E^{(1)}$ on $E^{(1)} \coloneqq \varphi_A^* E$,
linear over the Nygaard filtered relative prismatic cohomology
$\Fil_N^\bullet \Prism_{S/A}^{(1)}$.
\end{itemize}
A \emph{prismatic $F$-gauge} $E=(E, \Fil^\bullet E^{(1)}, \widetilde{\varphi}_{E})$
over $\Spf(S)/A$ consists of the following data:
\begin{itemize}
\item a prismatic gauge $E=(E, \Fil^\bullet E^{(1)})$ over $\Prism_{S/A}$
called the underlying gauge of $E$;
\item a map of filtered complexes
\[
\widetilde{\varphi}_{E} \colon \Fil^\bullet E^{(1)} 
\longrightarrow I^\mathbb{Z} E = I^\mathbb{Z}\Prism_{S/A} \widehat{\otimes}_{\Prism_{S/A}} E,
\]
such that the filtered linearization 
\[
\varphi_{E} \colon \Fil^\bullet E^{(1)} \widehat{\otimes}_{\Fil_N^\bullet \Prism_{S/A}^{(1)}, \varphi_{\Prism_{S}}} 
I^\mathbb{Z} \Prism_S \longrightarrow I^\mathbb{Z} E
\]
is a filtered isomorphism in $\DF_{(p,I)\text{-comp}}(I^\mathbb{Z}\Prism_{S/A})\simeq \mathcal{D}_{(p,I)\text{-comp}}(\Prism_{S/A})$.
\end{itemize}
We use $\fG(\Spf(S)/A)$ to denote the natural $\infty$-category whose objects are as above.
\end{definition}

\begin{remark}[{c.f.~\cite[Proposition 2.1 and Remark 2.5]{Tang22}}]
\label{equivalent def rel F-gauge qrsp}
Here we notice that by \cite[Thm.\ 15.2]{BS19}, the filtered morphism of $A$-algebras 
\[
\varphi_{\Prism_{S}} \colon \Fil^\bullet_N \Prism_{S/A}^{(1)} \to I^\bullet \Prism_{S/A}
\]
identifies the target as $(p, I)$-completely inverting $I \subset \Fil^1_N$ of the source
as filtered $A$-algebras.
In particular, 
the above linarization condition can be rewritten as a filtered isomorphism
\[
\left( \Fil^\bullet E^{(1)}[1/I] \right)^\wedge_{(p,I)} \xlongrightarrow[\cong]{\varphi_{E}} I^\mathbb{Z} E,
\]
or equivalently 
\[
\left( \colim_{i\in \mathbb{Z}} (\cdots \to \Fil^i E^{(1)} \otimes I^{-i} \to \Fil^{i+1} E^{(1)} \otimes I^{-i-1}\to \cdots) \right)^\wedge_{(p,I)} \xrightarrow[\cong]{\varphi_E} E.
\]
\end{remark}

\begin{definition}
\label{relative F-gauge subcategory definition}
As in \Cref{def F-gauge category}, we say a prismatic $F$-gauge
$E=(E, \Fil^\bullet E^{(1)}, \widetilde{\varphi}_{E})$
over $\Spf(S)/A$ is:
\begin{enumerate}
\item \emph{finite projective} if $E$ is $(p, I)$-completely finite projective over $\Prism_{S/A}$
and $\Fil^\bullet E^{(1)}$ is filtered $(p, I)$-completely finite projective
over $\Fil^{\bullet}_N \Prism_{S/A}^{(1)}$;
\item \emph{perfect} if $E$ is $(p, I)$-completely perfect over $\Prism_{S/A}$
and $\Fil^\bullet E^{(1)}$ is filtered $(p, I)$-completely perfect
over $\Fil^{\bullet}_N \Prism_{S/A}^{(1)}$;
\item \emph{coherent} if it is perfect with all of 
$E$, $\Fil^{\bullet} E^{(1)}$ and $\Gr^{\bullet} E^{(1)}$
concentrated in cohomological degree $0$.
\end{enumerate}
We use $\fG^{\ast}(\Spf(S)/A)$ to denote the natural $\infty$-category whose objects are as above,
where $*\in\{\vect, \perf, \coh\}$ respectively.
\end{definition}

The analog of \Cref{Gauge: cplt vs alg conditions} still holds true:
\begin{proposition}
\label{relative Gauge: cplt vs alg conditions}
Let $S$ be a large quasi-syntomic $\overline{A}$-algebra and let 
$(E, \Fil^\bullet E^{(1)}) \in \mathrm{Gauge}(\Spf(S)/A)$. Then, when making the
\Cref{relative F-gauge subcategory definition}.(1)-(2),
we may drop $(p, I)$-completely from the condition without changing the definition.
\end{proposition}

\begin{proof}
The part concerning $E$ follows from \Cref{vbperf vs cplt vbperf}.
The part concerning $\Fil^\bullet E^{(1)}$ follows from the same proof of
\Cref{Gauge: cplt vs alg conditions}, as long as we can show the following
variant of \cite[Lemma 4.28]{ALB23}: The pair $(\Prism_{S/A}^{(1)}, \Fil^1_N \Prism_{S/A}^{(1)})$
is henselian.
As $\Prism_{S/A}^{(1)}$ is $I$-adically complete, we may check the claim after modulo $I$.
By the discussion before \Cref{structure of Nygaard filtration on twisted-prismatic cohomology},
we are reduced to showing that $(\mathrm{dR}_{S/\overline{A}}, \Fil^1_H \mathrm{dR}_{S/\overline{A}})$
is a henselian pair. Our claim now follows from the fact that
$\mathrm{dR}_{S/\overline{A}}$ is $p$-adically complete and the fact
that $\Fil^1_H$ always admits a (derived) divided power structure,
see \cite[p.~5 and \S 4]{Mag24}.
\end{proof}

\begin{remark}
\label{functorial rel}
Similar to \Cref{functorial abs}, for a map of large quasi-syntomic $\overline{A}$-algebras $S_1\to S_2$
and $*\in\{\emptyset, \vect, \perf\}$,
the completed filtered base change induces a natural functor
\begin{align*}
\Phi_{(S_1,S_2)}:\fG^*(\Spf(S_1)/A) &\longrightarrow \fG^*(\Spf(S_2)/A),\\
(E, ~\Fil^\bullet E^{(1)}, (\varphi_{E})) &
\longmapsto (E \widehat{\bigotimes}_{\Prism_{S_1/A}} \Prism_{S_2/A}, 
\Fil^\bullet E^{(1)} \widehat{\bigotimes}_{\Fil^\bullet_N \Prism_{S_1/A}^{(1)}} 
\Fil^\bullet_N \Prism_{S_2/A}^{(1)}, (\widetilde{\varphi}_{E} \otimes_{\varphi_{\Prism_{S_1/A}}} \varphi_{\Prism_{S_2/A}})).
\end{align*}
\end{remark}
\begin{definition}
\label{def rel F-gauge}
	Let $Z$ be a quasi-syntomic $\overline{A}$-formal scheme.
	The category of \emph{prismatic ($F$-)gauges} over $Z/A$ is defined as the limit of $\infty$-categories
	\[
	\fG^*(Z/A)\colonequals \lim_{S\in (Z/\overline{A})_\qrsp} \fG^*(\Spf(S)/A),
	\]
	where $*\in \{\emptyset, \vect, \perf, \coh\}$.
\end{definition}

As in the absolute case (\Cref{F-gauge sheaf}), we have the following sheaf property.
\begin{proposition}
\label{F-gauge sheaf relative}
	Let $S\to S^{(0)}$ be a quasi-syntomic cover of large quasi-syntomic $\overline{A}$-algebras, and let $S^{(\bullet)}$ be the $p$-completed \v{C}ech nerve.
	Then for $*\in \{\emptyset, \perf,\vect\}$, we have a natural equivalence
	\[
	\fG^*(\Spf(S)/A) \simeq \lim_{[n]\in \Delta} \fG^*(\Spf(S^{(n)})/A).
	\]
\end{proposition}

\begin{proof}
The proof strategy is exactly the same as that of \Cref{F-gauge sheaf}: via Rees's dictionary,
the descent property follows from \Cref{flatness of Nygaard filtered ring map} (4).
\end{proof}

Our main construction in this section is the following.
\begin{theorem}
\label{restriction functor}
	Let $X$ be a quasi-syntomic formal scheme, and let $Z$ be a quasi-syntomic $\overline{A}$-formal scheme, 
 together with a map $Z\to X$.
	Then for $*\in\{\emptyset, \perf, \vect\}$, there is a natural functor
	\[
BC_{X, Z/A} \colon \fG^*(X) \longrightarrow \fG^*(Z/A),
	\]
	compatible with the natural base change functor of prismatic $F$-crystals $\fC^*(X)\to \fC^*(Z/A)$.
\end{theorem}
As a preparation, we first consider the special case of quasiregular semiperfectoid rings.
\begin{lemma}
\label{restriction functor qrsp}
Let $S$ be a quasiregular semiperfectoid ring, 
and let $S'$ a large quasi-syntomic $\overline{A}$-algebra, 
together with a map $\Spf(S') \to \Spf(S)$.
        Then there is a functorial commutative diagram of filtered rings
	\[
	\begin{tikzcd}
		\Fil^\bullet_N \Prism_S \arrow[d, "\varphi_{\Prism_S}"] \ar[r]& \Fil^\bullet_N  \Prism_{S'/A}^{(1)} \arrow[d, "\varphi_{\Prism_{S'/A}^{(1)}}"] \\
		I^\bullet \Prism_S \ar[r] & I^\bullet \Prism_{S'/A},
	\end{tikzcd}
	\]
	such that the top row underlies the composition of homomorphisms of rings $\Prism_S \to \Prism_{S'/A} \to \Prism_{S'/A}^{(1)}$.
\end{lemma}
\begin{proof}
There is a natural commutative diagram:
\[
\begin{tikzcd}
\Prism_{S} \ar[r] \arrow[dd, "\varphi_{\Prism_{S}}"] & \Prism_{S'/A} \arrow[d, "\mathrm{id} \otimes 1"]\\
& \Prism_{S'/A}^{(1)} \coloneqq \Prism_{S'/A} \widehat{\otimes}_{A, \varphi_A} A
\arrow[d, "\varphi_{rel}"] \\
\Prism_{S} \ar[r]  & \Prism_{S'/A},
\end{tikzcd}
\]
where the horizontal maps follow from the initial property of $\Prism_{S}$ in the category $\Spf(S)_\Prism$,
and the right vertical map is the natural factorization of $\varphi_{\Prism_{S'/A}}$,
with $\varphi_{rel}$ being the relative Frobenius map for prismatic cohomology of $S'$ over $A$.
	Moreover, if an element $x\in \Prism_{S}$ is sent into $I^n\Prism_{S}$ under $\varphi_{\Prism_{S}}$, then by the commutativity, the image of $x$ in $\Prism_{S'/A}^{(1)}$ is sent into $I^n\Prism_{S'/A}$ under $\varphi_{rel}$.
	By definition of Nygaard filtration for quasiregular semiperfectoid rings \cite[Theorem 15.2.(1)]{BS19}, this in particular means that the above diagram carries $\Fil^n_N \Prism_{S}$ into $\Fil^n_N \Prism_{S'/A}^{(1)}$ under the top right composition:
	\[
	\begin{tikzcd}
		\Prism_{S} \ar[r] & \Prism_{S'/A} \arrow[r, "\mathrm{id} \otimes 1"] & \Prism_{S'/A}^{(1)}.
	\end{tikzcd} 
	\]
	Thus we get the filtered map as needed.	
\end{proof}
\begin{proof}[Proof of \Cref{restriction functor}]
We prove the version with $F$, the one without $F$ is similar and easier.
Assume $S\in X_\qrsp$ is any quasiregular semiperfectoid ring over $X$.
Then since $\Spf(S)\times_X Z$ is a quasi-syntomic formal scheme over $Z$, 
and $Z$ is quasi-syntomic over $\Spf(\overline{A})$, 
we see $\Spf(S)\times_X Z$ admits quasi-syntomic covers by formal spectra of large quasi-syntomic $\overline{A}$-algebras.
Let $S'$ be any large quasi-syntomic $\overline{A}$-algebra that is  quasi-syntomic over $\Spf(S)\times_X Z$.
Since both $F$-gauges and relative $F$-gauges satisfy quasi-syntomic descent
(\Cref{F-gauge sheaf} and \Cref{F-gauge sheaf relative}),
it suffices to construct functors $BC_{(S,S')} \colon \FG^*(\Spf(S)) \longrightarrow \FG^*(\Spf(S')/A)$
compatible with base change in both $S$ and $S'$
(defined in \Cref{functorial abs} and \Cref{functorial rel} respectively).

Lastly we define the functor $BC_{(S,S')}$, for any pair $(S, S')$ as above, by taking the filtered base change:
\begin{align*}
BC_{(S,S')} \colon \FG^*(\Spf(S)) &\longrightarrow \FG^*(\Spf(S')/A),\\
E=(E, \Fil^\bullet E, \varphi_{E}) &\longmapsto 
\left(E \widehat{\bigotimes}_{\Prism_{S}} \Prism_{S'/A},
\Fil^\bullet E \widehat{\bigotimes}_{\Fil^\bullet_N \Prism_{S}} 
\Fil^\bullet_N \Prism_{S'/A}^{(1)}, 
\widetilde{\varphi}_{E} \otimes \varphi_{\Prism_{S'/A}^{(1)}} \right).
\end{align*}
Here, unraveling definitions, the filtered map $\widetilde{\varphi}_{E} \otimes \varphi_{\Prism_{S'/A}^{(1)}}$ is given by 
\[
\begin{tikzcd}
\Fil^\bullet E \widehat{\bigotimes}_{\Fil_N^\bullet \Prism_S} \Fil_N^\bullet \Prism_{S'/A}^{(1)}
\arrow[rrr, "\widetilde{\varphi}_{E} \otimes_{\varphi_{\Prism_{S}}} \varphi_{\Prism_{S'/A}^{(1)}}"] &   & &
I^\bullet E \widehat{\bigotimes}_{I^\bullet \Prism_S} I^\bullet \Prism_{S'/A} 
\simeq I^\bullet (E \widehat{\otimes}_{\Prism_S} \Prism_{S'/A}),
\end{tikzcd}
\]
and by the commutative diagram in \Cref{restriction functor qrsp}, 
its linearization is 
\begin{align*}
\left( \Fil^\bullet E \widehat{\bigotimes}_{\Fil_N^\bullet \Prism_S} \Fil_N^\bullet \Prism_{S'/A}^{(1)} \right)
\widehat{\bigotimes}_{\Fil_N^\bullet \Prism_{S'/A}^{(1)}} I^\bullet \Prism_{S'/A} & 
\cong \left(\Fil^\bullet E \widehat{\bigotimes}_{\Fil^\bullet_N \Prism_{S}} I^\bullet \Prism_S \right)
\widehat{\bigotimes}_{I^\bullet \Prism_S} I^\bullet \Prism_{S'/A} \\
& \xlongrightarrow{\varphi_E \otimes \mathrm{id}} I^\bullet E \widehat{\bigotimes}_{I^\bullet \Prism_S} I^\bullet \Prism_{S'/A},
\end{align*}
which is a filtered isomorphism by assumption of 
$E=(E, \Fil^\bullet E, \varphi_{E}) \in \FG(\Spf(S))$.
\end{proof}

\begin{remark}
Let us note that the functor in \Cref{restriction functor} satisfies
base change with respect to the base prism: in the setting of the theorem,
suppose we are given a map of prisms $(A, I) \to (\widetilde{A}, I\widetilde{A})$
and let $\widetilde{Z} \coloneqq Z \times_{\overline{A}} \overline{\widetilde{A}}$.
Then we have a commutative diagram:
\[
\xymatrix{
\fG^*(X) \ar[rr] \ar[rd] & & \fG^*(Z/A) \ar[ld]^{- \widehat{\otimes}_A \widetilde{A}} \\
& \fG^*(\widetilde{Z}/\widetilde{A}), &
}
\]
where the two unlabelled arrows are what constructed in \Cref{restriction functor}.
Indeed, tracing through the construction in the proof of \Cref{restriction functor},
we are reduced to a similar statement with $X$ and $Z$ replaced
by $S$ and $S'$ (and $\widetilde{Z}$ replaced by 
$\widetilde{S'} \coloneqq S' \widehat{\otimes}_{\overline{A}} \overline{\widetilde{A}}$),
which immediately follows from the base change formula of
prismatic cohomology
\[
\Prism_{S'/A} \widehat{\otimes}_A \widetilde{A} \cong \Prism_{\widetilde{S'}/\widetilde{A}},
\]
as well as similar base change formulas for Nygaard filtrations (\cite[Theorem 15.2.(3)]{BS19}).
\end{remark}

Similar to \Cref{absolute weight filtration subsection},
there is a natural notion of weight and weight filtration on the associated graded of a relative prismatic $F$-gauge.
\begin{definition}
\label{wt of rel F-gauge}
Let $Z$ be a quasi-syntomic $\overline{A}$-formal scheme,
and let $[a, b]$ be an interval in $\mathbb{R}\cup \{-\infty, \infty\}$.
For a graded complex $M^\bullet \in \DG^\ast_{p\text{-comp}}((Z/\overline{A})_\qrsp, \Gr_N^\bullet \Prism_{-/A}^{(1)})$.
\begin{enumerate}
\item The reduction of $M^\bullet$, as a graded complex over $((Z/\overline{A})_\qrsp, \mathcal{O}_\qrsp)$, 
is defined as the graded base change 
\[
\Red_{Z/A}(M^\bullet) \colonequals M^\bullet \widehat{\bigotimes}_{\Gr_N^\bullet \Prism_{-/A}^{(1)}} \mathcal{O}_\qrsp.
\]
The $i$-th graded piece of $\Red_{Z/A}(M^\bullet)$ is denoted as $\Red_{i,Z/A}(M^\bullet)$.
\item The graded complex $M^\bullet$ is said to \emph{have weights in} $[a,b]$ if $M^i=0$ for $i\ll 0$ and 
\[
\Red_{i,Z/A}(M^\bullet)=0, \forall i\notin [a,b].
\]
\end{enumerate}
For a relative ($F$-)gauge $E=(E,\Fil^\bullet E^{(1)}, \varphi_E)$ over $Z/A$ we define its reduction
and weights by that of its associated graded $M^\bullet=\Gr^\bullet E^{(1)}$.
\end{definition}

\begin{proposition}
\label{wt filtration rel F-gauge}
Let $Z$ be a quasi-syntomic $\overline{A}$-formal scheme, 
let $\ast\in \{\emptyset, \perf,\vect\}$, and let $a\leq b$ be two integers.
\begin{enumerate}[label=(\roman*)]
\item Assume $M^\bullet\in \DG^\ast_{p\text{-}\mathrm{comp}}(Z_\qrsp, \Gr_N^\bullet \Prism_{-/A}^{(1)})$ 
has weights in $[a,b]$.
Then there is a unique increasing filtration
$\Fil^\wt_i (M^\bullet)$ on $M^\bullet$ satisfying the following axioms:
\begin{enumerate}
\item The filtration is exhaustive;
\item the filtration ``starts at $0$'': i.e.~$\Fil^\wt_{\ll 0} (M^\bullet) = 0$; and
\item the graded piece $\Gr^\wt_i(M^\bullet) \simeq 
N_i \widehat{\otimes}_{\mathcal{O}_Z} \Gr^\bullet_N \Prism_{-/A}^{(1)}$ 
where $N_i \in \DG^\ast_{p\text{-comp}}(\mathcal{O}_Z)$ has grading $i$.
\end{enumerate}
Moreover, the induced filtration $\Fil^\wt_i(\Red_{Z/A}(M^{\bullet})) \coloneqq \Red_{Z/A}(\Fil^\wt_i (M^\bullet))$
is the one induced by grading: 
\[
\Fil^\wt_i(\Red_{Z/A}(M^{\bullet})) \cong \bigoplus_{j \leq i} \Red_{j,Z/A}(M^{\bullet}).
\]
In particular, there are natural isomorphisms:
\[
N_i \cong \Red_{Z/A}(\Gr^\wt_i(M^\bullet)) \cong \Gr^\wt_i(\Red_{Z/A}(M^{\bullet})) \cong 
\Red_{i, Z/A}(M^{\bullet})
\]
in $\DG^\ast_{p\text{-comp}}(\mathcal{O}_Z)$.
Therefore, the filtration is indexed by $i \in [a,b]$.
\item Assume there is a map of $p$-adic formal schemes $Z \to X$ such that  $X$ is quasi-syntomic.
Let $E=(E,\Fil^\bullet E) \in \mathrm{Gauge}^\ast(X)$ having weights in $[a,b]$, 
and let $E'$ be the associated relative gauge over $Z/A$ as in \Cref{restriction functor}.
Then $E'$ also has weights in $[a,b]$.
In fact we have a natural graded tensor product formula:
\[
\Red_{Z/A}(E') \cong \Red_X(E)\widehat{\otimes}_{\mathcal{O}_X} \mathcal{O}_Z.
\]
Moreover the weight filtrations on $M^{\bullet} \coloneqq \Gr^{\bullet}(E)$ and 
$N^{\bullet} \coloneqq \Gr^{\bullet}(E')$ are related via a natural filtered tensor product formula:
\[
\Fil^{\wt}_i(N^{\bullet}) \cong \Fil^{\wt}_i(M^{\bullet}) \widehat{\otimes}_{\Gr^\bullet_N} \Gr^\bullet_N \Prism_{-/A}^{(1)}.
\]
\end{enumerate}
\end{proposition}

\begin{proof}
	Part (i) is identical with the proof of \Cref{thm weight filtration of gauge} and we do not repeat here.
	For part (ii), it suffices to check the claim quasi-syntomic locally, and we assume there is a quasiregular semiperfectoid algebra $S\in X_\qrsp$, a large quasi-syntomic algebra $S'\in (Z/\overline{A})_\qrsp$, together with a map $S\to S'$ that is compatible with $X\to Z$.
	Then by proof of \Cref{restriction functor}, we have $\Fil^\bullet E'(S')^{(1)}= \Fil^\bullet E(S)\widehat{\otimes}_{\Fil^\bullet_N \Prism_S} \Fil^\bullet_N \Prism_{S'/A}^{(1)}$.
	Notice that we also have a commutative diagram of graded rings
	\[
	\begin{tikzcd}
	S \arrow[r, "\iota"] \ar[d] & \Gr^\bullet_N \Prism_S \ar[d] \arrow[r, "\Gr^0"] & S \ar[d] \\ 
	S' \arrow[r, "\iota"] &	\Gr^\bullet_N \Prism_{S'/A}^{(1)} \arrow[r, "\Gr^0"] & S'.
	\end{tikzcd}
    \]
    As a consequence, by the local formula of the reduction functors (\Cref{reduction of gauge}, \Cref{wt of rel F-gauge}), we get the tensor product formula 
    \[
    \Red_{S'/A}(E') \simeq \Red_S(E) \widehat{\otimes}_S S',
    \]
and the last formula follows from uniqueness of the weight filtration on $N^{\bullet}$.
\end{proof}

\subsection{Filtered Higgs complex and Hodge--Tate realization}
\label{sec filtered Higgs}
In eye of \Cref{structure of Nygaard filtration on twisted-prismatic cohomology},
we study the mod $t$ specialization of a relative prismatic $F$-gauge
in this subsection.

We start by considering the notion of filtered Higgs complex and the induced conjugate filtration on Hodge--Tate cohomology. 
Let $(A,I)$ be a bounded prism.
Recall that for a large quasi-syntomic formal scheme $S$ over $\overline{A}$, the relative Hodge--Tate cohomology ring 
$\overline{\Prism}_{S/A}$ admits an increasing filtration called \emph{conjugate filtration}, 
whose $i$-th graded factor is $\wedge^i\mathbb{L}_{S/\overline{A}}\{-i\}[-i]$
(see \cite[Construction 7.6]{BS19}).
We use $\Gr^\conj_\bullet \overline{\Prism}_{S/A} = \bigoplus_{i \in \mathbb{N}} \wedge^i\mathbb{L}_{S/\overline{A}}\{-i\}[-i]$
to denote the associated graded algebra over $S$.
\begin{definition}
\label{def Higgs}
	Let $Z$ be a quasi-syntomic $\overline{A}$-formal scheme.
	\begin{enumerate}[label=(\roman*)]
		\item The category of \emph{filtered Higgs fields} over $Z/A$ is defined as the limit of $\infty$-categories
		\[
		\FH(Z/A)\colonequals   \lim_{S \in (Z/\overline{A})_\qrsp} \FH(\Spf(S)/A),
		\]
		where $\FH(\Spf(S)/A)\colonequals \DF_{p\text{-comp}}(\Fil^\conj_\bullet \overline{\Prism}_{S/A})$ is the category of $p$-complete filtered complexes over the filtered ring $\Fil^\conj_\bullet \overline{\Prism}_{S/A}$
        for any large quasi-syntomic $\overline{A}$-formal scheme.
		\item The category of \emph{graded Higgs fields} over $Z/A$ is defined as the limit of $\infty$-categories  
		\[
		\GH(Z/A) \colonequals \lim_{S \in (Z/\overline{A})_\qrsp} \GH(\Spf(S)/A),
		\]
		where $\GH(\Spf(S)/A)$ is the category of $p$-complete graded complexes over the graded ring $\Gr^\conj_\bullet  \overline{\Prism}_{S/A}$ for any large quasi-syntomic $\overline{A}$-formal scheme.
	\end{enumerate}
Here, similar to \Cref{functorial abs}, for a map of large quasi-syntomic $\overline{A}$-algebras $S_1\to S_2$,
the completed filtered/graded base change induces natural functors between
$\FH(\Spf(S_i)/A)$ and $\GH(\Spf(S_i)/A)$.

Similar to \Cref{def F-gauge category}, for $*\in \{\vect, \perf\}$, 
we use $\FH^*(S/A)$ and $\GH^*(S/A)$ to denote the corresponding subcategory 
spanned by $p$-completely finite projective or 
$p$-completely perfect objects.
Then we may define $\FH^*(Z/A)$ and $\GH^*(Z/A)$ by similar limit formulae.
\end{definition}

The analog of \Cref{Gauge: cplt vs alg conditions} still holds true:
\begin{proposition}
\label{Higgs: cplt vs alg conditions}
Let $S$ be a large quasi-syntomic $\overline{A}$-formal scheme and let 
$Z$ be a quasi-syntomic $\overline{A}$-formal scheme. Then for $*\in \{\vect, \perf\}$, 
we may drop ``$p$-completely'', when defining $\FH^*(S/A)$, $\FH^*(Z/A)$, $\GH^*(S/A)$ and $\GH^*(Z/A)$,
without changing the definition.
\end{proposition}

\begin{proof}
The case of general $Z$ follows from the case of large quasi-syntomic $\overline{A}$-algebras
$S$. In this case, we are reduced to \Cref{graded: vb vs cplt vb}.
\end{proof}

\begin{construction}[Associated graded functor]
\label{def graded functor Higgs}
There is a natural functor by taking associated graded:
	\[
	\Gr: \FH^*(Z/A) \longrightarrow \GH^*(Z/A),
	\]
	for $*=\{\emptyset, \vect, \perf\}$.
	When $Z = \Spf(S)$ is large quasi-syntomic, it sends a filtered 
 $\Fil^\conj_\bullet \overline{\Prism}_{S/A}$-complex 
 $\Fil_\bullet M$
 to the graded $\Gr^\conj_\bullet \overline{\Prism}_{S/A}$-complex
 $\Gr_\bullet M = \bigoplus_{i \in \mathbb{N}} \Fil_i M/ \Fil_{i-1} M$.
\end{construction}

We also have the quasi-syntomic sheaf property for categories of filtered and graded Higgs complexes.
\begin{proposition}
\label{Higgs sheaf}
Let $S \to S^{(0)}$ be a quasi-syntomic cover of large quasi-syntomic rings over $\overline{A}$,
and let $S^{(\bullet)}$ be the $p$-completed \v{C}ech nerve.
	Then for $*\in \{\emptyset, \perf,\vect\}$, we have a natural equivalence
	\begin{align*}
		\FH^*(\Spf(S)/A) & \simeq \lim_{[n]\in \Delta} \FH^*(\Spf(S^{(n)})/A),\\
		\GH^*(\Spf(S)/A) & \simeq \lim_{[n]\in \Delta} \GH^*(\Spf(S^{(n)})/A).
	\end{align*}
\end{proposition}
\begin{proof}
One just mimics the proof of \Cref{F-gauge sheaf},
and uses \Cref{flatness of Nygaard filtered ring map} (2) as the main ingredient.
\end{proof}

\begin{remark}
Let us justify the appearance of ``Higgs'' in the above discussion.
By construction, there is a forgetful functor from $\FH(Z/A)$ to the category of Hodge--Tate crystals over
$(Z/A)_\Prism$, by forgetting the filtration structure.
Moreover, when $Z$ is affine and smooth over $\overline{A}$,
showing in \cite[Corollary 6.6]{BL22b} (independently by \cite[Thm.\ 4.12]{Tian21}, and for affine smooth $Z$ that is small by \cite[Thm.\ 1.1]{MW22}) Hodge--Tate crystals over $Z/\overline{A}$ are equivalent to quasi-nilpotent Higgs fields in complexes of $Z/A$.
So we can think of objects in $\FH(Z/A)$ as derived filtered generalizations of the usual notion of Higgs fields, over $p$-adic formal schemes that may not be smooth.
\end{remark}

To relate the relative prismatic ($F$-)gauge with the filtered Higgs field, we first recall the following observation on Nygaard graded pieces of relative prismatic cohomology from \cite[\S 15]{BS19}.
Assume the ideal $I$ is generated by an element $d$, which we fix for the rest of this subsection.
Then one can define a twisted version of the conjugate filtered Hodge--Tate cohomology, by
\[
S\longmapsto \left( \colim~\Fil^\conj_\bullet \overline{\Prism}_{S/A} \{\bullet\} \right)^\wedge_p,
\]
where the transition map is given by the tensor product of the canonical map $\Fil^\conj_i \overline{\Prism}_{S/A} \to \Fil^\conj_{i+1} \overline{\Prism}_{S/A}$ with the twisting map $d:I^i/I^{i+1} \simeq I^{i+1}/I^{i+2}$.

\begin{fact}\label{Nygaard graded and conjugate filtered}
	Let $d$ be a generator of the ideal $I$.
	For a large quasi-syntomic $\overline{A}$-algebra $S$, the relative Frobenius 
 $\varphi_{rel} \colon \Prism_{S/A}^{(1)} \to \Prism_{S/A}$ identifies the graded ring $\Gr^\bullet_N \Prism_{S/A}^{(1)}$ with the Rees construction of the twisted conjugate filtered Hodge--Tate cohomology $ \Fil^\conj_\bullet \overline{\Prism}_{S/A}\{\bullet\}$.
	As a consequence, for $*\in\{\emptyset, \perf,\vect\}$, we have an equivalence
	\[
	\DG_{p\text{-}\mathrm{comp}}^*(\Gr^\bullet_N \Prism_{S/A}^{(1)}) \simeq
 \DF_{p\text{-}\mathrm{comp}}^*( \Fil^\conj_\bullet \overline{\Prism}_{S/A}).
	\]
\end{fact}
\begin{proof}
	The first identification of rings is in \cite[Thm.\ 15.2.(2)]{BS19}
 and also recorded in \Cref{structure of Nygaard filtration on twisted-prismatic cohomology}.
	To get the equivalence of categories, we simply notice that there is an equivalence of filtered derived categories
	\begin{align*}
		\DF(\Fil^\conj_\bullet \overline{\Prism}_{S/A}) & \longrightarrow \DF(\Fil^\conj_\bullet \overline{\Prism}_{S/A}\{\bullet\}),\\
		\Fil_\bullet M &\longmapsto \Fil_\bullet M\otimes_{\overline{A}} \bar{I}^\bullet/\bar{I}^{\bullet+1},
	\end{align*}
    which is compatible with the twisting of the conjugate filtered Hodge--Tate cohomology.
\end{proof}
By taking limit ranging over all $S \in (Z/\overline{A})_\qrsp$,
the above naturally extends to general quasi-syntomic $\overline{A}$-formal schemes.
\begin{corollary}
\label{Nygaard graded and conjugate filtered general}
	Let $Z$ be a quasi-syntomic $\overline{A}$-formal scheme, and let $*\in\{\emptyset, \perf,\vect\}$.
	There is an equivalence of categories
	\[
	\DG_{p\text{-}\mathrm{comp}}^*((Z/\overline{A}))_\qrsp, \Gr^\bullet_N \Prism_{-/A}^{(1)}) \simeq \FH^*(Z/A).
	\]
\end{corollary}
\begin{definition}
\label{def HT realization}
	Let $Z$ be a quasi-syntomic $\overline{A}$-formal scheme, and let $*\in\{\emptyset, \perf,\vect\}$.
	We define the \emph{Hodge--Tate realization} functor
	\[
	\fG^*(Z/A) \longrightarrow \FH^*(Z/A)
	\]
	to be the composition of the associated graded functor with the equivalence in \Cref{Nygaard graded and conjugate filtered general}.
	Namely it is the limit of compositions
	\[
	\fG^*(\Spf(S)/A) \longrightarrow \DG_{p\text{-comp}}^*(\Gr^\bullet_N \Prism_{S/A}^{(1)}) \simeq \DF_{p\text{-comp}}^*( \Fil^\conj_\bullet \overline{\Prism}_{S/A}) = \FH^*(\Spf(S)/\overline{A}),
	\]
where $S$ ranges over all $(Z/\overline{A})_\qrsp$.
\end{definition}
The following relates Hodge--Tate cohomology and Higgs cohomology.

\begin{proposition}\label{Higgs associated to F-gauge cohomology}
	Let $Z$ be a quasi-syntomic formal scheme over $\overline{A}$, and let $*\in \{\emptyset, \perf, \vect\}$.
    Let $E=(E,\Fil^\bullet E^{(1)},\varphi_{E})\in \FG^*(Z/A)$  and let $(M, \Fil_\bullet M)$ 
    be the associated filtered Higgs field.
		Then $\varphi_{E}$ induces the following isomorphisms 
		\begin{enumerate}
			\item twisted graded pieces $\Gr^i E^{(1)}\{-i\}$ of $\Fil^\bullet E^{(1)}$ and filtrations $\Fil_i M$ of $M$;
			\item Hodge--Tate cohomology of $E$ and Higgs cohomology of $M$.
		\end{enumerate}
\end{proposition}
As a consequence, for a relative $F$-gauge $E$ over $X/A$,
we get a natural filtration on $R\Gamma((X/A)_\Prism, \overline{E})$,
which we call the \emph{conjugate filtration} on Hodge--Tate cohomology of $E$.

\begin{proof}[Proof of \Cref{Higgs associated to F-gauge cohomology}]
Let $E=(E, \Fil^\bullet E^{(1)}, \varphi_{E})$ be an object in $\FG^*(\Spf(S)/A)$ for a given $S\in (Z/\overline{A})_\qrsp$, and let $(M,\Fil_\bullet M)$ be the associated filtered Higgs field.
	On the one hand, by \Cref{equivalent def rel F-gauge qrsp} the map $\varphi_{E}$ induces an isomorphism of complexes
	\[
	\left( \colim_{i\in \mathbb{Z}} (\cdots \to \Fil^i E^{(1)} \otimes I^{-i} \to \Fil^{i+1} E^{(1)} \otimes I^{-i-1}\to \cdots) \right)^\wedge_{(p,I)} \simeq E,
	\]
	whose reduction mod $I$ is the Hodge--Tate cohomology, namely
	\[
	\left( A/I \otimes_A \colim_{i\in \mathbb{Z}} (\cdots \to \Fil^i E^{(1)} \otimes I^{-i} \to \Fil^{i+1} E^{(1)} \otimes I^{-i-1}\to \cdots) \right)^\wedge_p \simeq \overline{E}.
	\]
	Moreover, by taking the mod $I$ 
 (where $I$ is having filtration degree $1$) reduction within the colimit, we can rewrite the left hand side as
	\[
	\left( \colim_{i\in \mathbb{Z}} (\cdots \to \Gr^i E^{(1)} \otimes \bar{I}^{-i} \to \Gr^{i+1} E^{(1)} \otimes \bar{I}^{-i-1}\to \cdots) \right)^\wedge_p \simeq \overline{E}.
	\]
	On the other hand, 
	the twisted graded factor $\Gr^\bullet E^{(1)}\{-\bullet\}$ is the Rees construction for the associated filtered Higgs field $\Fil_\bullet M$.
	In particular, the Higgs cohomology, which is the underlying complex $M$ of the increasingly filtered objects $\Fil_i M$, is equal to
	\[
	\left( \colim_{i\in \mathbb{Z}} (\cdots \to \Fil_i M \to \Fil_{i+1} M \to \cdots ) \right)^\wedge_p \simeq \left( \colim_{i\in \mathbb{Z}} (\cdots \to \Gr^i E^{(1)}\{-i\} \to \Gr^{i+1} E^{(1)}\{-i-1\} \to \cdots ) \right)^\wedge_p.
	\]
	Hence the relative Frobenius identifies the Hodge--Tate cohomology of the $F$-gauge $E$ with the Higgs cohomology of the associated filtered Higgs field $M$, and sends $\Gr^i E^{(1)}\{-i\}$ isomorphically onto $\Fil_i M$.
	For arbitrary quasi-syntomic $\overline{A}$-formal scheme $Z$ and a relative $F$-gauge $E \in \FG^*(Z/A)$, 
	since the above isomorphisms for $E(S)$ is functorial in $S \in (Z/\overline{A})_\qrsp$, 
	by passing to limit we may conclude the isomorphism of cohomology in general.
\end{proof}

The identification in \Cref{Nygaard graded and conjugate filtered general} in particular allows us to study the notion of weights on filtered Higgs complexes.
Following \Cref{wt of rel F-gauge}, for $\ast\in \{\emptyset, \perf, \vect\}$, we let $\Red_{Z/A}$ be the base change functor
\[
\begin{tikzcd}
	\FH^\ast(Z/\overline{A}) \arrow[rrr, "-\otimes_{\Fil^\conj_\bullet \overline{\Prism}_{-/A}} \mathcal{O}_\qrsp"] &&&  \DG^\ast_{p\text{-comp}}(\mathcal{O}_Z),
\end{tikzcd}
\]
which by construction factors through the quotient functor $\Gr \colon \FH^\ast(Z/\overline{A}) \to \GH^\ast(Z/\overline{A})$.
A filtered Higgs complex $M$ is of \emph{weight} $[a,b]$ if $\Red_{Z/A}(M)$ lives within degree $[a,b]$.

Translating  \Cref{wt filtration rel F-gauge} using \Cref{Nygaard graded and conjugate filtered general}, we get the weight filtration on filtered Higgs fields.
\begin{corollary}
\label{wt filtration of filtered Higgs}
Fix a generator $d$ of the ideal $I$, let $Z$ be a quasi-syntomic $p$-adic formal scheme over $\overline{A}$.
Let $*\in\{\emptyset, \perf,\vect\}$, and let $a\leq b$ be two integers.
For a filtered Higgs complex $M \in \FH^\ast(Z/\overline{A})$ 
of weight $[a,b]$, there is a unique increasing filtration
$\Fil^\wt_i M$ on $M$ satisfying the following axioms:
\begin{enumerate}
\item The filtration is exhaustive;
\item the filtration ``starts at $0$'': i.e.~$\Fil^\wt_{\ll 0} M = 0$; and
\item the graded piece $\Gr^\wt_i M \simeq 
N_i \otimes_{\mathcal{O}_Z} \Fil^\conj_\bullet \overline{\Prism}_{-/A}$ 
where $N_i \in \DG^\ast_{p\text{-comp}}(\mathcal{O}_Z)$ has grading $i$.
\end{enumerate}
Moreover, the induced filtration $\Fil^\wt_i(\Red_{Z/A}(M)) \coloneqq \Red_{Z/A}(\Fil^\wt_i M)$
is the one induced by grading: 
\[
\Fil^\wt_i(\Red_{Z/A}(M)) \cong \bigoplus_{j \leq i} \Red_{j,Z/A}(M).
\]
In particular, there are natural isomorphisms:
\[
N_i \cong \Red_{Z/A}(\Gr^\wt_i(M^\bullet)) \cong \Gr^\wt_i(\Red_{Z/A}(M)) \cong 
\Red_{i, Z/A}(M)
\]
in $\DG^\ast_{p\text{-comp}}(\mathcal{O}_Z)$.
Therefore, the filtration is indexed by $i \in [a,b]$.
\end{corollary}

By taking the direct image to the Zariski site of $Z$, we can analyze the graded factors of the conjugate filtration on Hodge--Tate cohomology.
\begin{construction}
\label{meaning of lambda*}
	Denote by $\lambda \colon Z_\qrsp \to Z_\mathrm{Zar}$ the natural map of ringed sites.
	Then there is a natural graded derived pushforward functor between two graded derived  category of presheaves
	\[
	R\lambda_* \colon \DG((Z/\overline{A})_\qrsp,\mathcal{O}_\qrsp^\mathrm{PSh}) \to \DG(Z_\mathrm{Zar}, \mathcal{O}_{Z}^\mathrm{PSh}).
	\]	
\end{construction}

\begin{remark}
For any open $U \subset Z$, the evaluation of $R\lambda_*$ at $U$
of quasi-syntomic sheaves is given by the unfolding
process, see \Cref{unfolding process in relative setting}
and \Cref{how to compute unfolding remark}.
\end{remark}

We now use weight filtration to subdivide the Hodge--Tate cohomology and Higgs cohomology into smaller pieces, where each of them can be understood using the reduction functor and differential forms.

\begin{corollary}
\label{conjugated graded of HT complex}
Let $Z$ be a smooth $\overline{A}$-formal scheme of relative dimension $n$, let $*\in \{\vect,\perf\}$, and let $M\in \FH^*(Z/\overline{A})$ be of weight $[a,b]$.
The $j$-th graded factor $\Gr_j R\lambda_* M$ is zero unless $j\in [a,b+n]$.
		In the latter case it admits a finite increasing filtration such that the $i$-th graded factor of this filtration is 
		\[
		(\Red_{a+i, Z/A}(M)) \otimes_{\mathcal{O}_X} \Omega^{j-a-i}_{Z/\overline{A}}\{a+i-j\}[a+i-j],\quad \max\{0,j-a-n\}\leq i \leq \min\{j,b\}-a.
		\]
		In particular, if $\bigoplus_l \Red_{l,Z/A}(M)$ is given by a coherent sheaf over $\mathcal{O}_Z$, then we have
		\[
		\Gr_j R\lambda_* M \in D^{[\max\{j-b,0\}, \min\{j-a,n\}]}_\mathrm{coh}(\mathcal{O}_Z).	\]
\end{corollary}
	Here in the calculation, we implicitly use the fact that for a smooth $\overline{A}$-formal scheme of relative dimension $n$, the relative sheaf of differential $\Omega^l_{Z/\overline{A}}$ is zero unless $l\in [0,n]$.

	For the future usage, we also record a special case when an $F$-gauge comes from an $I$-torsionfree coherent $F$-crystal, where we can show that the reduction of the associated filtered Higgs field (which is the same as the reduction of the gauge by \Cref{Nygaard graded and conjugate filtered general}) is coherent.
	This is summarized in the next result.
\begin{corollary}
\label{reduction of relative F-gauge that comes from F-crys}
Let $f:X\to Y$  be a smooth morphism of smooth $p$-adic formal schemes over $\mathcal{O}_K$, and let $(\mathcal{E},\varphi_{\mathcal{E}})$ be an $I$-torsionfree coherent $F$-crystal over $X$.
    	For a bounded prism $(A,I)\in Y_\Prism$ such that $\Spf(\overline{A})\to Y$ is $p$-completely flat, let
     $E' = BC_{X, X_{\overline{A}}/A}(\Pi_X(\mathcal{E}))$ be the associated relative $F$-gauge over $X_{\overline{A}}/A$ via 
    \Cref{thm crystal to gauge} and \Cref{restriction functor}.
    	Then the reduction $\Red_{X_{\overline{A}}/A}(E')$ is a graded coherent sheaf over $\mathcal{O}_X$.
    \end{corollary}
    \begin{proof}
    	Using the identification in \Cref{Nygaard graded and conjugate filtered general}, this follows from  the complete base change formula $\Red_{X_{\overline{A}}/A}(E')\simeq \Red_X(E)\widehat{\otimes}_{\mathcal{O}_X} \mathcal{O}_{X_{\overline{A}}}$ in \Cref{wt filtration rel F-gauge}.(ii): since $\Red_X(E)$ is coherent over $\mathcal{O}_X$ (\Cref{Reduction is the graded of twisted filtration}) and $X_{\overline{A}}\to X$ is $p$-completely flat, the complete base change is also coherent. 
    \end{proof}

\subsection{Completeness of Nygaard filtration via filtered de Rham realization}
\label{Completeness of Nygaard filtration via filtered de Rham realization}
In this subsection, in eye of \Cref{structure of Nygaard filtration on twisted-prismatic cohomology},
we study the mod $\frac{d}{t}$ specialization of a relative prismatic $F$-gauge
in this subsection.
The following definition and sheaf-property proposition is completely analogous
to previous subsections.

\begin{definition}
\label{def connections}
Let $Z$ be a quasi-syntomic $\overline{A}$-formal scheme.
The category of \emph{filtered connections} over $Z/\overline{A}$ is defined as the limit of $\infty$-categories
\[
\FConn(Z/\overline{A})\colonequals   \lim_{S \in (Z/\overline{A})_\qrsp} \FConn(\Spf(S)/\overline{A}),
\]
where $\FConn(\Spf(S)/\overline{A}) \colonequals 
\DF_{p\text{-comp}}(\Fil^\bullet_H \mathrm{dR}(S/\overline{A}))$ is the category of $p$-complete filtered complexes
over the filtered ring $\Fil^\bullet_H \mathrm{dR}(S/\overline{A})$.
Here, similar to \Cref{functorial abs}, for a map of large quasi-syntomic $\overline{A}$-algebras $S_1\to S_2$,
the completed filtered base change induces natural functors between
$\FConn(\Spf(S_i)/\overline{A})$.

Similar to \Cref{def F-gauge category}, for $*\in \{\vect, \perf\}$, 
we use $\FConn^*(Z/\overline{A})$ to denote the subcategory spanned by $p$-completely
finite projective (resp.~perfect) objects.
\end{definition}

The analog of \Cref{Gauge: cplt vs alg conditions} still holds true:
\begin{proposition}
\label{Connections: cplt vs alg conditions}
Let $S$ be a large quasi-syntomic $\overline{A}$-formal scheme and let 
$Z$ be a quasi-syntomic $\overline{A}$-formal scheme. Then for $*\in \{\vect, \perf\}$, 
we may drop ``$p$-completely'', when defining $\FConn^*(S/\overline{A})$ and 
$\FConn^*(Z/\overline{A})$, without changing the definition.
\end{proposition}

\begin{proof}
The case of $\FConn^*(Z/\overline{A})$ follows from the case of 
$\FConn^*(S/\overline{A})$ by descent. The proof of $\FConn^*(S/\overline{A})$
follows from the exact proof of \Cref{relative Gauge: cplt vs alg conditions}:
Note that in the said proof, we have explained why 
$(\mathrm{dR}_{S/\overline{A}}, \Fil^1_H \mathrm{dR}(S/\overline{A}))$
is a henselian pair.
\end{proof}

Just like before, all these form quasi-syntomic sheaves.

\begin{proposition}
\label{FilConn sheaf}
Let $S \to S^{(0)}$ be a quasi-syntomic cover of large quasi-syntomic rings over $\overline{A}$,
and let $S^{(\bullet)}$ be the $p$-completed \v{C}ech nerve.
	Then for $*\in \{\perf,\vect\}$, we have a natural equivalence
	\begin{align*}
		\FConn^*(\Spf(S)/\overline{A}) & \simeq \lim_{[n]\in \Delta} \FConn^*(\Spf(S^{(n)})/\overline{A}).
	\end{align*}
\end{proposition}
\begin{proof}
One just mimics the proof of \Cref{F-gauge sheaf},
and uses \Cref{flatness of Nygaard filtered ring map} (3) as the main ingredient.
\end{proof}

In \cite[Lemma 8.6]{BS19} one finds an interesting example of $(p, I)$-completely descendable map
(see \Cref{descendable discussion})
between two $(p, I)$-complete $\mathbb{E}_\infty$-rings,
what we need is the following slight generalization.
\begin{proposition}
\label{descendable proposition}
Let $(A, I = (d))$ be a bounded prism with $\overline{A} \coloneqq A/I$, let 
$\overline{A}\langle \underline{X} \rangle \to R$
be a $p$-completely \'{e}tale map, and let 
$R_{\infty} \coloneqq R \widehat{\otimes}_{\overline{A}\langle \underline{X} \rangle} 
\overline{A}\langle \underline{X}^{1/p^\infty} \rangle$.
Then the filtered map
\[
\Fil^{\bullet}_N \mathrm{R\Gamma}(\Spf(R)_{\qsyn}, \Prism^{(1)}_{-/A})
\to \Fil^{\bullet}_N \Prism^{(1)}_{R_{\infty}/A}
\]
induces a $(p, I)$-completely descendable map between their underlying Rees algebras.
Similarly, the filtered map
\[
\Fil^{\bullet}_H \mathrm{dR}(R/\overline{A})
\to \Fil^{\bullet}_H \mathrm{dR}(R_{\infty}/\overline{A})
\]
induces a $p$-completely descendable map between their underlying Rees algebras.
\end{proposition}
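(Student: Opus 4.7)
The plan is to adapt \cite[Lemma 8.6]{BS19} (which treats the underlying unfiltered prismatic case) to the filtered setting, via the Rees-algebra dictionary in \Cref{structure of Nygaard filtration on twisted-prismatic cohomology}. Write $B_R, B_{R_\infty}$ for the Rees algebras of the Nygaard-filtered relative prismatic cohomologies, and $C_R, C_{R_\infty}$ for the Rees algebras of the Hodge-filtered de Rham complexes. By \Cref{structure of Nygaard filtration on twisted-prismatic cohomology}, these are $(p,I)$-complete graded $A[t,d/t]$-algebras with $t\cdot(d/t)=d$, and $B_\ast/(d/t) \simeq C_\ast$. Since descendability is stable under derived base change (\Cref{descendable discussion}), the de Rham statement follows from the Nygaard statement by specializing at $d/t = 0$, so the real task is to show $B_R \to B_{R_\infty}$ is descendable.

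The first step is to pass to the mod-$t$ quotient. By \Cref{structure of Nygaard filtration on twisted-prismatic cohomology}, $B_\ast/t$ is the Rees algebra of the twisted conjugate-filtered Hodge--Tate cohomology, whose graded pieces are $\wedge^i \mathbb{L}_{-/\bar{A}}\{-i\}[-i]$. The $p$-completely \'etale factorization $\bar{A}\langle \underline{X}\rangle \to R$ gives $\mathbb{L}_{R/\bar{A}} \simeq R \otimes_{\bar{A}\langle \underline{X}\rangle} \Omega^1_{\bar{A}\langle \underline{X}\rangle/\bar{A}}$, and analogously for $R_\infty$. The induced map on graded pieces is therefore the $p$-completed base change of the corresponding map for $\bar{A}\langle \underline{X}\rangle \to \bar{A}\langle \underline{X}^{1/p^\infty}\rangle$, whose descendability (at all wedge levels, after combining with $p$-completely faithfully flat descent from \Cref{flatness of Nygaard filtered ring map}) is contained in the argument of \cite[Lemma 8.6]{BS19}. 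Base-change stability of descendability then gives descendability of $B_R/t \to B_{R_\infty}/t$.

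The remaining and hardest step is to lift descendability from modulo $t$ to the Rees algebras themselves. Both sides are derived $(p,I,t)$-complete, and by \Cref{filtered perfect criterion} together with \Cref{perfect over complete is complete} they are perfect modules over the Rees algebra of $(A, I^{\mathbb{N}})$. I would run Mathew's descendability criterion inside the $t$-complete category: the key input is that the shifted cotangent complex $\mathbb{L}_{R_\infty/R}[-1]$ is $p$-completely flat (since $R_\infty$ is large quasi-syntomic over $R$), which forces the $t$-adic filtration on each fixed Rees weight to have uniformly bounded length, and thereby bounds the nilpotence index of the augmentation fiber uniformly; a $t$-adic Nakayama argument then concludes. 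The main obstacle is precisely this last step: descendability does not automatically survive cofiltered limits, so one has to pin down the uniform nilpotence bound, and the perfectness of $B_R, B_{R_\infty}$ over the base Rees algebra is what makes this tractable.
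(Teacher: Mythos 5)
Your overall plan---cast everything into Rees algebras via \Cref{structure of Nygaard filtration on twisted-prismatic cohomology} and then apply Mathew's nilpotence criterion by specializing---is the right general shape, and your observation that the de Rham statement is a derived base change of the Nygaard statement is correct. But the route you take from there diverges from the paper's and runs into a real gap.

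The paper specializes the Rees algebra modulo $d/t$, which by \Cref{structure of Nygaard filtration on twisted-prismatic cohomology} lands exactly on the Hodge-filtered de Rham side. The reason this is the right specialization is that the resulting map has a model over $\mathbb{F}_p$ after a further reduction mod $p$, so the ``$F^{\otimes 2}$ maps only nullhomotopically into the Rees algebra'' criterion can be transported directly from the argument of \cite[Lemma~8.6]{BS19}. You instead specialize modulo $t$, landing on the conjugate-filtered Hodge--Tate side, and then propose to lift descendability back up $t$-adically. Two things go wrong. First, your claim that $p$-complete flatness of $\mathbb{L}_{R_\infty/R}[-1]$ forces the $t$-adic filtration in each fixed Rees weight to have ``uniformly bounded length'' is false: in weight $i$ that filtration is $\Fil^i/\Fil^{i+n}$, whose length grows without bound in $n$ because the Nygaard filtration is an infinite decreasing filtration. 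There is no a priori uniform nilpotence bound coming from this. Second, you acknowledge that descendability does not pass through cofiltered limits automatically, which is exactly the difficulty---but the proposed ``$t$-adic Nakayama'' is not an argument; the lifting step is precisely what needs a proof, and the paper circumvents it entirely by choosing the $d/t$ specialization, where what must be lifted is merely a \emph{nullhomotopy} (not descendability itself), and completeness of the Nygaard filtration does that work. Your step (1) also overreaches: descendability of the graded pieces $\wedge^i\mathbb{L}$ individually does not by itself yield descendability of the whole graded Rees algebra map, and \cite[Lemma~8.6]{BS19} proves descendability of the unfiltered prismatic cohomology map, not of each wedge power separately.

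A minor structural point: the paper first uses base-change stability to reduce to $R = \bar{A}\langle \underline{X}\rangle$ itself (and then to one variable), which is cleaner than carrying an arbitrary $p$-completely \'etale $R$ through the rest of the argument; you keep the general $R$ and only use the chart to compute cotangent complexes.
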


\begin{proof}
Since descendability is preserved under base change, we are immediately
reduced to the case where $\overline{A}\langle \underline{X} \rangle = R$.
It suffices to prove the statement when there is only one variable,
as the general case follows from tensoring up one variable case.
Just like the proof of \cite[Lemma 8.6]{BS19}, we use $F$ to denote
the fiber in both Rees algebra maps, and we just need to show
$F^{\otimes 2}$ has only nullhomotopic maps to the Rees algebra
for the filtered sheaves evaluated at $\Spf(R)$.

For the case of Nygaard filtered $\Prism^{(1)}_{-/A}$, 
by $\frac{d}{t}$-completeness, it suffices to check
the above claim after modulo $\frac{d}{t}$ (using notation
from \Cref{structure of Nygaard filtration on twisted-prismatic cohomology});
by the \Cref{structure of Nygaard filtration on twisted-prismatic cohomology}
we are reduced to the case of Hodge filtered $\mathrm{dR}(-/\overline{A})$.
At this point, we simply proceed as in the proof of \cite[Lemma 8.6]{BS19}:
By completeness we may further reduce mod $p$, then our map has a model
defined over $\mathbb{F}_p$, and the proof as in loc.~cit.~applies
to show the relevant mapping space has only one component.
\end{proof}

Combining all the above discussions, we get an equivalent definition of ($F$-)gauges and 
filtered connections as certain filtered complexes over the filtered cohomology rings.
Roughly speaking, this is saying that the relative versions of the filtered prismatization $(X/A)^N$, the syntomication $(X/A)^\mathrm{syn}$ and the filtered de Rham stack $(X/\overline{A})^\mathrm{dR+}$ can be built easily from the $\mathbb{G}_m$-quotients of affine stacks. 

\begin{corollary}
Let $(A, I = (d))$ be a bounded oriented 
prism with $\overline{A} \coloneqq A/I$, and let $X=\Spf(R)$ be a $p$-complete smooth affine formal scheme over $\overline{A}$.
Let $\ast\in\{\emptyset, \perf\}$.
\begin{enumerate}
\item The category $\mathrm{Gauge}^*(X/A)$ is equivalent to the category of pairs
$(E,\Fil^\bullet E^{(1)})$, where $E\in D^*_{(p,I)\text{-comp}}(\Prism_{X/A})$ and 
$\Fil^\bullet E^{(1)}\in \DF^*_{(p,I)\text{-comp}}(\Fil^\bullet_N \Prism^{(1)}_{R/A})$ with underlying
complex $E^{(1)} \coloneqq E \widehat{\otimes}_{A, \varphi_A} A$.
\item The category $\FG^*(X/A)$ is equivalent to the category of triples $(E,\Fil^\bullet E^{(1)}, \widetilde{\varphi}_E)$, where $(E,\Fil^\bullet E^{(1)})$ is as in (1) and $\widetilde{\varphi}_E:\Fil^\bullet E^{(1)} \widehat{\otimes}_{\Fil^\bullet_N \Prism^{(1)}_{R/A}, \varphi} I^\mathbb{Z} \Prism_{X/A} \to E\otimes_{\Prism_{X/A}} I^\mathbb{Z} \Prism_{R/A}$ is a filtered isomorphism.
\item The category $\FConn^*(X/\overline{A})$ is equivalent to the category $\DF^*_{p\text{-comp}}(\mathrm{dR}(R/\overline{A}))$.
\end{enumerate}
\end{corollary}

\begin{proof}
This follows from combining \Cref{completely filtered descendable descent}
and \Cref{descendable proposition}.
\end{proof}

Using the descendability, below we also show the completeness of the filtration of an ($F$-)gauge in perfect complexes over a smooth $p$-adic formal scheme.

\begin{theorem}
\label{thm: filtered completeness}
Let $Z$ be a smooth $\overline{A}$-formal scheme.
Let $E=(E, \Fil^\bullet E^{(1)})$ be a prismatic gauge in perfect complexes
over $Z/A$,
let $\mathcal{E}$ be a prismatic crystal in perfect complexes over $Z/A$,
and let $(\mathcal{F}, \Fil^\bullet \mathcal{F})$ be a filtered perfect complex
with connection over $Z/\overline{A}$.
\begin{enumerate}
\item For any affine open $U = \Spf(R) \subset Z$
which admits a formal \'{e}tale map to $\widehat{\mathbb{A}^n}_{\overline{A}}$,
the filtered complexes $\Fil^{\bullet} E^{(1)}(U)$ and $\Fil^\bullet \mathcal{F}(U)$
are $(p, I)$-completely and $p$-completely perfect as filtered $\Fil^{\bullet}_N \Prism^{(1)}_{U/A}$-complexes
and $\Fil^{\bullet}_H \mathrm{dR}(U/\overline{A})$-complexes respectively.
\item The assignments $S \mapsto \mathcal{E}(\Prism_{S/A})$
and $S \mapsto \mathcal{E}^{(1)}(\Prism_{S/A}) \coloneqq
\mathcal{E}(\Prism_{S/A}) \widehat{\otimes}_{A, \varphi_A} A$
defines quasi-syntomic sheaves on $(Z/A)_{\qrsp}$.
With notation as in (1), the complex $\mathcal{E}(U)$ is $(p, I)$-completely perfect
over $\Prism_{U/A}$. Moreover the natural map 
\[
\mathcal{E}(U) \widehat{\otimes}_{A, \varphi_A} A \to
\mathcal{E}^{(1)}(U)
\]
is an isomorphism.
\item The filtrations $\Fil^{\bullet} E^{(1)}(Z)$ and $\Fil^\bullet \mathcal{F}(Z)$
are complete.
\end{enumerate}
\end{theorem}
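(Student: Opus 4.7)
The plan is to reduce all three parts to a local assertion on each affine open $U = \Spf(R) \subset Z$ equipped with a formal \'etale map $\bar{A}\langle \underline{X}\rangle \to R$; such $U$ form a basis for $Z_{\mathrm{Zar}}$, so this reduction is harmless for statements (1) and (3), while (2) is phrased globally but is also sheaf-theoretic in nature. For such $U$, let $R_\infty \coloneqq R \widehat{\otimes}_{\bar{A}\langle \underline{X}\rangle} \bar{A}\langle \underline{X}^{1/p^\infty}\rangle$ be the standard qrsp cover, with $p$-completed \v{C}ech nerve $R_\infty^{(\bullet)}$.

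For part (1), by definition of $\fG^{\perf}$ (resp.~$\FConn^{\perf}$), the value $\Fil^\bullet E^{(1)}(R_\infty)$ (resp.~$\Fil^\bullet \mathcal{F}(R_\infty)$) is a filtered perfect complex over $\Fil^\bullet_N \Prism^{(1)}_{R_\infty/A}$ (resp.~$\Fil^\bullet_H \mathrm{dR}(R_\infty/\bar{A})$). Translating through the Rees construction (\Cref{filtered perfect criterion}), these correspond to perfect complexes over the respective Rees algebras. Now by \Cref{descendable proposition}, the maps of Rees algebras induced by $R \to R_\infty$ are descendable. Thanks to \Cref{descendable discussion}, perfect complexes descend along descendable maps; combining with the quasi-syntomic sheafiness (\Cref{F-gauge sheaf relative} and \Cref{FilConn sheaf}), we obtain $\Fil^\bullet E^{(1)}(U)$ and $\Fil^\bullet \mathcal{F}(U)$ as filtered perfect objects over the respective filtered rings.

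For part (2), the assignment $S \mapsto \mathcal{E}(\Prism_{S/A})$ is a quasi-syntomic sheaf because $\mathcal{E}$ is a crystal in perfect complexes. The Frobenius twist $\mathcal{E}^{(1)}(\Prism_{S/A}) = \mathcal{E}(\Prism_{S/A}) \widehat{\otimes}_{A,\varphi_A} A$ is then also a qsyn sheaf: perfectness of $\mathcal{E}(\Prism_{S/A})$ over $\Prism_{S/A}$, coupled with the identification $\Prism^{(1)}_{S/A} = \Prism_{S/A} \widehat{\otimes}_{A,\varphi_A} A$, reduces the descent to that of $S \mapsto \Prism^{(1)}_{S/A}$. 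The perfectness of $\mathcal{E}(U)$ over $\Prism_{U/A}$ then follows from the unfiltered version of the same descendability argument used in (1). Finally, the comparison $\mathcal{E}(U) \widehat{\otimes}_{A,\varphi_A} A \xrightarrow{\sim} \mathcal{E}^{(1)}(U)$ is verified on the qsyn cover $R_\infty^{(\bullet)}$, where it holds by definition, and propagates to $U$ because perfectness of $\mathcal{E}(U)$ implies that base change commutes with the relevant cosimplicial limit.

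For part (3), at each term $R_\infty^{(n)}$ of the \v{C}ech nerve, the filtered ring $\Fil^\bullet_N \Prism^{(1)}_{R_\infty^{(n)}/A}$ (resp.~$\Fil^\bullet_H \mathrm{dR}(R_\infty^{(n)}/\bar{A})$) is an honest decreasing filtration on a $(p,I)$-complete ring, hence complete with respect to its filtration. Therefore, by \Cref{perfect over complete is complete}, $\Fil^\bullet E^{(1)}(R_\infty^{(n)})$ and $\Fil^\bullet \mathcal{F}(R_\infty^{(n)})$ are filtration-complete. Filtration-completeness is preserved under cosimplicial and Zariski limits, and $\Fil^\bullet E^{(1)}(Z)$ is computed as iterated limits over a Zariski cover of $Z$ by such affines $U$ and then over the qsyn \v{C}ech nerves $R_\infty^{(\bullet)}$ of each $U$; thus both global filtrations are complete. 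The main technical input, and the main obstacle, is the descendability in \Cref{descendable proposition}: the filtered descent in (1) genuinely requires control over the Rees algebra modulo $d/t$, where its reduction to Hodge-filtered de Rham allows invoking the nilpotence argument following \cite[Lemma 8.6]{BS19}.
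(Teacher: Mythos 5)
Your outline for part (1) matches the paper's proof: reduce to the Rees construction, invoke \Cref{descendable proposition}, and descend perfectness along descendable maps. This is fine.

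For part (2), however, there is a genuine gap at the final comparison. You write that the isomorphism $\mathcal{E}(U)\widehat{\otimes}_{A,\varphi_A}A\to\mathcal{E}^{(1)}(U)$ ``propagates to $U$ because perfectness of $\mathcal{E}(U)$ implies that base change commutes with the relevant cosimplicial limit.'' Perfectness of $\mathcal{E}(U)$ over $\Prism_{U/A}$ does let you pull a limit through a tensor over $\Prism_{U/A}$, and this reduces the claim to the case $\mathcal{E}=\mathcal{O}_\Prism$: namely, that $\lim_{\Delta}\Prism^{(1)}_{R^{(\bullet)}/A}\cong\Prism_{U/A}\widehat{\otimes}_{A,\varphi_A}A$. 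But this is precisely the non-trivial input you have skipped over. The Frobenius $\varphi_A\colon A\to A$ is not assumed flat for a general bounded prism $(A,I)$, so the identification of the unfolding of $\Prism^{(1)}_{-/A}$ with the Frobenius twist of $\Prism_{U/A}$ is not formal; the paper establishes it by comparing both sides with derived de Rham cohomology via \cite[Theorem 15.3]{BS19}. Without this step your argument for (2) does not close.

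For part (3) your reasoning contains an error. You claim that each $\Fil^\bullet_N\Prism^{(1)}_{R_\infty^{(n)}/A}$ is ``an honest decreasing filtration on a $(p,I)$-complete ring, hence complete with respect to its filtration.'' This is a non-sequitur: $(p,I)$-completeness of the underlying ring says nothing about completeness along a decreasing filtration (the constant filtration $\Fil^n=R$ on a complete ring is the simplest counterexample). Whether or not the Nygaard filtration on a qrsp $\Prism^{(1)}_{S/A}$ happens to be complete, it certainly does not follow from the reason you give, and the paper deliberately sidesteps this question. The paper's argument instead uses part (1): since $\Fil^\bullet E^{(1)}(U)$ is a \emph{filtered perfect} complex over $\Fil^\bullet_N\Prism^{(1)}_{U/A}$, it suffices by \Cref{perfect over complete is complete} to check that the filtered ring $\Fil^\bullet_N\Prism^{(1)}_{U/A}$ itself is complete for the smooth affine $U$; this is \cite[Lemma 7.8.(1)]{LL20}, proved by reducing modulo $d/t$ (via \Cref{structure of Nygaard filtration on twisted-prismatic cohomology}) to the finite Hodge filtration on $\mathrm{dR}(U/\bar{A})$. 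You should restructure (3) to go through the smooth affine rather than the \v{C}ech nerve, and replace the non-sequitur with this argument.
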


Here the evaluation at $U$ of quasi-syntomic sheaves is defined by the unfolding
process,
see \Cref{unfolding process in relative setting}
and \Cref{how to compute unfolding remark}.

\begin{proof}
For (1): Fix a $p$-completely \'{e}tale map $\overline{A}\langle \underline{X} \rangle \to R$.
Let $R^{(\bullet)}$ be the Cech nerve of $R \to R_{\infty}$
where $R_{\infty} \coloneqq R \widehat{\otimes}_{\overline{A}\langle \underline{X} \rangle} 
\overline{A}\langle \underline{X}^{1/p^\infty} \rangle$.
By \Cref{how to compute unfolding remark},
the values $\Fil^{\bullet} E^{(1)}(U)$ and $\Fil^\bullet \mathcal{F}(U)$
are given by the descent of their values on $R^{(\bullet)}$.
By \Cref{filtered perfect criterion} and \Cref{Definition: filtered complete perfect},
we need to check their associated
Rees's construction is perfect after derived reduction mod $(p, I)$ and $p$ respectively.
Therefore we are reduced to \Cref{descendable proposition}, by descent of completely perfectness
along completely descendable map (\Cref{descendable discussion}).

For (2): the first statement follows from the fact that $\Prism_{-/A}$
and $\Prism^{(1)}_{-/A}$ are quasi-syntomic sheaves (by Hodge--Tate
and de Rham comparison respectively),
and the perfectness assumption on $\mathcal{E}$.
Then the completely perfectness of $\mathcal{E}(U)$
over $\Prism_{U/A}$ can be proved exactly as in (1).
Lastly, with notations as in the proof of (1), we need to show the following natural arrow:
\[
\mathcal{E}(U) \widehat{\otimes}_{A, \varphi_A} A \to 
\lim_{\bullet \in \Delta} \mathcal{E}^{(1)}(R^{(\bullet)})
\]
is an isomorphism.
We first observe base change formulas:
$\mathcal{E}(U) \widehat{\otimes}_{A, \varphi_A} A \cong \mathcal{E}(U) \widehat{\otimes}_{\Prism_{U/A}} \Prism^{(1)}_{U/A}$
as well as
$\mathcal{E}^{(1)}(R^{(\bullet)}) \cong \mathcal{E}(U) \widehat{\otimes}_{\Prism_{U/A}} \Prism^{(1)}_{R^{(\bullet)}/A}$.
By completely perfectness of $\mathcal{E}(U)$ over $\Prism_{U/A}$, using the above base change formulas,
we are reduced to showing that the unfolding of $\Prism^{(1)}_{-/A}$
at $U$ is $\Prism_{U/A} \widehat{\otimes}_{A, \varphi_A} A$:
This follows from the de Rham comparison as in \cite[Theorem 15.3]{BS19}.

As for (3): quasi-syntomic sheaves are in particular Zariski sheaves,
so it suffices to check completeness for the values on a basis of opens
in $Z_{\mathrm{Zar}}$.
Note that by derived Nakayama lemma, we are reduced to checking completeness
after applying $-/^L (p, I)$ and $-/^L p$ respectively to these values.
To that end, we simply use (1) and \Cref{perfect over complete is complete}
to reduce ourselves to the completeness of the Nygaard (resp.~Hodge) filtration
on the Frobenius-twisted prismatic cohomology (resp.~derived de Rham cohomology)
of smooth algebras. The Hodge filtration for smooth formal schemes 
is a finite filtration, hence complete.
The claim about Nygaard filtration can be found in \cite[Lemma 7.8.(1)]{LL20},
the proof is quite easy: by $\frac{d}{t}$-completeness, we may check the completeness
of filtration after modulo $\frac{d}{t}$, but then
\Cref{structure of Nygaard filtration on twisted-prismatic cohomology}
reduces us to checking completeness of Hodge filtration for smooth formal schemes.
\end{proof}

In a private communication, Bhatt told us that \cite[Theorem 7.17]{BL22b}
is proved by a similar argument.

\section{Height of prismatic cohomology}
\label{Height of prismatic cohomology}
In this section, we study Verschiebung operator and the Frobenius height of prismatic cohomology, with coefficients in coherent prismatic $F$-crystals.

Recall that in \Cref{meaning of lambda*}
we defined a natural graded derived pushforward functor between two graded derived
category of presheaves
\[
R\lambda_*: \DG((X/\overline{A})_\qrsp,\mathcal{O}_\qrsp^\mathrm{PSh}) \to \DG(X_\mathrm{Zar}, \mathcal{O}_{X}^\mathrm{PSh}).
\]	
Also recall that in \Cref{thm crystal to gauge} we have attached an $F$-gauge $\Pi_X(\mathcal{E})$
on $X$ to an $F$-crystal $(\mathcal{E}, \varphi_{\mathcal{E}})$ on $X$,
then according to \Cref{restriction functor}, we obtain a relative $F$-gauge
$BC_{X, X_{\overline{A}}/A}(\Pi_X(\mathcal{E}))$ over $X_{\overline{A}}/A$.
Using these constructions, our first main theorem is the following.

\begin{theorem}
\label{thm height of coh local}
		Let $f \colon X\to Y$  be a smooth morphism of smooth $p$-adic formal schemes over $\mathcal{O}_K$ that is of relative dimension $n$, and let $(A,I)$ be a bounded prism over $Y$ such that $\Spf(\overline{A})\to Y$ is $p$-completely flat.
		Assume  $(\mathcal{E},\varphi_{\mathcal{E}})\in \FC^\coh(X)$ is $I$-torsionfree of height $[a,b]$,
  with $(E,\Fil^\bullet E^{(1)},\widetilde{\varphi}_E)$ the associated $F$-gauge.
		\begin{enumerate}[label=(\roman*)]
			\item\label{thm height of coh local graded piece} For each $j\in \mathbb{Z}$, the complex $R\lambda_* \Gr^jE^{(1)}$ lives in $D^{[0,n]}_{\coh}(\mathcal{O}_{X_{\overline{A}}})$.
			\item\label{thm height of coh local with I-adic filtration} The filtered complex $R\lambda_* \Fil^{\geq n+b}E^{(1)}$ is equivalent to the $I$-adic filtration $R\lambda_* I^{\geq n+b} E^{(1)}=I^{\geq n+b}\otimes R\lambda_* E^{(1)}$.
			\item\label{thm height of coh local bound} The map $\widetilde{\varphi}_E$ induces a natural isomorphism of the truncations
			\[
			\varphi_{R^{\leq i} \lambda_*}: R^{\leq i} \lambda_* (\Fil^{i+b} E^{(1)}) \simeq R^{\leq i} \lambda_* (I^{i+b}  E)=I^{i+b}\otimes R^{\leq i}\lambda_* E, ~\forall i\in \mathbb{N}.
			\] 
		\end{enumerate}
\end{theorem}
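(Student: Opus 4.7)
Set $E \coloneqq BC_{X, X_{\bar{A}}/A}(\Pi_X(\mathcal{E}))$, the relative $F$-gauge obtained by composing the functor of \Cref{thm crystal to gauge} with \Cref{restriction functor}. By \Cref{Reduction is the graded of twisted filtration} together with \Cref{wt filtration rel F-gauge}, $E$ has weight $[a,b]$, and by \Cref{reduction of relative F-gauge that comes from F-crys} its graded reduction $\Red_{X_{\bar{A}}/A}(E)$ is a graded coherent sheaf on $X_{\bar{A}}$.

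For \ref{thm height of coh local graded piece}, the plan is to pass through the Hodge--Tate realization. Via the Rees equivalence of \Cref{Nygaard graded and conjugate filtered general}, the graded complex $\Gr^\bullet E^{(1)}$ corresponds to an increasingly filtered Higgs field $(M, \Fil_\bullet M)$ of the same weight $[a,b]$ with coherent reduction, with $\Gr^j E^{(1)}$ identified (up to a Tate twist) with $\Fil_j M$. I would then prove $R\lambda_* \Fil_j M \in D^{[0,n]}_\coh(\mathcal{O}_{X_{\bar{A}}})$ by induction on $j$, using the cofiber triangle $\Fil_{j-1}M \to \Fil_j M \to \Gr_j M$. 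The base case $j<a$ is vacuous since $\Fil_j M = 0$ by the weight bound, and the inductive step uses \Cref{conjugated graded of HT complex}, which gives $R\lambda_* \Gr_j M \in D^{[\max\{j-b,0\},\min\{j-a,n\}]}_\coh \subseteq D^{[0,n]}_\coh$; the long exact sequence of cohomology sheaves transports the bound.

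For \ref{thm height of coh local bound}, I would invoke the relative analog of \Cref{filtration on Frobenius twist}: since $E$ has weight $[a,b]$ and $\Fil^\bullet E^{(1)}$ is filtered complete by \Cref{thm: filtered completeness}, the filtered Frobenius $v\colon \Fil^\bullet E^{(1)} \to I^\bullet E$ induced by $\widetilde{\varphi}_E$ is a filtered isomorphism on $\Fil^{\geq b}(-)$. Hence for each $i \geq 0$ the underlying map $\Fil^{i+b} E^{(1)} \xrightarrow{\simeq} I^{i+b} E$ is an isomorphism of complexes, and applying $R\lambda_*$ followed by $\tau^{\leq i}$ yields the claimed identification; the outer equality with $I^{i+b} \otimes R^{\leq i}\lambda_* E$ follows since $I$ is generated by a distinguished (hence nonzerodivisor) element.

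For \ref{thm height of coh local with I-adic filtration}, I would combine \ref{thm height of coh local graded piece} with filtered completeness from \Cref{thm: filtered completeness}: both $R\lambda_*\Fil^\bullet E^{(1)}$ and $I^\bullet \otimes R\lambda_*E^{(1)}$ are complete filtered complexes, so it suffices to compare their graded pieces past degree $n+b$. The main obstacle is that $v$ targets $I^\bullet E$ rather than $I^\bullet E^{(1)}$; the extra shift by $n$ is precisely what is needed to absorb this Frobenius-twist discrepancy after $R\lambda_*$, since \ref{thm height of coh local graded piece} ensures that the graded cofiber of the natural comparison lives in cohomological degrees $[0,n]$ and therefore ceases to affect the filtered cohomology once the filtered degree exceeds $b+n$, yielding the desired filtered isomorphism.
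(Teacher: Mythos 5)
Your proof of part \ref{thm height of coh local graded piece} is essentially the paper's argument: pass to the filtered Higgs field via the identification $\Gr^j E^{(1)}\{-j\}\cong \Fil_j M$ of \Cref{Higgs associated to F-gauge cohomology}, and deduce the cohomological bound from \Cref{conjugated graded of HT complex} via the finite conjugate filtration (your induction on $j$ is an equivalent reformulation of the paper's ``finite increasing exhaustive filtration'' phrasing).

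Your proof of part \ref{thm height of coh local bound}, however, has a genuine gap. You claim that ``the filtered Frobenius $v\colon \Fil^\bullet E^{(1)} \to I^\bullet E$ is a filtered isomorphism on $\Fil^{\geq b}(-)$,'' and consequently that $\Fil^{i+b}E^{(1)}\to I^{i+b}E$ is \emph{already an isomorphism of sheaves of complexes}. This is false, and it misreads \Cref{filtration on Frobenius twist}. That proposition concerns $\Fil^\bullet(\varphi^*E)$, which is \emph{by definition} the filtered base change $\Fil^\bullet E\,\widehat{\otimes}_{\Fil^\bullet_N\Prism_S,\varphi_{\Prism_S}}I^\bullet\Prism_S$, i.e.\ the \emph{linearization}; the conclusion $\Fil^{\geq b}(\varphi^*E)\cong I^{\geq b}E$ says nothing about the un-linearized $\Fil^{\geq b}E$. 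The correct relative analogue would give that $\Fil^\bullet E^{(1)}\,\widehat{\otimes}_{\Fil^\bullet_N\Prism^{(1)}_{S'/A},\varphi_{\mathrm{rel}}}I^\bullet\Prism_{S'/A}\to I^\bullet E$ is a filtered isomorphism in degrees $\geq b$, not that $\widetilde{\varphi}_E\colon\Fil^{i+b}E^{(1)}\to I^{i+b}E$ is one. A simple sanity check: with $\mathcal{E}=\mathcal{O}_\Prism$ (so $b=0$), your claim would assert $\Fil^0_N\Prism^{(1)}_{S'/A}\cong\Prism_{S'/A}$ via relative Frobenius, which is clearly false. The fact that the theorem is stated as an isomorphism only after passing to the truncation $R^{\leq i}\lambda_*$ at the level tied to $i+b$ is the entire point, and it is what the paper has to work for: it forms the cone $C^j=\mathrm{Cone}(\widetilde{\varphi}_E\colon\Fil^j E^{(1)}\to I^j E)$, identifies $\Gr^j C^\bullet\simeq\Fil_{\geq j+1}\overline{\mathcal{E}}\{j\}$ using \Cref{Higgs associated to F-gauge cohomology}, and then uses \Cref{conjugated graded of HT complex} (which crucially relies on $\Omega^v_{Z/\bar{A}}=0$ for $v>n$) to show $R\lambda_*\Gr^j C^\bullet\in D^{[\max\{j+1-b,0\},n]}_\coh$. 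Filtered completeness then gives $R\lambda_* C^{i+b}\in D^{\geq i+1}$, which is what yields \ref{thm height of coh local bound}. Your argument bypasses the dimension-$n$ input entirely, which should have been a warning sign.

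Relatedly, your outline of part \ref{thm height of coh local with I-adic filtration} is internally inconsistent with your own claimed \ref{thm height of coh local bound}: if $\Fil^{j}E^{(1)}\to I^{j}E$ were a sheaf-level isomorphism for all $j\geq b$, then $R\lambda_*\Fil^{\geq b}E^{(1)}$ would already be $I$-adic past degree $b$, not past degree $n+b$. The extra $n$ in the theorem is not there to ``absorb a Frobenius-twist discrepancy''; it is the cohomological amplitude of $R\lambda_*$ applied to the graded cofiber $\Fil_{\geq j+1}\overline{\mathcal{E}}$, which vanishes after pushforward exactly once $j+1>b+n$. To repair both parts you need to run the paper's cone argument rather than assert an isomorphism before pushing forward.
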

 
Combine (i) and (ii) above, we see that $R\lambda_* \mathcal{E}$  and $R\lambda_* \Fil^\bullet E^{(1)}$ live in $D^{[0,n]}(X_{\overline{A}})$.

\begin{remark}
	Note that in the special case when $\mathcal{E}$ is the prismatic structure sheaf $\mathcal{O}_\Prism$, this is shown in \cite[Lem.\ 7.8]{LL20}, which essentially follows from the calculation of graded pieces of Nygaard filtration as in \cite[Thm.\ 15.2]{BS19}.
\end{remark}

\begin{proof}
	Denote by $(\overline{\mathcal{E}}, \Fil_\bullet \overline{\mathcal{E}})$ the associated filtered Higgs field as in \Cref{def HT realization}.
	By  \Cref{reduction of relative F-gauge that comes from F-crys}  and \Cref{conjugated graded of HT complex}, for each $l\in \mathbb{Z}$, we have 
	\[
	R\lambda_* \Gr_l \overline{\mathcal{E}} \in D_\coh^{[0,n]}(\mathcal{O}_X).
	\]
	Since $R\lambda_* \Fil_j \overline{\mathcal{E}}$ admits a finite increasing and exhaustive filtration with graded pieces being $R\lambda_* \Gr_l \overline{\mathcal{E}}$,
	we see the cohomological bound and coherence in (i) hold for $R\lambda_* \Fil_j \overline{\mathcal{E}}$.
	
To compare the two filtrations, by the filtered completeness of $\Fil^{\bullet}E^{(1)}$ in \Cref{thm: filtered completeness} and that of the $I$-adic filtration, it suffices to compare the map of associated graded pieces. 
		We let $C^j$ be the cone of the Frobenius map $\varphi_{\mathcal{E}}:\Fil^j E^{(1)} \to I^j E$, regarded as a descending filtered complex via the filtration $C^{\geq j}$.
		Then for each $j\in \mathbb{Z}$, the graded piece $\Gr^j C^\bullet$ by definition is 
		\[
		\mathrm{Cone}(\varphi_\mathcal{E}:\Gr^j E^{(1)} \longrightarrow \overline{\mathcal{E}}\{j\}),
		\]
		with the left hand side identified with $\Fil_j \overline{\mathcal{E}}\{j\}$ by \Cref{Higgs associated to F-gauge cohomology}.\ (i).
		So we get $\Gr^j C^\bullet \simeq \Fil_{\geq j+1} \overline{\mathcal{E}}\{j\}$.
		By \Cref{reduction of relative F-gauge that comes from F-crys} and \Cref{conjugated graded of HT complex}, the filtered complex $R\lambda_* ( \Fil_{\geq j+1} \overline{\mathcal{E}}\{j\})$ has a finite increasing and exhaustive filtration ranging from the filtered degree $j+1-b$ to $n$ (and the entire term vanishes when $j+1-b>n$.),
		such that the $l$-th graded piece lives in  $D_\coh^{[\max\{l,0\},n]}(\mathcal{O}_{X_{\overline{A}}})$.
		So by induction and the aforementioned facts in \Cref{conjugated graded of HT complex} again, we have 
		\[
		R\lambda_* \Gr^j C^\bullet \in D_\coh^{[\max\{j+1-b,0\}, n]}(\mathcal{O}_{X_{\overline{A}}}), \tag{$\ast$}
		\]
		which vanishes when $j+1-b>n$.
		This means that the Frobenius map induces an isomorphism from $R\lambda_* \Gr^j E^{(1)}$ to $R\lambda_* \overline{\mathcal{E}}\{j\}$ for $j+1-b>n$, and $R\lambda_* (\Fil^\bullet E^{(1)})$ is eventually the $I$-adic filtration.
		
		Finally by the completeness of the filtration, item (iii) follows from a reformulation of $(\ast)$ that for each $j\geq i+b$, we have $R\lambda_* \Gr^j C^\bullet$ lives in $D_\coh^{[i+1, n]}(\mathcal{O}_{X_{\overline{A}}})$.
\end{proof}

By taking the higher direct image along a proper smooth morphism, we can estimate the height of each individual prismatic cohomology.
This is analogous to the work of Kedlaya bounding slopes of rigid cohomology as in \cite[Thm.~6.7.1]{Ked06}.
\begin{theorem}
\label{thm height of coh global}
	Let $f \colon X\to Y$  be a smooth morphism of smooth $p$-adic formal schemes over $\mathcal{O}_K$ that is of relative dimension $n$, and let $(A,I)\in Y_\Prism$ such that $\Spf(\overline{A})\to Y$ is $p$-completely flat.
	Assume  $(\mathcal{E},\varphi_{\mathcal{E}})\in \FC^\coh(X)$ is $I$-torsionfree of height $[a,b]$, and we denote $E=(E,\Fil^\bullet E^{(1)},\widetilde{\varphi}_E)$ the associated relative $F$-gauge over $X_{\overline{A}}/A$.
	For an integer $j\geq a$ and a non-negative integer $i$, the Frobenius structure on the $i$-th relative prismatic cohomology induces a natural commutative diagram
	\[
	\begin{tikzcd}
		\mathrm{H}^i((X/\overline{A})_\qrsp, \Fil^j E^{(1)}) \arrow[r, "u^j"] \arrow[d, "\varphi^j"'] & \mathrm{H}^i((X/\overline{A})_\qrsp,E^{(1)}) \arrow[d, "\varphi"] \\
		I^j\otimes \mathrm{H}^i((X/\overline{A})_\qrsp, E) \arrow[r]& I^a\otimes \mathrm{H}^i((X/\overline{A})_\qrsp, E),
	\end{tikzcd}
    \]
    where the horizontal arrows are defined by the canonical maps and the vertical arrows are given by Frobenius morphisms on $E$.
    Then we have:
	\begin{enumerate}[label=(\roman*)]
		\item\label{thm height of coh global upper} the map $\varphi^j$ is an isomorphism when $j \geq b+\min\{i,n\}$, and		\item\label{thm height of coh global lower} the map $u^j$ is an isomorphism when $j \leq  a+\max\{0,i-n\}$. 
	\end{enumerate}
\end{theorem}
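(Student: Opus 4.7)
The plan is to reduce each assertion to a cohomological vanishing statement on an appropriate cone via the identification $R\Gamma((X/\bar{A})_{\qrsp}, -) \simeq R\Gamma(X_{\bar{A}}, R\lambda_*(-))$ and the fact that $X_{\bar{A}} \to \Spf(\bar{A})$ is smooth proper of relative dimension $n$ over an affine base, so a coherent complex on $X_{\bar{A}}$ concentrated in Zariski cohomological degrees $[\alpha,\beta]$ yields global sections in $D^{[\alpha, \beta + n]}$. Both parts thus come down to sharp bounds on the Zariski cohomological amplitude of the relevant cones, at which point filtered completeness (\Cref{thm: filtered completeness}) enters through a Mittag--Leffler type argument.

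For (i), I would form the filtered cone $C^{\geq j} := \mathrm{Cone}(\widetilde{\varphi}_E \colon \Fil^j E^{(1)} \to I^j E)$ on the quasi-syntomic site. By the proof of \Cref{thm height of coh local}, the graded pieces satisfy
\[
R\lambda_*\Gr^{j'}C^{\bullet} \in D^{[\max\{0,j'+1-b\}, n]}_{\coh}(\mathcal{O}_{X_{\bar{A}}}), \qquad j' \geq b,
\]
and a Mittag--Leffler argument (surjective transitions on the bottom cohomological degree, because consecutive pieces are concentrated there or higher) combined with filtered completeness gives $R\lambda_* C^{\geq j} \in D^{[\max\{0, j+1-b\}, n]}_{\coh}$. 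When $j \geq b + \min\{i,n\}$: if $i \leq n$ then $j+1-b \geq i+1$, placing the cone in degrees $\geq i+1$; if $i > n$ then the lower bound $n+1$ exceeds the upper bound $n$, forcing the cone to vanish. In either case $R\Gamma(X_{\bar{A}}, R\lambda_* C^{\geq j})$ vanishes in degrees $i-1$ and $i$, yielding the isomorphism $\varphi^j$.

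For (ii), I analyze the cone $E^{(1)}/\Fil^j E^{(1)}$, which admits a finite increasing filtration with graded pieces $\Gr^{j'} E^{(1)}$ for $a \leq j' < j$ (using $\Gr^{j'} E^{(1)} = 0$ for $j' < a$, a consequence of the weight range $[a,b]$ via \Cref{Reduction is the graded of twisted filtration} and \Cref{wt filtration rel F-gauge}). When $i \leq n$ the hypothesis forces $j \leq a$, so $\Fil^j E^{(1)} = E^{(1)}$ and $u^j$ is trivially an isomorphism. For $i > n$, I invoke the Higgs realization $\Gr^{j'} E^{(1)} \cong \Fil_{j'} M\{j'\}$ of \Cref{Higgs associated to F-gauge cohomology} together with the bound $R\lambda_* \Gr_k M \in D^{[\max\{0,k-b\}, \min\{k-a,n\}]}_{\coh}$ from \Cref{conjugated graded of HT complex} to obtain $R\lambda_*(E^{(1)}/\Fil^j E^{(1)}) \in D^{[0, \min\{j-a-1, n\}]}_{\coh}$. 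Taking $R\Gamma(X_{\bar{A}}, -)$ gives cohomological amplitude at most $\min\{j-a-1, n\} + n \leq i - 1$ when $j \leq a + (i-n)$, so $H^i$ of the cone vanishes and $u^j$ is surjective on $H^i$. Injectivity, giving the full isomorphism, is then obtained by combining the commutative diagram in the statement with (i) applied in a shifted range: the factorization $\widetilde{\varphi}_E \circ u^j = (\text{inclusion})\circ \varphi^j$ together with the $I$-torsionfree structure of the pushforward and the completeness of the Nygaardian filtration on cohomology constrains the kernel of $u^j$ to vanish.

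The main obstacle I anticipate is precisely the injectivity step in part (ii): the direct cohomological estimate supplies only surjectivity on $H^i$, and the gap between $D^{\leq i-1}$ and $D^{\leq i-2}$ cannot be closed by dimension-counting alone. Bridging this will require carefully combining the filtered completeness of the Nygaardian filtration on the global cohomology with the Frobenius-isomorphism from part (i), effectively identifying the induced filtration on $H^i(E^{(1)})$ with its saturated Nygaardian version in the sense of \Cref{thm crystal to gauge}.
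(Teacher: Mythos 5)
Your approach to part (i) coincides in substance with the paper's: both reduce to a cohomological amplitude bound on the cone $C^{\geq j}$ of $\widetilde{\varphi}_E$ via the graded estimate $R\lambda_*\Gr^{j'}C^{\bullet} \in D^{[\max\{0,j'+1-b\}, n]}_{\coh}$ from the proof of \Cref{thm height of coh local}; the paper passes through \Cref{thm height of coh local}.\ref{thm height of coh local bound} and a Leray spectral sequence for the map, while you estimate $R\lambda_*C^{\geq j}$ directly, which is equivalent and arguably cleaner. Your Mittag--Leffler invocation for the lower bound is superfluous (for a complete decreasing filtration whose graded pieces all lie in $D^{\geq c}$, the completed object lies in $D^{\geq c}$ directly by the Milnor sequence, since $H^{c-1}$ and $H^{c-2}$ of all finite-stage quotients vanish) but harmless.

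For part (ii) your cone computation reproduces the paper's, arriving at $R\lambda_*(E^{(1)}/\Fil^j E^{(1)}) \in D^{[0,\min\{j-a-1,n\}]}_{\coh}(\mathcal{O}_{X_{\bar{A}}})$ and hence $R\Gamma(X_{\bar{A}}, \mathrm{Cone}(v^j)) \in D^{[0,i-1]}$ at the endpoint $j = a+(i-n)$. As you correctly point out, this forces $H^i(\mathrm{Cone}(v^j))=0$ (so $u^j$ is surjective) but leaves the single Leray term $H^n\bigl(X_{\bar{A}}, \mathcal{H}^{i-1-n}(\mathrm{Cone}(v^j))\bigr)$ — and hence $H^{i-1}(X_{\bar{A}},\mathrm{Cone}(v^j))$, which controls $\ker(u^j)$ — unconstrained. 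Let me stress that on a careful reading the paper's own argument has the same feature: the listed Leray terms are those with $p+q=i$ only, so the paper's computation establishes $H^i$ of the cone vanishes, i.e.~surjectivity, without addressing $H^{i-1}$. Your proposed repair of injectivity — via the commuting square, ``the $I$-torsionfree structure of the pushforward,'' and (i) in a shifted range — is too imprecise to evaluate and does not obviously close the gap: the ranges in (i) and (ii) are disjoint unless $i \geq b-a+2n$, and $R^if_{\Prism,*}\mathcal{E}$ need not be $\mathcal{I}$-torsionfree. What does hold unconditionally is surjectivity of $u^j$ at $j = a+\max\{0,i-n\}$, and this is all that \Cref{bound of Frob height of cohomology} actually uses: from $\varphi\circ u^j = \mathrm{can}\circ \varphi^j$ and surjectivity of $u^j$ one gets $\mathrm{Im}(\varphi) = \varphi\bigl(u^j(H^i(\Fil^j E^{(1)}))\bigr) \subseteq \mathrm{Im}\bigl(\mathcal{I}_\Prism^{\,j}\otimes R^if_{\Prism,*}\mathcal{E}\bigr)$, which is precisely the asserted upper bound on the image.
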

\begin{proof}
	We first notice that as the $F$-crystal $(\mathcal{E},\varphi_\mathcal{E})$ is $I$-torsionfree and of height $[a,b]$, the Frobenius map sends $\mathcal{E}^{(1)}$ into $I^a\mathcal{E}\simeq I^a\otimes \mathcal{E}\subset \mathcal{E}[1/I]$, and sends $\Fil^\bullet E^{(1)}$ into $I^a \otimes E$.
 So the diagram in the statement can be obtained by applying the functor $\mathrm{H}^i(X,-)$ 
 at the following diagram of complexes over $(X_{\overline{A}})_\mathrm{Zar}$: 
	\[
	\begin{tikzcd}
		R\lambda_* \Fil^j E^{(1)} \arrow[r, "v^j"] \arrow[d, "R\lambda_*\widetilde{\varphi}^j_E"'] & R\lambda_* E^{(1)} \arrow[d, "R\lambda_*\widetilde{\varphi}_E"] \\
		I^j\otimes R\lambda_* E \arrow[r] & R\lambda_* (I^a\otimes E)  \simeq  I^a \otimes R\lambda_* E.
	\end{tikzcd}
    \]
    To show item \ref{thm height of coh global upper}, we apply $\mathrm{H}^i(X,-)$ at the isomorphism of sheaves from \Cref{thm height of coh local}.\ref{thm height of coh local bound},
    which immediately implies that the map $\varphi^j$ is an isomorphism when $j \geq b+i$.
    It is then left to show that the map $\varphi^j$ is an isomorphism when $j \geq b+n$, and we assume now that $i \geq n$.
    Since both $R\lambda_* \Fil^\bullet E^{(1)}$ and $R\lambda_* E$  live in cohomological degree $[0,n]$ (\Cref{thm height of coh local}.\ref{thm height of coh local graded piece} and \ref{thm height of coh local with I-adic filtration}), the Leray spectral sequence for the map $\varphi^j=\mathrm{H}^i(X, R\lambda_*\widetilde{\varphi}_E^j)$ is computed by the following $n$-terms
    \[
    \mathrm{H}^{i-n}(X, R^n\lambda_* \widetilde{\varphi}_E^j), \ldots, \mathrm{H}^i(X, R^0\lambda_* \widetilde{\varphi}_E^j).
    \]
    Notice that by \Cref{thm height of coh local}.\ref{thm height of coh local bound}, the map $R^l\lambda_*\widetilde{\varphi}_E^j$ is an isomorphism when $j\geq b+l$.
    Thus the map $ \varphi^j$ is an isomorphism when $j \geq b+n$, as the number $l$ ranges from $0$ to $n$.
    
    For \ref{thm height of coh global lower}, the sheaf of complexes $\mathrm{Cone}(v^j)$ admits a decreasing filtration, which by \Cref{Higgs associated to F-gauge cohomology} satisfies
    \[
    \Gr^l(\mathrm{Cone}(v^j))=
    \begin{cases}
    	\Fil_l \bar{\mathcal{E}}\{l\},~ l \leq j-1,\\
    	0,~ l \geq j.
    \end{cases}
    \]
    Moreover, by \Cref{reduction of relative F-gauge that comes from F-crys} and \Cref{conjugated graded of HT complex}, each $\Fil_l \bar{\mathcal{E}}\{l\}$ lives in $ D^{[0, \min\{l-a,n\}]}_\mathrm{coh}(\mathcal{O}_X)$.
    So by the filtered completeness of $\mathrm{Cone}(v^j)$, the complex  $\mathrm{Cone}(v^j)$ itself lives in $D^{[0, \min\{ j-1-a,n\}]}_\mathrm{coh}(\mathcal{O}_X)$, which is zero when $j-1-a <0$, or equivalently $j \leq a$.
    This in particular implies that when $j\leq a$, the map $v^j$ and hence $u^j$ are always isomorphisms for all the integers $i$.
    To finish the proof of \ref{thm height of coh global lower}, we are left to consider the case when $i \geq n$.
    Using Leray spectral sequence again, the cone of the map $u^j=\mathrm{H}^i(v^j)$ is computed by the following terms
    \[
    \mathrm{H}^{i-n}(X, \mathcal{H}^n(\mathrm{Cone}(v^j)), \ldots, \mathrm{H}^n(X, \mathcal{H}^{i-n}(\mathrm{Cone}(v^j)),
    \]
    where we implicitly use the fact that $\mathrm{H}^l(X,\mathcal{H}^{i-l}(\mathrm{Cone}(v^j))=0$ for $l >n$.
    In particular, when $j \leq a+i-n$, as the complex $\mathrm{Cone}(v^j)$ lives in $D^{[0, \min\{ j-1-a,n\}]}_\mathrm{coh}(\mathcal{O}_X) \subset D^{[0, \min\{ i-n-1,n\}]}_\mathrm{coh}(\mathcal{O}_X)$, the terms computing $\mathrm{Cone}(u^j)$ above are all equal to zero.
    Hence the map $u^j$ is an isomorphism when $j\leq a+i-n$, under the assumption that $i \geq n$.
\end{proof}

Analogous to \cite[Cor.\ 15.5]{BS19}, we can extend the Verschiebung operator on individual prismatic cohomology sheaf to general coefficients.
\begin{corollary}
\label{Verschiebung}
Keep the same assumptions and notations as in \Cref{thm height of coh local}.
	Then for $i\in \mathbb{N}$, there is a natural map 
	\[
	\psi: I^{i+b}\otimes_A R^{\leq i} \lambda_* E \longrightarrow R^{\leq i} \lambda_* E^{(1)},
	\]
	such that its compositions with Frobenius morphism $\varphi: R^{\leq i} \lambda_* E^{(1)} \to R^{\leq i} \lambda_* E$ are canonical maps induced from inclusions $I^{i+b}\subset A$, namely
	\begin{align*}
			&\varphi\circ \psi \simeq \mathrm{can}:I^{i+b}\otimes R^{\leq i} \lambda_* E\longrightarrow  R^{\leq i} \lambda_* E;\\
			&\psi\circ\varphi \simeq \mathrm{can} :I^{i+b}\otimes R^{\leq i} \lambda_* E^{(1)} \longrightarrow R^{\leq i} \lambda_* E^{(1)}.
	\end{align*}
\end{corollary}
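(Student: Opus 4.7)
The plan is to use the isomorphism from \Cref{thm height of coh local}~(iii) to define $\psi$, and then to verify the two compatibility identities with the Frobenius $\varphi\colon R^{\leq i}\lambda_* E^{(1)} \to R^{\leq i}\lambda_* E$. By that theorem, the restriction of the filtered Frobenius $\widetilde{\varphi}_E$ gives, after applying $\tau^{\leq i}R\lambda_*$, an isomorphism
\[
\varphi_{R^{\leq i}\lambda_*}\colon R^{\leq i}\lambda_*\bigl(\Fil^{i+b}E^{(1)}\bigr) \xrightarrow{\ \simeq\ } I^{i+b}\otimes_A R^{\leq i}\lambda_* E,
\]
so we set
\[
\psi \colonequals \mathrm{can}\circ \varphi_{R^{\leq i}\lambda_*}^{-1}\colon I^{i+b}\otimes_A R^{\leq i}\lambda_* E \longrightarrow R^{\leq i}\lambda_* E^{(1)},
\]
where $\mathrm{can}$ is the forget-the-filtration arrow $R^{\leq i}\lambda_*\bigl(\Fil^{i+b}E^{(1)}\bigr) \to R^{\leq i}\lambda_* E^{(1)}$.

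For $\varphi\circ\psi\simeq\mathrm{can}$, we note that by the very definition of a relative $F$-gauge (\Cref{def rel F-gauge qrsp}), the composition $\Fil^{i+b}E^{(1)}\hookrightarrow E^{(1)}\xrightarrow{\widetilde{\varphi}_E} E$ factors through $I^{i+b}E\hookrightarrow E$, and the resulting factored map $\Fil^{i+b}E^{(1)}\to I^{i+b}E$ is exactly the one that becomes $\varphi_{R^{\leq i}\lambda_*}$ after $\tau^{\leq i}R\lambda_*$. Consequently $\varphi\circ\psi$ is the canonical inclusion coming from $I^{i+b}\subset A$.

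For $\psi\circ\varphi\simeq\mathrm{can}$, the key ingredient is a multiplication map. By \Cref{equivalent def rel F-gauge qrsp} one has $I\subseteq\Fil^1_N\Prism^{(1)}$, hence by multiplicativity $I^{i+b}\subseteq\Fil^{i+b}_N\Prism^{(1)}$, producing a canonical $A$-linear map
\[
\mu\colon I^{i+b}\otimes_A E^{(1)} \longrightarrow \Fil^{i+b}E^{(1)}.
\]
The point is that the square
\[
\xymatrix{
I^{i+b}\otimes_A E^{(1)} \ar[r]^-{\mathrm{id}\otimes \widetilde{\varphi}_E} \ar[d]_{\mu} & I^{i+b}\otimes_A E \ar@{=}[d] \\
\Fil^{i+b}E^{(1)} \ar[r]_-{\widetilde{\varphi}_E} & I^{i+b}E
}
\]
commutes, since both composites send $f\otimes y$ to $f\cdot\widetilde{\varphi}_E(y)$; this uses that $\widetilde{\varphi}_E\colon E^{(1)}=\varphi_A^* E \to E$ is $A$-linear, which in turn follows from $\varphi_{\Prism_S}$ being the identity on the structural image of $A$ in $\Prism^{(1)}$. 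Applying $\tau^{\leq i}R\lambda_*$ and inverting the bottom arrow identifies $\varphi_{R^{\leq i}\lambda_*}^{-1}\circ(\mathrm{id}\otimes\varphi)$ with $R^{\leq i}\lambda_*(\mu)$; composing once more with $\mathrm{can}$ recovers the canonical inclusion. The argument is formal once the isomorphism $\varphi_{R^{\leq i}\lambda_*}$ is in hand; the main obstacle is purely bookkeeping around the Frobenius twist $E^{(1)}$ and the multiplicative structure of the Nygaard filtration.
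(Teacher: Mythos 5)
Your proposal is correct and follows essentially the same route as the paper: in both cases, $\psi$ is defined as $\mathrm{can}\circ\varphi_{R^{\leq i}\lambda_*}^{-1}$ using the truncated isomorphism from \Cref{thm height of coh local}~(iii), and the two compatibility identities are checked by the same commuting diagrams. The paper is terser about the second identity $\psi\circ\varphi\simeq\mathrm{can}$, disposing of it with ``the observation that the composition $I^{i+b}\otimes C^{(1)}\xrightarrow{\varphi}I^{i+b}\otimes C\xrightarrow{\varphi^{-1}_{R^{\leq i}\lambda_*}}\Fil^{i+b}C^{(1)}$ is the canonical map,'' whereas you spell that observation out by explicitly introducing the multiplication map $\mu\colon I^{i+b}\otimes_A E^{(1)}\to\Fil^{i+b}E^{(1)}$ (via $I\subseteq\Fil^1_N\Prism^{(1)}$) and verifying the relevant square commutes using $A$-linearity of $\widetilde{\varphi}_E$. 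This is not a genuinely different argument, but it makes the ``observation'' that the paper takes for granted into an actual verification, which is arguably clearer; you should, however, note that the containment $\mu(I^{i+b}\otimes E^{(1)})\subseteq\Fil^{i+b}E^{(1)}$ is really read off from the saturated filtration characterization $\Fil^jE^{(1)}=\widetilde{\varphi}_E^{-1}(I^jE)$ together with $A$-linearity (so that $\widetilde{\varphi}_E(f\cdot y)=f\cdot\widetilde{\varphi}_E(y)\in I^{i+b}E$), rather than from the weaker general module inequality $\Fil^{i+b}_N\cdot\Fil^jE^{(1)}\subseteq\Fil^{i+b+j}E^{(1)}$, which by itself would not suffice when $a<0$.
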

\begin{proof}
	Let $C$ be the sheaf of complexes $R^{\leq i} \lambda_* E$, and let $C^{(1)}$ be its $\varphi_A$-linear twist, which by flatness is $R^{\leq i} \lambda_* E^{(1)}$.
	Similarly we let $\Fil^j C^{(1)}$ be the complex $R^{\leq i} \lambda_* (\Fil^j E^{(1)})$.
	Define the map $\psi$ by the following composition
	\[
	\begin{tikzcd}
		C\otimes I^{i+b} \arrow[rr, "\varphi^{-1}_{R^{\leq i} \lambda_*}", "\simeq"'] && \Fil^{i+b} C^{(1)} \arrow[r, "\mathrm{can}"] & C^{(1)}.
	\end{tikzcd}
    \]
    The first  composition formula then follows from the following enlarged commutative diagram
	\[
	\begin{tikzcd}
		 C\otimes I^{i+b} \arrow[rr, "\varphi^{-1}_{R^{\leq i} \lambda_*}", "\simeq"'] \arrow[rrd, "\psi"'] && \Fil^{i+b} C^{(1)} \arrow[d, "\mathrm{can}"] \arrow[r, "\varphi", "\simeq"'] & I^{i+b}\otimes C\arrow[d, "\mathrm{can}"] \\ 
		&& C^{(1)} \arrow[r, "\varphi"] & C.
	\end{tikzcd}
    \]
    For the second formula, it follows from the observation that the composition below is the canonical map
    \[
    \begin{tikzcd}
    	I^{i+b}\otimes C^{(1)} \arrow[r, "\varphi"] & I^{i+b}\otimes C  \arrow[rr, "\varphi^{-1}_{R^{\leq i} \lambda_*}", "\simeq"'] && \Fil^{i+b} C^{(1)}.
    \end{tikzcd}
    \]
\end{proof}

Recall that evaluating at prisms whose reduction is $p$-completely flat is $t$-exact,
see \Cref{Evaluating at flat prism is t-exact}.
By taking all the bounded prisms $(A,I)\in Y_\Prism$ such that $\Spf(\overline{A})$ is $p$-completely flat over $Y$,
the above implies the following result on individual relative prismatic cohomology crystal.

\begin{corollary}
\label{bound of Frob height of cohomology}
Let $f:X\to Y$ be a smooth proper morphism of smooth $p$-adic formal schemes over $\mathcal{O}_K$, and assume $(\mathcal{E},\varphi_{\mathcal{E}})\in \FC^\coh(X)$ is $I$-torsionfree of height $[a,b]$.
For $i\in \mathbb{N}$, the $R^i f_{\Prism, *} \mathcal{E}$
is a coherent prismatic $F$-crystal over $Y_\Prism$, such that the image of its Frobenius morphism $\varphi$ within $\mathcal{I}_\Prism^a\otimes R^i f_{\Prism, *} \mathcal{E}$ satisfies the inclusions:
\[
\mathrm{Im}(\mathcal{I}_\Prism^{b+\min\{i,n\}}\otimes R^i f_{\Prism, *} \mathcal{E}) \subseteq \mathrm{Im}(\varphi) \subseteq \mathrm{Im}(\mathcal{I}_\Prism^{a+\max\{0 ,i-n\}}\otimes R^i f_{\Prism, *} \mathcal{E}).
\]
In particular, the $I$-torsionfree quotient of $R^i f_{\Prism, *} \mathcal{E}$ has Frobenius height in
$[a+\max\{0 ,i-n\}, b+\min\{i,n\}]$.
\end{corollary}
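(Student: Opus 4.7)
The plan is to reduce the global statement to the local assertions in \Cref{thm height of coh global}, which were already formulated per prism $(A,I)\in Y_\Prism$ with $\Spf(\bar{A})\to Y$ being $p$-completely flat. The key inputs are \Cref{BK covering property} (existence of Breuil--Kisin covers of $Y_\Prism$ with flat reduction), \Cref{Evaluating at flat prism is t-exact} ($t$-exactness of evaluation on such prisms), and \Cref{thm height of coh global} itself.

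First, I would identify the value of $R^i f_{\Prism,*}\mathcal{E}$ on a Breuil--Kisin prism $(A,I)\in Y_\Prism$. Since $\Spf(\bar{A})\hookrightarrow Y$ is an open immersion and $f$ is smooth proper, the slice $X_\Prism\times_{Y_\Prism}(A,I)$ is the relative prismatic site $(X_{\bar{A}}/A)_\Prism$, so $(R^i f_{\Prism,*}\mathcal{E})(A)=\mathrm{H}^i((X_{\bar{A}}/A)_\Prism,\mathcal{E})$. Via the quasi-syntomic sheaf property of (relative) prismatic cohomology, this agrees with $\mathrm{H}^i((X_{\bar{A}}/\bar{A})_\qrsp,E)$ (respectively $\mathrm{H}^i((X_{\bar{A}}/\bar{A})_\qrsp,E^{(1)})$ after applying Frobenius twist), which is precisely the object controlled by \Cref{thm height of coh global} applied to $(A,I)$ and $E=BC_{X,X_{\bar A}/A}(\Pi_X(\mathcal{E}))$.

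Next I would establish that $R^i f_{\Prism,*}\mathcal{E}$ is a coherent prismatic $F$-crystal. By base change for prismatic cohomology along maps of prisms (cf.~\cite[\S 4.2]{BS21}, \cite[Thm.~6.1]{GR22}) applied to the perfect complex $\mathcal{E}$ and the smooth proper $f$, the total derived pushforward $Rf_{\Prism,*}\mathcal{E}$ is an $F$-crystal in perfect complexes on $Y_\Prism$. Its value on a Breuil--Kisin prism $(A,I)$ is a perfect $A$-complex whose cohomology modules are therefore finitely generated; by \Cref{value of t-inequality crystal on flat object}, these individual $\mathrm{H}^i$'s assemble into objects of the heart of the standard $t$-structure on $\fC^\perf(Y)$, which is exactly the abelian category $\fC^\coh(Y)$ of coherent $F$-crystals. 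Thus $R^i f_{\Prism,*}\mathcal{E}\in\fC^\coh(Y)$ is a well-defined coherent prismatic $F$-crystal.

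Finally, I would harvest the Frobenius height bound from \Cref{thm height of coh global}, evaluated on each Breuil--Kisin prism. For $j=b+\min\{i,n\}$, part (i) of that theorem says $\varphi^j$ is an isomorphism, hence the image of Frobenius on the $i$-th cohomology contains $\mathcal{I}_\Prism^{b+\min\{i,n\}}\otimes R^i f_{\Prism,*}\mathcal{E}$. For $j=a+\max\{0,i-n\}$, part (ii) says $u^j$ is an isomorphism; chasing the commutative diagram in \Cref{thm height of coh global}, this forces the Frobenius to factor through the submodule $\mathcal{I}_\Prism^{a+\max\{0,i-n\}}\otimes R^i f_{\Prism,*}\mathcal{E}$. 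The inclusion of images is an $\mathcal{O}_\Prism$-submodule condition, and since Breuil--Kisin prisms cover $Y_\Prism$ (\Cref{BK covering property}), checking it on such a covering family suffices. The bound on the Frobenius height of the $\mathcal{I}$-torsionfree quotient then follows directly.

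The main obstacle in this plan is not the Frobenius height estimate itself (which is local and already packaged in \Cref{thm height of coh global}) but rather verifying that $R^i f_{\Prism,*}\mathcal{E}$ is genuinely a coherent crystal, as opposed to merely a presheaf of finitely generated modules. The delicate point is the crystal (base change) condition along arbitrary transition maps of Breuil--Kisin prisms appearing in the Čech nerve; this ultimately relies on flatness of the transition maps (\Cref{BK is regular with flat Frob} and \Cref{BK covering property}(1)--(3)), which guarantees that taking the $i$-th cohomology commutes with base change on the nose, and then on the crystal property of the whole complex $Rf_{\Prism,*}\mathcal{E}$.
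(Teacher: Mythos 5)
Your proof is correct and follows the same route as the paper: evaluate on Breuil--Kisin prisms (which cover $Y_\Prism$ and have $p$-completely flat reduction), use $t$-exactness of such evaluations (\Cref{Evaluating at flat prism is t-exact}) to reduce the coherence claim and height bounds to the local statement in \Cref{thm height of coh global}, and chase the commutative diagram there to read off the two inclusions. One point you gloss over that the paper flags explicitly: to make sense of the Frobenius morphism $\varphi$ on $R^i f_{\Prism,*}\mathcal{E}$ as a map out of $\varphi_{Y_\Prism}^* R^i f_{\Prism,*}\mathcal{E}$, one must identify the unfolded cohomology $\mathrm{H}^i((X_{\bar{A}}/\bar{A})_\qrsp, E^{(1)})$ with the Frobenius twist $\big(R^i f_{\Prism,*}\mathcal{E}\big)(A)\widehat{\otimes}_{A,\varphi_A} A$; this is exactly \Cref{thm: filtered completeness}.(2), and your phrase ``after applying Frobenius twist'' should be unwound to an explicit citation of it (the flatness of $\varphi_A$ on Breuil--Kisin prisms from \Cref{BK is regular with flat Frob} then lets this identification pass to individual cohomology groups).
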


Here we implicitly use \Cref{thm: filtered completeness}.(2) to identify $R^if_{\Prism,*} \mathcal{E}^{(1)}$ with $\varphi_{Y_\Prism}^* R^if_{\Prism,*} \mathcal{E}$ for the Frobenius morphism of $R^if_{\Prism,*} \mathcal{E}$.

The following lemma says that passing to $I$-power torsions or $I$-torsionfree quotient
preserves $F$-crystals.

\begin{lemma}
\label{DLMS lemma}
Let $X$ be a smooth $p$-adic formal schemes over $\mathcal{O}_K$, 
and let $(\mathcal{E},\varphi_{\mathcal{E}}) \in \FC^\coh(X)$ be a coherent $F$-crystal.
Then $\mathcal{F} \coloneqq \mathcal{E}[I^{\infty}]$ is $(I, p)$-power-torsion,
hence $\varphi_{X_\Prism}^* \mathcal{F}[1/I] = 0 = \mathcal{F}[1/I]$.
In particular, we have a coherent $F$-crystal $(\mathcal{F}, 0)$, and
the $\varphi_{\mathcal{E}}$ induces a Frobenius isogeny
on $\mathcal{E}/\mathcal{F}$ making it an $I$-torsionfree coherent $F$-crystal.
\end{lemma}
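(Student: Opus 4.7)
The plan is to reduce everything to the local picture on a covering family of Breuil--Kisin prisms, where Noetherianity makes $I$-power torsion bounded, and then assemble these local submodules into a global sub-crystal.

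First I would use \Cref{BK covering property} to pick a covering family of BK prisms $\{(A_\lambda, I_\lambda = (d_\lambda))\}_{\lambda \in \Lambda}$ for $X_\Prism$. By \Cref{BK is regular with flat Frob} each $A_\lambda$ is a Noetherian $(p, I)$-complete ring with $d_\lambda$ a nonzerodivisor, and by \Cref{def coherent crys} the value $M_\lambda \coloneqq \mathcal{E}(A_\lambda)$ is a finitely generated $A_\lambda$-module sitting in degree $0$. Hence the submodule $M_\lambda[d_\lambda^\infty]$ is finitely generated, so it equals $M_\lambda[d_\lambda^{N_\lambda}]$ for some $N_\lambda \geq 0$. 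This already yields the $(I, p)$-power-torsion claim --- each element of $\mathcal{F}(A_\lambda)$ is killed by a power of $I_\lambda$, hence by a power of $(I_\lambda, p)$ --- and in particular $\mathcal{F}[1/I] = 0 = \varphi^*_{X_\Prism}\mathcal{F}[1/I]$.

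Next I would verify that these local $I^\infty$-torsion submodules assemble into a coherent sub-$F$-crystal $\mathcal{F} \subset \mathcal{E}$. The crucial point is that for any $(p, I)$-completely flat map $(A, I) \to (B, IB)$ of prisms, the formation $M \mapsto M[I^\infty] = \ker(M \to M[1/I])$ commutes with base change, since $(-)[1/I]$ commutes with tensor products and flatness preserves kernels; boundedness of the torsion ensures classical and derived completions coincide. Combined with \Cref{BK covering property}(3), which guarantees that the transitions in the \v Cech nerve of the BK cover are $(p, I)$-completely flat, this gives a descent datum defining a sub-crystal of $\mathcal{E}$ whose values on BK prisms are finitely generated over Noetherian rings, so coherence in the sense of \Cref{def coherent crys} is automatic. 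Endowing $\mathcal{F}$ with the zero Frobenius datum is tautologically an isogeny since both sides of the prospective isomorphism $\varphi^*_{X_\Prism}\mathcal{F}[1/I] \to \mathcal{F}[1/I]$ vanish, producing the claimed coherent $F$-crystal $(\mathcal{F}, 0)$.

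Finally, $\mathcal{E}/\mathcal{F}$ is coherent as a cokernel in the heart of the standard $t$-structure, and is $I$-torsionfree on BK prisms by construction (the quotient $M_\lambda/M_\lambda[d_\lambda^\infty]$ has no $d_\lambda$-torsion). The short exact sequence $0 \to \mathcal{F} \to \mathcal{E} \to \mathcal{E}/\mathcal{F} \to 0$ becomes an isomorphism after inverting $\mathcal{I}_\Prism$ or after applying $\varphi^*_{X_\Prism}$ and inverting $\mathcal{I}_\Prism$, so $\varphi_\mathcal{E}$ descends uniquely to a Frobenius isogeny on $\mathcal{E}/\mathcal{F}$. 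The one mildly delicate point is the base-change compatibility of $I^\infty$-torsion in the $(p,I)$-complete setting, but this is controlled by the fact that the torsion submodule is annihilated by a fixed power of $I_\lambda$, hence is automatically $(p,I)$-complete.
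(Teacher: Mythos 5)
There is a genuine gap in your first paragraph, and it invalidates the rest. You claim that $\mathcal{F}(A_\lambda)$ being killed by a power of $I_\lambda$ implies it is killed by a power of $(I_\lambda, p)$. The implication goes the wrong way: since $(I_\lambda, p) \supset I_\lambda$, being killed by $(I_\lambda, p)^K$ is a \emph{stronger} condition than being killed by $I_\lambda^K$. In particular, $(I_\lambda, p)$-power torsion forces $p^K \mathcal{F}(A_\lambda) = 0$ for some $K$, which you have not established --- and which is precisely the nontrivial content of the lemma. The downstream consequence you draw, $\varphi^*_{X_\Prism}\mathcal{F}[1/I] = 0$, genuinely requires the $p$-power torsion and fails for $I$-power torsion alone: if $M$ is an $A$-module killed by $I^N$, then $\varphi_A^* M$ is killed by $\varphi_A(I)^N$, and $\varphi_A(I)$ is not contained in any power of $I$. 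Concretely, with $A = \mathbb{Z}_p[[u]]$, $\delta(u) = 0$, $I = (u-p)$, and $M = A/I \cong \mathbb{Z}_p$, one computes $\varphi_A^* M = A/(u^p - p) \cong \mathbb{Z}_p[p^{1/p}]$, a domain in which $u - p = p^{1/p} - p$ is nonzero, so $\varphi_A^* M[1/I] \neq 0$. What does work: if $M$ is killed by $I^N$ and by $p^m$, then writing $\varphi(d) = d^p + p\cdot(\text{unit})$ for a generator $d$ of $I$ and iterating shows $\varphi_A^* M$ is killed by $d^{mpN}$.

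The missing ingredient is exactly what the paper supplies by citing \cite[Proposition 4.13]{DLMS}: $\mathcal{E}(A)[1/p]$ is a finite projective $A[1/p]$-module, hence torsionfree over the domain $A[1/p]$, so $\mathcal{F}(A)[1/p] = 0$; Noetherianity of $A$ then gives $p^m\mathcal{F}(A) = 0$ for some $m$, which combined with $I$-power torsion yields the $(I,p)$-power torsion claim. Your descent and gluing discussion in the second and third paragraphs is not unreasonable in isolation, but it is both more elaborate than the paper needs and, as it stands, built on a false premise.
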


\begin{proof}
On Breuil--Kisin prisms $(A, I) \in X_{\Prism}$, we have
$\mathcal{F}(A) = \mathcal{E}(A)[I^{\infty}]$.
It suffices to know that $\mathcal{F}(A)$ is $p$-power torsion.
This follows from \cite[Proposition 4.13]{DLMS}: the authors showed that $\mathcal{E}(A)[p^{-1}]$
is a finite projective $A[p^{-1}]$-module, hence $\mathcal{F}(A)[p^{-1}] = 0$.
\end{proof}

Next we show the derived pushforward of $I$-power torsion prismatic $F$-crystals
will have isogenous Frobenius, for trivial reasons.

\begin{proposition}
\label{pushforward of I-power torsion F-crystal}
Let $f \colon X\to Y$ be a qcqs
smooth morphism of smooth $p$-adic formal schemes over $\mathcal{O}_K$, 
and let $(\mathcal{E},\varphi_{\mathcal{E}}) \in \FC^\coh(X)$ be an $I^{\infty}$-torsion $F$-crystal
on $X_{\Prism}$.
Then both $Rf_{\Prism, *} \mathcal{E}[1/I] = 0$ and
$\varphi_{Y_{\Prism}}^*Rf_{\Prism, *} \mathcal{E}[1/I] = 0$.
In particular, we have $(Rf_{\Prism, *} \mathcal{E}, Rf_*(\varphi_{\mathcal{E}})) \in \FC^{\perf}(Y)$.
\end{proposition}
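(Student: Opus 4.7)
The plan is to leverage the $I^{\infty}$-torsion hypothesis, together with \Cref{DLMS lemma}, to upgrade $\mathcal{E}$ to a $(p,I)$-power torsion crystal, and then to reduce both vanishing statements to a simple nilpotence computation over Breuil--Kisin prisms.

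First, since $\mathcal{E} = \mathcal{E}[I^{\infty}]$, applying \Cref{DLMS lemma} to $\mathcal{E}$ itself yields that $\mathcal{E}$ is simultaneously $p^{\infty}$-torsion. Hence on any Breuil--Kisin prism $(A, I = (d))$ in $X_{\Prism}$, the value $\mathcal{E}(A)$ is a finitely generated $A$-module killed by some $(p^{m}, d^{n})$. To check the first vanishing $Rf_{\Prism, *}\mathcal{E}[1/I] = 0$, one works locally on $Y$: picking a Breuil--Kisin cover of $Y_{\Prism}$ as in \Cref{BK covering property}, the value $Rf_{\Prism, *}\mathcal{E}(A)$ is computed by relative prismatic cohomology of $X_{A}/A$ with coefficients in $\mathcal{E}|_{X_{A}}$. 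Quasi-compactness of $X_{A}$ yields a uniform $(p^{m}, d^{n})$-annihilation on a finite Breuil--Kisin cover of $X_{A}$, hence on the pushforward itself, which therefore dies after inverting $d$.

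For the Frobenius-twisted statement, the pullback $\varphi_{Y_{\Prism}}^{*} Rf_{\Prism, *}\mathcal{E}(A) \cong Rf_{\Prism, *}\mathcal{E}(A) \otimes_{A, \varphi_{A}} A$ is annihilated by $\varphi(p^{m}) = p^{m}$ and $\varphi(d)^{n}$. The crux is to show that $d$ still acts nilpotently on this module. Since $(A, I)$ is a prism, $\delta(d)$ is a unit, and $\varphi(d) = d^{p} + p\delta(d) \equiv d^{p} \pmod{p}$; thus in $A/(p, \varphi(d)) = A/(p, d^{p})$ the element $d$ is nilpotent. The canonical surjection $A/(p^{m}, \varphi(d)^{n}) \twoheadrightarrow A/(p, \varphi(d))$ has nilpotent kernel, so $d$ remains nilpotent in $A/(p^{m}, \varphi(d)^{n})$, giving $d^{N} \in (p^{m}, \varphi(d)^{n})$ for some $N$. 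Hence the Frobenius twist of $Rf_{\Prism, *}\mathcal{E}(A)$ also dies after inverting $d$, and both sides of the candidate Frobenius isogeny become zero after inverting $I$.

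Granting the two vanishing statements, the isogeny condition for an $F$-crystal structure on $(Rf_{\Prism, *}\mathcal{E}, Rf_{*}\varphi_{\mathcal{E}})$ is trivially satisfied. Perfectness on a Breuil--Kisin prism follows from the fact that the pushforward is a bounded complex of finitely generated modules over the regular Noetherian ring $A$ (\Cref{BK is regular with flat Frob}), using the $(p^{m}, d^{n})$-annihilation to reduce the relative prismatic cohomology to a problem over a smooth proper morphism of classical schemes. The only non-formal step is the radical calculation in the preceding paragraph; the rest is bookkeeping.
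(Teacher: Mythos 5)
Your proof is correct, but the crucial second vanishing, $\varphi_{Y_\Prism}^* Rf_{\Prism,*}\mathcal{E}[1/I] = 0$, is established by a genuinely different argument from the paper's. The paper reduces, after choosing a Breuil--Kisin prism $(A,I)$ covering $Y_\Prism$ and a relative Breuil--Kisin prism $(B, IB)$ covering $(X_{\bar{A}}/A)_\Prism$ with faithfully flat relative Frobenius, to the single statement $\mathcal{E}(B)\widehat{\otimes}_{B,\varphi_B} B[1/I]=0$, which is read off directly from the Frobenius isogeny $\varphi_{\mathcal{E}}$. You instead feed the $F$-structure in indirectly through \Cref{DLMS lemma} to upgrade $I^\infty$-torsion to bounded $(p,I)$-power torsion, obtain a uniform $(p^m, d^n)$-annihilation of $Rf_{\Prism,*}\mathcal{E}(A)$ from quasi-compactness, and then use the $\delta$-ring identity $\varphi(d)=d^p+p\delta(d)$ to show $d$ is nilpotent modulo $(p^m,\varphi(d)^n)$, so the $\varphi_A$-twist is again $d$-power torsion. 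Your route trades the paper's faithfully flat descent along the relative Frobenius for a one-line radical computation and treats both vanishings by the same mechanism, which some will find cleaner; the paper's route avoids \Cref{DLMS lemma} and the need for a uniform annihilator. Two small remarks: the unitness of $\delta(d)$ is not actually used in your nilpotence step, since $\varphi(d)\equiv d^p \pmod p$ is a $\delta$-ring identity; and in the final paragraph you slip in a ``smooth proper morphism of classical schemes,'' silently strengthening the stated qcqs-smooth hypothesis --- the paper disposes of perfectness by citing \cite[Prop.\ 5.11]{GR22}, and it is cleanest to do the same rather than sketch an independent argument.
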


\begin{proof}
The claim about $Rf_{\Prism, *} \mathcal{E}[1/I] = 0$ follows immediately
from the assumption that $E[1/I] = 0$ and the map being qcqs.
Below we show the other claim. By standard argument, we are immediately
reduced to the case of $Y = \Spf(R_0)$ is an affine and
$X = \Spf(R)$ is also an affine which moreover admits an \'{e}tale chart:
Namely we may assume that there is an \'{e}tale map $R_0\langle T_i^{\pm 1} \rangle \to R$.

Mimicking \cite[Example 3.4]{DLMS}, we see that one can find a Breuil--Kisin prism
$(A, I)$ covering $Y_{\Prism}$ as well as a Breuil--Kisin prism $(B, J)$
covering the relative prismatic site $(X/A)_{\Prism}$, and moreover the relative Frobenius
$\varphi_{B/A} \colon B \widehat{\otimes}_{A, \varphi_A} A \to B$
is faithfully flat.
Similar to \Cref{BK covering property}, the object $(B, J = IB)$ is weakly final in $(X/A)_{\Prism}$,
and its $(i+1)$-fold self product in $(X/A)_{\Prism}$ exist for all $i \in \mathbb{N}$
and are all given by affine objects denoted as $(B^{(i)}, IB^{(i)})$.
With the above notation, the vanishing we are after translates\footnote{To see this translation,
we refer readers to the discussion around \cite[4.17-4.18 and their footnote 10.]{BS19}} to:
\[
\lim_{\bullet \in \Delta} \mathcal{E}(B^{(\bullet)}, IB^{(\bullet)})
\widehat{\otimes}_{A, \varphi_A} A[1/I] = 0.
\]
To that end, it suffices to show $\mathcal{E}(B^{(\bullet)}, IB^{(\bullet)})
\widehat{\otimes}_{A, \varphi_A} A[1/I] = 0$,
and in fact it suffices to show this when $\bullet = 0$
(as the latter ones are base changed from this case).
Finally, since the relative Frobenius on $B = B^{(0)}$ is faithfully flat,
we are reduced to knowing $\mathcal{E}(B, IB)
\widehat{\otimes}_{B, \varphi_B} B[1/I] = 0$.
But this follows from the Frobenius isogeny property of $\mathcal{E}$:
\[
\mathcal{E}(B, IB)
\widehat{\otimes}_{B, \varphi_B} B[1/I] \xrightarrow[\varphi_{\mathcal{E}}]{\cong}
\mathcal{E}(B, IB) [1/I]
\]
and the assumption that $\mathcal{E}$ is $I^{\infty}$-torsion
(so the latter module in the above equation is $0$).

As for the last sentence, we just note that
(see for instance \cite[Proposition 5.11]{GR22})
$Rf_{\Prism, *}\mathcal{E}$ is a prismatic crystal in perfect complex over $Y$.
\end{proof}

Now we are ready to generalize the ``Frobenius isogeny property'' and ``weak
\'{e}tale comparison'' in \cite{GR22}.

\begin{theorem}
\label{direct image vs etale realization}
Let $f \colon X \to Y$ be a smooth proper morphism
between smooth formal schemes over $\Spf(\mathcal{O}_K)$, then derived pushforward
of $F$-crystals in perfect complexes on $X$ are $F$-crystals in perfect complexes on $Y$,
moreover the following diagram commutes functorially:
\[
\xymatrix{
\FC^{\perf}(X) \ar[d]_{Rf_{\Prism, *}} \ar[r]^{T(-)} & 
D^{(b)}_{lisse}(X_{\eta}, \mathbb{Z}_p) \ar[d]^{Rf_{\eta, *}} \\
\FC^{\perf}(Y) \ar[r]^{T(-)} & 
D^{(b)}_{lisse}(Y_{\eta}, \mathbb{Z}_p).
}
\]
\end{theorem}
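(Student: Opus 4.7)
The plan proceeds in two stages: first show that $Rf_{\Prism,*}$ carries $\FC^{\perf}(X)$ into $\FC^{\perf}(Y)$, then verify compatibility with the étale realization functor. Perfectness of the underlying pushforward complex on $Y_{\Prism}$ is already known from \cite[Proposition 5.11]{GR22} (and is in fact invoked in the proof of \Cref{pushforward of I-power torsion F-crystal}), so the first nontrivial point is the Frobenius isogeny property. Using the bounded standard $t$-structure on $\FC^{\perf}(X)$ from \Cref{def coherent crys} together with the finite cohomological amplitude of $Rf_{\Prism,*}$ (from properness), the question reduces via the distinguished triangles of the $t$-structure truncations to the case where $\mathcal{E}$ lies in the heart, i.e.\ is a coherent $F$-crystal.

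For coherent $\mathcal{E}$, apply \Cref{DLMS lemma} to obtain a short exact sequence $0 \to \mathcal{F} \to \mathcal{E} \to \mathcal{E}/\mathcal{F} \to 0$ in $\FC^{\coh}(X)$ with $\mathcal{F}$ being $(p,I)$-power torsion carrying zero Frobenius and $\mathcal{E}/\mathcal{F}$ being $I$-torsionfree. Applying $Rf_{\Prism,*}$ yields a fiber sequence of perfect complexes on $Y_{\Prism}$, and it suffices to verify the Frobenius isogeny condition on each piece. For the torsion summand, \Cref{pushforward of I-power torsion F-crystal} shows directly that $Rf_{\Prism,*}\mathcal{F}$ and its Frobenius twist both vanish after inverting $\mathcal{I}$, so the Frobenius linearization is tautologically an isomorphism. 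For the $I$-torsionfree summand, \Cref{bound of Frob height of cohomology} bounds the Frobenius height of every $R^i f_{\Prism,*}(\mathcal{E}/\mathcal{F})$ individually, so each contributes a Frobenius isogeny; the bounded spectral sequence abutting to $Rf_{\Prism,*}(\mathcal{E}/\mathcal{F})$ then assembles these into the isogeny property for the entire complex. Combining via the distinguished triangle yields the desired $F$-crystal structure on $Rf_{\Prism,*}\mathcal{E}$.

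For the commutativity of the diagram, recall from the discussion before \Cref{etale realization is t-exact} that $T(-)$ factors as first taking the Laurent $F$-crystal $-\otimes_{\mathcal{O}_\Prism} \mathcal{O}_\Prism[1/\mathcal{I}_\Prism]^{\wedge}_p$ and then passing to $\varphi$-fixed points, with the target identified with $D^{(b)}_{lisse}(-_\eta,\mathbb{Z}_p)$ via \cite[Corollary 3.7]{BS21}. By \cite[Theorem 6.1]{GR22}, the formation of Laurent $F$-crystals in perfect complexes is compatible with derived pushforward, i.e.\
\[
Rf_{\Prism,*}\bigl(\mathcal{E}\otimes_{\mathcal{O}_{\Prism,X}}\mathcal{O}_{\Prism,X}[1/\mathcal{I}]^{\wedge}_p\bigr) \simeq (Rf_{\Prism,*}\mathcal{E})\otimes_{\mathcal{O}_{\Prism,Y}}\mathcal{O}_{\Prism,Y}[1/\mathcal{I}]^{\wedge}_p,
\]
and its \'etale realization agrees with $Rf_{\eta,*}$ applied to $T(\mathcal{E})$. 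Since $\varphi$-fixed points are a limit, they commute with the $\infty$-categorical $Rf_{\Prism,*}$, and functoriality in $\mathcal{E}$ is inherited from each constituent construction.

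The main obstacle is controlling the Frobenius isogeny property uniformly across cohomological degrees when the coefficient has both $I$-torsion and nontrivial Frobenius height. The torsion is handled crisply by \Cref{pushforward of I-power torsion F-crystal}, where the Frobenius isogeny on $\mathcal{E}$ itself forces the entire pushforward to vanish after inverting $\mathcal{I}$; the $I$-torsionfree case is precisely where the quantitative height bound from \Cref{main} (specifically \Cref{bound of Frob height of cohomology}) becomes essential. Without such a bound, there would be no a priori reason the Frobenius isogeny would propagate through the spectral sequence.
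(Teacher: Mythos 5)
Your proof follows the paper's argument essentially verbatim: reduce to the coherent heart, split via \Cref{DLMS lemma} into an $\mathcal{I}$-power-torsion piece (handled by \Cref{pushforward of I-power torsion F-crystal}) and an $I$-torsionfree piece (handled by \Cref{Verschiebung}, equivalently the height bound in \Cref{bound of Frob height of cohomology}), and then deduce the commutative square from the weak \'etale comparison of \cite[Theorem 6.1]{GR22} and the description of $T(-)$ from \cite[Corollary 3.7]{BS21}. The only difference is cosmetic (the paper cites \Cref{Verschiebung} where you cite \Cref{bound of Frob height of cohomology}, and the paper is a bit more explicit in assembling the \'etale comparison over the pro-\'etale site of $Y_\eta$ from its values at perfect prisms), so the proposal is correct.
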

\begin{proof}
Let $(\mathcal{E},\varphi_\mathcal{E})$ be in $\FC^\perf(X)$.
It is known (see for instance \cite[Proposition 5.11]{GR22})
that $Rf_{\Prism, *}\mathcal{E}$ is a prismatic crystal in perfect complexes over $Y$.
To see the induced Frobenius on $Rf_{\Prism, *}\mathcal{E}$ must be an isogeny,
we first reduce ourselves to the case where $(\mathcal{E},\varphi_{\mathcal{E}}) \in \FC^\coh(X)$.
By considering the $I^{\infty}$-torsions of $\mathcal{E}$ and its $I$-torsionfree
quotient, which are coherent $F$-crystals by \Cref{DLMS lemma},
we are done thanks to \Cref{pushforward of I-power torsion F-crystal} and
\Cref{Verschiebung} respectively.

To see the commutative diagram, we recall by construction in \cite[Cor.\ 3.7]{BS21} that the \'etale realization $T(Rf_{\Prism, *} \mathcal{E})$ is isomorphic to the sheaf of complexes over $Y_{\eta,\mathrm{pro\acute{e}t}}$, sending an affinoid perfectoid Huber pair $(S[1/p],S)$ over $Y_\eta$ onto the following complexes
	\[
	\mathrm{fib}\left(\begin{tikzcd}
		(Rf_{\Prism, *} \mathcal{E}[1/I]^\wedge_p)(\mathrm{A}_{\inf}(S),I) \arrow[rr, "\varphi-\mathrm{id}"] && (Rf_{\Prism, *} \mathcal{E}[1/I]^\wedge_p)(\mathrm{A}_{\inf}(S),I),
	\end{tikzcd}\right)
	\]
	namely
	\[
	T(Rf_{\Prism, *} \mathcal{E}): (S[1/p],S) \longmapsto \left( (Rf_{\Prism, *} \mathcal{E}[1/I]^\wedge_p) (\mathrm{A}_{\inf}(S),I)\right)^{\varphi=1}.
	\]
	On the other hand, by applying derived global section at weak \'etale comparison in \cite[Thm.\ 6.1]{GR22}, for any perfect prism $(A,I)$ in $Y_\Prism$, there is a natural isomorphism of $\mathbb{Z}_p$-complexes
	\[
	R\Gamma((X_{\overline{A}})_{\eta, \mathrm{pro\acute{e}t}},T(\mathcal{E})) \simeq \left( (Rf_{\Prism, *} \mathcal{E}[1/I]^\wedge_p)(A,I)\right)^{\varphi=1},
	\]
	where $X_{\overline{A}}$ is the complete base change $X\times_Y \Spf(\overline{A})$.
    As a consequence, by taking the inverse system with respect to perfect prisms associated to affinoid perfectoid Huber pairs $(S[1/p], S)$ over $Y_\eta$, we get a natural isomorphism of $\mathbb{Z}_p$-complete complexes over $Y_\eta$
    \[
    Rf_{\eta,*}T(\mathcal{E}) \simeq T(Rf_{\Prism, *} \mathcal{E}).
    \]
\end{proof}

Combining the above with \Cref{etale realization is t-exact},
we get the following slight refinement of the
``weak \'{e}tale comparison'' \cite[Theorem 6.1]{GR22}.

\begin{corollary}\label{weak etale comparison}
Let $f \colon X \to Y$ be a smooth proper morphism
between smooth formal schemes over $\Spf(\mathcal{O}_K)$, and let 
$(\mathcal{E},\varphi_\mathcal{E}) \in \FC^{\perf}(X)$.
Then $T(R^i f_{\Prism, *} \mathcal{E}) = R^i f_{\eta, *}(T(\mathcal{E}))$.
\end{corollary}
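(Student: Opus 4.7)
The plan is to deduce the statement by combining two ingredients already established in the excerpt: the commutative square in \Cref{direct image vs etale realization} and the $t$-exactness of the \'{e}tale realization functor in \Cref{etale realization is t-exact}.

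First, I would start with the isomorphism
\[
T(Rf_{\Prism, *} \mathcal{E}) \simeq Rf_{\eta, *}(T(\mathcal{E}))
\]
in $D^{(b)}_{lisse}(Y_{\eta}, \mathbb{Z}_p)$, given by the commutative square of \Cref{direct image vs etale realization}. Taking the $i$-th cohomology sheaf on the right hand side, we obtain $R^i f_{\eta, *}(T(\mathcal{E}))$ by the very definition of higher direct image.

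Next, I need to identify the $i$-th cohomology of the left hand side with $T(R^i f_{\Prism, *} \mathcal{E})$. This is precisely where \Cref{etale realization is t-exact} enters: since $T \colon \FC^{\perf}(Y) \to D^{(b)}_{lisse}(Y_{\eta}, \mathbb{Z}_p)$ is $t$-exact with respect to the standard $t$-structures on both sides, it commutes with taking cohomology objects. Formally, a $t$-exact functor $F$ between stable $\infty$-categories equipped with $t$-structures satisfies $F \circ \mathcal{H}^i \cong \mathcal{H}^i \circ F$ for every $i$, as follows immediately from the fact that $F$ preserves both $\tau^{\leq i}$ and $\tau^{\geq i}$. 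Applying this to $F = T$ and the prismatic $F$-crystal in perfect complexes $Rf_{\Prism, *} \mathcal{E}$ (which, by \Cref{direct image vs etale realization}, indeed lies in $\FC^{\perf}(Y)$), we get $\mathcal{H}^i(T(Rf_{\Prism, *} \mathcal{E})) \simeq T(\mathcal{H}^i(Rf_{\Prism, *} \mathcal{E})) = T(R^i f_{\Prism, *} \mathcal{E})$.

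Combining the two identifications yields the desired isomorphism. There is no real obstacle here; the work has already been done in \Cref{direct image vs etale realization} (which packages the derived compatibility) and \Cref{etale realization is t-exact} (which upgrades it to compatibility with individual cohomology degrees). The proof is essentially a one-line deduction, and I would write it as such.
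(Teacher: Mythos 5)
Your argument is exactly the paper's intended one: the paper gives no separate proof for this corollary but prefaces it with ``Combining the above with \Cref{etale realization is t-exact}, we get the following slight refinement,'' which is precisely your two-step deduction (apply $\mathcal{H}^i$ to the commutative square of \Cref{direct image vs etale realization}, then use $t$-exactness of $T$ to move $\mathcal{H}^i$ through). Correct and complete.
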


\bibliographystyle{amsalpha}
\bibliography{template}

\providecommand{\bysame}{\leavevmode\hbox to3em{\hrulefill}\thinspace}
\providecommand{\MR}{\relax\ifhmode\unskip\space\fi MR }
\providecommand{\MRhref}[2]{%
  \href{http://www.ams.org/mathscinet-getitem?mr=#1}{#2}
}
\providecommand{\href}[2]{#2}
\begin{thebibliography}{DLMS22}

\bibitem[ALB23]{ALB23}
{Anschütz, Johannes} and {Arthur-César} Le~Bras, \emph{Prismatic {Dieudonné} theory}, Forum of Mathematics, Pi \textbf{11} (2023), e2.

\bibitem[BBD82]{BBDG82}
A.~A. Be\u{\i}linson, J.~Bernstein, and P.~Deligne, \emph{Faisceaux pervers}, Analysis and topology on singular spaces, {I} ({L}uminy, 1981), Ast\'{e}risque, vol. 100, Soc. Math. France, Paris, 1982, pp.~5--171. \MR{751966}

\bibitem[Bha22]{Bha23}
Bhargav Bhatt, \emph{Prismatic {$F$}-gauges}, 2022, Lecture notes available at \url{https://www.math.ias.edu/~bhatt/teaching/mat549f22/lectures.pdf}.

\bibitem[BL22a]{BL22a}
Bhargav {Bhatt} and Jacob {Lurie}, \emph{{Absolute prismatic cohomology}}, arXiv e-prints (2022), arXiv:2201.06120.

\bibitem[BL22b]{BL22b}
\bysame, \emph{{The prismatization of $p$-adic formal schemes}}, arXiv e-prints (2022), arXiv:2201.06124.

\bibitem[BMS18]{BMS18}
Bhargav Bhatt, Matthew Morrow, and Peter Scholze, \emph{Integral {$p$}-adic {H}odge theory}, Publ. Math. Inst. Hautes \'{E}tudes Sci. \textbf{128} (2018), 219--397. \MR{3905467}

\bibitem[BMS19]{BMS19}
\bysame, \emph{Topological {H}ochschild homology and integral {$p$}-adic {H}odge theory}, Publ. Math. Inst. Hautes \'{E}tudes Sci. \textbf{129} (2019), 199--310. \MR{3949030}

\bibitem[BS17]{BS17}
Bhargav Bhatt and Peter Scholze, \emph{Projectivity of the {W}itt vector affine {G}rassmannian}, Invent. Math. \textbf{209} (2017), no.~2, 329--423. \MR{3674218}

\bibitem[BS22]{BS19}
\bysame, \emph{Prisms and prismatic cohomology}, Ann. of Math. (2) \textbf{196} (2022), no.~3, 1135--1275. \MR{4502597}

\bibitem[BS23]{BS21}
\bysame, \emph{Prismatic {$F$}-crystals and crystalline {G}alois representations}, Camb. J. Math. \textbf{11} (2023), no.~2, 507--562. \MR{4600546}

\bibitem[DL21]{DuLiu21}
Heng {Du} and Tong {Liu}, \emph{{A prismatic approach to $(\varphi, \hat G)$-modules and $F$-crystals}}, arXiv e-prints (2021), arXiv:2107.12240.

\bibitem[DLMS22]{DLMS}
Heng {Du}, Tong {Liu}, Yong~Suk {Moon}, and Koji {Shimizu}, \emph{{Completed prismatic $F$-crystals and crystalline $\mathbf{Z}_p$-local systems}}, arXiv e-prints (2022), arXiv:2203.03444.

\bibitem[{Dri}20]{Drin20}
Vladimir {Drinfeld}, \emph{{Prismatization}}, arXiv e-prints (2020), arXiv:2005.04746.

\bibitem[GL21]{GL21}
Haoyang Guo and Shizhang Li, \emph{Period sheaves via derived de {R}ham cohomology}, Compos. Math. \textbf{157} (2021), no.~11, 2377--2406. \MR{4323988}

\bibitem[GR22]{GR22}
Haoyang {Guo} and Emanuel {Reinecke}, \emph{{A prismatic approach to crystalline local systems}}, arXiv e-prints (2022), arXiv:2203.09490.

\bibitem[Ked06]{Ked06}
Kiran~S. Kedlaya, \emph{Fourier transforms and {$p$}-adic `{W}eil {II}'}, Compos. Math. \textbf{142} (2006), no.~6, 1426--1450. \MR{2278753}

\bibitem[LL20]{LL20}
Shizhang {Li} and Tong {Liu}, \emph{{Comparison of prismatic cohomology and derived de Rham cohomology}}, arXiv e-prints (2020), arXiv:2012.14064, to appear in J.~Eur.~Math.~Soc.

\bibitem[Lur09]{Lur09}
Jacob Lurie, \emph{Higher topos theory}, Annals of Mathematics Studies, vol. 170, Princeton University Press, Princeton, NJ, 2009. \MR{2522659}

\bibitem[Lur17]{Lur17}
Jacob Lurie, \emph{Higher algebra}, 2017, available at \url{https://www.math.ias.edu/~lurie/papers/HA.pdf}.

\bibitem[Lur18]{Lur18}
\bysame, \emph{Spectral algebraic geometry}, 2018, available at \url{https://www.math.ias.edu/~lurie/papers/SAG-rootfile.pdf}.

\bibitem[Mag24]{Mag24}
Kirill Magidson, \emph{Divided powers and derived de rham cohomology}, 2024.

\bibitem[Mat16]{Mat16}
Akhil Mathew, \emph{The {G}alois group of a stable homotopy theory}, Adv. Math. \textbf{291} (2016), 403--541. \MR{3459022}

\bibitem[Mat22]{Mat22}
\bysame, \emph{Faithfully flat descent of almost perfect complexes in rigid geometry}, J. Pure Appl. Algebra \textbf{226} (2022), no.~5, Paper No. 106938, 31. \MR{4332074}

\bibitem[MT20]{MT20}
Matthew {Morrow} and Takeshi {Tsuji}, \emph{{Generalised representations as q-connections in integral $p$-adic Hodge theory}}, arXiv e-prints (2020), arXiv:2010.04059.

\bibitem[MW21]{MW21}
Yu~{Min} and Yupeng {Wang}, \emph{{Relative $(\varphi,\Gamma)$-modules and prismatic $F$-crystals}}, arXiv e-prints (2021), arXiv:2110.06076.

\bibitem[MW22]{MW22}
\bysame, \emph{{P-adic Simpson correpondence via prismatic crystals}}, arXiv e-prints (2022), arXiv:2201.08030.

\bibitem[Shi18]{Shimizu18}
Koji Shimizu, \emph{Constancy of generalized {H}odge-{T}ate weights of a local system}, Compos. Math. \textbf{154} (2018), no.~12, 2606--2642. \MR{3873527}

\bibitem[{Sta}23]{stacks-project}
The {Stacks Project Authors}, \emph{\textit{Stacks Project}}, \url{https://stacks.math.columbia.edu}, 2023.

\bibitem[{Tan}22]{Tang22}
Longke {Tang}, \emph{{Syntomic cycle classes and prismatic Poincar{\'e} duality}}, arXiv e-prints (2022), arXiv:2210.14279.

\bibitem[{Tia}21]{Tian21}
Yichao {Tian}, \emph{{Finiteness and Duality for the cohomology of prismatic crystals}}, arXiv e-prints (2021), arXiv:2109.00801.

\bibitem[Wu21]{Wu21}
Zhiyou Wu, \emph{Galois representations, {$(\varphi,\Gamma)$}-modules and prismatic {F}-crystals}, Doc. Math. \textbf{26} (2021), 1771--1798. \MR{4353339}

\end{thebibliography}

\end{document}